\theoremstyle{plain}
\numberwithin{equation}{section}
\numberwithin{figure}{section}
\numberwithin{table}{section}
\newtheorem{theorem}{Theorem}[section]
\newtheorem{lemma}[theorem]{Lemma}
\newtheorem{corollary}[theorem]{Corollary}
\newtheorem{proposition}[theorem]{Proposition}
\newtheorem{assumption}[theorem]{Assumption}
\theoremstyle{definition}
\newtheorem{definition}[theorem]{Definition}
\theoremstyle{remark}
\newtheorem{remark}[theorem]{Remark}
\newcommand{\R}{\mathbb{R}}
\newcommand{\N}{\mathbb{N}}
\newcommand{\Z}{\mathbb{Z}}
\newcommand{\bigO}{\mathcal{O}}
\newcommand{\abs}[1]{\left|#1\right|}
\newcommand{\norm}[1]{\left\|#1\right\|}
\newcommand{\dualproduct}[3][\Gamma]{\left\langle#2,#3 \right\rangle_{#1}}
\newcommand{\honehalftilde}[1][\Gamma]{\widetilde{H}^{1/2}(#1)}
\newcommand{\refspace}[2]{P^{#1}(#2)}
\newcommand{\polyspace}[2]{\SSnd^{#1}(#2)}
\newcommand{\dist}[2]{\operatorname{dist}\left(#1,#2\right)}
\def\supp{\operatorname*{supp}}
\def\allvertices{\mathcal{V}}
\def\innervertices{\allvertices^{int}}
\def\alledges{\mathcal{E}}
\def\inneredges{\alledges^{int}}
\newcommand{\fV}{\mathfrak V}
\def\H{\widetilde{H}}
\def\P{{\mathbb P}}
\def\R{{\mathbb R}}
\def\N{{\mathbb N}}
\def\A{{\mathcal A}}
\def\D{{\mathcal D}}
\def\P{{\mathcal P}}
\def\Q{{\mathcal Q}}
\def\SSnd{{\mathcal S}}
\def\SS{\widetilde {\SSnd}}
\def\TT{{\mathcal T}}
\def\diam{{\rm diam}}
\def\mesh{\TT}
\def\QQ{{\mathcal Q}_p}
\def\PP{{\mathcal P}_p}
\def\support#1{\operatorname{supp}#1}
\def\pp{\mathbf{p}}
\def\AVP{\mathcal{A}_{{\omega}_V}}
\def\AEP{\mathcal{A}_{{\omega}_e}}
\newcommand{\Tref}{\widehat T}
\newcommand{\mTref}{\widetilde T}
\newcommand{\Vref}{\widehat V}
\newcommand{\Sref}{\widehat S}
\newcommand{\Kref}{\widehat K}
\newcommand{\eremk}{\hbox{}\hfill\rule{0.8ex}{0.8ex}}
\newcommand{\colvec}[1]{\begin{pmatrix}#1\end{pmatrix}}
\begin{document}
\title{Stable decompositions of $hp$-BEM spaces and an optimal Schwarz preconditioner
for the hypersingular integral operator in 3D}
\author{Michael Karkulik}\address{Departamento de Matem\'atica, Universidad T\'ecnica Federico Santa Mar\'ia, Valpara\'iso, Chile}
\author{Jens Markus Melenk}\address{Institut f\"ur Analysis und Scientific Computing, Technische Universit\"at Wien, Austria}
\author{Alexander Rieder}\address{Institut f\"ur Analysis und Scientific Computing, Technische Universit\"at Wien, Austria}
\date{\today}
\subjclass{65F08, 65N38, 41A35}
\keywords{preconditioning high order BEM, stable localization, domain decomposition}
\begin{abstract}  
  We consider fractional Sobolev spaces $H^\theta(\Gamma)$, $\theta \in [0,1]$, on a 2D surface $\Gamma$. 
  We show that functions in $H^\theta(\Gamma)$ can be decomposed into
  contributions with local support in a stable way. Stability of the decomposition is inherited by 
  piecewise polynomial subspaces. Applications include the analysis of additive Schwarz preconditioners 
  for discretizations of the hypersingular integral operator by the $p$-version of the 
  boundary element method with condition number bounds that are uniform in the polynomial degree $p$. 
\end{abstract}

\maketitle
\section{Introduction}
Fractional Sobolev spaces arise frequently in both analysis and numerical analysis when dealing with 
(fractional) partial differential or integral equations. We mention, for example, the 
classical boundary integral operators associated with the Laplacian that lead to the 
Sobolev space spaces $H^{1/2}(\Gamma)$.
Similarly, for example in screen problems or fractional diffusion, the natural spaces are
often given by Sobolev spaces encoding homogeneous Dirichlet conditions denoted by $\widetilde{H}^{\theta}(\Gamma)$.

When discretizing problems posed 
in such spaces, a standard ansatz space consists of globally continuous piecewise polynomials of degree $p$ 
on a mesh $\TT$ that partitions $\Gamma$. An important tool in the numerical analysis of 
such a setting are \emph{stable} decompositions of such discrete functions into local contributions. 

In this work, we consider 2D surfaces $\Gamma \subset \R^3$ and 
propose a stable decomposition procedure of functions $u \in \widetilde{H}^{\theta}(\Gamma)$ into
a global low-order contribution of piecewise linears/bilinears and functions with local support
(see Theorem~\ref{thm:main_decomposition}).  This decomposition is constructed such that, 
if $u$ is a piecewise polynomial on a mesh $\TT$, then the local contributions are also 
piecewise polynomials with the same degree distributions. Our construction accommodates 
variable polynomial degree distribution (i.e., the polynomial degree is allowed to vary from element to element) and ``mixed'' meshes consisting of triangles and quadrilaterals
(see Theorem~\ref{thm:decomposition-FEM-space}).
Similar decompositions that emphasize the $p$-dependence have already appeared 
in the context of meshes consisting of quadrilaterals only in \cite{pavarino94} for the case $\theta=1$
and~\cite{heuer99} for $\theta \in (0,1)$. For meshes consisting of triangles/tetrahedra only, 
the decomposition in the finite element case of $\theta=1$ was covered 
in \cite{schoeberl-melenk-pechstein-zaglmayr08} and recently a new decomposition
was proposed in \cite{falk-winther15} for general simplicial meshes. The decomposition of \cite{falk-winther15} 
is similar to our result in that it is also simultaneously stable in $L^2$ and $H^1$,
which implies stability for general $\theta \in (0,1)$, although this simultaneous stability is 
not emphasized in \cite{falk-winther15} and its ramifications not explored. 

An application of our stable decomposition is given by the analysis in 
Section~\ref{sec:ams} of an additive Schwarz preconditioner for the $p$-BEM applied to the 
hypersingular integral equation. The resulting condition number is shown to be uniformly bounded 
in the polynomial degree $p$ employed; here, mixed meshes consisting of triangles and quadrilaterals 
are allowed as well as a variable polynomial degree distribution. The numerical performance 
of this preconditioner is studied in \cite{fuehrer-melenk-praetorius-rieder15}. 

A second application of our decomposition result is given in Theorem~\ref{thm:interpolation_poly_l2}, 
which identifies the interpolation space between spaces of piecewise polynomials (of arbitrary degree) 
equipped with weighted $L^2$-norms. 
In fact, in a subsequent work \cite{our_interpolation_space_paper} we will use our decomposition to show
that the interpolation space obtained by interpolating (using the $K$-method) between 
a space of piecewise polynomials equipped with the $L^2$- and $H^1$-norm coincides with the same space
equipped with the appropriate Sobolev norm, i.e., 
\begin{align*}
  \left(\left(\SS^{\pp,1}(\mesh),\|\cdot\|_{L^2(\Gamma)}\right),\left(\SS^{\pp,1}(\mesh),\|\cdot\|_{\widetilde H^1(\Gamma)}\right)\right)_{\theta,2} & = 
\left(\SS^{\pp,1}(\mesh),\|\cdot\|_{\widetilde H^{\theta}(\Gamma)}\right) 
\qquad \mbox{ (equivalent norms)}, \\  
  \left(\left(\SSnd^{\pp,1}(\mesh),\|\cdot\|_{L^2(\Gamma)}\right),\left(\SSnd^{\pp,1}(\mesh),\|\cdot\|_{H^1(\Gamma)}\right)\right)_{\theta,2} & = 
\left(\SSnd^{\pp,1}(\mesh),\|\cdot\|_{H^{\theta}(\Gamma)}\right) 
\qquad \mbox{ (equivalent norms)}; \\  
\end{align*}
here, the implied norm equivalences are independent of the polynomial degree distribution. 
This result of \cite{our_interpolation_space_paper} will generalize the analysis of the single-element case in 
\cite{maday89,bernardi-dauge-maday92,bernardi-dauge-maday10,belgacem94}
and \cite[Thm.~{4.2}]{bernardi-dauge-maday07,bernardi-dauge-maday10} to general shape-regular meshes. 

\subsection*{Outline of the paper}

In view of the technical nature of the paper, we have collected the main results in 
Section~\ref{sect:main_results}. Applications such as the preconditioning
of the $p$-version discretization of the hypersingular integral operator are given
in Section~\ref{sec:applications}. The remainder of the paper is devoted to the proof
of the main result, namely, the decomposition of a function $u$ into local components. The 
decomposition is such that the local components are 
supported on the patches of a mesh, i.e., the union of elements meeting at a vertex or an edge. It is performed in several steps. In a first step, 
a piecewise linear contribution $u_1$ is determined using a Scott-Zhang interpolant. The primary purpose 
of this contribution is to remove the $h$-dependence of the decomposition, i.e., the difference $u - u_1$ 
can be localized with control of the constants uniformly in the size of the mesh patches. In a second step, 
``vertex contributions'' for each vertex $V$ of the mesh are determined with the aid of 
averaging operators. The vertex contributions have two important properties: a) the support of each vertex contribution is the 
corresponding vertex patch $\omega_V$ and b) the function value at $V$ is reproduced 
if the function is continuous at $V$. We collect these vertex contributions in the function $u_2$. 
Then the function $u - u_1 - u_2$ vanishes in the vertices of the mesh. A second averaging operator 
is applied to this function for each edge to create ``edge contributions'' supported by the patches 
$\omega_e$ associated with the edges. These are collected in a function $u_3$. 
The difference $u - u_1 - u_2 -u_3$ is then a function
vanishing on the skeleton of the mesh and therefore consists of local ``element contributions''. 

The stability properties of the averaging operators aplied in these steps of the decomposition 
have to be analyzed. In preparation to that analysis, we address in Section~\ref{sec:interpolation-argument} 
the issue that full Sobolev norms do not scale conveniently under (affine) changes of variables 
but that seminorms do. Since we define fractional Sobolev spaces by interpolation, we present 
and analyze interpolation spaces that are based on interpolating seminorms. These results come in two flavors: 
Section~\ref{sect:interpol_seminorms} focuses on the interpolation between Sobolev spaces, and 
Section~\ref{sec:interpolation-of-weighted-spaces} studies the case of interpolation between 
weighted Sobolev spaces with weight given by the distance from a point. The ensuing 
Section~\ref{sec:averaging-operators} develops averaging operators on the 
reference element $\Kref$ with the following key property: ``Vertex averaging operators'' reproduce 
the function value in one corner of $\Kref$ and ``edge averaging operators'' reproduce function values
on one edge of $\Kref$. The actual construction is performed in several steps since these operators 
should have the additional property that the ``vertex averaging operator'' should vanish on the edge
opposite the vertex and the ``edge averaging operator'' should vanish on the other edges of $\Kref$. 
Section~\ref{sec:decomposition-general-triangulations} uses the averaging operators on the reference
element $\Kref$ to generate the vertex or edge contributions mentioned above. The general procedure 
for meshes consisting of triangles only is as follows: one selects from the (vertex or edge) patch 
a certain element $K^\star$. In the discrete case of piecewise polynomials, $K^\star$ is normally chosen to have 
the lowest polynomial degree present in the patch. (In some cases the element with a slightly higher polynomial degree 
is favored if it allows one to do the averaging on a triangle instead of a square). 
The corresponding vertex or edge contribution is defined for \emph{that} element $K^\star$ by applying 
the averaging operator to the pullback to the reference element. For the \emph{remaining} elements of the patch, 
the contribution is defined by copying the values from $K^\star$. 

An important feature of the averaging operators for triangles is that polynomials of degree $p$ 
are mapped to polynomials of the same degree. In this way, the above decomposition procedure also yields 
a stable decomposition for spaces of piecewise polynomials. The presence of quadrilaterals in a mesh
introduces significant complications. We define the averaging operators on the reference square 
in terms of the averaging operators on a triangular part of the square combined with mapping this triangular
part to the full square using a Duffy transformation. When applied to polynomials of degree $p$, this 
increases the polynomial degree to $2p$. In the discrete case, we therefore introduce an additional Gauss-Lobatto 
interpolation step to get back to polynomials of degree $p$. Due to the lack of stability of the 
Gauss-Lobatto interpolation in $L^2$, the stability of the decomposition in fractional
Sobolev norms $\widetilde{H}^\theta(\Gamma)$ cannot easily be inferred  from stability 
in $L^2(\Gamma)$ and $\widetilde{H}^1(\Gamma)$ and an interpolation argument. Rather, a careful 
analysis in the local $\widetilde H^\theta$-spaces is necessary, which is done in the present paper. 
%



\section{Notation, assumptions, and main results}
\label{sect:main_results}
We will introduce the necessary notation and present the main results of the present work.
Due to the technical nature of the results, all proofs are relegated to Section~\ref{sec:decomposition-general-triangulations}.

$B_r(x)$ denotes the Euclidean ball with radius $r$ centered at $x$.
The shorthand $a \lesssim b$ expresses $a \leq C b$ for a constant $C > 0$ that does not depend on parameters of 
interest (in particular the mesh size $h$ and the polynomial degree $p$). The notation  
$a \sim b$ is short for $a \lesssim b$ in conjunction with $b \lesssim a$. 

\subsection{Geometric and functional setting}\label{subsection:setting}
Let $\Omega \subseteq \R^3$ be a bounded Lipschitz domain and let $\Gamma \subseteq \partial \Omega$ 
be an open, connected subset of $\partial \Omega$ that stems from a Lipschitz dissection as described 
in \cite[p.~{99}]{mclean}. 
The Sobolev spaces $L^{2}(\partial\Omega)$ and $H^1(\partial\Omega)$ are 
defined as in \cite[pp.~{99}]{mclean} by use of Bessel potentials on $\R^{2}$ and liftings
via the bi-Lipschitz maps that describe $\partial\Omega$.
For any relatively open $\omega \subseteq \partial\Omega$, we 
define the space $L^2(\omega)$ of square integrable functions in the standard way. The spaces 
$H^1(\omega)$ and $\widetilde H^1(\omega)$ are defined by 
\begin{align}
H^{1}(\omega) &:= \{v|_\omega \colon  v \in H^{1}(\partial\Omega)\},  
& 
\widetilde H^{1}(\omega) &:= \{v \colon  E_{0,\omega} v \in H^{1}(\partial\Omega)\}, 
\end{align}
where $E_{0,\omega}$ denotes the operator that extends a function defined on $\omega$ to a function
on $\partial\Omega$ by zero. We recall that for each $u \in H^1(\partial\Omega)$ we can define
the surface gradient $\nabla u \in L^2(\partial\Omega)$. It can be checked that 
for (relatively) open $\omega\subseteq \partial\Omega$ and $u \in H^1(\omega)$ 
or $u \in \widetilde H^1(\omega)$ 
the surface gradient $\nabla u$ is also well-defined on $\omega$
and depends only on the function values of $u$ on $\omega$.
With the surface gradient in hand, we introduce the seminorm and norm 
\begin{equation}\label{eq:semi-norm}
  |u|_{H^1(\omega)}:= \|\nabla u\|_{L^2(\omega)}, 
  \qquad \|u\|_{H^1_h(\omega)}^2 :=
  \frac{1}{\diam(\omega)^2} \|u\|_{L^2(\omega)}^2 + |u|_{H^1(\omega)}^2. 
\end{equation}
Fractional Sobolev spaces are defined by interpolation
via the $K$-method.
For two Banach spaces $(X_0,\norm{\cdot}_{0})$, $(X_1,\norm{\cdot}_{1})$ with continuous embedding
$X_1 \subseteq X_0$ and fixed $\theta \in (0,1)$, define the $K$-functional by $K^2(t,u):= 
\inf_{v \in X_1} \|u - v\|_0^2 + t^2 \|v\|^2_1$ and 
the interpolation norm by
\begin{align}
  \label{eq:def_K_functional}
  \norm{u}^2_{(X_0,X_1)_{\theta,2}}
  &:= \int_{t=0}^\infty t^{-2\theta} K^2(t,u) \frac{dt}{t}
  = \int_{t=0}^\infty t^{-2\theta} \bigg(\inf_{v \in X_1} \|u - v\|_{0}^{2} + t^{2} \|v\|_1^{2} \bigg) \frac{dt}{t}    
\end{align}
together with the convention 
that for $\theta=0$ or $\theta = 1 $ we set $\norm{u}^2_{(X_0,X_1)_{\theta,2}}=\norm{u}^2_{X_\theta}$.

We then define the interpolation space $\left(X_0,X_1\right)_{\theta,2}
:=\{ u \in X_0: \, \norm{u}_{(X_0,X_1)_{\theta,2}} < \infty \}$.
In this way, the spaces
$H^\theta(\omega) := (L^2(\omega),H^1(\omega))_{\theta,2}$,
$\widetilde H^\theta(\omega):= (L^2(\omega),\widetilde H^1(\omega))_{\theta,2}$
as well as
$H^\theta_h(\omega) := (L^2(\omega),H^1_h(\omega))_{\theta,2}$,
$\widetilde H^\theta_h(\omega):= (L^2(\omega),\widetilde H^1_h(\omega))_{\theta,2}$
and their corresponding norms are defined.
Occasionally, it will be more convenient to work with seminorms instead of full norms.
For two Banach spaces $X_0$, $X_1$ with
$\norm{\cdot}_{X_1}^2=\abs{\cdot}_{X_1}^2 + c^2\norm{\cdot}_{X_0}^2$,
we define a seminorm by
\begin{align*}
  |u|^2_{(X_0,X_1)_{\theta,2}} &= \int_{t=0}^\infty t^{-2\theta} \left(\inf_{v \in X_1} \|u - v\|_{0}^2 + t^2|v|^2_1\right) \frac{dt}{t}.
\end{align*}
For example, on the space $\H_h^\theta(\omega)$ we define in this way the seminorm
\begin{align*}
  |u|^2_{\H_h^{\theta}(\omega)}&:= \int_{t=0}^\infty t^{-2\theta}
  \bigg(\inf_{v \in \widetilde{H}^1(\omega)} \|u - v\|_{L^2(\omega)}^2 + t^2|v|_{H^1(\omega)}^2\bigg) \frac{dt}{t}=|u|^2_{\H^{\theta}(\omega)} .
\end{align*}
We refer to Section~\ref{sect:interpol_seminorms} on how these seminorms relate to the full interpolation norms.
\subsection{Meshes and polynomial spaces} 
In this section, we further restrict our assumptions on the surface $\Gamma$ and introduce the discretization into
  boundary elements which are the image of planar reference elements under suitable transformations. 

We require that $\Gamma$ admits a suitable triangulation $\TT$  into open subsets $K_1, \dots, K_{\abs{\TT}}$, 
satisfying Assumption~\ref{assumption:element_maps} below. As it is standard in FEM and BEM, each element $K_i$ is 
the image of some fixed reference element under an element map $F_{K_i}$. 
To that end, we define the reference triangle and square by 
\begin{align}
\label{eq:reference-triangle}
\Tref &:=\{(\xi,\eta)\,:\, 0 < \xi < 1, \quad 0 <  \eta < \xi \}, \\
\Sref &:=\{(\xi,\eta)\,:\, 0 < \xi < 1, \quad 0 <  \eta < 1 \}. 
\end{align}
Often, we will work with functions that are only defined on a subdomain  $\omega \subseteq \Gamma$. 
Correspondingly, we write $\TT|_{\omega}:=\left\{ K \in \TT: K \subset \omega \right\}$ for the 
subtriangulation. Throughout the article, the triangulations and the element maps are required to satisfy
the following assumption, which generalizes the usual concept of 
shape-regularity to elements on surfaces, 
where the element maps are mappings ${\mathbb R}^2\rightarrow {\mathbb R}^3$:
\begin{assumption}
  \label{assumption:element_maps}
  \begin{enumerate}
  \item For each element $K \in \TT$, there exists $\Kref \in \{\Tref,\Sref\}$ and 
   an element map $F_K:\Kref \rightarrow K$ that is $C^1$ up to the boundary $\partial\Kref$. 
  \item The element maps $F_K$ are bijections $\overline{\Kref} \rightarrow \overline{K}$. 
  \item With $h_K:= \operatorname*{diam} K$, the Gramian $G(x) := (F_K^\prime(x))^\top \cdot F_K^\prime(x)$ 
    has two eigenvalues $\lambda_{min}(x)$, $\lambda_{max}(x)$. 
    The shape regularity 
    of $\TT$ is characterized by the constant $\gamma$ 
    that satisfies, for all $K \in \TT$, 
    \begin{equation}
      \label{eq:shape-regularity}
      \sup_{x \in \Kref} \frac{h_K^2}{\lambda_{min}(x)} + \frac{\lambda_{max}(x)}{h_K^2} \leq \gamma. 
    \end{equation}
    Note that this implies $\|F_K^\prime\|_{L^\infty(\Kref)} \leq C h_K$.  
  \item 
    The triangulation is {\em regular}\/: The intersection $\overline{K}_1 \cap \overline{K}_2$ of two elements 
    $K_1 \ne K_2 \in {\mathcal T}$ is either empty, exactly one vertex or exactly one edge (including its two endpoints). 
    If the intersection $\overline{K}_1 \cap \overline{K}_2$ is an edge 
    $e = F_{K_1}(\widehat e_1) = F_{K_2}(\widehat e_2)$ (for two edges $\widehat e_1$, $\widehat e_2$ of the reference
    element $\Kref$), then $F_{K_1}^{-1} \circ F_{K_2}|_{\widehat e_1}: \widehat e_1 \rightarrow \widehat e_2$ 
    is an affine bijection.
  \item Each edge is either fully contained in $\Gamma$ or part of $\partial \Gamma$.
  \end{enumerate}
\end{assumption}

A crucial role in our construction will be played by the elements sharing a vertex or an edge. 
We introduce the following definition. 
\begin{definition}
  \label{def:patches}
  Denote the set of all vertices by $\allvertices$ and the set of all edges by $\alledges$. 
  For a vertex $V \in \allvertices$ and an edge $e \in \alledges$ we denote the vertex and edge patches by
  \begin{equation*}
    \omega_{V}:=\operatorname{interior} \left( \bigcup_{ K \in \TT: V \in \overline{K}} \overline{K} \right), \qquad
    \omega_{e}:=\operatorname{interior} \left( \bigcup_{ K \in \TT: e \subset \overline{K}} \overline{K} \right).
  \end{equation*}
  For each patch denote the local mesh size as $h_{V}:=\diam\left(\omega_{V}\right)$ and $h_{e}:=\diam{\left(\omega_{e}\right)}$. 
  The set of vertices ${\mathcal V}$ of the triangulation $\TT$ is decomposed as 
  ${\mathcal V} =  {\mathcal V}^{int} \dot \cup {\mathcal V}^{bdy}$, where 
  ${\mathcal V}^{int} = \{z \in {\mathcal V}\colon z \in \Gamma\}$ and 
  ${\mathcal V}^{bdy} = \{z \in {\mathcal V}\colon z \in \partial \Gamma\}$.  
  Analogously we decompose the edges as $\mathcal{E} = \mathcal{E}^{int} \dot\cup \mathcal{E}^{bdy}$.
\end{definition}
We are interested in the space of piecewise polynomials on the triangulation.
For $p \in \N$, we denote by ${\mathcal P}_p := \operatorname*{span} \{x^i y^j\colon 0 \leq i,j \leq p \wedge\; i+j\leq p\}$ 
the space of polynomials of (total) degree $p$ and 
by ${\mathcal Q}_p:= \operatorname*{span} \{x^i y^j\colon 0 \leq i,j \leq p\}$ the tensor-product space of polynomials
of degree $p$ in each variable.  We write 
\begin{equation}
\refspace{p}{\Kref}:= 
\begin{cases} 
{\mathcal P}_p & \mbox{ if $\Kref = \Tref$}, \\ 
{\mathcal Q}_p & \mbox{ if $\Kref = \Sref$}. 
\end{cases} 
\end{equation}
If we want to emphasize the  domain under consideration, we write $\PP(\widehat{K})$ and $\QQ(\widehat{K})$ for $\widehat{K} \in \{\Sref,\Tref\}$
for the two different polynomial spaces.
For each element $K \in \TT$ we choose a polynomial degree $p_K \in \N$ and collect them in the family
$\pp:=(p_K)_{K\in \TT}$. We define the space of piecewise polynomials as:
\begin{align*}
  \polyspace{\pp,0}{\TT}:=\left\{ u \in L^2(\Gamma) : u \circ F_K \in \refspace{p_K}{\Kref} \; \forall K \in \TT \right\}.
\end{align*}

For the discretization of the spaces $H^{\theta}(\Gamma)$ and $\widetilde{H}^{\theta}(\Gamma)$ we consider spaces of globally continuous piecewise polynomials:
\begin{align*}
  \SSnd^{\pp,1}(\TT)&:= \polyspace{\pp,0}{\TT} \cap H^1(\Gamma), \\
  \SS^{\pp,1}(\TT)&:= \SSnd^{\pp,1}(\TT) \cap \widetilde H^1(\Gamma) = \left\{ u \in \SSnd^{\pp,1}(\TT)\, : \;  u|_{\partial \Gamma} = 0 \right\}.
\end{align*}

For subtriangulations $\TT|_{\omega}$ we define $\SS^{\pp,1}(\TT|_{\omega})$ analogously,
i.e., globally continuous piecewise polynomials that vanish on $\partial \omega$.
We introduce the piecewise constant local mesh size function $h \in \SSnd^{0,0}(\TT)$ as 
the function  satisfying $h|_{K}:=\diam(K)$ for all $K \in \TT$ and the 
polynomial degree distribution $p \in \polyspace{0,0}{\TT}$ as $p|_K:= p_K$, for all $K \in \TT$.
For a set $M\subset\R^d$, we denote by $d_M(\cdot) := \dist{\cdot}{M}$ the distance to the set $M$.

  For some results, we rely on the following assumption regarding the polynomial degree distribution wherever triangles and quadrilaterals meet:
  \begin{assumption}
    \label{ass:degree_distribution}
    If a triangle $T$ and a quadrilateral $S$ of the triangulation $\TT$ share an edge $e$, the corresponding polynomial degrees $p_T$ and $p_S$
    satisfy
    \begin{align}
      p_T \leq p_S \qquad \text{ or } \qquad p_S \leq 2p_T.
    \end{align}
  \end{assumption}

\subsection{Main results} 
\label{sec:main-results}
The main result of this paper, which underlies the stability of the additive Schwarz preconditioner
discussed in Section~\ref{sec:ams} ahead, 
states that we can decompose the space $\SS^{\pp,1}(\TT)$ into 
local contributions in an $H^\theta$-stable way.
To that end, define the spaces
\begin{equation}
\begin{aligned}[c]
X_0 &= \prod_{V \in \innervertices} L^2(\omega_V),  \\
Y_0 &= \prod_{e \in \inneredges} L^2(\omega_e), \\
Z_0 &= \prod_{K \in \TT} L^2(K), \\
\end{aligned}
\qquad
\begin{aligned}[c]
X_1 &= \prod_{V \in \innervertices} \widetilde H^1_h(\omega_V), \\
Y_1 &= \prod_{V \in \inneredges} \widetilde H^1_h(\omega_e), \\
Z_1 &= \prod_{K \in \TT} \widetilde H^1_h(K), 
\end{aligned}
\label{eq:definition_xyz}
\end{equation}
equipped with norms and seminorms as described in Section~\ref{subsection:setting}.

The following theorem 
has in a similar form already appeared in \cite{heuer99} for rectangular
meshes and was presented for triangulations in \cite[Lemma 5.2]{fuehrer-melenk-praetorius-rieder15}.
\begin{theorem}
\label{thm:decomposition_coloring_estimate}
For $\theta \in (0,1)$, there exists a constant $C_{\theta}$ that depends only on $\Gamma$, $\Omega$,
the $\gamma$-shape regularity of the triangulation $\TT$, and $\theta$, 
such that for all $u \in \widetilde{H}^{\theta}(\Gamma)$,
and for all decompositions
\begin{align*}
  u=u_1 + \sum_{V \in \innervertices} {u_{V} } + \sum_{e \in \inneredges} {u_e} + \sum_{K \in \TT}{u_K}
\end{align*}
with $\support{u_V} \subseteq \omega_V$, $\support{u_e} \subseteq \omega_e$ and $\support{u_K}\subseteq K$, we can estimate:
\begin{align*}
  \norm{u}_{\widetilde{H}^{\theta}(\Gamma)}^2
  &\leq
    C_{\theta}\left( \norm{u_1}_{\widetilde{H}^{\theta}(\Gamma)}^2 + \sum_{V \in \innervertices}{ \norm{u_V}_{\widetilde{H}^{\theta}(\Gamma)}^2} + 
    \sum_{e \in \inneredges}{ \norm{u_e}_{\widetilde{H}^{\theta}(\Gamma)}^2} + 
    \sum_{K \in \TT}{ \norm{u_K}_{\widetilde{H}^{\theta}(\Gamma)}^2} \right)\\
  &\leq C_{\theta}\left( \norm{u_1}_{\widetilde{H}^{\theta}(\Gamma)}^2 + \sum_{V \in \innervertices}{ \norm{u_V}_{\widetilde{H}_h^{\theta}(\omega_V)}^2} + 
    \sum_{e \in \inneredges}{ \norm{u_e}_{\widetilde{H}_h^{\theta}(\omega_e)}^2} + 
    \sum_{K \in \TT}{ \norm{u_K}_{\widetilde{H}^{\theta}_h(K)}^2} \right).
\end{align*}
\end{theorem}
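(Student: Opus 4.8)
The plan is to prove the second inequality first — the reduction of the global $\widetilde H^\theta(\Gamma)$-norm on each local piece to the scaled local norm $\widetilde H_h^\theta(\omega)$ — and then the first, which is the genuine ``stable superposition'' or coloring estimate. For the second inequality, note that for a function $w$ with $\support w \subseteq \omega$ where $\omega$ is a vertex patch $\omega_V$, an edge patch $\omega_e$, or a single element $K$, the zero-extension identifies $\widetilde H^\theta(\omega)$ isometrically with a closed subspace of $\widetilde H^\theta(\Gamma)$, so $\|w\|_{\widetilde H^\theta(\Gamma)} = \|w\|_{\widetilde H^\theta(\omega)}$. It then remains to compare $\|\cdot\|_{\widetilde H^\theta(\omega)}$ (interpolation between $L^2(\omega)$ and $\widetilde H^1(\omega)$ with the \emph{unscaled} $H^1$-norm) with $\|\cdot\|_{\widetilde H^\theta_h(\omega)}$ (interpolation with the $\diam(\omega)$-scaled norm $\|\cdot\|_{\widetilde H^1_h(\omega)}$). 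These two interpolation norms are equivalent with constants independent of $\diam(\omega)$: this is a standard rescaling of the $K$-functional, $K_h(t,w)^2 = \inf_v \|w-v\|_{L^2}^2 + t^2(\diam(\omega)^{-2}\|v\|_{L^2}^2 + |v|_{H^1}^2)$, and one checks $K_h(t,w) \sim K(\min\{t,\diam(\omega)\},w)$ up to a harmless truncation, giving $\|w\|_{\widetilde H^\theta(\omega)} \le \|w\|_{\widetilde H^\theta_h(\omega)} \lesssim \|w\|_{\widetilde H^\theta(\omega)}$; this is precisely the type of statement collected in Section~\ref{sect:interpol_seminorms}. Summing over the (finitely many types of) local pieces yields the second inequality.

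For the first inequality, the heart of the matter, the strategy is a coloring argument combined with the fact that fractional-order norms of sums of functions with finite overlap are controlled by the sum of the norms. Concretely, by $\gamma$-shape-regularity there is a bound $N = N(\gamma)$ on the number of elements in any vertex or edge patch, hence a bound on how many patches $\omega_V$, $\omega_e$ a fixed element can belong to; therefore one can color the index sets $\innervertices$, $\inneredges$, $\TT$ with finitely many colors (a number depending only on $\gamma$) so that within one color the supports are pairwise disjoint. For functions with pairwise disjoint supports, zero-extension again gives $\|\sum_j w_j\|_{\widetilde H^\theta(\Gamma)}^2 \sim \sum_j \|w_j\|_{\widetilde H^\theta(\Gamma)}^2$ — but this equivalence is \emph{not} exact for fractional $\theta$, since the $H^\theta$-seminorm is nonlocal; the correct tool is that the $\widetilde H^\theta(\Gamma)$-norm is bounded above by the full (Gagliardo/double-integral or interpolation) norm, and for disjoint supports the diagonal block contributions reproduce $\sum_j \|w_j\|^2$ while the off-diagonal blocks are controlled by the same sum because $\|E_0 w_j\|_{\widetilde H^\theta(\Gamma)} = \|w_j\|_{\widetilde H^\theta}$ and an interpolation/${K}$-functional superadditivity argument (choosing the near-minimizer $v$ for $\sum w_j$ to be the sum of near-minimizers $v_j$) gives $K(t,\sum_j w_j)^2 \le \big(\sum_j K(t,w_j)\big)^2 \le (\#\text{colors})\sum_j K(t,w_j)^2$. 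Then the triangle inequality over the (finitely many) colors, plus the separate term $\|u_1\|_{\widetilde H^\theta(\Gamma)}$, produces the claimed bound with $C_\theta$ depending only on $\theta$, $\gamma$, $\Gamma$, $\Omega$.

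More carefully, I would organize the first inequality as: (i) $\|u\|_{\widetilde H^\theta(\Gamma)}^2 \le (1+3C)\big(\|u_1\|_{\widetilde H^\theta(\Gamma)}^2 + \|\sum_V u_V\|_{\widetilde H^\theta(\Gamma)}^2 + \|\sum_e u_e\|_{\widetilde H^\theta(\Gamma)}^2 + \|\sum_K u_K\|_{\widetilde H^\theta(\Gamma)}^2\big)$ by the Cauchy--Schwarz/triangle inequality applied to the four-term sum; (ii) for each of the three collective sums, split the index set into $C(\gamma)$ color classes with disjoint supports and use the triangle inequality again; (iii) for a single color class $\{w_j\}$ with pairwise disjoint supports, prove $\|\sum_j w_j\|_{\widetilde H^\theta(\Gamma)}^2 \le C_\theta \sum_j \|w_j\|_{\widetilde H^\theta(\Gamma)}^2$ via the $K$-functional subadditivity noted above. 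I expect the main obstacle to be exactly step (iii): one must be careful that the interpolation-space norm on $\Gamma$ genuinely behaves subadditively with respect to disjointly supported pieces — unlike the Hilbertian Gagliardo seminorm, for which the off-diagonal (cross) terms could in principle be of the wrong sign or size — and that the constant produced is independent of the mesh size and polynomial degree. This is handled cleanly by the $K$-functional: picking $v = \sum_j v_j$ with each $v_j \in \widetilde H^1(\omega_j)$ a near-optimal competitor for $w_j$ gives $\|u-v\|_{L^2(\Gamma)}^2 = \sum_j \|w_j - v_j\|_{L^2}^2$ and $\|v\|_{\widetilde H^1(\Gamma)}^2 = \sum_j \|v_j\|_{\widetilde H^1}^2$ by disjointness, so $K(t,\sum_j w_j)^2 \le \sum_j K(t,w_j)^2$ \emph{exactly}, and integrating against $t^{-2\theta}\,dt/t$ finishes it — no coloring loss even within a color class, only across the finitely many color classes and the four groups. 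Collecting all constants gives $C_\theta$ of the asserted dependence.
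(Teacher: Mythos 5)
Your overall approach — coloring by shape‑regularity, triangle inequality over the finitely many colour classes, and an additivity estimate for disjointly supported pieces — is exactly the structure of the paper's proof. The one place where you diverge is that the paper cites the key disjoint‑support estimate (\ref{eq:coloring_est}) from \cite[Lemma~4.1.49]{ss} / \cite[Lemma~3.2]{petersdorff_rwp_elasticity}, whereas you propose to prove it from scratch with a $K$-functional argument. That argument is worth scrutinizing.

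Two issues, one cosmetic and one substantive. The cosmetic one: zero extension $\widetilde H^\theta(\omega)\to\widetilde H^\theta(\Gamma)$ is \emph{not} an isometry. It is bounded with norm $\le 1$ (norm $1$ in $L^2$ and in $\widetilde H^1$, then interpolate), so $\|w\|_{\widetilde H^\theta(\Gamma)}\le\|w\|_{\widetilde H^\theta(\omega)}$; the reverse inequality is not available because the local $K$-functional restricts the competitor to $\widetilde H^1(\omega)$, which only enlarges the infimum. This direction happens to be the one you need for the second chain of inequalities, so the conclusion there is fine, but the ``$=$'' is an overclaim.

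The substantive one concerns step (iii). To make $K\bigl(t,\sum_j w_j\bigr)^2 \le \sum_j K(t,w_j)^2$ decouple exactly, you must take each near‑optimal competitor $v_j$ \emph{supported in} $\omega_j$ (otherwise $\|\sum v_j\|_{\widetilde H^1(\Gamma)}^2\ne\sum\|v_j\|_{\widetilde H^1}^2$). But then the infimum you are running over for each $j$ is the \emph{local} $K$-functional, and integrating gives
\begin{equation*}
\Bigl\|\sum_j w_j\Bigr\|_{\widetilde H^\theta(\Gamma)}^2 \;\le\; \sum_j \|w_j\|_{\widetilde H^\theta(\omega_j)}^2,
\end{equation*}
not $\sum_j\|w_j\|_{\widetilde H^\theta(\Gamma)}^2$. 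Since in general $\|w_j\|_{\widetilde H^\theta(\omega_j)}\ge\|w_j\|_{\widetilde H^\theta(\Gamma)}$, this is \emph{weaker} than (\ref{eq:coloring_est}) and therefore does not yield the first inequality of the theorem, which explicitly has the $\widetilde H^\theta(\Gamma)$-norms in the middle. What your argument does give, combined with $\|w_j\|_{\widetilde H^\theta(\omega_j)}\sim\|w_j\|_{\widetilde H^\theta_h(\omega_j)}$ (Lemma~\ref{lemma:K-vs-k}, using $\diam(\omega_j)\lesssim 1$), is a clean direct proof of the \emph{composite} estimate — $\|u\|_{\widetilde H^\theta(\Gamma)}^2 \lesssim \|u_1\|^2 + \sum\|u_V\|_{\widetilde H^\theta_h(\omega_V)}^2 + \cdots$ — which is what is actually used downstream (e.g.\ in Theorem~\ref{thm:ams-hypersingular}). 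So the $K$-functional route is a perfectly good and arguably more self‑contained path to the useful inequality, but as written it does not establish the intermediate bound with the global $\widetilde H^\theta(\Gamma)$-norms of the local pieces; for that one either needs the cited Gagliardo‑norm estimate or an additional argument showing $\|w_j\|_{\widetilde H^\theta(\omega_j)}\lesssim\|w_j\|_{\widetilde H^\theta(\Gamma)}$ for patch‑supported functions, which is not available in general.
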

\begin{proof}
  The proof is based on a so called coloring argument, we include it for completeness.
  The main ingredient is the following estimate (see \cite[Lemma 4.1.49]{ss} or \cite[Lemma 3.2]{petersdorff_rwp_elasticity}):
  Let $w_j$, $j=1,\dots, n$ be functions in $\widetilde{H}^{\theta}(\Gamma)$ for $\theta \geq 0$ with pairwise disjoint support. Then it holds
  \begin{align}
    \label{eq:coloring_est}
    \Big\|{\sum_{i=1}^{n}{w_i}}\Big\|_{\widetilde{H}^{\theta}(\Gamma)}^2 &\lesssim \sum_{i=1}^{n}{\norm{w_i}^2_{\widetilde{H}^{\theta}(\Gamma)}}.
  \end{align}
  (The implied constant is known for the case of using the Aaronstein-Slobodeckij norm to be $5/2$, \cite[Lemma 4.1.49]{ss}).
  For notational simplicity we only consider the vertex contributions, the edge and inner parts are treated in exactly the same way.
  By $\gamma$-shape regularity, the number of vertices in a patch can be uniformly bounded by some constant
  $N_{c}$.  Thus, we can sort the vertices into sets $J_1,\dots,J_{N_{c}}$, such that
  $\bigcup_{i=1}^{N_c}{ J_i} = \innervertices$ and $ \abs{\omega_{V} \cap \omega_{V'} } = 0$ for all $V,V'$ in the same index set $J_i$.
  Repeated application of the triangle inequality and~\eqref{eq:coloring_est} then gives:
  \begin{align*}
    \norm{u}_{\widetilde{H}^{\theta}(\Gamma)}^2
    &\leq 2\norm{u_1}_{\widetilde{H}^{\theta}(\Gamma)}^2 + 2 \,\Big\|{\sum_{V \in \innervertices}{ u_{V}}}\Big\|_{\widetilde{H}^{\theta}(\Gamma)}^2 
    \leq 2 \norm{u_1}_{\widetilde{H}^{\theta}(\Gamma)}^2 + 2 N_c \sum_{i=1}^{N_c}{ \Big\|{\sum_{V \in J_i}{u_V}}\Big\|_{\widetilde{H}^{\theta}(\Gamma)}^2 }  \\
    &\leq 2 \norm{u_1}_{\honehalftilde}^2 + 2\, N_c \, C \!\!  \sum_{V \in \innervertices}{ \norm{u_{V}}_{\widetilde{H}^{\theta}(\Gamma)}^2 }.
  \end{align*}
  To prove the second estimate, see Remark~\ref{rem:norm-equivalence-extension}.
\end{proof}
The primary objective of the present work is to 
provide the following converse estimate:
\begin{theorem}[stable decomposition of $\widetilde{H}^\theta(\Gamma)$---continuous and discrete]
\label{thm:main_decomposition}
\begin{enumerate}[(i)]
\item 
\label{item:thm:main_decomposition-i}
Any function $u \in L^2(\Gamma)$ can be written in the form
\begin{equation}
\label{eq:decomposition}
u = u_1 + \sum_{V\in \innervertices} u_V  + \sum_{e\in \inneredges} u_e + \sum_{K \in \TT} u_K,
\end{equation}
where the components $u_1 \in \SS^{1,1}(\TT)$, 
$(u_V)_{V \in \innervertices} \subset X_0$, 
$(u_e)_{e \in \inneredges} \subset Y_0$, 
and $(u_K)_{K\in\TT} \subset Z_0$ of the 
decomposition (\ref{eq:decomposition}) depend linearly on $u$, 
and the corresponding linear maps have the 
following mapping properties: 
\begin{equation*}
  \begin{aligned}[c]
    u &\mapsto u_1 \\
    u &\mapsto (u_V)_V \\
    u &\mapsto (u_e)_e \\
    u &\mapsto (u_K)_K
  \end{aligned}
  \begin{aligned}[c]
    :\\:\\:\\:
  \end{aligned}
\qquad
  \begin{aligned}[c]
    L^2(\Gamma) &\rightarrow \left(\SS^{1,1}(\TT),\norm{\cdot}_{L^2}\right),  \\
    L^2(\Gamma) &\rightarrow X_0,  \\
    L^2(\Gamma) &\rightarrow Y_0,  \\
    L^2(\Gamma) &\rightarrow Z_0, 
  \end{aligned} 
  \qquad
  \begin{aligned}
    \widetilde H^1(\Gamma) &\rightarrow \left(\SS^{1,1}(\TT),\norm{\cdot}_{H^1}\right), \\
    \widetilde H^1(\Gamma) &\rightarrow X_1, \\
    \widetilde H^1(\Gamma) &\rightarrow Y_1,\\
    \widetilde H^1(\Gamma) &\rightarrow Z_1. 
  \end{aligned}
\end{equation*}
The constants of these bounded linear maps depend solely on $\Gamma$, $\Omega$, and the shape regularity constant of ${\mathcal T}$. 
Additionally, for every $\theta \in (0,1)$ there exist constants $C_\theta$ 
(depending solely on $\theta$, $\Gamma$, $\Omega$, and the shape regularity) such that:

\begin{eqnarray*}
\norm{u_1}_{\widetilde{H}^{\theta}(\Gamma)}^2  \leq C_{\theta} \norm{u}^2_{\widetilde{H}^{\theta}(\Gamma)}, \\
\sum_{V \in {\mathcal V}^{int}} |u_V|^2_{\widetilde{H}_h^{\theta}(\omega_V)} + h_V^{-2\theta} \|u_V\|^2_{L^2(\omega_V)} \leq C_{\theta} \|u\|^2_{\widetilde H^\theta(\Gamma)}, \\
\sum_{e\in{\mathcal E}^{int}} |u_e|^2_{\widetilde{H}_h^{\theta}(\omega_e)} + h_e^{-2\theta} \|u_e\|^2_{L^2(\omega_e)} \leq C_{\theta} \|u\|^2_{\widetilde H^\theta(\Gamma)}, \\
\sum_{K\in{\mathcal T}} |u_K|^2_{\widetilde{H}_h^{\theta}(K)} + h_K^{-2\theta} \|u_K\|^2_{L^2(K)} \leq C_{\theta} \|u\|^2_{\widetilde H^\theta(\Gamma)}. 
\end{eqnarray*}
\item
\label{item:thm:main_decomposition-ii}
If $\TT$ consists of triangles only and if $u \in \SS^{\pp,1}(\TT)$ 
then each of the contributions $u_V$, $u_e$, $u_K$ is in $\SS^{\pp,1}(\TT)$.  
\item
\label{item:thm:main_decomposition-iii}
If $\TT$ consists of triangles and quadrilaterals
and if $u \in \SS^{\pp,1}(\TT)$, then each of the contributions 
$u_V$, $u_e$, $u_K$ is in $\SS^{2\pp,1}(\TT)$. 
\end{enumerate}
\end{theorem}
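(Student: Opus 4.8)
The plan is to build the decomposition (\ref{eq:decomposition}) in the four stages announced in the introduction, and then extract the quantitative estimates by an interpolation-space argument after first establishing the endpoint bounds in $L^2(\Gamma)$ and $\widetilde H^1(\Gamma)$ separately. First I would fix $u_1 := I^{\mathrm{SZ}} u$, a Scott--Zhang-type quasi-interpolant onto $\SS^{1,1}(\TT)$ built from vertex functionals that annihilate functions vanishing at the corresponding vertices and that respect $\partial\Gamma$ (so $u_1 \in \widetilde H^1$ when $u \in \widetilde H^1$). Since $I^{\mathrm{SZ}}$ is a bounded projection both on $L^2(\Gamma)$ and on $\widetilde H^1(\Gamma)$ with constants depending only on shape regularity, boundedness on $\widetilde H^\theta(\Gamma)$ follows by interpolation of the $K$-functional (this is the content that Section~\ref{sect:interpol_seminorms} is set up to justify), giving the first displayed estimate. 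Writing $w := u - u_1$, the key structural fact is that $w$ is small near vertices (it vanishes at vertices in the discrete case, and more generally $I^{\mathrm{SZ}}$ removes the relevant local low-order part), which is exactly what is needed to localize with mesh-size-robust constants.

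Next I would pass to the reference element and invoke the averaging operators constructed in Section~\ref{sec:averaging-operators}: for each interior vertex $V$ pick a distinguished element $K^\star \in \omega_V$ (lowest degree, or a triangle if a small degree increase allows it), pull $w$ back by $F_{K^\star}$, apply the vertex averaging operator on $\Kref$ — which reproduces the value at the image of $V$, vanishes on the opposite edge, and maps $P^p(\Kref)$ into itself on triangles (into $P^{2p}$ on squares via the Duffy argument) — and push forward; on the remaining elements of $\omega_V$ define $u_V$ by copying values from $K^\star$ across shared edges. One checks $\supp u_V \subseteq \omega_V$ and that $u_1 + \sum_V u_V$ reproduces nodal values of $u$, so $w_2 := u - u_1 - \sum_V u_V$ vanishes at all vertices. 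Repeating with the edge averaging operators produces $u_e$ with $\supp u_e \subseteq \omega_e$, reproducing values on $e$ and vanishing on the other edges, so that $w_3 := w_2 - \sum_e u_e$ vanishes on the entire mesh skeleton; hence $w_3 = \sum_K u_K$ with $u_K := w_3|_K \in \widetilde H^1(K)$ automatically. Linearity in $u$ is transparent from this construction. The discrete statements (\ref{item:thm:main_decomposition-ii}) and (\ref{item:thm:main_decomposition-iii}) then follow because each step maps piecewise polynomials to piecewise polynomials: polynomial-degree preservation on triangles (Section~\ref{sec:averaging-operators}), and on quadrilaterals the Duffy transformation doubles the degree, which is absorbed either by the extra Gauss--Lobatto interpolation step on simplices neighbouring squares (using Assumption~\ref{ass:degree_distribution}) or, when a true degree increase is unavoidable, yielding membership in $\SS^{2\pp,1}(\TT)$ as stated; continuity across edges is built in by the ``copy from $K^\star$'' rule together with Assumption~\ref{assumption:element_maps}(iv).

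For the quantitative bounds I would first prove the $L^2$- and $\widetilde H^1_h$-stability of each map $u\mapsto u_V$, $u\mapsto u_e$, $u\mapsto u_K$ \emph{on the patch}: by the scaling in Assumption~\ref{assumption:element_maps}(iii) it suffices to bound the reference-element averaging operators in $L^2(\Kref)$ and $H^1(\Kref)$, which is done in Section~\ref{sec:averaging-operators}; the $h$-weighting in $\widetilde H^1_h$ and the factors $h_V^{-2\theta}$, etc., come out of the affine change of variables, and the summation over patches is finite-overlap by shape regularity. The subtle point — and I expect this to be the main obstacle — is upgrading these two endpoint estimates to the $\widetilde H^\theta_h$-estimates in the theorem. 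Because of the Gauss--Lobatto interpolation used on quadrilaterals, the operators are \emph{not} uniformly bounded in $L^2$ with a constant independent of $p$ in a way compatible with straightforward interpolation, so one cannot simply interpolate between the $L^2$ and $\widetilde H^1$ bounds. Instead, as flagged at the end of the Outline, one works directly in the local fractional spaces, using the seminorm-based interpolation of Section~\ref{sect:interpol_seminorms} and, for the pieces that carry the distance-to-vertex weight, the weighted interpolation theory of Section~\ref{sec:interpolation-of-weighted-spaces}; the key quantitative input is a $p$-robust bound for the Gauss--Lobatto interpolation operator in the relevant weighted $\widetilde H^\theta$-seminorm on $\Kref$, after which the global $\widetilde H^\theta(\Gamma)$-bound is assembled by the coloring argument of Theorem~\ref{thm:decomposition_coloring_estimate} applied patchwise. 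Finally, the bound $\norm{u_1}_{\widetilde H^\theta(\Gamma)} \le C_\theta\norm{u}_{\widetilde H^\theta(\Gamma)}$ combined with the triangle inequality and the three patch-estimates yields that the whole decomposition is $\widetilde H^\theta$-stable, completing the proof.
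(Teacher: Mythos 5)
Your overall strategy matches the paper's: a Scott--Zhang projection for the low-order piece, then vertex averaging, then edge averaging, then element residuals, with the reference-element operators of Section~\ref{sec:averaging-operators} applied to a distinguished element in each patch and propagated to the rest by the element maps. The paper's proof of this theorem is in fact given as a one-line reduction to Theorem~\ref{thm:decomposition-FEM-space}, replacing the degree-preserving operators $\AVP^p$, $\AEP^p$ by the continuous counterparts $\AVP$, $\AEP$ of Lemmas~\ref{lemma:averaging-vertex_patch-general} and~\ref{lemma:averaging-edge_patch-general}. There is, however, a misattribution worth correcting. You diagnose the failure of naive endpoint-plus-interpolation as a consequence of the Gau{\ss}--Lobatto step, but for \emph{this} theorem --- where the mixed-mesh case need only land in $\SS^{2\pp,1}(\TT)$ --- no Gau{\ss}--Lobatto interpolation is performed at all. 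Indeed Remark~\ref{rem:why_not_interpolate_at_the_end} explicitly notes that the present theorem \emph{can} be obtained by treating $\theta\in\{0,1\}$ and interpolating, and the $\widetilde H^\theta_h$-bound for $\AVP$ is established precisely this way in Lemma~\ref{lemma:averaging-vertex_patch-general}. The genuine subtlety lies elsewhere: the $H^1_h$-endpoint bound for the edge operator $\AEP$ already carries the weighted $L^2$-norms $\|d_j^{-1}\cdot\|_{L^2(\omega_e)}$ with $d_j$ the distances to the endpoints of $e$ (Lemma~\ref{lemma:averaging-edge_patch-general}), so that after identifying the interpolation space via Lemma~\ref{lemma:equivalence-sobolev-plus-weight} one must show that the remainder $u-u_1-\sum_V u_V$ has finite $\|d_{\mathcal V}^{-\theta}\cdot\|_{L^2}$; this is supplied by the weighted-stability clause of the vertex operator in Lemma~\ref{lemma:averaging-vertex_patch-general} together with Lemma~\ref{lemma:weight-localize}. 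Gau{\ss}--Lobatto and its $p$-robust bounds (Lemma~\ref{lemma:GL}) are needed only for the sibling Theorem~\ref{thm:decomposition-FEM-space}, not here.
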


We note that the decomposition of Theorem~\ref{thm:main_decomposition}, (\ref{item:thm:main_decomposition-iii}) 
in the general case of meshes containing both triangles and quadrilaterals does not ensure that the contributions
are in $\SS^{\pp,1}(\TT)$. This makes Theorem~\ref{thm:main_decomposition} not directly applicable for the analysis
of additive Schwarz methods on meshes consisting of triangles and quadrilaterals.
A modification, which, however, relies on the function $u$ to be discrete, rectifies this deficiency: 
\begin{theorem}[stable localization of $\SS^{\pp,1}(\TT)$]
\label{thm:decomposition-FEM-space}
If Assumption~\ref{ass:degree_distribution} holds, any function $u \in \SS^{\pp,1}(\TT)$ can be decomposed in the way described in Theorem~\ref{thm:main_decomposition}
with the additional feature that the contributions $u_V$, $u_e$, $u_K$ are in $\SS^{\pp,1}(\TT)$.
If Assumption~\ref{ass:degree_distribution} is not satisfied, the stability estimates only hold for $\theta \in \{0,1\}$.
\end{theorem}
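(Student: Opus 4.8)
The plan is to start from the decomposition produced by Theorem~\ref{thm:main_decomposition}\,(\ref{item:thm:main_decomposition-iii}) and to post-process it so that the contributions land in $\SS^{\pp,1}(\TT)$ rather than merely in $\SS^{2\pp,1}(\TT)$. Concretely, recall from the outline that the vertex and edge contributions on quadrilateral elements are built by transporting an averaging operator from a triangular sub-part of the reference square to the full square via a Duffy transformation; a polynomial of degree $p$ thereby becomes a polynomial of degree $2p$ in each variable, i.e.\ lies in $\QQ_{2p}(\Sref)$ instead of $\QQ_p(\Sref)$. The fix is to apply, on each quadrilateral, a tensor-product Gauss--Lobatto interpolation operator $I_p^{GL}:\QQ_{2p}(\Sref)\to \QQ_p(\Sref)$ after the Duffy step, so that the resulting element/edge/vertex pieces are again of degree $p$ per variable. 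On triangles the construction already preserves the degree by Theorem~\ref{thm:main_decomposition}\,(\ref{item:thm:main_decomposition-ii}), so nothing changes there. Assumption~\ref{ass:degree_distribution} is exactly what is needed to carry this out consistently across a triangle--quadrilateral interface: when a triangle $T$ and a quadrilateral $S$ meet along an edge $e$, one can always pick the ``reference'' element $K^\star$ of the patch $\omega_e$ (or $\omega_V$) so that either the averaging is done on a triangle (when $p_T\le p_S$, so no Duffy blow-up occurs on the triangle side and the copy to $S$ stays controlled), or the Gauss--Lobatto reduction of a degree-$2p_T$ function on the square is still accurate because $p_S\ge 2p_T$ leaves enough room; in either case the degree distribution $\pp$ is respected on every element after the correction, and global continuity is preserved because the correction operators agree on shared edges (Gauss--Lobatto interpolation is a trace-compatible, tensor-product operator).

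The key steps, in order, are: (1) run the decomposition of Theorem~\ref{thm:main_decomposition}\,(\ref{item:thm:main_decomposition-iii}) for a discrete $u\in\SS^{\pp,1}(\TT)$, obtaining $u_1,(u_V),(u_e),(u_K)$ with local support and with the stated $\widetilde H^\theta_h$-bounds, but with degrees possibly up to $2\pp$ on quadrilaterals; (2) on each quadrilateral $S$, replace the degree-$2p_S$ local contributions by their Gauss--Lobatto interpolants $I_{p_S}^{GL}$ of the relevant pieces, and absorb the interpolation error into the element contribution $u_K$ (which is supported in $K$ and may carry any degree that is $\le p_K$, since the error of a degree-preserving construction on the correct spaces is again a degree-$p_K$ polynomial vanishing on $\partial K$); (3) verify that after this correction the supports, the partition-of-unity property~\eqref{eq:decomposition}, and global continuity are all retained; (4) re-derive the stability bounds in $\widetilde H^\theta_h(\omega_V)$, $\widetilde H^\theta_h(\omega_e)$, $\widetilde H^\theta_h(K)$ for the corrected pieces. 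The last step is where the real work lies: one must control $I_{p}^{GL}$ on the relevant local fractional-Sobolev scales on the reference square, using the interpolation-of-seminorms machinery of Section~\ref{sect:interpol_seminorms} together with the scaling of seminorms under the element maps (Assumption~\ref{assumption:element_maps}), and a shape-regularity/finite-overlap argument to pass from local to global, exactly as in Theorem~\ref{thm:decomposition_coloring_estimate}.

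The main obstacle, and the reason the theorem is phrased with the caveat about $\theta\in\{0,1\}$, is that Gauss--Lobatto interpolation is \emph{not} $L^2$-stable uniformly in $p$: $\|I_p^{GL} v\|_{L^2}\lesssim \|v\|_{L^2}$ fails with a $p$-independent constant, so one cannot simply establish stability at $\theta=0$ and $\theta=1$ and interpolate. Instead, for $\theta\in(0,1)$ one has to argue \emph{directly} in the local $\widetilde H^\theta_h$-norm on the reference square, exploiting that the functions to which $I_p^{GL}$ is applied are not arbitrary but are images under a Duffy transformation of degree-$p$ polynomials on a triangle — a class on which Gauss--Lobatto interpolation \emph{does} enjoy the needed (seminorm) stability because the relevant quantities are governed by the smooth, low-complexity structure inherited from the triangle side, and Assumption~\ref{ass:degree_distribution} guarantees the two polynomial degrees across an interface are close enough (within a factor $2$, or ordered favorably) that this transfer of stability is quantitatively uniform. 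When Assumption~\ref{ass:degree_distribution} fails, the degree mismatch across a triangle--quadrilateral edge can be arbitrarily large, the Gauss--Lobatto correction is no longer controlled in the fractional norms, and only the endpoint cases $\theta\in\{0,1\}$ — which follow from the $H^1$- and $L^2$-versions of the decomposition that do not require the Duffy/Gauss--Lobatto step in fractional norms — survive.
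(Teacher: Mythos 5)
Your broad idea — tame the Duffy degree–doubling on quadrilaterals with Gau{\ss}--Lobatto interpolation, and recognize that a naive ``prove $\theta\in\{0,1\}$ and interpolate'' won't close the argument — is indeed the mechanism of the paper. But the specific plan of running the continuous decomposition of Theorem~\ref{thm:main_decomposition}\,(\ref{item:thm:main_decomposition-iii}) first and then post-processing is \emph{not} what the paper does, and it has concrete gaps that would prevent it from working.

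First, the ``absorb the interpolation error into $u_K$'' step is wrong on all three counts. The error $u_V - I_p^{GL}u_V$ is a polynomial of degree up to $2p_K$ (not $\le p_K$), it does not vanish on $\partial K$, and it is supported in the entire patch $\omega_V$ rather than in a single element; so it cannot be folded into an element contribution $u_K\in\SS^{\pp,1}(\TT)$ with $\operatorname{supp} u_K\subset\overline K$.

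Second, element-wise Gau{\ss}--Lobatto interpolation destroys continuity across element interfaces: if two quadrilaterals $K_1,K_2$ share an edge $e$ and $p_{K_1}\neq p_{K_2}$, the traces of $i_{p_{K_1}}$ and $i_{p_{K_2}}$ on $e$ differ; and across a triangle--square interface the triangle side receives no interpolation at all, so the traces disagree in general. Thus post-processing takes the pieces out of $\SS^{\pp,1}(\TT)$. The paper avoids this entirely by \emph{building the interpolation into the averaging operators before the copying/rotation step}: Corollary~\ref{cor:vertex_averaging_quad_to_trig} defines $\A_{\Sref}^p = \A_0^{(1,0)}\circ\mathcal{R}_{\Tref}\circ i_{\lfloor p/2\rfloor}$, and Corollaries~\ref{cor:averaging-vertex_patch-general} and~\ref{cor:averaging-edge_patch-general} give the degree-preserving operators $\AVP^p$, $\AEP^p$, applying Gau{\ss}--Lobatto once on the reference element of the selected $K^\star$ and only then extending to the patch, so that the traces are generated from a single source and continuity is automatic. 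The actual proof of Theorem~\ref{thm:decomposition-FEM-space} in Section~\ref{sec:proofs} then constructs the decomposition directly: Scott--Zhang, then $\AVP^p$, then $\AEP^p$, then element restrictions. It does not post-process Theorem~\ref{thm:main_decomposition}.

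Third, the stated obstruction is slightly off. Gau{\ss}--Lobatto interpolation \emph{is} $L^2$-stable $\mathcal{Q}_{2p}\to\mathcal{Q}_p$ uniformly in $p$ (Lemma~\ref{lemma:GL}, \eqref{item:lemma:GL-ii}, with $q=2p$). The genuine obstruction, spelled out in Remark~\ref{rem:why_not_interpolate_at_the_end}, is that the interpolation-space identification
$\big((\SS^{\pp,1}(\TT),\|\cdot\|_{L^2}),(\SS^{\pp,1}(\TT),\|\cdot\|_{\widetilde H^1})\big)_{\theta,2} = (\SS^{\pp,1}(\TT),\|\cdot\|_{\widetilde H^\theta})$
is not available for general $hp$ meshes. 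The paper's substitute — the local reference-element identification $\left(\mathcal{Q}_p,L^2(\Sref)\right)$-$\left(\mathcal{Q}_p,H^1(\Sref)\right)$ interpolation $=\left(\mathcal{Q}_p,H^\theta(\Sref)\right)$ with $p$-uniform constants (Lemma~\ref{lemma:GL}, \eqref{item:lemma:GL-i}) — works only at the element level, which is the second reason the interpolation step must be built into the local construction rather than performed after summing over patches. Your proposal neither names this lemma nor explains how the fractional-norm stability would be obtained at the patch level.

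Finally, your handling of Assumption~\ref{ass:degree_distribution} is vague and, as written, doesn't match the case analysis. The paper's Corollary~\ref{cor:averaging-edge_patch-general} isolates the precise problematic configuration (its case (2)): $K$ a square of lower degree $p_S=p_K$ and $K'$ a triangle with degree $p_T=p_{K'}$ in the range $p_S \le p_T \le 2p_S$. There, averaging must be done on the triangle (otherwise the restriction $\mathcal{Q}_{p_S}|_{\Tref}\subset\mathcal{P}_{2p_S}$ overshoots $p_{K'}$), but then bringing the degree-$p_T$ function back onto the square via Duffy plus $i_{p_S}$ requires an interpolation-space identification for the constrained space $\{u\in\mathcal{P}_p(\Tref) : u|_{\widehat e}\in\mathcal{P}_q,\ u|_{\partial\Tref\setminus\widehat e}=0\}$, which is not available; this is exactly where the proof degrades to $\theta\in\{0,1\}$.
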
 
\begin{theorem}[stable decomposition of $H^\theta(\Gamma)$ --- continuous and discrete]
\label{rem:decomposition}
The statements of Theorems~\ref{thm:main_decomposition}, \ref{thm:decomposition-FEM-space} are also true in the spaces $H^{\theta}(\Gamma)$ instead of $\widetilde H^{\theta}(\Gamma)$. Then, the sums run over all vertices/edges instead of just the interior ones. 
\end{theorem}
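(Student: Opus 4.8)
The plan is to observe that the proof of Theorems~\ref{thm:main_decomposition} and~\ref{thm:decomposition-FEM-space} carried out in Section~\ref{sec:decomposition-general-triangulations} goes through almost verbatim after three cosmetic changes: replace $\widetilde H^\theta(\Gamma)$ by $H^\theta(\Gamma)$ throughout, replace the low-order space $\SS^{1,1}(\TT)$ by $\SSnd^{1,1}(\TT)$, and enlarge the index sets $\innervertices$, $\inneredges$ to $\allvertices$, $\alledges$. I indicate the points that need adaptation.

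The structural backbone is unchanged. One takes $u_1$ to be a Scott--Zhang quasi-interpolant into $\SSnd^{1,1}(\TT)$ (now without any constraint on $\partial\Gamma$), which is simultaneously bounded $L^2(\Gamma)\to\SSnd^{1,1}(\TT)$ and $H^1(\Gamma)\to\SSnd^{1,1}(\TT)$ and reproduces vertex values at \emph{all} vertices, including those on $\partial\Gamma$. One then forms vertex contributions $u_V$ for every $V\in\allvertices$ and edge contributions $u_e$ for every $e\in\alledges$ using the \emph{same} reference-element averaging operators of Section~\ref{sec:averaging-operators}, whose construction on $\Kref\in\{\Tref,\Sref\}$ is insensitive to whether the physical patch $\omega_V$ or $\omega_e$ abuts $\partial\Gamma$. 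Since $u_1$ matches $u$ at every vertex and the vertex/edge corrections restore the remaining skeleton values (including those on $\partial\Gamma$), the remainder $u-u_1-\sum_V u_V-\sum_e u_e$ vanishes on the whole skeleton of $\TT$ and hence splits into element bubbles $u_K$, as before. The local target norms stay the $\widetilde H^\theta_h$-norms on $\omega_V$, $\omega_e$, $K$: the local pieces still vanish on the respective patch boundaries, so the zero-trace localization remains the correct one, and extension by zero is still bounded $\widetilde H^\theta_h(\omega)\to H^\theta(\Gamma)$ since $\|v\|_{H^\theta(\Gamma)}\le\|v\|_{\widetilde H^\theta(\Gamma)}\lesssim\|v\|_{\widetilde H^\theta_h(\omega)}$, cf.\ Remark~\ref{rem:norm-equivalence-extension}.

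The two global inequalities are obtained exactly as in the $\widetilde H$-case. The upper bound is the coloring argument of Theorem~\ref{thm:decomposition_coloring_estimate}, which uses only the disjoint-support estimate~\eqref{eq:coloring_est} (valid verbatim in $H^\theta(\Gamma)$, see \cite[Lemma 4.1.49]{ss}, \cite[Lemma 3.2]{petersdorff_rwp_elasticity}) and the uniformly bounded overlap of the patches, a property of the mesh unaffected by enlarging the index sets. The lower bounds $\|u_1\|_{H^\theta(\Gamma)}^2\lesssim\|u\|_{H^\theta(\Gamma)}^2$ and $\sum_{V\in\allvertices}\bigl(|u_V|^2_{\widetilde H^\theta_h(\omega_V)}+h_V^{-2\theta}\|u_V\|^2_{L^2(\omega_V)}\bigr)\lesssim\|u\|^2_{H^\theta(\Gamma)}$ (and the analogues for edges and elements) follow from the $L^2$- and $H^1$-endpoint estimates for the Scott--Zhang operator and the averaging operators, combined with interpolation of operators and the seminorm interpolation results of Section~\ref{sect:interpol_seminorms}; none of these endpoint estimates used the homogeneous boundary condition on $\partial\Gamma$. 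For the discrete parts, the degree preservation on triangles and the degree-doubling plus Gauss--Lobatto correction on quadrilaterals are reference-element statements, and Assumption~\ref{ass:degree_distribution} is invoked only across interior triangle/quadrilateral interfaces, exactly as before.

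The only genuinely new bookkeeping, and the step I expect to require the most care, is the treatment of the boundary patches $\omega_V$ with $V\in\bdryvertices$ and $\omega_e$ with $e\in\bdryedges$: one must verify that the reference-element averaging operators can still be applied to the selected element $K^\star$ of such a patch (in particular, that the normalization ``reproduce on one edge, vanish on the others'' is still meaningful when one edge of $K^\star$ lies on $\partial\Gamma$), and that copying the averaged values onto the remaining elements of a boundary patch produces a function that is continuous across $\Gamma$ whose extension by zero lies in $H^\theta(\Gamma)$ with the stated local bound. This is routine but must be spelled out case by case; everything else is a transcription of Section~\ref{sec:decomposition-general-triangulations}.
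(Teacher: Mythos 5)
Your approach is the right one---the paper gives no explicit proof of this theorem, presenting it as a routine transcription of the proofs of Theorems~\ref{thm:main_decomposition} and~\ref{thm:decomposition-FEM-space}---and the three substitutions you list are what is needed. However, one assertion in your second paragraph is incorrect and contradicts the concern you rightly raise at the end: you claim that ``the local pieces still vanish on the respective patch boundaries'' so that the local target norms remain $\widetilde H^\theta_h(\omega_V)$. This fails for boundary patches. For $V\in\bdryvertices$ with $u(V)\neq 0$, the averaging operator $\AVP$ reproduces the vertex value, so $u_V(V)=u(V)\neq 0$; but $V$ lies on $\overline{\partial\omega_V\cap\partial\Gamma}$, so $u_V$ does not vanish on the part of $\partial\omega_V$ lying on $\partial\Gamma$, and for $\theta\geq 1/2$ it fails to belong to $\widetilde H^\theta_h(\omega_V)$, since membership there requires that the extension by zero all the way to $\partial\Omega$ be controlled. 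The correct local spaces for $V\in\bdryvertices$ and $e\in\bdryedges$ impose the zero-trace condition only on the relatively interior part $\partial\omega_V\cap\Gamma$ of the patch boundary; extension by zero from such a patch to $\Gamma$ is then bounded in $L^2$ and $H^1$, and this suffices because $H^\theta(\Gamma)$ is a quotient (restriction) space that places no constraint at $\partial\Gamma$. With that change---which your final paragraph anticipates but your second paragraph flatly contradicts---the argument goes through. A smaller point: the Scott--Zhang quasi-interpolant does not reproduce vertex values of a general function; it is the vertex averaging operators $\AVP$ applied to $u-I^1_h u$ that accomplish this, so that $u-I^1_h u-\sum_V u_V$ vanishes at all vertices, which is the property actually used.
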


  \begin{remark}
    \label{rem:why_not_interpolate_at_the_end}
  While one can prove Theorem~\ref{thm:main_decomposition} by first considering the cases $\theta=0$ and $\theta=1$ and then using an interpolation argument, the proof of Theorem~\ref{thm:decomposition-FEM-space}
  is more involved due to the fact that mixed meshes are considered. 
  Indeed, the decomposition operators that do not increase the polynomial
  degrees for quadrilaterals are only stable when applied to (piecewise)
  polynomial functions (due to the Gauss-Lobatto interpolation step).
  Performing the interpolation step at the end would therefore require a stability estimate of the
  form
  $$
  \norm{u}_{\left(\left(\SS^{\pp,1}(\mesh),\|\cdot\|_{L^2(\Gamma)}\right),\left(\SS^{\pp,1}(\mesh),\|\cdot\|_{\widetilde H^1(\Gamma)}\right)\right)_{\theta,2}}
  \lesssim \norm{u}_{\widetilde{H}^{\theta}(\Gamma)}.
  $$
  Such estimates are not available in the required generality (in fact, the present decomposition result forms the basis 
  for a proof of such estimates). Therefore we cannot rely on performing a single interpolation step at the end.
  Instead we carefully work with fractional Sobolev norms throughout our proof, making sure to 
  perform interpolation arguments only on the reference element where the necessary norm equivalences are known 
  (see Proposition~\ref{lemma:GL}).
\eremk
\end{remark}

\section{Applications}
\label{sec:applications}
\subsection{Interpolation of discrete $L^2$ spaces with weights}
Interpolation of piecewise polynomial spaces equipped with weighted $L^2$ norms
  is common when working  with a non-uniform triangulation or non-constant polynomial degree,
  and appears, e.g., in the context of inverse estimates, \cite{georgoulis08}.
  Theorem~\ref{thm:interpolation_poly_l2} below provides
  a general setting for such applications.

It is well-known that the interpolation of $L^2$ spaces with different weights gives
the $L^2$ space with corresponding interpolated weight:
\begin{proposition}[{\cite[Lemma 23.1]{tartar07}}]\label{prop:tartar}
  \label{prop:interpolation_weighted_l2}
  Let $w_0,w_1$ be positive, measurable functions, and let $\omega \subseteq \Gamma$ be open.
  Let $X_0:=L^2(\omega;w_0)$ and $X_1:=L^2(\omega;w_1)$ be the weighted Lebesgue spaces with norm $\|u\|_{L^2(\omega; w_i)} = (\int_\omega |u(x)|^2 \omega_i\,dx)^{1/2}$. 
  For $\theta \in (0,1)$ 
  we can identify the interpolation space as
  \begin{align*}
    \left(X_0,X_1\right)_{\theta,2}= L^2(\omega;w_0^{1-\theta}w_1^{\theta}) 
    \qquad \mbox{ (equivalent norms).}
  \end{align*}
  The implied constants are explicitly known and depend only on $\theta$.
\end{proposition}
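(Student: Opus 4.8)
The plan is to compute the $K$-functional \emph{pointwise} and then integrate in $t$ by means of a Beta-function identity. Fix $u \in X_0$ and $t>0$. Since both norms are integrals of pointwise quantities, the infimum in $K^2(t,u)=\inf_{v\in X_1}\int_\omega |u-v|^2 w_0 + t^2|v|^2 w_1\,dx$ may be performed under the integral sign: for almost every $x$ the scalar quadratic $z\mapsto |u(x)-z|^2 w_0(x)+t^2|z|^2 w_1(x)$ is strictly convex with unique minimiser $z^\star(x)=\frac{w_0(x)}{w_0(x)+t^2 w_1(x)}\,u(x)$ and minimal value $\frac{t^2 w_0(x)w_1(x)}{w_0(x)+t^2 w_1(x)}\,|u(x)|^2$. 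The first step is to check that $v^\star:=z^\star$ is an admissible competitor, i.e.\ $v^\star\in X_1$; this follows from the elementary bound $\frac{w_0^2 w_1}{(w_0+t^2 w_1)^2}\le \frac{w_0}{t^2}$, which gives $\|v^\star\|_{X_1}^2\le t^{-2}\|u\|_{X_0}^2<\infty$. Since $v^\star$ realises the pointwise minimum everywhere, one concludes
\[
  K^2(t,u)=\int_\omega \frac{t^2\,w_0(x)\,w_1(x)}{w_0(x)+t^2 w_1(x)}\,|u(x)|^2\,dx .
\]
(If one does not want to assume $X_1\subseteq X_0$, the identical computation applies to the couple $K$-functional $K^2(t,u)=\inf_{u=u_0+u_1}\|u_0\|_0^2+t^2\|u_1\|_1^2$, which coincides with \eqref{eq:def_K_functional} when $X_1\subseteq X_0$.)

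Next I would insert this expression into the definition \eqref{eq:def_K_functional} of the interpolation norm and interchange the order of integration. The integrand $(t,x)\mapsto t^{-2\theta-1}\,\frac{t^2 w_0(x)w_1(x)}{w_0(x)+t^2 w_1(x)}\,|u(x)|^2$ is nonnegative and jointly measurable, so Tonelli's theorem yields
\[
  \|u\|_{(X_0,X_1)_{\theta,2}}^2=\int_\omega |u(x)|^2\left(\int_0^\infty t^{1-2\theta}\,\frac{w_0(x)\,w_1(x)}{w_0(x)+t^2 w_1(x)}\,dt\right)dx .
\]
The inner integral is evaluated by the substitution $s=t^2 w_1(x)/w_0(x)$, which reduces it to $\tfrac12\,w_0(x)^{1-\theta}w_1(x)^{\theta}\int_0^\infty s^{-\theta}(1+s)^{-1}\,ds$, and the remaining integral is the Beta integral $B(1-\theta,\theta)=\pi/\sin(\pi\theta)$. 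Hence the inner integral equals $C_\theta\,w_0(x)^{1-\theta}w_1(x)^{\theta}$ with $C_\theta=\pi/(2\sin\pi\theta)$, and therefore
\[
  \|u\|_{(X_0,X_1)_{\theta,2}}^2=C_\theta\,\|u\|_{L^2(\omega;\,w_0^{1-\theta}w_1^{\theta})}^2 .
\]
Being an identity up to the explicit, $\theta$-only constant $C_\theta$, this simultaneously identifies the norm (up to equivalence) and the space: $u\in(X_0,X_1)_{\theta,2}$ precisely when $u\in L^2(\omega;w_0^{1-\theta}w_1^{\theta})$.

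I do not expect a genuine obstacle: the argument is short, and the two points that demand a little care are (a) verifying that the pointwise minimiser lies in $X_1$, so that the pointwise minimisation really computes $K^2(t,u)$ and not merely a lower bound, and (b) the mild standing hypothesis $0<w_i<\infty$ a.e., which legitimises the pointwise arithmetic, the change of variables, and the embedding relating $X_0$ and $X_1$. If one prefers to avoid the exact evaluation of the Beta integral, it is enough to bound the inner $t$-integral from above and below by $\theta$-dependent multiples of $w_0^{1-\theta}w_1^{\theta}$, which already gives the asserted equivalence of norms.
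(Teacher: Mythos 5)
Your argument is correct and is, in essence, the proof one finds in Tartar's book, which the paper cites for this proposition without reproducing a proof: compute the $K$-functional pointwise by minimizing the scalar quadratic, verify that the pointwise minimizer lies in $X_1$, then apply Tonelli and evaluate the resulting $t$-integral via the Beta function. Your remark about the couple $K$-functional is well taken, since for general positive weights one has neither $X_1\subseteq X_0$ nor $X_0\subseteq X_1$, so the definition \eqref{eq:def_K_functional} must be read in the $X_0+X_1$ sense; with that understanding the computation is exactly as you wrote it and yields the explicit constant $C_\theta=\pi/(2\sin\pi\theta)$.
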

In order to formulate the analog of Proposition~\ref{prop:interpolation_weighted_l2} for discrete spaces, 
we need the following definition:
\begin{definition}[locally comparable]
\label{def:locally_comparable}
  We call a measurable function $w: \Gamma \to \R$ a {\em locally comparable weight}, if $w(x)>0$ (almost) everywhere,
  and if there exists a constant $C_w>0$ such that for all 
vertex patches $\omega_V$ the following estimate holds
  \begin{align}
    \label{eq:def:locally_comparable}
    \inf_{x\in \omega_V}{w(x)} &\leq \sup_{x\in \omega_V}{w(x)} \leq C_w \inf_{x\in \omega_V}{w(x)}.
  \end{align}
\end{definition}
\begin{theorem}
\label{thm:interpolation_poly_l2}
Let $\TT$ satisfy Assumption~\ref{assumption:element_maps}. 
  Let $w_0$, $w_1 \in L^{\infty}(\Gamma)$ be locally comparable weights. 
  Then we can identify the interpolation space by
  \begin{align}
    \left(\left(\SS^{\pp,1}(\mesh),\|\cdot\|_{L^2(\Gamma;w_0)}\right),\left(\SS^{\pp,1}(\mesh),\|\cdot\|_{L^2(\Gamma;w_1)}\right)\right)_{\theta,2}
    &=
    \left( \SS^{\pp,1}(\mesh), \norm{\cdot}_{L^2(\Gamma;w_0^{1-\theta} w_1^{\theta})} \right), \label{eq:interpolation_poly_l2_1}\\
    \left(\left(\SSnd^{\pp,1}(\mesh),\|\cdot\|_{L^2(\Gamma;w_0)}\right),\left(\SSnd^{\pp,1}(\mesh),\|\cdot\|_{L^2(\Gamma;w_1)}\right)\right)_{\theta,2}
    &=
    \left( \SSnd^{\pp,1}(\mesh), \norm{\cdot}_{L^2(\Gamma;w_0^{1-\theta} w_1^{\theta})} \right),  \label{eq:interpolation_poly_l2_2}
  \end{align}
  where the  norm equivalence constants depend only on $\Gamma$, $\Omega$, $\theta$,
  the shape regularity of $\TT$, and  $C_w$ from~\eqref{eq:def:locally_comparable}.
\end{theorem}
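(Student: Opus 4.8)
The plan is to deduce the identity from the weighted‑$L^2$ interpolation result (Proposition~\ref{prop:interpolation_weighted_l2}) by exhibiting $\SS^{\pp,1}(\TT)$, equipped with the two weighted $L^2$‑norms, as a uniformly bounded \emph{retract} of a product of locally supported spaces, the retraction being the stable decomposition of Theorem~\ref{thm:decomposition-FEM-space}. Observe first that $\SS^{\pp,1}(\TT)$ is one and the same finite‑dimensional vector space underlying every space that occurs in \eqref{eq:interpolation_poly_l2_1}, so that only the \emph{uniform} equivalence of norms is at stake; in particular the cases $\theta\in\{0,1\}$ are trivial and we may take $\theta\in(0,1)$. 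I carry out the argument for \eqref{eq:interpolation_poly_l2_1}; the proof of \eqref{eq:interpolation_poly_l2_2} is identical once Theorem~\ref{thm:decomposition-FEM-space} is replaced by its $H^\theta$‑counterpart (Theorem~\ref{rem:decomposition}) and the interior vertices/edges by all vertices/edges.

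First I would fix a decomposition. By Theorem~\ref{thm:decomposition-FEM-space} --- whose splitting into $\SS^{\pp,1}(\TT)$‑valued contributions and whose $\theta=0$ (i.e.\ $L^2$) stability are available without Assumption~\ref{ass:degree_distribution} --- write $u=u_1+\sum_V u_V+\sum_e u_e+\sum_K u_K$ with all contributions in $\SS^{\pp,1}(\TT)$ and $\support{u_V}\subseteq\omega_V$, $\support{u_e}\subseteq\omega_e$, $\support{u_K}\subseteq K$. Since the low‑order part $u_1$ is not patch‑supported, expand it in the nodal basis, $u_1=\sum_V u_1(\vertex{V})\,\varphi_V$ with $\support{\varphi_V}\subseteq\omega_V$, and absorb $u_1(\vertex{V})\varphi_V$ into $u_V$. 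This produces $u=\sum_i u_i$ with $i$ ranging over the interior vertices, interior edges, and elements, where $u_i\in\SS^{\pp,1}(\TT)$ is supported in the associated patch $\mathcal D_i\in\{\omega_V,\omega_e,K\}$ and depends linearly on $u$. By inspection of the construction in Section~\ref{sec:decomposition-general-triangulations} (Scott--Zhang interpolation, the reference‑element averaging operators, Gauss--Lobatto interpolation, and the Duffy transform, each of which is $L^2$‑stable on the reference configuration), the decomposition map $D\colon u\mapsto(u_i)_i$ is \emph{local}: $u_i$ depends only on $u$ restricted to a fixed‑size enlargement $\widetilde{\mathcal D}_i\supseteq\mathcal D_i$ (a union of $\bigO(1)$ vertex patches), with $\|u_i\|_{L^2(\mathcal D_i)}\lesssim\|u\|_{L^2(\widetilde{\mathcal D}_i)}$; this local $L^2$‑stability strengthens the global $\theta=0$ estimate of Theorem~\ref{thm:decomposition-FEM-space}. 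The reconstruction $R\colon(u_i)_i\mapsto\sum_i u_i$ obeys $\|\sum_i u_i\|_{L^2(\Gamma;w)}^2\lesssim\sum_i\|u_i\|_{L^2(\mathcal D_i;w)}^2$ for every weight $w$, by the bounded overlap of the supports $\mathcal D_i$.

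Next I would upgrade these bounds to the weights $w\in\{w_0,\,w_1,\,w_0^{1-\theta}w_1^\theta\}$; the last is again locally comparable with constant $C_w$. A locally comparable weight varies by at most a factor $\sim C_w^{\bigO(1)}$ over each enlarged patch $\widetilde{\mathcal D}_i$, so locality and $L^2$‑stability of $D$ yield $\sum_i\|u_i\|_{L^2(\mathcal D_i;w)}^2\lesssim\sum_i\|u\|_{L^2(\widetilde{\mathcal D}_i;w)}^2\lesssim\|u\|_{L^2(\Gamma;w)}^2$, again via bounded overlap. Hence $D$ and $R$ are bounded as maps $L^2(\Gamma;w)\leftrightarrow\prod_i L^2(\mathcal D_i;w)$ with constants depending only on the shape regularity of $\TT$ and on $C_w$, and $RD=\mathrm{id}$ on $\SS^{\pp,1}(\TT)$. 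Writing $X_k:=(\SS^{\pp,1}(\TT),\|\cdot\|_{L^2(\Gamma;w_k)})$, the couple $(X_0,X_1)$ is therefore a uniformly bounded retract of $(\prod_i L^2(\mathcal D_i;w_0),\prod_i L^2(\mathcal D_i;w_1))$, and the standard interpolation‑of‑operators argument (applied to the co‑retraction $D$ and retraction $R$) gives, for $\theta\in(0,1)$,
\[
\|u\|_{(X_0,X_1)_{\theta,2}}\ \sim\ \big\|(u_i)_i\big\|_{\left(\prod_i L^2(\mathcal D_i;w_0),\ \prod_i L^2(\mathcal D_i;w_1)\right)_{\theta,2}}.
\]
The right‑hand product is a weighted $L^2$‑space over the disjoint union $\bigsqcup_i\mathcal D_i$, so Proposition~\ref{prop:interpolation_weighted_l2} identifies it with $\prod_i L^2(\mathcal D_i;w_0^{1-\theta}w_1^\theta)$; feeding this through $R$ and $D$ once more (bounded by the above, $RD=\mathrm{id}$) identifies its norm with $\|u\|_{L^2(\Gamma;w_0^{1-\theta}w_1^\theta)}$. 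Chaining the equivalences proves \eqref{eq:interpolation_poly_l2_1}, with constants depending only on $\theta$ (through Proposition~\ref{prop:interpolation_weighted_l2} and the interpolation‑of‑operators bound), on the shape regularity --- hence on $\Gamma$ and $\Omega$ --- through the decomposition and overlap counts, and on $C_w$; none depends on $h$ or $\pp$.

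I expect the crux to be the localized $L^2$‑stability claimed in the second paragraph: one must extract from the construction of Section~\ref{sec:decomposition-general-triangulations} that the decomposition is local and stable element by element (not just globally), which is what lets the local comparability of the weights be exploited, and --- just as essential --- one must use the variant of the decomposition that does \emph{not} raise the polynomial degrees on meshes containing quadrilaterals. This is precisely why Theorem~\ref{thm:decomposition-FEM-space}, rather than the degree‑doubling decomposition of Theorem~\ref{thm:main_decomposition} for mixed meshes, is the right input; the rest is bookkeeping with the $K$‑functional and the finite‑overlap inequality.
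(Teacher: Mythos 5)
Your retraction strategy is a genuinely different route from the paper's. The paper proceeds \`a la Tartar: after the decomposition $u=u_1+\sum_V u_V$ (with edge and element pieces absorbed), it constructs by hand, for each $t>0$, the explicit near-minimizer $v=\sum_V \frac{(\overline{w_0})_V}{(\overline{w_0})_V+t^2(\overline{w_1})_V}\,u_V$ of the $K$-functional (using patch-averaged weights), then computes the integral with Fubini and a substitution, and finally exchanges the unweighted $L^2$-norm in the decomposition stability for the $L^2_\theta$-norm via local comparability — precisely the observation you emphasize. Your retraction argument replaces that explicit computation with the abstract interpolation-of-operators machinery plus Proposition~\ref{prop:interpolation_poly_l2} on a product space. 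Both approaches hinge on the same two inputs: the patch-supported decomposition is (i) \emph{local} and (ii) $L^2$-stable \emph{locally}, so that local comparability upgrades the constants to the weighted norms. That much is sound.

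There is, however, a genuine gap in your formulation of the retraction. You take $Y_k:=\prod_i L^2(\mathcal D_i;w_k)$ and claim that $(X_0,X_1)$ with $X_k=(\SS^{\pp,1}(\TT),\|\cdot\|_{L^2(\Gamma;w_k)})$ is a retract of $(Y_0,Y_1)$. But your reconstruction $R:(u_i)_i\mapsto\sum_i u_i$ maps $Y_k$ into $L^2(\Gamma;w_k)$, \emph{not} into $\SS^{\pp,1}(\TT)$, so $R$ is not a retraction onto $X_k$ and the identity $\|u\|_{(X_0,X_1)_\theta}\sim\|Du\|_{(Y_0,Y_1)_\theta}$ does not follow. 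What the argument as written establishes is the (trivial) $L^2(\Gamma;w_0),L^2(\Gamma;w_1)$ statement, not the discrete one: the inclusion $\SS^{\pp,1}(\TT)\hookrightarrow L^2(\Gamma)$ only gives $K(t,u;X_0,X_1)\geq K(t,u;L^2(w_0),L^2(w_1))$, which is the wrong direction. The fix is to replace $Y_k$ by the product of the \emph{polynomial} subspaces $V_i=\{v\in\SS^{\pp,1}(\TT):\supp v\subseteq\mathcal D_i\}$ with the weighted $L^2$-norms; then $R$ does land in $X_k$, $D,R$ are bounded with $RD=\mathrm{id}$, and the retraction theorem applies. One then cannot invoke Proposition~\ref{prop:interpolation_poly_l2} directly on $\prod_i V_i$ — but on each patch the weight is comparable to a constant, so the two factor norms differ only by a constant scaling, and their interpolation is again the scaled $L^2$-norm (equivalently: each $V_i$ is a retract of $L^2(\mathcal D_i)$ via the $L^2(\mathcal D_i)$-orthogonal projection, which is uniformly bounded in the weighted norms precisely because $w_k\sim\mathrm{const}$ on $\mathcal D_i$). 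Note that once you perform this per-patch scaling explicitly, you essentially recover the paper's Tartar-style construction, so the abstraction gained is limited; nevertheless, phrased correctly, the retraction route is a valid and tidy alternative.
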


\begin{proof}
We will only show~\eqref{eq:interpolation_poly_l2_1}. Let $u \in \SS^{\pp,1}(\TT)$.
We denote by $\norm{\cdot}_{\theta}$ the interpolation norm of the space
$$
\left((\SS^{\pp,1}(\TT),\norm{\cdot}_{L^2(\Gamma, w_0)}),(\SS^{\pp,1}(\TT),\norm{\cdot}_{L^2(\Gamma, w_1)})\right)_{\theta,2}.
$$ 
The weighted $L^2$ norm will be denoted by 
$\norm{u}_{L^2_{\theta}}:=\norm{u}_{L^2(\Gamma, w_0^{1-\theta} w_1^{\theta})}$.
By Proposition~\ref{prop:interpolation_weighted_l2} and the definition of the $K$-functional as an infimum we have the trivial estimate
\begin{align*}
  \norm{u}_{\theta} &\gtrsim \norm{u}_{L^2_{\theta}}.
\end{align*}
It remains to show the converse estimate. We proceed similarly to the proof of \cite[Lemma 23.1]{tartar07}.

Let $u_1$,$(u_V)_{V \in \innervertices}$ denote the decomposition of Theorem~\ref{thm:decomposition-FEM-space}
(for simplicity of notation we assume that the edge and element contributions are included in the vertex functions).

By the local definition of the Scott-Zhang operator, it easy to see that 
it is also stable in weighted $L^2$ norms
(with stability constant depending additionally on the constant $C_w$ 
of Def.~\ref{def:locally_comparable}).
Since $u_1$ is constructed using this operator 
(see the Section~\ref{sec:decomposition-general-triangulations}), 
we have by interpolation
\begin{align*}
  \norm{u_1}_{\theta}&\lesssim \norm{u}_{L^2_\theta}.
\end{align*}

For $V \in \innervertices$ and $j=0$, $1$ set 
$(\overline{w_j})_{V}:=\frac{1}{2} \left(\inf_{x \in \omega_V}{w_j(x)} + \sup_{x \in \omega_V}{w_j(x)}\right)$.
Since  $w_0$, $w_1$ are locally comparable weights, we have 
on each patch $(\overline{w_{j}})_V \sim w_j$.
In order to estimate the infimum in the $K$-functional, we set:
\begin{align*}
  v(x)&:=\sum_{V \in \innervertices}{\frac{(\overline{w_0})_V}{(\overline{w_0})_V + t^2 (\overline{w_1})_V} u_V(x) }
  \quad \text{and}  \quad                
  u-u_1-v=\sum_{V \in \innervertices}{\frac{t^2 (\overline{w_1})_V}{(\overline{w_0})_V + t^2 (\overline{w_1})_{V}} u_V }. 
\end{align*}
Since the coefficients are independent of $x$ we have $v \in {\SS}^{\pp,1}(\TT)$ and thus $u = (u-v)+ v$ 
is an admissible decomposition for the infimum of \eqref{eq:def_K_functional}.
Using $(\overline{w_{j}})_V \sim w_j$ we estimate
\begin{align*}
  \abs{v(x)}&\lesssim \frac{ w_0(x)}{w_0(x) + t^2 w_1(x)} \sum_{V \in \innervertices}{\abs{u_V(x)}} \; \text{ and } \;
  \abs{u - u_1-v}\lesssim \frac{t^2 w_1(x)}{w_0(x) + t^2 w_1(x)} \sum_{V \in \innervertices}{\abs{u_V}}, 
\end{align*}
where the implied constant only depends on the constant in \eqref{eq:def:locally_comparable}.
A simple calculation then shows
\begin{align*}
  K^2(t;u-u_1) &\lesssim  \int_{\Gamma}{\frac{t^2 w_0(x)w_1(x)}{w_0(x)+t^2 w_1(x)} \Big(\!\sum_{V \in \innervertices}{\abs{u_V(x)}} \Big)^2 dx }.
\end{align*}
For the interpolation norm we then get by using Fubini's theorem and the substitution ${t=s\sqrt{\frac{w_0(x)}{w_1(x)}}}$:
\begin{align*}
  \norm{u-u_1}_{\theta}^2
  &\lesssim \int_{0}^{\infty}{\int_{\Gamma}{ t^{-2\theta}\frac{t^2 w_0(x)w_1(x)}{w_0(x)+t^2 w_1(x)} \big(\sum_{V \in \innervertices}{\abs{u_V(x)}} \big)^2 dx \frac{dt}{t}}} \\
  &\lesssim \int_{\Gamma}{ w_0(x)^{1-\theta} w_1(x)^{\theta} \big(\sum_{V \in \innervertices}{\abs{u_V(x)}} \big)^2  \int_{0}^{\infty}{\frac{s^{1-2\theta}}{1+s^2} \,ds} \,dx} \\
  &\lesssim \int_{\Gamma}{ w_0(x)^{1-\theta} w_1(x)^{\theta} \big(\sum_{V \in \innervertices}{\abs{u_V(x)}} \big)^2 \;dx}.
\end{align*}
Since each function $u_V$ is supported by the single patch $\omega_V$, and since the patches only have finite overlap, a simple coloring argument implies
\begin{align*}
  \norm{u-u_1}_{\theta}^2 & \lesssim \sum_{V \in \innervertices}{ \int_{\Gamma}{ w_0(x)^{1-\theta} w_1(x)^{\theta} \abs{u_V(x)}^2 \;dx}} 
  =\sum_{V \in \innervertices}{\norm{u_V}_{L^2_{\theta}}^2}.
\end{align*}

We observe that, since the decomposition $(u_V)_{V \in \innervertices}$ was constructed in a local manner
and $\omega_0^{1-\theta} \omega_1^{\theta}$ is locally comparable, we can exchange the
$L^2$-norm in the stability result of Theorem~\ref{thm:main_decomposition} by the $L^2_{\theta}$ norm, which then concludes the proof.
\end{proof}

\subsection{Additive Schwarz preconditioning for the $p$-BEM}
\label{sec:ams}
In this section we apply the decomposition results of
Theorems~\ref{thm:decomposition_coloring_estimate}, 
\ref{thm:main_decomposition}
to the $hp$-version of 
the boundary element method. 
Our model problem is the hypersingular integral
operator $D$ for the Laplacian; for a more detailed discussion of 
boundary integral operators and their discretizations, 
we refer the reader to the monographs \cite{ss,s,hsiaowendland,mclean}.
We note that the following setting covers both, the case of closed surfaces and screen problems.
  This is because in the case of closed surfaces we have 
  $\honehalftilde=H^{1/2}(\Gamma)$ and $\SS^{\pp,1}(\TT)=\mathcal{S}^{\pp,1}(\TT)$.

The hypersingular integral operator $D: \honehalftilde \to H^{-1/2}(\Gamma)$ is defined by
\begin{align*}
  ( D\,u ) \;(x):= -\partial_{n_x}^{int} \int_{\Gamma}{ \partial_{n_y}^{int} G(x,y)\,u(y) \;dS_y}, \quad \text{for $x \in \Gamma$},
\end{align*}
where $G(x,y):=\frac{1}{4\pi} \frac{1}{\abs{x-y}}$ is the fundamental solution of the 3D-Laplacian and $\partial^{int}_{n_x}$
denotes the (interior) normal derivative with respect to $x$.

In the case of a closed, connected surface, the kernel of $D$ consists of the constant
functions. In order to get a well-posed system it is customary to introduce,
for some chosen $\alpha > 0$, the stabilized form
\begin{align}
\label{eq:def_Dtilde}
  \dualproduct{\widetilde D u}{v}:= \dualproduct{Du}{v} + \alpha \dualproduct{u}{1}\dualproduct{v}{1} \quad \quad \forall u,v \in \honehalftilde,
\end{align}
where $\dualproduct{\cdot}{\cdot}$ denotes the extension of the standard $L^2$-inner product to $H^{-1/2}(\Gamma) \times \widetilde{H}^{1/2}(\Gamma)$.
This bilinear form is known to be bounded and elliptic, i.e., 
there exist some constants $c$, $C>0$ such that
$| \dualproduct{ \widetilde D u}{v} | \leq C \norm{u}_{\honehalftilde} \norm{v}_{\honehalftilde}$ and
 $\dualproduct{ \widetilde D u}{u} \geq c \norm{u}_{\honehalftilde}^2$ for all $u,v \in \honehalftilde$.
In the case of an open surface $\Gamma \neq \partial \Omega$ the kernel 
of $D$ is trivial and already $D$ is elliptic so that we may set $\alpha=0$.

The Galerkin matrix $\widetilde {\mathbf D}_{hp}$ corresponding to the 
Galerkin discretization of this bilinear form based on the 
space $\SS^{\pp,1}(\TT)$ with chosen basis $(\varphi_i)_i$ is given by 
$\displaystyle 
  \left(\widetilde{{\mathbf D}}_{hp}\right)_{ij}:=\dualproduct{\widetilde{D} \varphi_j}{\varphi_i}. 
$

We will present a preconditioner for this matrix based on the abstract additive Schwarz framework that will
allow for $hp$-independent bounds on the condition number of the preconditioned system. It is based on the
decomposition into the vertex, edge and element patch spaces, given in \eqref{eq:decomposition}.
We briefly recall some important definitions of the additive Schwarz theory. For a detailed introduction see \cite[Chapter 2]{book_toselli_widlund}.

Let $a(\cdot,\cdot): \fV \times \fV \to \R$ be a symmetric, positive definite bilinear form 
on a finite dimensional vector space $\fV$. We will write ${\mathbf A}$ for the 
corresponding Galerkin matrix. Let $\fV_i \subseteq \fV$, $i=0, \dots, N$, be a family of subspaces and let
$R_i^T: \fV_i \to \fV$ denote the canonical embedding operators 
(we will use the symbol ${\mathbf R}_i^T$ for their matrix representation). 
Assume that for each subspace $\fV_i$ a symmetric, positive definite bilinear form 
$\widetilde{a}_i(\cdot,\cdot)$ 
is given; its Galerkin matrix is denoted $\widetilde{{\mathbf A}}_i$.  
Assume the spaces $\fV_i$ form a decomposition of $\fV$, i.e., we can write
\begin{align*}
  \fV &= R_0^T \fV_0 + \sum_{i=1}^{N}{R_i^T \fV_i}.
\end{align*}
We then define the additive Schwarz preconditioner by:
\begin{align*}
  {\mathbf B}^{-1}:= \sum_{i=0}^{N}{{\mathbf R}_i^T \, \widetilde{{\mathbf A}}_{i}^{-1} {\mathbf R}_i}.
\end{align*}

The preconditioner induced by the decomposition defined in \eqref{eq:decomposition} is optimal in $h$ and $p$. This is formalized in the following theorem: 
\begin{theorem}
\label{thm:ams-hypersingular}
  Let $a(u,v)= \langle \widetilde D u,v\rangle_\Gamma$. 
  Let $\fV_0:=\SS^{1,1}(\TT)$ be the global lowest order space and 
  $R_0^T:\fV_0 \rightarrow \SS^{\pp,1}(\TT)$ be the canonical embedding. 
  For every $V \in \innervertices$ we define the space 
  $\fV_{V}:=\{u \in \SS^{\pp,1}(\TT): \operatorname{supp}(u) \subseteq \overline{\omega_V} \}$
  with canonical embedding $R_V^T:\fV_V \rightarrow \SS^{\pp,1}(\TT)$.  
  We use ``exact local solvers'', i.e., $\widetilde{a}_V(u,v):=\dualproduct{\widetilde{D} R^T_{V} u }{ R^T_{V} v}$ with matrix representation $\widetilde {\mathbf A}_V$.
  Let Assumption~\ref{ass:degree_distribution} hold.
  Then the preconditioner defined by
  \begin{align*}
    {\mathbf B}^{-1}:= {\mathbf R}_0^T {\mathbf A}_0^{-1} {\mathbf R}_0 
   + \sum_{V \in \innervertices}{{\mathbf R}_V^T \, \widetilde{{\mathbf A}}_{V}^{-1} {\mathbf R}_V}
  \end{align*}
  is optimal in the sense that there exists a constant $C>0$ that only depends only on $\Gamma$, $\Omega$, and the $\gamma$-shape regularity of $\mathcal{T}$ such that
  \begin{align*}
    \kappa({\mathbf B}^{-1} \widetilde{{\mathbf D}}_{hp}):=\frac{\lambda_{max}({\mathbf B}^{-1} \widetilde{{\mathbf D}}_{hp})}{\lambda_{\min}({\mathbf B}^{-1} \widetilde{{\mathbf D}}_{hp})} &\leq C .
  \end{align*}
\end{theorem}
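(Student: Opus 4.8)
The plan is to verify the abstract additive Schwarz hypotheses of Proposition~\ref{thm_asm_estimate} for the specific choice of subspaces $\fV_0 = \SS^{1,1}(\TT)$ and $\fV_V = \{u \in \SS^{\pp,1}(\TT): \operatorname{supp}(u) \subseteq \overline{\omega_V}\}$, $V \in \innervertices$, and then combine the resulting eigenvalue bounds. Since exact local solvers are used, $\widetilde a_V = a$ on $\fV_V$, so the only two things to establish are: (i) a \emph{stable splitting} estimate, namely every $u \in \SS^{\pp,1}(\TT)$ admits a decomposition $u = R_0^T u_0 + \sum_V R_V^T u_V$ with $\sum_V a(u_V,u_V) + a(u_0,u_0) \lesssim a(u,u)$, and (ii) a \emph{finite overlap / coloring} estimate giving $a(u,u) \lesssim a(u_0,u_0) + \sum_V a(u_V,u_V)$ for \emph{any} such decomposition. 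Ellipticity and continuity of $\langle \widetilde D \cdot,\cdot\rangle_\Gamma$ with respect to $\|\cdot\|_{\honehalftilde}$ (stated in Section~\ref{sec:ams}) then turn both into equivalent statements about the $\widetilde H^{1/2}(\Gamma)$-norm, i.e., we work with $\theta = 1/2$ throughout.

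For the lower bound on $\lambda_{\min}$ (hypothesis (i)), I would take $u \in \SS^{\pp,1}(\TT)$ and apply Theorem~\ref{thm:decomposition-FEM-space} (note Assumption~\ref{ass:degree_distribution} is assumed, so $\theta = 1/2 \in (0,1)$ is admissible and all contributions $u_1, u_V, u_e, u_K$ stay in $\SS^{\pp,1}(\TT)$). This produces $u_1 \in \SS^{1,1}(\TT)$ — which I assign to the coarse space $\fV_0$ via $R_0^T$ — together with vertex, edge, and element contributions. The edge contributions $u_e$ (supported in $\omega_e$) and element contributions $u_K$ (supported in $K$) must be folded into the vertex spaces $\fV_V$: each $\omega_e$ and each $K$ is contained in $\overline{\omega_V}$ for some vertex $V$ of $e$ resp. $K$, so we may absorb $u_e$ and $u_K$ into the nearest $u_V$; by $\gamma$-shape regularity only a bounded number of edges/elements get attached to any given vertex, so this regrouping costs only a $\gamma$-dependent constant. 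The stability bounds of Theorem~\ref{thm:main_decomposition} (and Theorem~\ref{thm:decomposition-FEM-space}, which inherits them under Assumption~\ref{ass:degree_distribution}) give
\begin{align*}
\|u_1\|_{\widetilde H^{1/2}(\Gamma)}^2 + \sum_{V \in \innervertices} \|u_V\|_{\widetilde H^{1/2}_h(\omega_V)}^2 + \sum_{e} \|u_e\|_{\widetilde H^{1/2}_h(\omega_e)}^2 + \sum_{K} \|u_K\|_{\widetilde H^{1/2}_h(K)}^2 \lesssim \|u\|_{\widetilde H^{1/2}(\Gamma)}^2,
\end{align*}
and since for a function supported in $\omega_V$ the local norm $\|\cdot\|_{\widetilde H^{1/2}_h(\omega_V)}$ is equivalent (up to shape-regularity constants) to the global norm $\|\cdot\|_{\widetilde H^{1/2}(\Gamma)}$ of its zero-extension — this is the content of Remark~\ref{rem:norm-equivalence-extension} / the second inequality in Theorem~\ref{thm:decomposition_coloring_estimate} read in reverse — we obtain $\|u_0\|_{\widetilde H^{1/2}(\Gamma)}^2 + \sum_V \|u_V\|_{\widetilde H^{1/2}(\Gamma)}^2 \lesssim \|u\|_{\widetilde H^{1/2}(\Gamma)}^2$. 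Invoking ellipticity of $\widetilde D$ on the left and continuity on the right converts this into $a(u_0,u_0) + \sum_V a(u_V,u_V) \lesssim a(u,u)$, which is exactly hypothesis (i) with a $\gamma$-dependent $C_0$.

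For the upper bound on $\lambda_{\max}$ (hypothesis (ii)), I would take an arbitrary decomposition $u = R_0^T v_0 + \sum_V R_V^T v_V$ with $v_0 \in \fV_0$ and $v_V \in \fV_V$, and estimate $\|u\|_{\widetilde H^{1/2}(\Gamma)}^2$. Applying the triangle inequality to split off $v_0$, the remaining task is to bound $\|\sum_V v_V\|_{\widetilde H^{1/2}(\Gamma)}^2$ where each $v_V$ is supported in $\overline{\omega_V}$; vertex patches have finite overlap (at most $N_c$ of them meet, with $N_c = N_c(\gamma)$), so the coloring argument of Theorem~\ref{thm:decomposition_coloring_estimate}, based on the estimate \eqref{eq:coloring_est} for functions with pairwise disjoint support, yields $\|\sum_V v_V\|_{\widetilde H^{1/2}(\Gamma)}^2 \lesssim \sum_V \|v_V\|_{\widetilde H^{1/2}(\Gamma)}^2$. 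Continuity of $\widetilde D$ on the left and ellipticity on the right then give $a(u,u) \lesssim a(v_0,v_0) + \sum_V a(v_V,v_V)$, i.e., hypothesis (ii) with a $\gamma$-dependent $C_1$. Combining, Proposition~\ref{thm_asm_estimate} gives $\kappa(\mathbf B^{-1}\widetilde{\mathbf D}_{hp}) \le C_0 C_1 = C(\Gamma,\Omega,\gamma)$.

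The genuinely substantive step is hypothesis (i), and it is not new work here: all the difficulty has been front-loaded into Theorems~\ref{thm:main_decomposition}/\ref{thm:decomposition-FEM-space}, whose proof occupies the rest of the paper — in particular the $p$-explicit local stability estimates and, for mixed meshes, the Gauss-Lobatto interpolation step that forces one to carry fractional norms through the argument rather than interpolating at the end (cf.\ Remark~\ref{rem:why_not_interpolate_at_the_end}). Given those results, the proof of Theorem~\ref{thm:ams-hypersingular} is essentially bookkeeping: regrouping edge/element contributions onto vertices, passing between local $\widetilde H^{1/2}_h$-norms and the global $\widetilde H^{1/2}(\Gamma)$-norm via zero-extension, and invoking the equivalence of $a(\cdot,\cdot)$ with $\|\cdot\|_{\honehalftilde}^2$. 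The one point to watch is that Assumption~\ref{ass:degree_distribution} is essential precisely so that Theorem~\ref{thm:decomposition-FEM-space} delivers a decomposition \emph{within} $\SS^{\pp,1}(\TT)$ that is stable at $\theta = 1/2$; without it one would only have stability at $\theta \in \{0,1\}$, which does not suffice for the $\widetilde H^{1/2}$-setting of the hypersingular operator.
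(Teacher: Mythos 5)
Your proposal matches the paper's proof: you verify the two abstract additive Schwarz hypotheses, obtain the lower eigenvalue bound from Theorem~\ref{thm:decomposition-FEM-space} (with edge and element contributions absorbed into vertex parts and Remark~\ref{rem:norm-equivalence-extension} handling the passage between local and global norms), and obtain the upper eigenvalue bound from the coloring estimate of Theorem~\ref{thm:decomposition_coloring_estimate}, converting between $a(\cdot,\cdot)$ and $\|\cdot\|_{\honehalftilde}^2$ via ellipticity and continuity of $\widetilde D$. The only cosmetic difference is that you cite the additive Schwarz lemma as a labeled proposition, whereas the paper inlines its statement directly into the proof; the substance and route are identical.
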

  \begin{proof}
    The abstract additive Schwarz theory gives the condition number estimate 
    $\kappa \leq C_0 C_1$ in terms of the following two constants $C_0$, $C_1$ 
    (see \cite{zhang_multilevel_schwarz,lions_schwarz_alternating_1,nepomnyaschik_asm}):
    \begin{enumerate}[(i)]
    \item 
     \label{item:schwarz-i}
    The constant $C_0 >0 $ is such that every $u \in \fV$ admits a decomposition
       $ u = \sum_{i=0}^{J}{ R_i^T \, u_i}$ with $u_i \in \fV_i$
      such that $\sum_{i=0}^{J}{\widetilde{a}_i(u_i,u_i)}\leq C_0 \; a(u,u).$
    \item 
     \label{item:schwarz-ii}
     The constant $C_1 >0$ is such that for every decomposition
      $u=\sum_{i=0}^{J}{R_i^T \,v_i}$ with $v_i \in \fV_i$ the following estimate holds:
      $a(u,u) \leq C_1 \sum_{i=0}^{J}{\widetilde{a}_i(v_i,v_i)}$.
    \end{enumerate}
    Since $\widetilde{D}$ is continuous and elliptic, we can replace $a(\cdot,\cdot)$ with  the $\honehalftilde$-norm.
    The requirement (\ref{item:schwarz-i}) then corresponds to the stability statement 
    of Theorem~\ref{thm:decomposition-FEM-space} (where we absorbed the element and edge contributions into 
    the vertex parts)
    in conjunction with Remark~\ref{rem:norm-equivalence-extension}.
    The requirement (\ref{item:schwarz-ii}) is just an application of Theorem~\ref{thm:decomposition_coloring_estimate}. 
  \end{proof} 
We refer to \cite{fuehrer-melenk-praetorius-rieder15} for studies concerning the numerical performance 
of the preconditioner of Theorem~\ref{thm:ams-hypersingular}. 
\begin{remark}
The preconditioner of Theorem~\ref{thm:ams-hypersingular} requires a solver for 
  the space $\fV_0=\SS^{1,1}(\TT)$ of piecewise linears. This solver can be replaced with a multilevel
  method as advocated, e.g., in \cite{tran_stephan_asm_h_96,amcl03,fuehrer-melenk-praetorius-rieder15}. 
\eremk
\end{remark}
\section{Interpolation of Sobolev norms} 
\label{sec:interpolation-argument}
\subsection{Interpolation of Sobolev norms and seminorms}
\label{sect:interpol_seminorms}
As is well-known, the $L^2$-norm and the $H^1$-seminorm scale differently under affine changes of variables,
and the full $H^1$-norm does not have a natural scaling property. Thus, the effect of domain scalings on 
fractional Sobolev spaces, which are defined by interpolation between $L^2$ and $H^1$ is not very clear.
One way to clarify the impact of domain scalings is to study the interpolation of seminorms.
The following lemma works out the corresponding norm equivalences. 
\begin{lemma}[interpolation of seminorms]
\label{lemma:K-vs-k}
Let $X_1 \subseteq X_0$ be two Banach spaces with norms 
$\|\cdot\|_0$ and $\|\cdot\|_1 := H^{-1} \|\cdot\|_0 + |\cdot |_1$, where $|\cdot|_1$ is a seminorm
and $H > 0$. Introduce the following two $K$-functionals: 
$$
K^{2}(u,t):= \inf_{v \in X_1} \|u - v\|_{0}^2 + t^2 \|v\|_1^2, 
\qquad k^{2}(u,t):= \inf_{v \in X_1} \|u - v\|_{0}^2 + t^ 2|v|_1^2.
$$
For $\theta \in (0,1)$ introduce the seminorm $|\cdot|_\theta$ and the norms 
$\|\cdot\|_\theta$ and $\|\cdot\|_{\tilde\theta}$ by 
\begin{eqnarray*}
|u|^2_\theta &=& \int_{t=0}^\infty t^{-2\theta} k^2(u,t) \frac{dt}{t}, \\
\|u\|^2_\theta &=& \int_{t=0}^\infty t^{-2\theta} K^2(u,t) \frac{dt}{t}, \\
\|u\|^2_{\tilde \theta} &=& H^{-2\theta}  \|u\|^2_0 + |u|^2_\theta. 
\end{eqnarray*}
Then there exists $C > 0$, which depends solely on $\theta$ (in particular, it is independent of $H$), 
such that 
$$
C^{-1} \|u\|_{\theta} \leq \|u\|_{\tilde \theta} \leq C \|u\|_{\theta}. 
$$ 
\end{lemma}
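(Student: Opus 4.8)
The plan is to prove the two-sided estimate by relating the two $K$-functionals $K(u,t)$ and $k(u,t)$ pointwise in $t$, and then integrating against the weight $t^{-2\theta}\,dt/t$. The only genuine difference between $K$ and $k$ is that $K$ uses $\|v\|_1^2 = H^{-2}\|v\|_0^2 + |v|_1^2$ (after expanding the square, up to a harmless factor $2$) while $k$ uses only $|v|_1^2$. So the first step is the trivial direction: since $|v|_1 \le \|v\|_1$ we get $k(u,t) \le K(u,t)$ for every $t$, hence $|u|_\theta \le \|u\|_\theta$; combined with the interpolation inequality $H^{-2\theta}\|u\|_0^2 \lesssim \|u\|_\theta^2$ (which follows from $K(u,t) \ge \min(1,t/H)\|u\|_0$ — more precisely, $\|u\|_0^2 \le 2 K^2(u,t) \max(1, H^2/t^2)$ after using $\|u\|_0 \le \|u-v\|_0 + \|v\|_0 \le \|u-v\|_0 + H\|v\|_1$) one obtains $\|u\|_{\tilde\theta} \lesssim \|u\|_\theta$.

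For the reverse inequality $\|u\|_\theta \lesssim \|u\|_{\tilde\theta}$, the plan is to bound $K^2(u,t)$ in terms of $k^2(u,s)$ for suitable $s$ and $\|u\|_0^2$. Fix $t$ and let $v$ be a near-minimizer for $k(u,t)$, i.e. $\|u-v\|_0^2 + t^2|v|_1^2 \approx k^2(u,t)$. Then, using $v$ as a candidate in the infimum defining $K(u,t)$,
\begin{align*}
K^2(u,t) &\le \|u-v\|_0^2 + t^2\|v\|_1^2 = \|u-v\|_0^2 + t^2|v|_1^2 + t^2 H^{-2}\|v\|_0^2 \\
&\lesssim k^2(u,t) + t^2 H^{-2}\|v\|_0^2 \lesssim k^2(u,t) + t^2 H^{-2}\big(\|u-v\|_0^2 + \|u\|_0^2\big) \\
&\lesssim (1 + t^2 H^{-2}) k^2(u,t) + t^2 H^{-2}\|u\|_0^2.
\end{align*}
Now integrate: $\int_0^\infty t^{-2\theta}(1 + t^2H^{-2}) k^2(u,t)\,\frac{dt}{t}$ splits into $|u|_\theta^2$ plus $H^{-2}\int_0^\infty t^{2-2\theta} k^2(u,t)\,\frac{dt}{t}$. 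The second piece is $H^{-2}$ times the squared seminorm at interpolation parameter $1-\theta$ shifted — and here one uses the elementary bound $k(u,t) \le \min(\|u\|_0, t|u|_{1,\mathrm{eff}})$-type reasoning, or more cleanly, that $k^2(u,t) \le \|u\|_0^2$ always and $k^2(u,t)/t^2$ is decreasing, to see $\int_0^\infty t^{2-2\theta} k^2(u,t)\frac{dt}{t} \lesssim_\theta H^{2-2\theta}\cdot H^{2\theta-2}\cdot(\ldots)$; the scaling works out so that this term is controlled by $H^{-2\theta}\|u\|_0^2 + |u|_\theta^2 = \|u\|_{\tilde\theta}^2$. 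Similarly $H^{-2}\int_0^\infty t^{2-2\theta}\|u\|_0^2\,\frac{dt}{t}$ diverges if taken literally, so one must be careful: the bound $K^2(u,t) \le \|u\|_0^2$ must be used for large $t$ instead, splitting the integral at $t = H$. For $t \ge H$ use $K^2(u,t)\le \|u\|_0^2$ directly, contributing $\lesssim_\theta H^{-2\theta}\|u\|_0^2$; for $t \le H$ use the displayed bound with $1 + t^2H^{-2} \le 2$ and $t^2 H^{-2} \le 1$, contributing $\lesssim |u|_\theta^2 + H^{-2\theta}\|u\|_0^2$.

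The main obstacle I expect is the bookkeeping of the $H$-powers to confirm that the constant $C$ is truly independent of $H$ — this is the whole point of the lemma, and it hinges on the self-similar structure of the integrals under the substitution $t = H\tau$. After that substitution every integral becomes a pure numerical integral in $\tau$ against $\tau^{-2\theta}$ or $\tau^{2-2\theta}$ times a bounded-and-decaying kernel, with the $H$-dependence factored cleanly into $H^{-2\theta}\|u\|_0^2$ and the rescaled $k$-functional; so the cleanest exposition is probably to perform $t = H\tau$ at the very start, reducing to the case $H = 1$, and only then carry out the pointwise $K$-versus-$k$ comparison and the split of the integral at $\tau = 1$. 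The rest is routine: convergence of $\int_0^1 \tau^{-2\theta}\cdot\tau^2\,\frac{d\tau}{\tau}$ and $\int_1^\infty \tau^{-2\theta}\,\frac{d\tau}{\tau}$ for $\theta \in (0,1)$, and the monotonicity properties of $k(u,\cdot)$.
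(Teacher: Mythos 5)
Your proposal is correct and follows essentially the same route as the paper's proof: the trivial direction via $k\le K$ together with $\|u\|_0\le\max\{1,H/t\}K(u,t)$, and the reverse direction via the split of the integral at $t=H$, using $K(u,t)\le\|u\|_0$ for $t\ge H$ and the pointwise bound $K^2(u,t)\lesssim k^2(u,t)+\frac{t^2}{H^2}\|u\|_0^2$ (obtained by inserting a near-minimizer of $k$ into $K$) for $t\le H$. The initial attempt to integrate the pointwise $K$-versus-$k$ bound over all of $(0,\infty)$ is a dead end, as you yourself observe (the term $H^{-2}\int_0^\infty t^{2-2\theta}k^2(u,t)\,\tfrac{dt}{t}$ is not controlled by $\|u\|_{\tilde\theta}$ — the bound $k\le\|u\|_0$ still leaves $\int_H^\infty t^{1-2\theta}\,dt$ divergent for $\theta\in(0,1)$), so the split at $t=H$ is not an optional refinement but the essential step; once you reach it, your argument coincides with the paper's.
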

\begin{proof} 
We show $\|u\|_{\tilde \theta} \leq C \|u\|_\theta$: Obviously, $|u|_\theta \leq \|u\|_{\theta}$. 
In order to see $H^{-\theta} \|u\|_{0} \leq C \|u\|_{\theta}$, we observe that for arbitrary $v\in X_1$
$$
\|u\|_{0} \leq \|u - v\|_{0} + \|v\|_{0} \leq \|u - v\|_{0} + \frac{H}{t} t \|v\|_{1},
$$
and hence $\|u\|_{0} \leq \max\{1,H/t\}  K(u,t)$. This implies 
\begin{eqnarray*}
H^{-2\theta} \left(\frac{1}{2\theta} + \frac{1}{2(1-\theta)}\right) \|u\|^2_0 
 = \int_{t=0}^\infty t^{-2\theta-1} \min\{1,t/H\}^2 \|u\|^2_0\,dt \leq \int_{t=0}^\infty t^{-2\theta-1} K^2(u,t)\,dt
 = \|u\|^2_\theta. 
\end{eqnarray*}
Next, we show $\|u\|_{\theta} \leq C \|u\|_{\tilde \theta}$. We write 
\begin{align}\label{lemma:K-vs-k:eq1}
  \|u\|^2_\theta = \int_{t=0}^H t^{-2\theta - 1} K^2(u,t)\,dt + \int_{t=H}^\infty t^{-2\theta -1} K^2(u,t)\,dt .
\end{align}
To treat the first integral, we let $\tilde v$ be the minimizer of $k^{2}(u,t)$, i.e., 
$\tilde v = \operatorname*{argmin} \inf_{v \in X_1} \|u - v\|^{2}_{0} + t^{2} |v|_{1}^{2}$ 
(If the minimum is not attained, then select $v_\varepsilon$ with 
$\|u - v_\varepsilon\|_{0}^{2} + t^{2} |v_\varepsilon|_{1}^{2} \leq k^{2}(u,t) + \varepsilon$ and let $\varepsilon \rightarrow 0$ 
at the end.)
Then 
$$
K^{2}(u,t) \leq \|u - \tilde v\|^{2}_{0} + t^{2} \|\tilde v\|_{1}^{2} \leq 
\|u - \tilde v\|^{2}_{0} + \frac{t^{2}}{H^{2}} \|\tilde v\|^{2}_{0} + t^{2}|\tilde v|_{1}^{2}
\lesssim \max\left\{1,\frac{t^{2}}{H^{2}}\right\} k^{2}(u,t) + \frac{t^{2}}{H^{2}} \|u\|^{2}_{0}. 
$$
Therefore, 
\begin{align*}
  \int_{t=0}^H t^{-2\theta-1} K^2(u,t)\,dt \lesssim \int_{t=0}^H t^{-2\theta-1} k^2(u,t)\,dt + 
  H^{-2} \int_{t=0}^H t^{-2\theta+1} \|u\|^2_{0}\,dt
\lesssim |u|^2_\theta + H^{-2\theta} \|u\|^2_{0}. 
\end{align*}
To treat the second integral in~\eqref{lemma:K-vs-k:eq1}, we use the obvious estimate $K(u,t)\leq \|u\|_{0}$ and see
\begin{align*}
  \int_{t=H}^\infty t^{-2\theta-1} K^2(u,t)\,dt \leq \int_{t=H}^\infty t^{-2\theta-1} \|u\|^2_{0}\,dt
  \leq C H^{-2\theta} \|u\|^2_{0}. 
\end{align*}
Therefore, $\|u\|^2_\theta \lesssim H^{-2\theta}\|u\|^2_{0} + |u|^2_\theta$ with implied constants 
depending only on $\theta$. 
\end{proof}

Lemma~\ref{lemma:K-vs-k}
 gives the following norm equivalence for the standard fractional Sobolev norms and
the weighted versions:
\begin{align}
\label{eq:norm-equivalence}
  \abs{u}_{\H^\theta(\omega)} + \diam(\omega)^{-\theta} \norm{u}_{L^2(\omega)} \sim \norm{u}_{\H^{\theta}_h(\omega)}  \quad
  \text{and}\quad
  \abs{u}_{H^\theta(\omega)} + \diam(\omega)^{-\theta} \norm{u}_{L^2(\omega)} \sim \norm{u}_{H^{\theta}_h(\omega)}.
\end{align}

Concerning the norms on patches, we note the following: 
\begin{remark}
\label{rem:norm-equivalence-extension}
Since for any vertex or edge patch $\omega \subset \Gamma$ 
the operator realizing the extension by zero is bounded with constant $1$ as an operator 
$L^2(\omega) \rightarrow L^2(\Gamma)$ and 
$\widetilde {H}^1_h(\omega) \rightarrow \widetilde{H}^1(\Gamma)$ we obtain by interpolation 
\begin{equation} 
\|u\|_{\widetilde{H}^\theta(\Gamma)} \lesssim
\|u\|_{\widetilde{H}_h^\theta(\omega)} \stackrel{(\ref{eq:norm-equivalence})}{\sim}
|u|_{\widetilde{H}_h^\theta(\omega)} + \operatorname{diam}(\omega)^{-\theta} \|u\|_{L^2(\omega)} 
\qquad \forall u \in \widetilde{H}^\theta(\omega).
\end{equation}
\eremk
\end{remark}

Next, we study the scaling of interpolation seminorms. The following corollary can be seen
as a generalization of~\cite[Lemma 2.8]{heuer_sobolev_norms} and provides the
  application of Lemma~\ref{lemma:K-vs-k} to the present setting of 2D surface elements.

\begin{corollary} \label{cor:reference-patch-theta-equivalence}
  Let $K\in\TT$ be an element and $\Kref$ be the associated reference element.
  Suppose that $X_1\subset L^2(K)$ and $\widehat X_1\subset L^2(\Kref)$
  are continuously embedded Banach spaces such that for every $u\in X_1$ and its pull-back 
  $\widehat u:= u \circ F_K$ to the reference element $\widehat K$ there holds 
$\widehat u \in \widehat X_1$. 
  Let $|\cdot|_{X_1}$ and $|\cdot|_{\widehat X_1}$ be seminorms (or norms)
  on $X_1$ with $| u |_{X_1} \sim | \widehat u |_{\widehat X_1}$.  
  Define $|\cdot|_{\theta,K}$ and $|\cdot|_{\theta,\Kref}$
  as in Lemma~\ref{lemma:K-vs-k}, where $X_0 = L^2(K)$ 
  and $\widehat{X}_0 = L^2(\Kref)$.
Then it holds for $\theta\in[0,1]$
  \begin{align*}
    |u|_{\theta,K} \sim h_K^{1-\theta} |\widehat u|_{\theta,\Kref}. 
  \end{align*}
  In particular, we have 
  \begin{align*}
    \| u \|_{(L^2(K),X_1)_{\theta,2}} \sim h_K^{1-\theta} \| \widehat u \|_{(L^2(\Kref),\widehat X_1)_{\theta,2}}.
  \end{align*}
\end{corollary}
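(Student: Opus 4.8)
\textbf{Proof plan for Corollary~\ref{cor:reference-patch-theta-equivalence}.}
The plan is to reduce everything to the behavior of the two basic quantities entering the $K$-functional under the pull-back $F_K$, and then to manipulate the defining integral of the interpolation seminorm by a change of the integration variable $t$. The key observation is that the $L^2$-norm and the seminorm scale by fixed (different) powers of $h_K$. Precisely, for $u \in L^2(K)$ with $\widehat u = u\circ F_K$, the change-of-variables formula together with $\gamma$-shape regularity (Assumption~\ref{assumption:element_maps}, in particular the eigenvalue bounds on the Gramian) gives $\|u\|_{L^2(K)} \sim h_K \|\widehat u\|_{L^2(\Kref)}$, since the Jacobian $\sqrt{\det G}$ is comparable to $h_K^2$. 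By hypothesis we also have $|u|_{X_1} \sim |\widehat u|_{\widehat X_1}$ (no $h_K$-power here — this is exactly why the corollary is phrased in terms of seminorms rather than full norms). So the first step is to record these two scaling relations with their implied constants depending only on $\gamma$.

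The second step is to insert these into the $k$-functional. For fixed $t > 0$,
\begin{align*}
  k^2_K(u,t) = \inf_{v \in X_1} \|u - v\|_{L^2(K)}^2 + t^2 |v|_{X_1}^2
  \sim \inf_{\widehat v \in \widehat X_1} h_K^2 \|\widehat u - \widehat v\|_{L^2(\Kref)}^2 + t^2 |\widehat v|_{\widehat X_1}^2
  = h_K^2 \, k^2_{\Kref}\!\left(\widehat u, t/h_K\right),
\end{align*}
where in the middle step I used that $v \mapsto \widehat v = v \circ F_K$ is a bijection $X_1 \to \widehat X_1$ (this is where the assumption that pull-backs of $X_1$ land in $\widehat X_1$, and implicitly the converse via $F_K^{-1}$, is used), and in the last step I factored out $h_K^2$ and absorbed the remaining $h_K^{-2}$ into a rescaling of $t$. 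The third step is then a one-line substitution in the interpolation seminorm: with $s = t/h_K$,
\begin{align*}
  |u|^2_{\theta,K} = \int_0^\infty t^{-2\theta} k^2_K(u,t)\,\frac{dt}{t}
  \sim \int_0^\infty t^{-2\theta} h_K^2\, k^2_{\Kref}(\widehat u, t/h_K)\,\frac{dt}{t}
  = h_K^{2 - 2\theta} \int_0^\infty s^{-2\theta} k^2_{\Kref}(\widehat u, s)\,\frac{ds}{s}
  = h_K^{2-2\theta} |\widehat u|^2_{\theta,\Kref},
\end{align*}
which is the claimed equivalence after taking square roots. The endpoint cases $\theta \in \{0,1\}$ follow directly from the first step without any integral manipulation, matching the convention in~\eqref{eq:def_K_functional}.

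For the "in particular" statement about the full interpolation norm, the cleanest route is to apply Lemma~\ref{lemma:K-vs-k} on both $K$ and $\Kref$: it identifies $\|u\|_{(L^2(K),X_1)_{\theta,2}}$ (up to constants depending only on $\theta$) with $|u|_{\theta,K} + H_K^{-\theta}\|u\|_{L^2(K)}$, where $H_K$ is the scale implicit in the full norm on $X_1$ (here $H_K \sim h_K$, consistently with the $\widetilde H^1_h$-type norms of Section~\ref{subsection:setting}), and similarly on the reference element with $H_{\Kref} \sim 1$. Then combine the seminorm scaling just proved with $\|u\|_{L^2(K)} \sim h_K\|\widehat u\|_{L^2(\Kref)}$ and check that both summands carry the same power $h_K^{1-\theta}$; summing the two equivalences gives the result, with constants depending only on $\theta$ and $\gamma$. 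The main obstacle is mostly bookkeeping rather than a deep difficulty: one must be careful that the use of seminorms (not norms) is what makes the single clean power $h_K^{1-\theta}$ appear — an attempt to scale full norms directly would produce $t$-dependent mismatches — and one must make sure the rescaling $t \mapsto t/h_K$ is applied to the $k$-functional (seminorm version) and only afterwards converted to the full norm via Lemma~\ref{lemma:K-vs-k}, exactly as Remark~\ref{rem:why_not_interpolate_at_the_end} emphasizes.
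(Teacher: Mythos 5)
Your proof is correct and follows essentially the same route as the paper: scale the two terms of the $k$-functional under pull-back (using $\|u\|_{L^2(K)} \sim h_K\|\widehat u\|_{L^2(\Kref)}$ and the hypothesis $|u|_{X_1} \sim |\widehat u|_{\widehat X_1}$), rescale the parameter $t$ by $h_K$ in the integral, and then invoke Lemma~\ref{lemma:K-vs-k} to convert the seminorm equivalence into the full-norm equivalence. The only difference is cosmetic — the paper writes the chain with $\widehat u$ on the left and the substitution $t \mapsto t h_K$, whereas you keep $u$ on the left and substitute $s = t/h_K$ — and you spell out more explicitly the bijectivity of $v \mapsto \widehat v$ and the role of $H_K \sim h_K$ versus $H_{\Kref} \sim 1$.
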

\begin{proof} 
  Write $k^2(u,t) = \inf_{v \in X_1} \|u - v\|^2_{L^2(K)} + t^2|v|_{X_1}^2$. 
  Then
  \begin{align*}
    \|\widehat u - \widehat v\|_{L^2(\Kref)}^{2} + t^{2}|\widehat v|_{\widehat X_1}^{2}
    \sim 
    h_K^{-2} \|u - v\|^{2}_{L^2(K)} + t^{2} |v|_{X_1}^{2} . 
  \end{align*}
  Therefore,
  \begin{align*}
    \inf_{w \in \widehat{X}_1}
    \|\widehat u - w\|_{L^2(\Kref)}^{2} + t^{2}|w|_{\widehat X_1}^{2}
    \lesssim h_K^{-2} k^{2}(u,t  h_K), 
  \end{align*}
  and thus 
  \begin{align*}
    |\widehat u|_{\theta,\Kref}^2
    \lesssim h_K^{-2} \int_{t=0}^\infty t^{-2\theta} k^2(u,t  h_K)\frac{dt}{t} 
    \lesssim h_K^{-2+2\theta} |u|^2_{\theta,K}. 
  \end{align*}
  The reverse direction is shown in a similar manner. The equivalence of norms then follows with 
  Lemma~\ref{lemma:K-vs-k}.
\end{proof}
\subsection{Interpolation of weighted spaces}
\label{sec:interpolation-of-weighted-spaces}
The vertex averaging operators defined in Section~\ref{sec:averaging-operators} ahead are such that 
they reproduce the value in one corner of the reference element $\Kref$ for functions that are continuous
there. For functions with less regularity near that corner, the difference is measured in a 
weighted $L^2$-norm. Thus, weighted spaces and interpolation between weighted spaces are studied
in the present section. 
\subsubsection{Trace theorems and local estimates}
We recall some simple trace estimates.
\begin{lemma}\label{lemma:weighted-norm-equivalence}
  Let $\widehat{V}$ be a vertex of $\Tref$ and $\widehat{e}$ an edge with $\widehat{V} \in \overline{\widehat{e}}$.
    \begin{enumerate}[(i)]      
    \item\label{item:lemma:weighted-norm-equivalence-i}
      Then for all $v \in H^1(\Tref)$ the following estimates hold      
      provided that the right-hand sides are finite:  
      \begin{eqnarray*}
        \|d_{\widehat{V}}^{-1} v\|_{L^2(\Tref)} &\lesssim  & 
        \|\nabla v\|_{L^2(\Tref)} + \|d_{\widehat{V}}^{-1/2} v\|_{L^2(\widehat{e})},  \\
        \|d_{\widehat{V}}^{-1/2} v\|_{L^2(\widehat{e})} &\lesssim& \|d_{\widehat{V}}^{-1} v\|_{L^2(\Tref)} + \|\nabla v\|_{L^2(\Tref)}. 
      \end{eqnarray*}
    \item\label{item:lemma:weighted-norm-equivalence-ii}
      Define, for $t > 0$, the slabs $\widetilde{A}^{{2,1}}(t):=\{ x \in \Tref: t<d_{\widehat{V}}(x) < 2t\}$.
      Then for all $v \in H^1(\Tref)$ the following estimates hold
      provided that the right-hand sides are finite:
      \begin{eqnarray*}
        \|d_{\widehat{V}}^{-1} v\|_{L^2(\widetilde{A}^{2,1}(t))} &\lesssim & 
        \|\nabla v\|_{L^2(\widetilde{A}^{{2,1}}(t))} + \|d_{\widehat{V}}^{-1/2} v\|_{L^2(\widehat{e}\, \cap \widetilde{A}^{{2,1}}(t))},  \\
        \|d_{\widehat{V}}^{-1/2} v\|_{L^2(\widehat{e}\, \cap \widetilde{A}^{{2,1}}(t))} &\lesssim& \|d_{\widehat{V}}^{-1} v\|_{L^2(\widetilde{A}^{{2,1}}(t))}
        + \|\nabla v\|_{L^2(\widetilde{A}^{{2,1}}(t))}. 
      \end{eqnarray*}
    \item\label{item:lemma:weighted-norm-equivalence-edge}
       For $v \in H^1(\Tref)$ with $v|_{\widehat{e}} \equiv 0$, it holds
      \begin{align}
          \norm{d_{\widehat{e}}^{-1} u}_{L^2(\Tref)} &\leq \norm{\nabla v}_{L^2(\Tref)}.
          \label{eq:lemma:weighted-norm-equivalence-edge}
      \end{align}
  \end{enumerate}
\end{lemma}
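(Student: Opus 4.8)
The plan is to prove the three assertions of Lemma~\ref{lemma:weighted-norm-equivalence} by standard one-dimensional Hardy-type inequalities combined with a dyadic decomposition of $\Tref$ into slabs around the vertex $\widehat V$. Without loss of generality I would place $\widehat V$ at the origin and take $\widehat e$ to lie along one of the coordinate axes; then $d_{\widehat V}(x) = |x|$ and $d_{\widehat e}(x)$ is (comparable to) the distance to that axis. The key elementary tool is the Hardy inequality in the form $\int_0^1 r^{-2} |f(r)|^2\, dr \lesssim \int_0^1 |f'(r)|^2\, dr + |f(1)|^2$ for $f \in H^1(0,1)$, and its local/dyadic analog $\int_t^{2t} r^{-2} |f(r)|^2 \, dr \lesssim \int_t^{2t} |f'(r)|^2\, dr + t^{-1}|f(2t)|^2$, which after summing over dyadic scales recovers the global statement.

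For part~(\ref{item:lemma:weighted-norm-equivalence-i}), the first estimate follows by writing the $L^2(\Tref)$-norm in polar-type coordinates centered at $\widehat V$, applying the radial Hardy inequality on each ray with the trace on $\widehat e$ playing the role of the boundary value $f(1)$ (here one uses that $\widehat e$ emanates from $\widehat V$, so integrating along rays near $\widehat e$ controls the weighted trace term $\|d_{\widehat V}^{-1/2} v\|_{L^2(\widehat e)}$), and then integrating over the angular variable. The second (reverse) estimate is a weighted trace inequality: restricting $v$ to $\widehat e$, one bounds $|v(r)|^2$ along the edge by the fundamental theorem of calculus in the transverse direction, $|v(r,0)|^2 \lesssim |v(r,s)|^2 + \bigl(\int |\partial_s v|\bigr)^2$, then weights by $r^{-1}$ and integrates; Cauchy-Schwarz turns the transverse integral into $\|d_{\widehat V}^{-1}v\|_{L^2}^2 + \|\nabla v\|_{L^2}^2$ after accounting for the Jacobian. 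Part~(\ref{item:lemma:weighted-norm-equivalence-ii}) is the same pair of arguments but localized to a single dyadic slab $\widetilde A^{2,1}(t)$; since on such a slab $d_{\widehat V} \sim t$ is essentially constant, the estimates are really just the scaled Poincaré/trace inequalities on a fixed annular reference domain, and scaling tracks the powers of $t$ correctly. Part~(\ref{item:lemma:weighted-norm-equivalence-edge}) is the cleanest: with $v|_{\widehat e} \equiv 0$, one uses the fundamental theorem of calculus in the direction transverse to $\widehat e$ starting from the zero boundary value, $v(x) = \int_0^{d_{\widehat e}(x)} \partial_\perp v$, and the classical Hardy inequality (with the sharp constant $1$, which is why we get $\le$ rather than $\lesssim$) gives $\|d_{\widehat e}^{-1} v\|_{L^2(\Tref)} \le \|\partial_\perp v\|_{L^2(\Tref)} \le \|\nabla v\|_{L^2(\Tref)}$.

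The main obstacle is bookkeeping rather than conceptual: one must choose coordinates on $\Tref$ adapted simultaneously to the vertex $\widehat V$ and the edge $\widehat e$ so that both weights $d_{\widehat V}$ and $d_{\widehat e}$ have tractable expressions, verify that the change of variables has Jacobian bounded above and below (true since $\Tref$ is a fixed Lipschitz triangle and we stay away from the opposite edge issues by working with distances), and make sure the ``finiteness of the right-hand side'' hypotheses are used to justify the density/approximation step that allows reducing to smooth $v$. A minor care point is that in part~(\ref{item:lemma:weighted-norm-equivalence-i}) the rays from $\widehat V$ must actually reach $\widehat e$ for the Hardy boundary term to be the edge trace; for rays missing $\widehat e$ one instead hits the opposite part of $\partial\Tref$, but those rays have angular measure bounded away from the troublesome directions, so one may alternatively close the estimate by first proving it with the full boundary trace $\|d_{\widehat V}^{-1/2} v\|_{L^2(\partial\Tref)}$ and then absorbing the part of $\partial\Tref$ away from $\widehat V$ using a trivial trace theorem (since there $d_{\widehat V} \sim 1$). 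I would present the argument on the reference triangle only, as all uses of the lemma are on $\Tref$.
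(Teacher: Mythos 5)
Your handling of part~(\ref{item:lemma:weighted-norm-equivalence-ii}), of the second estimate in part~(\ref{item:lemma:weighted-norm-equivalence-i}), and of part~(\ref{item:lemma:weighted-norm-equivalence-edge}) is essentially correct and matches the paper's reasoning in spirit (the paper handles the second estimate in~(\ref{item:lemma:weighted-norm-equivalence-i}) by localized trace estimates plus a Besicovitch covering, while you integrate directly in the transverse variable; both are fine). However, your plan for the \emph{first} estimate in part~(\ref{item:lemma:weighted-norm-equivalence-i}) has a genuine gap. The ``key Hardy inequality'' you state, $\int_0^1 r^{-2}|f(r)|^2\,dr \lesssim \int_0^1|f'|^2\,dr + |f(1)|^2$, is false without $f(0)=0$ (test $f\equiv 1$), and once $f(0)=0$ is imposed the $|f(1)|^2$ term is superfluous. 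More seriously, the polar/radial scheme around $\widehat V$ does not produce the weighted edge trace: every ray from $\widehat V$ exits $\Tref$ on $\partial\Tref\setminus\widehat e$, and your backup --- prove the bound with the full boundary trace $\|d_{\widehat V}^{-1/2}v\|_{L^2(\partial\Tref)}$ and ``absorb the part away from $\widehat V$'' --- does not close, because $\partial\Tref\setminus\overline{\widehat e}$ contains the \emph{other} edge of $\Tref$ emanating from $\widehat V$, on which $d_{\widehat V}^{-1/2}$ is just as singular as on $\widehat e$ and cannot be absorbed by a trivial trace theorem. Even setting that aside, the two-dimensional radial Hardy inequality your scheme implicitly needs (after picking up the $r\,dr$ Jacobian) is a borderline failure in $d=2$: with $u(x)=(\log(e/|x|))^{-\alpha}$, $0<\alpha<1/2$, one has $u(\widehat V)=0$ and $\|\nabla u\|_{L^2}<\infty$ but $\|\,|x|^{-1}u\|_{L^2}=\infty$. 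So a derivation that passes through a radial Hardy cannot work.

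The paper avoids all of this by working in Cartesian coordinates adapted to $\widehat e$ rather than polar coordinates around $\widehat V$. With $\widehat V=(0,0)$ and $\widehat e=(0,1)\times\{0\}$, so $d_{\widehat V}\sim\xi$ on $\Tref$, one writes $v(\xi,\eta)=v(\xi,0)+\int_0^\eta \partial_t v(\xi,t)\,dt$. The $v(\xi,0)$ term contributes precisely $\int_0^1 \xi^{-1}v(\xi,0)^2\,d\xi=\|d_{\widehat V}^{-1/2}v\|_{L^2(\widehat e)}^2$ --- this is where the weighted edge trace comes from --- and for the integral term one uses $\xi^{-1}\leq\eta^{-1}$ on $\Tref$ (where $\eta<\xi$) and applies the one-dimensional Hardy inequality in $\eta$ to $\eta\mapsto\int_0^\eta\partial_t v$, which genuinely vanishes at $\eta=0$. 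In other words, move perpendicularly to $\widehat e$, not radially from $\widehat V$; this is the same mechanism you already use correctly for the second estimate in~(\ref{item:lemma:weighted-norm-equivalence-i}) and for part~(\ref{item:lemma:weighted-norm-equivalence-edge}). (A small side remark: the sharp one-dimensional Hardy constant is $2$, not $1$, so the claim of a constant-$1$ bound in part~(\ref{item:lemma:weighted-norm-equivalence-edge}) is not justified by Hardy alone.)
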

\begin{proof}
  The first estimate in~(\ref{item:lemma:weighted-norm-equivalence-i}) follows from standard arguments
  and an application of Hardy's inequality: To keep the notation succinct,
  consider $\widehat{V}:=(0,0)$ and $\widehat{e}:= (0,1) \times \{0\}$; note that $d_{V} \sim \xi$. 
  Then $u(\xi,\eta) = u(\xi,0) + \int_{t=0}^\eta \partial_t u(\xi,t)\,dt$ and therefore 
  $$
    \int_{\widehat T} \xi^{-2} u^2(\xi,\eta) \lesssim \int_{\xi} \xi^{-1} u(\xi,0)^2\,d\xi + 
    \int_{\xi} \int_{\eta=0}^\xi \left| \xi^{-1} \int_{t=0}^\eta u_t(\xi,t)\,dt\right|^2\,d\eta\,d\xi.
  $$
  In the last integral, we estimate $\xi^{-1} \leq \eta^{-1}$ and apply Hardy's inequality. 
  The second estimate in~(\ref{item:lemma:weighted-norm-equivalence-i})
  follows from local trace estimates near $\widehat{e}$ and a covering argument (Besicovitch).
  The estimates in (\ref{item:lemma:weighted-norm-equivalence-ii})
  follow in a similar manner using polar coordinates; alternatively, it can be
  shown by scaling arguments.
  The estimate~(\ref{item:lemma:weighted-norm-equivalence-edge}) follows along the same lines,
  using that $u(\xi,0)$ vanishes.
\end{proof}
\begin{lemma}\label{lemma:weighted-infty}
 Let $u\in L^\infty(\Tref)$ and let $\widehat{V}$ be one of the vertices of $\Tref$.
  Then,
$\displaystyle 
    \| d_{\widehat{V}}^{-1/2} u \|_{L^2(\Tref)} \lesssim \| u \|_{L^\infty(\Tref)}.
$
\end{lemma}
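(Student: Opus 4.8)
The statement to prove is Lemma~\ref{lemma:weighted-infty}: for $u \in L^\infty(\Tref)$ and $\widehat V$ a vertex of $\Tref$, one has $\|d_{\widehat V}^{-1/2} u\|_{L^2(\Tref)} \lesssim \|u\|_{L^\infty(\Tref)}$. The plan is entirely elementary and hinges only on the integrability of $d_{\widehat V}^{-1}$ near a single point in a $2$-dimensional domain. First I would pull the $L^\infty$-bound out of the integral: since $|u(x)| \le \|u\|_{L^\infty(\Tref)}$ for a.e.\ $x$, we get
\begin{align*}
  \|d_{\widehat V}^{-1/2} u\|_{L^2(\Tref)}^2 = \int_{\Tref} d_{\widehat V}(x)^{-1} |u(x)|^2 \, dx \le \|u\|_{L^\infty(\Tref)}^2 \int_{\Tref} d_{\widehat V}(x)^{-1} \, dx.
\end{align*}
So the lemma reduces to the claim that $\int_{\Tref} d_{\widehat V}(x)^{-1}\,dx < \infty$, with a value depending only on the fixed reference triangle $\Tref$ (hence an absolute constant).

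For the remaining integral I would argue by a change to polar coordinates centered at $\widehat V$. After a rigid motion we may assume $\widehat V$ is the origin; then $d_{\widehat V}(x) = |x|$, and $\Tref$ is contained in the ball $B_R(\widehat V)$ with $R = \operatorname*{diam}\Tref$ (in fact $R = \sqrt 2$ for the concrete reference triangle, but only $R < \infty$ matters). Writing $x = (r\cos\varphi, r\sin\varphi)$ gives $dx = r\,dr\,d\varphi$, and thus
\begin{align*}
  \int_{\Tref} d_{\widehat V}(x)^{-1}\,dx \le \int_{B_R(\widehat V)} |x|^{-1}\,dx = \int_0^{2\pi}\!\!\int_0^R r^{-1} \cdot r \, dr\,d\varphi = 2\pi R < \infty.
\end{align*}
Combining the two displays yields $\|d_{\widehat V}^{-1/2} u\|_{L^2(\Tref)} \le \sqrt{2\pi R}\,\|u\|_{L^\infty(\Tref)}$, which is the assertion with an explicit absolute constant.

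There is essentially no obstacle here: the only point requiring a word of care is that $u \in L^\infty$ does not a priori guarantee $d_{\widehat V}^{-1/2}u \in L^2$ for a general domain, but the $2$-dimensional polar-coordinate computation shows the weight $d_{\widehat V}^{-1}$ is integrable on any bounded planar set, so the right-hand side is finite and the manipulation is justified. One could equivalently invoke the slab decomposition $\widetilde A^{2,1}(2^{-k})$ from Lemma~\ref{lemma:weighted-norm-equivalence}(\ref{item:lemma:weighted-norm-equivalence-ii}): on $\widetilde A^{2,1}(t)$ the weight $d_{\widehat V}^{-1}$ is comparable to $t^{-1}$ and the slab has area $\lesssim t^2$, so $\int_{\widetilde A^{2,1}(2^{-k})} d_{\widehat V}^{-1} \lesssim 2^{-k}$ and the geometric series $\sum_k 2^{-k}$ converges — but the direct polar-coordinate estimate is cleaner and I would present that.
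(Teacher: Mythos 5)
Your proof is correct and takes essentially the same approach as the paper: pull out the $L^\infty$-bound and verify $\int_{\Tref} d_{\widehat V}^{-1}\,dx < \infty$ by an explicit elementary integral. The paper computes this in Cartesian coordinates using $d_{\widehat V}\sim\xi$ on $\Tref$, while you use polar coordinates; this is a purely cosmetic difference.
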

\begin{proof}
  For simplicity, we consider $\widehat{V}=(0,0)$ and note that $d_{\widehat{V}}\sim \xi$. We compute
  \begin{align*}
    \| d_{\widehat{V}}^{-1/2} u \|_{L^2(\Tref)}^2 &\lesssim \| u \|_{L^\infty(\Tref)}^2 \int_0^1\xi^{-1}\int_0^\xi 1\,d\eta d\xi
    = \| u \|_{L^\infty(\Tref)}^2.  \qedhere
  \end{align*}
  
\end{proof}
\subsubsection{Interpolation of weighted spaces}
We start with an explicit construction of a function that essentially
realizes the decomposition of the K-functional for a pair of Sobolev spaces: 
\begin{lemma}\label{lemma:sobolev:decomposition}
  Let $\omega\subset\R^d$ be a bounded Lipschitz domain with $\diam(\omega)\leq C_1$, $\theta\in(0,1)$,
  and fix $\beta > 0$.
  Then, for $u\in H^\theta_h(\omega)$, there is a function $w:(0,\infty)\rightarrow H^1_h(\omega)$
  such that
  \begin{align}\label{cor:sobolev:eq1}
    \int_{0}^\infty t^{-2\theta}
    \left( \| u - w(t) \|_{L^2(\omega)}^2 + t^2 \| w(t) \|_{H^1_h(\omega)}^2 \right)
    \frac{dt}{t}
    \lesssim \| u \|^2_{H^\theta_h(\omega)}.
  \end{align}
  Additionally, for all subsets $\omega'\subset\omega$ with
  $\dist{\omega'}{\partial\omega}>\beta t$ it holds
  \begin{align}\label{cor:sobolev:eq2}
    \| w(t) \|_{L^2(\omega')} &\lesssim \| u \|_{L^2(\cup_{x\in\omega'}B_{\beta t/2}(x))}.
  \end{align}
  The hidden constants depend only on $d$, $C_1$, $\beta$, and the Lipschitz constant of $\omega$.\\

  If, additionally, $\widetilde\omega\subset\omega$ is a subset with
  $\diam(\omega)\leq C_{\diam}\diam(\widetilde\omega)$ for some constant $C_{\diam}>0$,
  and $\supp(u)\subset\widetilde\omega$,
  then $w:(0,\infty)\rightarrow \widetilde H^1_h(\omega)$ and 
  the right-hand side in~\eqref{cor:sobolev:eq1} can be replaced by
  $\| u \|_{\widetilde H^\theta_h(\widetilde\omega)}$;
  furthermore, \eqref{cor:sobolev:eq2} holds for all $\omega'\subset\R^d$ with $u$ on the right implicitly
  extended by zero. The hidden constants depend additionally on $C_{\diam}$.
\end{lemma}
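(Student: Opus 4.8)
\emph{Plan.} The natural construction is to let $w(t)$ be a mollification, at a length scale proportional to $t$, of a Sobolev extension of $u$. Fix a nonnegative radial $\phi\in C_c^\infty(B_{1/2}(0))$ with $\int\phi=1$ and write $\phi_\delta(x):=\delta^{-d}\phi(x/\delta)$, so that $\supp\phi_\delta\subset B_{\delta/2}(0)$, $\int\phi_\delta=1$, and $\widehat\phi(0)=1$, $\nabla\widehat\phi(0)=0$. Let $E$ be a Stein-type Sobolev extension operator followed by multiplication with a fixed cutoff, so that $Eu$ has bounded support, $Eu=u$ on $\omega$, and — interpolating the $L^2$- and $H^1$-extension bounds and using $\diam(\omega)\le C_1$ together with \eqref{eq:norm-equivalence} — $\norm{Eu}_{H^\theta(\R^d)}\lesssim\norm{u}_{H^\theta_h(\omega)}$ with a constant depending only on $d$, $C_1$ and the Lipschitz constant of $\omega$. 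Now set $w(t):=\big((Eu)*\phi_{\beta t}\big)\big|_{\omega}$; this is smooth on $\omega$, so $w(t)\in H^1_h(\omega)$, and $w(t)(x)$ depends on $Eu$ only through its values on $B_{\beta t/2}(x)$.

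The locality estimate \eqref{cor:sobolev:eq2} is then immediate: Jensen's inequality applied to the probability measure $\phi_{\beta t}(x-\cdot)\,dz$ gives $\abs{w(t)(x)}^2\le\int\abs{Eu(z)}^2\phi_{\beta t}(x-z)\,dz$, and integrating over $x\in\omega'$ and using Fubini yields $\norm{w(t)}_{L^2(\omega')}^2\le\norm{Eu}_{L^2(\cup_{x\in\omega'}B_{\beta t/2}(x))}^2$; when $\dist{\omega'}{\partial\omega}>\beta t$ the balls $B_{\beta t/2}(x)$, $x\in\omega'$, lie in $\omega$, so $Eu$ may be replaced by $u$. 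For \eqref{cor:sobolev:eq1} I would pass to the Fourier side. With $U:=Eu$, Plancherel gives $\norm{U-U*\phi_{\beta t}}_{L^2(\R^d)}^2=\int\abs{1-\widehat\phi(\beta t\xi)}^2\abs{\widehat U(\xi)}^2\,d\xi$ and $\norm{\nabla(U*\phi_{\beta t})}_{L^2(\R^d)}^2=\int\abs{\xi}^2\abs{\widehat\phi(\beta t\xi)}^2\abs{\widehat U(\xi)}^2\,d\xi$; interchanging the $t$- and $\xi$-integrals and substituting $s=\beta t\abs{\xi}$, the $t$-integrals of $t^{-2\theta-1}\abs{1-\widehat\phi(\beta t\xi)}^2$ and of $t^{1-2\theta}\abs{\widehat\phi(\beta t\xi)}^2$ equal $c(\theta)\beta^{2\theta}\abs{\xi}^{2\theta}$ and $c'(\theta)\beta^{2\theta-2}\abs{\xi}^{2\theta-2}$ with finite constants (using $\abs{1-\widehat\phi(\eta)}\lesssim\min(\abs{\eta}^2,1)$ and $0<\theta<1$ for the first, and that $\widehat\phi$ is Schwartz and $\theta<1$ for the second). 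Hence the contribution of these two terms to \eqref{cor:sobolev:eq1} is $\lesssim\int\abs{\xi}^{2\theta}\abs{\widehat U}^2\,d\xi\le\norm{U}_{H^\theta(\R^d)}^2\lesssim\norm{u}_{H^\theta_h(\omega)}^2$. The remaining piece $\diam(\omega)^{-2}\norm{w(t)}_{L^2(\omega)}^2$ of $\norm{w(t)}_{H^1_h(\omega)}^2$ I would handle by splitting the $t$-integral at $t=\diam(\omega)$: for $t\le\diam(\omega)$ Young's inequality gives $\norm{w(t)}_{L^2(\omega)}\le\norm{U}_{L^2(\R^d)}$, while for $t\ge\diam(\omega)$ one has $\norm{w(t)}_{L^\infty}\lesssim(\beta t)^{-d/2}\norm{U}_{L^2(\R^d)}$, which for $d\ge2$ gives a convergent tail; both pieces are $\lesssim\diam(\omega)^{-2\theta}\norm{u}_{L^2(\omega)}^2\lesssim\norm{u}_{H^\theta_h(\omega)}^2$. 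This proves the first part.

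For the refined statement I would keep the same $w(t)$ but take $U:=\widetilde u$, the extension of $u$ by zero — legitimate since $\supp u\subset\widetilde\omega\subset\omega$ — so that, by Remark~\ref{rem:norm-equivalence-extension} applied on $\widetilde\omega$, $\norm{\widetilde u}_{H^\theta(\R^d)}\lesssim\norm{u}_{\widetilde H^\theta_h(\widetilde\omega)}$, and $\diam(\omega)\sim\diam(\widetilde\omega)$ lets one trade $\diam(\omega)^{-\theta}$ for $\diam(\widetilde\omega)^{-\theta}$. With this choice the Fourier and Fubini arguments of the previous paragraph carry over unchanged, and since $\widetilde u$ is now globally defined, \eqref{cor:sobolev:eq2} holds for \emph{every} $\omega'\subset\R^d$. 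The one genuinely new point — and the part I expect to be the real obstacle — is that $\widetilde u*\phi_{\beta t}$ need not vanish on $\partial\omega$ once $\beta t\gtrsim\dist{\widetilde\omega}{\partial\omega}$, so that $w(t)\notin\widetilde H^1_h(\omega)$ for large $t$. I would repair this by multiplying by a smooth $t$-dependent cutoff $\psi_t$ which equals $1$ on $\{x\in\omega:\dist{x}{\partial\omega}>2\beta t\}$, is supported in $\{x\in\omega:\dist{x}{\partial\omega}\ge\beta t\}$, and satisfies $\abs{\nabla\psi_t}\lesssim(\beta t)^{-1}$ (simply setting $w(t):=0$ for $t$ so large that $\{x\in\omega:\dist{x}{\partial\omega}\ge\beta t\}=\emptyset$); an alternative that avoids a separate cutoff is a Whitney-type mollification of $\widetilde u$ whose width at $x$ is $\min(\beta t,c\,d_{\partial\omega}(x))$, which directly keeps $w(t)$ supported in $\omega$. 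Either way, the price is an additional boundary-layer term — essentially $(1-\psi_t)(\widetilde u*\phi_{\beta t})$ and its gradient, supported in a shell of width $\sim\beta t$ about $\partial\omega$ — whose $L^2$-norm I would bound by that of $u$ on a slightly larger shell; controlling $\int t^{-2\theta-1}$ against this quantity, so that the total still sums to $\lesssim\norm{u}_{\widetilde H^\theta_h(\widetilde\omega)}^2$, is exactly where the geometric relation between $\widetilde\omega$ and $\omega$ (i.e. that $\widetilde\omega$ is separated from $\partial\omega$ on a scale comparable to $\diam(\widetilde\omega)$, which holds for the fat reference elements and mesh patches to which the lemma is applied) must be used, and this boundary analysis is the technical core of this part.
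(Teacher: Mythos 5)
Your plan follows the same overall strategy as the paper: extend $u$ to $\R^d$ and take $w(t)$ to be a mollification of the extension at scale $\sim\beta t$, then deduce \eqref{cor:sobolev:eq2} from the support of the mollifier and \eqref{cor:sobolev:eq1} from the characterization of $H^\theta$ via mollification. The differences are in execution. The paper takes $w(t):=\chi_{[0,\diam(\omega)]}(t)\cdot(\rho_{\beta t/2}\star Eu)$, i.e.\ it truncates the mollification in $t$ and cites~\cite[Thm.~7.47]{adams_fournier} for the estimate; you instead keep $w(t)=Eu*\phi_{\beta t}$ for all $t$ and prove the bound directly by Plancherel. Both are fine, but note that the paper's hard cutoff at $t=\diam(\omega)$ makes the large-$t$ tail trivial ($w(t)\equiv0$), whereas your $L^\infty$-decay argument for $t\ge\diam(\omega)$ requires $d+2\theta>2$, as you flag with the remark ``$d\ge 2$''; since the lemma is applied with $d=2$ this is harmless, but the truncation is slightly cleaner and avoids the dimensional restriction.

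On the second part (zero extension, $\supp u\subset\widetilde\omega$), you have put your finger on a real subtlety. The paper's proof is a single sentence (``extend $u$ by zero instead of using Stein'') and does \emph{not} address the fact that $\widetilde u*\phi_{\beta t}$ is supported in $\widetilde\omega+B_{\beta t/2}(0)$, which need not lie inside $\omega$; as you observe, without a separation condition $\dist{\widetilde\omega}{\partial\omega}\gtrsim\diam(\omega)$ the mollified function has no reason to vanish near $\partial\omega$, so the assertion ``$w:(0,\infty)\to\widetilde H^1_h(\omega)$'' is not literally established by the construction as written. In the place where this part of the lemma is actually invoked (the proof of Lemma~\ref{lemma:weight-localize}, with $\omega=F_\Gamma^{-1}(B_{c_1h_V}(V)\cap\omega_V)$ and $\widetilde\omega=F_\Gamma^{-1}(B_{\alpha_2h_V}(V)\cap\omega_V)$, $\alpha_2=c_1/2$), the separation $\dist{\widetilde\omega}{\partial\omega}\gtrsim\diam(\omega)$ \emph{does} hold and the paper repeatedly ``further reduces $\beta$'', so the overshoot is avoided in practice. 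Your alternative remedy — a $t$-dependent cutoff $\psi_t$ annihilating $w(t)$ in a boundary shell of width $\sim\beta t$ — is a reasonable way to make the statement robust without a separation hypothesis, but you leave the boundary-layer estimate (controlling $(1-\psi_t)(\widetilde u*\phi_{\beta t})$ and its gradient against $\int t^{-2\theta-1}\ldots$) as a sketch. So your proposal identifies a genuine looseness in the paper's argument and offers a more careful route, but the decisive boundary estimate in the second part is claimed rather than proved; to be complete you would need either to add the separation hypothesis under which the bare mollification already works (matching how the lemma is actually used), or to carry out the boundary-layer analysis you outline.
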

\begin{proof}
  Denote by $H^1_h(\R^d)$ the space $H^1(\R^d)$ with norm
  \begin{align*}
    \| \cdot \|_{H^1_h(\R^d)}^2 := \diam(\omega)^{-2}\| \cdot \|_{L^2(\R^d)}^2 + | \cdot |_{H^1(\R^d)}^2,
  \end{align*}
  and by $H^\theta_h(\R^d)$ the interpolation between $L^2(\R^d)$ and $H^1_h(\R^d)$.
  From $\diam(\omega)\leq C_1$ we conclude that Stein's extension operator $E$ (see~\cite[Sec.~{VI}]{stein70}) is bounded
  as $L^2(\omega)\rightarrow L^2(\R^d)$ and $H^1_h(\omega)\rightarrow H^1_h(\R^d)$,
  with a constant depending only on $d$, $C_1$, and the Lipschitz constant of $\omega$.
  Therefore, it suffices to show the existence of  a function $w:(0,\infty)\rightarrow H^1_h(\R^d)$
  such that
  \begin{align}\label{cor:sobolev:eq3}
    \int_{0}^\infty t^{-2\theta}
    \left( \| Eu - w(t) \|_{L^2(\R^d)}^2 + t^2 \| w(t) \|_{H^1_h(\R^d)}^2 \right)
    \frac{dt}{t}
    \lesssim \| Eu \|^2_{H^\theta_h(\R^d)}.
  \end{align}
  To that end, let $\rho\in C^\infty(\R^d)$ with $\supp \rho\subset B_1(0)\setminus B_{1/2}(0)$
  be a mollifier and $\chi_{I}$ the indicator function of an interval $I$.
  Using the techniques from the proof of~\cite[Theorem 7.47]{adams_fournier} (implications $(a) \Rightarrow (b)$
    and $(c) \Rightarrow (a)$), it follows that
  $w(t) := \chi_{[0,\diam(\omega)]}(t)\cdot (\rho_{\beta t/2} \star E u)$ fulfills~\eqref{cor:sobolev:eq3}
  where we used again $\diam(\omega)\leq C_1$. The estimate~\eqref{cor:sobolev:eq2} is clear.
  If $\widetilde\omega\subset\omega$ is a subset
  as indicated and $u\in\widetilde H^\theta_h(\widetilde\omega)$, then we can extend
  $u$ by zero to $\R^d$ instead of using the Stein extension.
\end{proof}
The following lemma identifies the interpolation space between $L^2$ and 
a weighted $H^1$-space: 
\begin{lemma}
\label{lemma:equivalence-sobolev-plus-weight}
  Let $\theta \in (0,1)$, $\widehat{K}\in\{ \widehat T,\widehat S \}$ be the reference triangle or square,
  and let $\widehat{V}$ be one of its vertices. Consider the interpolation space between $L^2(\widehat{K})$ with the standard norm and
  $H^1(\widehat{K},d_{\widehat{V}}^{-1}) :={\{ u \in H^{1}(\widehat{K}): \norm{d_{\widehat{V}}^{-1} u}_{L^2(\widehat{K})} < \infty \}}$
  with the norm $\| u \|_{H^1(\widehat{K},d_{\widehat{V}}^{-1})}^2:= \|u\|_{H^1(\widehat{K})}^2 + \| d_{\widehat{V}}^{-1} u \|_{L^2(\widehat{K})}^2$.
  Then,
  \begin{align*}
    \norm{u}_{\left(L^2(\widehat{K}), H^{1}(\widehat{K},d_{\widehat{V}}^{-1})\right)_{\theta,2}}
    &\sim \norm{u}_{H^\theta(\widehat{K})} + \| d_{\widehat{V}}^{-\theta} u \|_{L^2(\widehat{K})},
  \end{align*}  
  and the implied constants depend only on $\theta$.
\end{lemma}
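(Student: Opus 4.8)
The plan is to prove the two inequalities separately. The estimate ``$\gtrsim$'' is elementary and follows from monotonicity of the $K$-functional together with Proposition~\ref{prop:interpolation_weighted_l2}; the estimate ``$\lesssim$'' is the substantial part and requires an explicit near-minimizer for the $K$-functional, obtained by combining the mollifier construction of Lemma~\ref{lemma:sobolev:decomposition} with a cut-off near the vertex.

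\textbf{Lower bound.} Write $X_1:=H^1(\widehat K,d_{\widehat V}^{-1})$. Since $\|v\|_{H^1(\widehat K)}\le\|v\|_{X_1}$, we have directly $K(t,u;L^2(\widehat K),H^1(\widehat K))\le K(t,u;L^2(\widehat K),X_1)$, hence $\|u\|_{(L^2(\widehat K),X_1)_{\theta,2}}\ge\|u\|_{H^\theta(\widehat K)}$. Likewise $\|d_{\widehat V}^{-1}v\|_{L^2(\widehat K)}\le\|v\|_{X_1}$, so the same monotonicity yields $\|u\|_{(L^2(\widehat K),X_1)_{\theta,2}}\gtrsim\|u\|_{(L^2(\widehat K),L^2(\widehat K;d_{\widehat V}^{-2}))_{\theta,2}}$, which by Proposition~\ref{prop:interpolation_weighted_l2} (with $w_0\equiv 1$, $w_1=d_{\widehat V}^{-2}$) is $\sim\|d_{\widehat V}^{-\theta}u\|_{L^2(\widehat K)}$. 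Adding the two bounds proves ``$\gtrsim$''.

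\textbf{Upper bound: the near-minimizer.} Note that on the fixed reference element $\|v\|_{X_1}^2\sim\|v\|_{H^1_h(\widehat K)}^2+\|d_{\widehat V}^{-1}v\|_{L^2(\widehat K)}^2$. Let $w:(0,\infty)\to H^1_h(\widehat K)$ be the function of Lemma~\ref{lemma:sobolev:decomposition} applied with $\omega=\widehat K$, so that $\int_0^\infty t^{-2\theta}\big(\|u-w(t)\|_{L^2(\widehat K)}^2+t^2\|w(t)\|_{H^1_h(\widehat K)}^2\big)\,dt/t\lesssim\|u\|_{H^\theta(\widehat K)}^2$ and $w(t)$ is, up to a cut-off in $t$, a mollification of (the zero extension of) $u$ at scale $\sim t$. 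In two dimensions such a mollification need not lie in $X_1$, so I truncate near $\widehat V$: choose $\psi_t\in C^\infty(\widehat K)$ with $\psi_t\equiv 0$ on $\{d_{\widehat V}<ct\}$, $\psi_t\equiv 1$ on $\{d_{\widehat V}>2ct\}$, $|\nabla\psi_t|\lesssim 1/t$, and set $\widetilde w(t):=\psi_t\,w(t)$. Then $\operatorname{supp}\widetilde w(t)\subset\{d_{\widehat V}>ct\}$, so $\widetilde w(t)\in X_1$, and $\widetilde w(t)$ is admissible in the $K$-functional for the pair $(L^2(\widehat K),X_1)$.

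\textbf{Upper bound: the estimates.} It remains to bound $\int_0^\infty t^{-2\theta}\big(\|u-\widetilde w(t)\|_{L^2(\widehat K)}^2+t^2\|\widetilde w(t)\|_{X_1}^2\big)\,dt/t$. The pieces $\|u-w(t)\|_{L^2}$ and $\|w(t)\|_{H^1_h}$ are already controlled by Lemma~\ref{lemma:sobolev:decomposition}. All errors caused by the cut-off — namely $\|(1-\psi_t)w(t)\|_{L^2}$ and $\|(\nabla\psi_t)w(t)\|_{L^2}\lesssim t^{-1}\|w(t)\|_{L^2(\{d_{\widehat V}<2ct\})}$ — are, via \eqref{cor:sobolev:eq2} applied to the zero extension of $u$, bounded by $t^{-1}\|u\|_{L^2(\{d_{\widehat V}<c't\})}$ (respectively without the $t^{-1}$ for the first one); after multiplication by the relevant power of $t$ and integration against $t^{-2\theta}\,dt/t$, Fubini converts these into $\int_{\widehat K}|u(x)|^2\int_{t>d_{\widehat V}(x)/c'}t^{-2\theta-1}\,dt\,dx\sim\|d_{\widehat V}^{-\theta}u\|_{L^2(\widehat K)}^2$. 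The only genuinely new term is $\int_0^\infty t^{2-2\theta}\|d_{\widehat V}^{-1}\widetilde w(t)\|_{L^2(\widehat K)}^2\,dt/t$. For this I split $\{d_{\widehat V}>ct\}$ into dyadic annuli $A_k=\{2^{-k-1}<d_{\widehat V}<2^{-k}\}$, on which $d_{\widehat V}^{-1}\sim 2^k$, bound $\|w(t)\|_{L^2(A_k)}\lesssim\|u\|_{L^2(\widetilde A_k)}$ for $2^{-k}\gtrsim t$ via \eqref{cor:sobolev:eq2} (the annuli with $2^{-k}\sim t$ being absorbed into the previous near-vertex terms), and sum; since $\theta<1$ the weight $t^{1-2\theta}$ is integrable at the origin, so $\sum_k 2^{2k}\|u\|_{L^2(\widetilde A_k)}^2\int_0^{c2^{-k}}t^{1-2\theta}\,dt\sim\sum_k 2^{2k\theta}\|u\|_{L^2(\widetilde A_k)}^2\sim\|d_{\widehat V}^{-\theta}u\|_{L^2(\widehat K)}^2$ by finite overlap of the $\widetilde A_k$. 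Passing to the infimum over $t$ then gives $\|u\|_{(L^2(\widehat K),X_1)_{\theta,2}}^2\lesssim\|u\|_{H^\theta(\widehat K)}^2+\|d_{\widehat V}^{-\theta}u\|_{L^2(\widehat K)}^2$, which completes the proof.

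\textbf{Main obstacle.} The delicate point is the weighted-$H^1$ defect $\int t^{2-2\theta}\|d_{\widehat V}^{-1}\widetilde w(t)\|_{L^2}^2\,dt/t$: the crude bound $d_{\widehat V}^{-1}\lesssim 1/(ct)$ on $\operatorname{supp}\widetilde w(t)$ is too lossy, as it produces the divergent integral $\int t^{-2\theta}\|u\|_{L^2}^2\,dt/t$. One must instead exploit that $d_{\widehat V}^{-1}$ varies slowly on scale $t$ away from the vertex and that $t^{1-2\theta}$ is integrable near $t=0$, which together recast the quantity as $\|d_{\widehat V}^{-\theta}u\|_{L^2}^2$; this is precisely the function of the dyadic-annulus bookkeeping above. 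A secondary, purely technical, point is making the mollification/cut-off near the boundary vertex rigorous, which is handled by working with the zero extension of $u$ (an $L^2$-isometry), since only $L^2$-type control of the cut-off errors is needed.
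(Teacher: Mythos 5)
The lower bound and the general strategy for the upper bound (mollify, then cut off near $\widehat V$) match the paper, but there is a genuine gap in your treatment of the boundary. You apply Lemma~\ref{lemma:sobolev:decomposition} with $\omega=\widehat K$ and then repeatedly invoke \eqref{cor:sobolev:eq2} on sets that touch $\partial\widehat K$: the balls $\{d_{\widehat V}<2ct\}\cap\widehat K$ and the dyadic annuli $A_k\cap\widehat K$ all have distance zero from $\partial\widehat K$, since $\widehat V$ is itself a boundary point and the two edges emanating from $\widehat V$ cut straight through these sets. The hypothesis $\dist{\omega'}{\partial\omega}>\beta t$ in \eqref{cor:sobolev:eq2} therefore fails, and the variant of the lemma that drops this hypothesis requires $\supp u\subset\subset\omega$, which does not hold for a general $u\in H^\theta(\widehat K)$. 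Your proposed fix --- switching to the zero extension of $u$ --- is not available either: the zero extension is not in $H^1(\R^2)$, so $w(t)$ built from it would not be an admissible competitor in the $K$-functional and would not satisfy \eqref{cor:sobolev:eq1}. The paper resolves exactly this point by \emph{first} extending $u$ symmetrically across the two edges through $\widehat V$ (placing $\widehat V$ at the origin and extending to $\omega=(-1,1)^2$ in the square case, and reflecting across an edge in the triangle case). After that extension, $\widehat V$ is an interior point of $\omega$ and every ball $B_r(\widehat V)$ with $r<1/2$ is at distance $\geq 1/2$ from $\partial\omega$, so the distance hypothesis of \eqref{cor:sobolev:eq2} is honoured. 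Without this step, your argument has a real hole.

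A secondary remark: for the weighted-$H^1$ defect $\int_0^\infty t^{2-2\theta}\|d_{\widehat V}^{-1}\widetilde w(t)\|_{L^2(\widehat K)}^2\,\tfrac{dt}{t}$ you run a dyadic-annulus argument, which is again driven by \eqref{cor:sobolev:eq2} and hence inherits the same boundary problem. A cleaner route --- the one used in the paper --- is the triangle inequality $d_{\widehat V}^{-1}\widetilde w(t)\lesssim d_{\widehat V}^{-1}\bigl(u-w(t)\bigr)+d_{\widehat V}^{-1}u$ on $\supp\psi_t\subset\{d_{\widehat V}>ct\}$: there $t\,d_{\widehat V}^{-1}\lesssim 1$, so $t\|d_{\widehat V}^{-1}(u-w(t))\|_{L^2}\lesssim\|u-w(t)\|_{L^2}$, which is already controlled by \eqref{cor:sobolev:eq1}, while the remaining piece $t\|d_{\widehat V}^{-1}u\|_{L^2(\{d_{\widehat V}>ct\})}$ reduces by Fubini directly to $\|d_{\widehat V}^{-\theta}u\|_{L^2(\widehat K)}$. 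This bypasses \eqref{cor:sobolev:eq2} for the weighted term altogether and avoids the dyadic bookkeeping you set up.
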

\begin{proof}
  For notational simplicity, we consider the case $\widehat{K}=(0,1)^2$ and $\widehat{V}=(0,0)$;
  the case of triangles can be inferred from this one by reflection across an edge of the triangle.
  We first show the bound $\gtrsim$ in the desired equivalence. 
  Note that for $t>0$ we have 
  \begin{align*}
    &\inf_{v \in H^1(\widehat{K},d_{\widehat{V}}^{-1})}{\left(\norm{u - v}_{L^2(\widehat{K})}^2 + t^2 \norm{v}_{H^1(\widehat{K})}^2 + t^2 \norm{d^{-1}_{\widehat{V}} v}^2_{L^2(\widehat{K})}\right)}
    \gtrsim K^2_{1}(u,t) + K^2_{2}(u,t),
  \end{align*}
  where $K_1(u,t)$ is the $K$-functional for the interpolation pair $(L^2(\widehat{K}),H^1(\widehat{K}))$
  and $K_2$ is the functional for $(L^2(\widehat{K}),L^2(\widehat{K},d^{-1}_{\widehat{V}}))$.
  By Proposition~\ref{prop:tartar} we have 
  $(L^2(\widehat{K}),L^2(\widehat{K},d^{-1}_{\widehat{V}}))_{\theta,2}=L^2(\widehat{K},d^{-\theta}_{\widehat{V}})$
  and obtain the bound $\gtrsim$.
  For the reverse bound, consider $u \in H^{\theta}(\widehat{K})$ with 
  $\| d_{\widehat{V}}^{-\theta} u \|_{L^2(\widehat{K})}<\infty$.
  We extend $u$ to $\omega:=(-1,1)^2$ in two steps by extending symmetrically first across the $y$ and 
then across the $x$-axis. The extended function is again denoted by $u$.
  Then, we choose $w:(0,\infty)\rightarrow H^1(\omega)$ according to Lemma~\ref{lemma:sobolev:decomposition} using $\beta:=1/2$
  and get for $t<1/2$ 
  \begin{align}\label{lem:espw:eq1}
    \| w(t) \|_{L^2(B_t(0)\cap\widehat K)} \leq \| w(t) \|_{L^2(B_t(0))}
    \lesssim \| u \|_{L^2(B_{2t}(0))} \lesssim \| u \|_{L^2(B_{2t}(0)\cap\widehat K)},
  \end{align}
  and for $t\geq 1/2$ 
  \begin{align}\label{lem:espw:eq2}
    \| w(t) \|_{L^2(B_t(0)\cap\widehat K)} \lesssim \| Eu \|_{L^2(\R^2)} \lesssim \| u \|_{L^2(\widehat K)}.
  \end{align}
  For $t > 0$, let $\chi_t\in C_0^\infty(\R^d)$ denote a smooth cutoff function with
  $\chi_t(x) = 1$ for all $x \in B_{t/2}(0)$, $\support{\chi_t} \subseteq B_t(0)$, and
  $\norm{\nabla \chi_t}_{L^{\infty}(\R^d)} \leq C t^{-1}$
  and define $\widetilde w(t):=(1-\chi_t)w(t)$. We calculate
  \begin{align*}
    \norm{u-\widetilde{w}(t)}_{L^2(\widehat{K})}&\leq
    \norm{u-w(t)}_{L^2(\widehat{K})} + \norm{\chi_t w(t)}_{L^2(\widehat{K})}
    \leq\norm{u-w(t)}_{L^2(\widehat{K})} + \norm{w(t)}_{L^2(B_t(0)\cap\widehat{K})}, \\
    t\norm{\widetilde w(t)}_{H^1(\widehat{K})}
    &\leq t\norm{w(t)}_{H^1(\widehat{K})} + t\norm{\chi_t w(t)}_{H^1(\widehat{K})}
    \lesssim t\norm{w(t)}_{H^1(\widehat{K})} + \norm{ w(t)}_{L^2(B_t(0)\cap\widehat{K})},
  \end{align*}
  and, with the notation $B^c_r(0) = {\mathbb R}^2 \setminus B_r(0)$, 
  \begin{align*}
    t \| d_{\widehat{V}}^{-1} \widetilde w(t)\|_{L^2(\widehat{K})} 
    &\lesssim t \| d_{\widehat{V}}^{-1} w(t) \|_{L^2(B^{c}_{t/2}(0)\cap\widehat K)} 
    \lesssim t \| d_{\widehat{V}}^{-1} \left(u - w(t)\right) \|_{L^2(B^c_{t/2}(0)\cap\widehat K)}
    + t \| d_{\widehat{V}}^{-1} u \|_{L^2(B^{c}_{t/2}(0)\cap\widehat K)} \\
    &\lesssim \norm{ u - w(t) }_{L^2(\widehat{K})}  + t \| d_{\widehat{V}}^{-1} u\|_{L^2(B^{c}_{t/2}(0)\cap\widehat K)}.
  \end{align*}
  We obtain
  \begin{align}
    \norm{u}^2\!\!\!&_{(L^2(\widehat{K}), H^{1}(\widehat{K},d_{\widehat{V}}^{-1}))_{\theta,2}}  \nonumber \\
    &\lesssim\begin{multlined}[t][11.75cm]
      \int_{0}^{\infty}{ t^{-2\theta}
        \left(\norm{u-w(t)}^2_{L^2(\widehat{K})} + t^2 \norm{w(t)}^2_{H^1(\widehat{K})}\right) \frac{dt}{t}} \\
      + \int_{0}^{\infty}{t^{-2\theta}  \norm{w(t)}^2_{L^2(B_{t}(0)\cap\widehat K)} \frac{dt}{t}}      
      + \int_{0}^{\infty}{t^{2-2\theta}  \| d_{\widehat{V}}^{-1} u \|^2_{L^2(B_{t/2}^c(0)\cap\widehat K)} \frac{dt}{t}}
    \end{multlined} \nonumber \\
    &\lesssim \norm{u}^2_{H^{\theta}(\omega)} + \int_{0}^{\infty}{t^{-2\theta}
        \norm{w(t)}^2_{L^2(B_{t}(0)\cap\widehat K)} \frac{dt}{t}} +
      \int_{0}^{\infty}{t^{2-2\theta}  \| d_{\widehat{V}}^{-1} u\|^2_{L^2(B_{t/2}^c(0)\cap\widehat K)} \frac{dt}{t}},
      \label{lem:espw:eq3}      
  \end{align}
  where we have used Lemma~\ref{lemma:sobolev:decomposition} in the last step.
  Since the symmetric extension is continuous in $L^2$ and $H^1$, 
  an interpolation argument shows that
  $\| u\|_{H^\theta(\omega)}\lesssim \| u \|_{H^\theta(\widehat K)}$,
  and it remains to bound the last two integrals by $\| d^{-\theta}_{\widehat{V}} u \|_{L^2(\widehat{K})}$.
  For the first term, we use~\eqref{lem:espw:eq2} to bound
  \begin{align*}
    \int_{1/2}^{\infty}{t^{-2\theta} \| w(t) \|^2_{L^2(B_{t}(0)\cap\widehat K)} \frac{dt}{t}}
    \lesssim \norm{u}^2_{L^2(\widehat K)}.
  \end{align*}
  We use \eqref{lem:espw:eq1} and polar coordinates to bound
  \begin{align*}
    \int_{0}^{1/2}{t^{-2\theta}  \| w(t) \|^2_{L^2(B_{t}(0)\cap\widehat K)} \frac{dt}{t}}
    &\lesssim \int_{0}^{1/2}{t^{-2\theta}  \| u \|^2_{L^2(B_{2t}(0)\cap\widehat K)} \frac{dt}{t}}
=\int_{0}^{1/2}{t^{-2\theta-1 } \int_{0}^{2t} \int_0^{2\pi} \chi_{\widehat K}(r,\varphi)u^2(r,\varphi) r\, d\varphi dr dt}\\
    &\leq\int_{0}^{2\pi}\int_{0}^{1} \chi_{\widehat K}(r,\varphi)u^2(r,\varphi) r \int_{r}^{\infty} t^{-2\theta -1}\,dtdrd\varphi \\
    &\lesssim\int_0^{2\pi}{\int_{0}^{1}{ \chi_{\widehat K}(r,\varphi)u^2(r,\varphi) r^{1-2\theta} dr \, d\varphi}}
    \leq \| d_{\widehat{V}}^{-\theta} u \|_{L^2(\widehat K)}^2.
  \end{align*}
  Finally, the second term in~\eqref{lem:espw:eq3} can be estimated by
  \begin{align*}
    \int_{0}^{\infty}{t^{2-2\theta} \| d_{\widehat{V}}^{-1} u\| ^2_{L^2(B_{t/2}^c(0)\cap\widehat K)} \frac{dt}{t}}
    &\leq\int_{0}^{\infty}{ \int_{\widehat{K} \cap \{\abs{x} > t/2\}} {t^{1-2\theta} \abs{x}^{-2} u^2(x)} dt}  
=\int_{\widehat{K}} { \abs{x}^{-2} u^2(x)} \int_{0}^{2\abs{x}}{ t^{1-2\theta}  dt}  \\
    &\lesssim\int_{\widehat{K}} { \abs{x}^{-2} u^2(x) \abs{x}^{2-2\theta}}
    = \| d_{\widehat{V}}^{-\theta} u \|_{L^2(\widehat{K})}^2.
\qedhere
  \end{align*}
\end{proof}

The following lemma encapsulates a construction needed later on in the proof of 
Lemma~\ref{lemma:weight-localize}. 
  \begin{lemma}
    \label{lemma:constructing_the_cone}
  Let $V \in \allvertices$ and let $\omega_V$ denote its vertex patch. Fix one element $K \subseteq \omega_V$ and define the annuli
  \begin{align*} A^{b,a}(t):=B_{b t}(V) \setminus \overline{B_{ a t}(V)} \qquad \text{ for } t>0 \text{ and } 0<a<b.\end{align*}
  Then there exists an open set $Z \subseteq K$ and a constant $\beta >0 $ such that
  \begin{align}
    \label{eq:proof_weighted_norm_est_on_patch_4_inclusion}
    \bigcup_{z \in {Z}} {\overline{B_{\beta d_{V}(z)}(z)}} \cap \Gamma
    \subseteq K
  \end{align}
  and for all $v \in H^1_h(\omega_V)$ there holds,  
  provided that the right-hand side is finite,
  \begin{align}
      \label{eq:proof_weighted_norm_est_on_patch_4}
      \| d_{V}^{-1} v \|_{L^2(A^{2,1}(t) \cap \omega_V)} \lesssim
      \| d_{V}^{-1} v \|_{L^2(A^{2,1}(t) \cap Z)}
      +  \| v \|_{H^{1}_h(A^{2,1}(t) \cap \omega_V)}. 
  \end{align}
\end{lemma}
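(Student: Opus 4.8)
The plan is to take $Z$ to be the image under the element map $F_K$ of a thin interior cone of the reference element, and to establish the weighted estimate~\eqref{eq:proof_weighted_norm_est_on_patch_4} by transporting it to the reference elements of the patch --- where the geometry near the vertex is \emph{exactly} a polygonal sector with no curvature --- and then chaining local estimates around the elements meeting at $V$.

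First I would fix $Z$. Let $\Kref \in \{\Tref,\Sref\}$ be the reference element of $K$ and $\widehat V$ the corner with $F_K(\widehat V) = V$; in a neighbourhood of $\widehat V$ the set $\Kref$ coincides with an infinite plane sector. Choose a nonempty open set $\widehat Z \subset \Kref$ which near $\widehat V$ is a thin angular sector with apex $\widehat V$ and which, away from $\widehat V$, is pulled into the interior of $\Kref$, so that $\overline{B_{\widehat\beta\,|\widehat z - \widehat V|}(\widehat z)} \subset \Kref$ for all $\widehat z \in \widehat Z$ and some $\widehat\beta > 0$; such a set plainly exists. Put $Z := F_K(\widehat Z)$. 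Since shape regularity makes $F_K$ bi-Lipschitz with constants $\sim h_K$ (so $d_V \circ F_K \sim h_K\,|\cdot - \widehat V|$) and $\Gamma$ is, in an $\bigO(h_K)$-neighbourhood of $K$, a bi-Lipschitz image of $\Kref$, a sufficiently small $\beta > 0$ (depending only on $\widehat\beta$ and the shape regularity) gives $\overline{B_{\beta d_V(z)}(z)} \cap \Gamma \subseteq K$ for every $z \in Z$, i.e.~\eqref{eq:proof_weighted_norm_est_on_patch_4_inclusion}.

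For~\eqref{eq:proof_weighted_norm_est_on_patch_4} I would use that the estimate is scale invariant. Pulling back $v$ to $\widehat v' := v \circ F_{K'}$ on the reference element $\Kref'$ of any element $K' \subseteq \omega_V$ (with $\widehat V'$ the corner mapping to $V$), the weighted $L^2$-norm $\|d_V^{-1} v\|_{L^2(\,\cdot\,\cap K')}$, the gradient norm $\|\nabla v\|_{L^2(\,\cdot\,\cap K')}$ and the weighted edge traces $\|d_V^{-1/2} v\|_{L^2(e\cap\,\cdot\,)}$ transform, up to shape-regularity constants, into the corresponding reference-element quantities, while $A^{2,1}(t) \cap K'$ pulls back into an annular slab $\{c_1 s < |\widehat x - \widehat V'| < c_2 s\} \cap \Kref'$ with $s = t/h_{K'}$ and $c_1,c_2$ depending only on the shape regularity. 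It therefore suffices to prove two reference-level facts, \emph{uniformly in $s > 0$}: (a) on $K$ itself, a weighted Poincaré estimate bounding $\|d_{\widehat V}^{-1}\widehat v\|_{L^2}$ on such a slab of $\Kref$ by the same quantity on that slab intersected with $\widehat Z$, plus the $H^1$-seminorm on the slab; and (b) on a general element, the slab estimates of Lemma~\ref{lemma:weighted-norm-equivalence} --- whose proof carries over verbatim to $\Sref$ and to general annular ratios --- which relate, in both directions, the weighted $L^2$-norm on such a slab to the $H^1$-seminorm on the slab plus the weighted $L^2$-trace on one reference edge through $\widehat V'$.

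Fact (a) is the step I expect to be the main obstacle. Rescaling the slab by $1/s$: for $s$ below a fixed threshold the rescaled set equals the \emph{fixed} bounded Lipschitz domain $\{c_1 < |y| < c_2\} \cap \mathcal C$, where $\mathcal C$ is the infinite sector agreeing with $\Kref$ at $\widehat V$, with the fixed subdomain $\{c_1 < |y| < c_2\} \cap \widehat Z$; for $s$ in the remaining bounded range one has a compact one-parameter family of uniformly Lipschitz connected domains containing a subdomain of uniformly positive relative measure. In all cases the Poincaré--Friedrichs inequality with an observation subdomain, $\|w\|_{L^2(D)} \lesssim \|w\|_{L^2(D_0)} + \|\nabla w\|_{L^2(D)}$, holds with a constant depending only on the shape regularity (standard Rellich compactness argument), and since $d_{\widehat V} \sim s$ on the rescaled annulus this yields (a). Crucially, this uses that we pulled back to the \emph{polygonal} reference element: on the physical patch the geometry near $V$ is controlled only up to bi-Lipschitz equivalence, so a direct compactness argument there would not give a uniform constant. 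Finally I would combine (a) and (b): ordering the patch as a fan, any $K' \subseteq \omega_V$ is joined to $K$ by a chain $K = K_0, K_1, \dots, K_m = K'$ with $m$ bounded by the shape regularity and consecutive elements sharing an edge $e_j \ni V$; applying the first slab estimate of (b) on $K_j$ and the second one on $K_{j-1}$ bounds $\|d_V^{-1} v\|_{L^2(\,\cdot\,\cap K_j)}$ by $\|d_V^{-1} v\|_{L^2(\,\cdot\,\cap K_{j-1})}$ plus an $H^1_h$-term on $K_{j-1}\cup K_j$; iterating along the chain, inserting (a) for $K_0 = K$, and summing over the finitely many $K' \subseteq \omega_V$ gives~\eqref{eq:proof_weighted_norm_est_on_patch_4}. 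The only remaining bookkeeping is that the covering and chaining steps slightly enlarge the annular ratio appearing on the right-hand side; this is harmless and is absorbed by the bounded overlap when~\eqref{eq:proof_weighted_norm_est_on_patch_4} is later summed over dyadic scales in the proof of Lemma~\ref{lemma:weight-localize}. Throughout, care is needed to keep every constant independent of $t$ and of the local mesh size.
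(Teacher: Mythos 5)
Your proposal is essentially correct and follows the same macroscopic plan as the paper's own proof (construct $\widehat Z$ as a reference sector emanating from $\widehat V$ and set $Z = F_K(\widehat Z)$, then chain slab estimates of Lemma~\ref{lemma:weighted-norm-equivalence}~(\ref{item:lemma:weighted-norm-equivalence-ii}) around the fan of elements meeting at $V$ to reduce to $K$, and finally pass from $K$ to $Z$). The one place you diverge is the last step, your "fact (a)": to get from the annular slab in $\Kref$ down to the cone $\widehat Z$, you invoke a Poincar\'e--Friedrichs inequality with an observation subdomain, justified by rescaling and a compactness (Rellich) argument over the one-parameter family of slab geometries. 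The paper instead reuses the same trace-type slab estimates of Lemma~\ref{lemma:weighted-norm-equivalence}~(\ref{item:lemma:weighted-norm-equivalence-ii}) a second time: it bounds the weighted $L^2$-norm on the slab by the $H^1$-seminorm plus a $d_{\widehat V}^{-1/2}$-weighted trace on $\partial\widehat Z$, and then bounds that trace term by a weighted $L^2$-term and $H^1$-term on $\widehat Z$. The paper's route has the advantage of not requiring a compactness argument at all (and hence produces constants controlled entirely by the already-established Hardy-type estimates), while your route is perhaps more robust in that it does not require $\widehat Z$ to have a convenient two-sided Lipschitz boundary across which one can trace. If you keep your version, spell out the uniformity of the Poincar\'e constant over the bounded $s$-range more explicitly (e.g.\ via a covering by finitely many comparable geometries or a continuity argument for the constant), and note that the ratio distortion you mention does require the covering $B_t(V)\setminus B_{t/2}(V)$ in the $dt/t$-integration step of Lemma~\ref{lemma:weight-localize} to absorb the enlarged slabs, exactly as the paper does with $A^{3,1/2}(t)$ in~\eqref{eq:proof_weighted_norm_est_on_patch_7}.
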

\begin{proof}
  We first show
  \begin{align}
     \label{eq:proof_weighted_norm_est_on_patch_1}
    \norm{d_{V}^{-1} v}_{L^2(A^{2,1}(t) \cap \omega_V)} \lesssim \norm{d_{V}^{-1} v}_{L^2(A^{2,1}(t) \cap K)}
    +  \norm{v}_{H^{1}_h(A^{2,1}(t) \cap \omega_V)}.
  \end{align}
  Let $K_1$ and $K_2$ be two elements of $\omega_V$ sharing an edge $e_{1,2}$.
  Applying both estimates from
  Lemma~\ref{lemma:weighted-norm-equivalence}, (\ref{item:lemma:weighted-norm-equivalence-ii}) and scaling arguments
  shows
  \begin{align*}
    \|d_V^{-1}v\|_{L^2(A^{2,1}(t) \cap K_1)} &\lesssim
    \|d_V^{-1/2} v\|_{L^2(A^{2,1}(t) \cap  e_{1,2})} + \|\nabla v\|_{L^2(A^{2,1}(t) \cap K_1)}\\
    &\lesssim
    \|d_V^{-1} v\|_{L^2(A^{2,1}(t) \cap K_2)} +
    \|\nabla v\|_{L^2(A^{2,1}(t) \cap K_2)} + 
    \|\nabla v\|_{L^2(A^{2,1}(t) \cap K_1)}.
  \end{align*}
  Taking a sequence of neighboring elements $K'=K_1,\ldots,\widehat K_j =  K$,
  this argument can be repeated $j$ times and summed up to yield~\eqref{eq:proof_weighted_norm_est_on_patch_1}.

  We now construct the set $Z$.
  Let $\widehat{Z}$ be a cone on the reference element centered at $\widehat{V}:=(1,0)$,
  symmetric with respect to  the diagonal of $\widehat{S}$ with an opening angle in $(0,\pi/4)$.
  We note the existence of a constant $\widehat{\beta} >0 $
  such that for $\widehat{x} \in \widehat{Z}$ with $|\widehat{x}-\widehat{V}| \leq 1/4$,
  we have $\overline{B_{\beta d_{\widehat{V}}(\widehat{x})}(\widehat{x})}\subseteq \widehat{K}.$
  We set $Z:=F_{K}\big(\widehat{Z} \cap B_{1/4}(\widehat{V})\big)$ (where we assume that $\widehat{V}$ is mapped to ${V}$).
  To show~\eqref{eq:proof_weighted_norm_est_on_patch_4_inclusion}, we just have to select $\beta\leq \widehat{\beta}$ sufficiently small
  to compensate for the Lipschitz constants of $F_K$ and $F_K^{-1}$.

  The estimate~\eqref{eq:proof_weighted_norm_est_on_patch_4}
  can then be shown completely analogously to~\eqref{eq:proof_weighted_norm_est_on_patch_1}:
  On the reference element, we can use Lemma~\ref{lemma:weighted-norm-equivalence}, (\ref{item:lemma:weighted-norm-equivalence-ii})
  to reduce the estimate to an $H^1$ contribution and a $d_{\widehat{V}}^{-1/2}$-weighted integral on the boundary of $\widehat{Z}$. This can then in
  turn again be estimated by a weighted $L^2$-term and an $H^1$-term on $\widehat{Z}$, as appears on
  the right-hand side of~\eqref{eq:proof_weighted_norm_est_on_patch_4}.
  The restriction due to the
  condition $\abs{\widehat{x}-\widehat{V}} \leq 1/4$ does not impact the estimate, as $d_{\widehat{V}}^{-1}$ is
  bounded outside of this region.    
\end{proof}

\begin{lemma}\label{lemma:weight-localize}
  Let $V \in \allvertices$ and let $\omega_V$ denote its vertex patch. Fix one element $K \subseteq \omega_V$.
  Then the following estimate holds for all $\theta \in [0,1]$ and for all
  $u \in H^{1}_h(\omega_V)$, provided that the
  right-hand side is finite:
  \begin{align}
    \norm{d_{V}^{-\theta} u}_{L^2(\omega_V)} \leq C \left[ \norm{d_{V}^{-\theta} u}_{L^2(K)} +  \norm{u}_{H^{\theta}_h(\omega_V)}\right].
    \label{eq:weighthed_norm_est_on_patch}
  \end{align}
  The constant depends on $\theta$,  the shape regularity constant $\gamma$, and $\Gamma$.
\end{lemma}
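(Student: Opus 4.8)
The plan is to separate the endpoints $\theta\in\{0,1\}$, which can be handled by hand, from the range $\theta\in(0,1)$, which carries the real work. For $\theta=0$ the weight $d_V^{-\theta}$ is constant and there is nothing to prove. For $\theta=1$ I would cover $\omega_V$ by the dyadic annuli $A^{2,1}(2^k)$ (notation of Lemma~\ref{lemma:constructing_the_cone}) centered at $V$; these have finite overlap and on each of them $d_V\sim 2^k$, so~\eqref{eq:proof_weighted_norm_est_on_patch_1} applies, and summation over $k$ gives $\norm{d_V^{-1}u}_{L^2(\omega_V)}^2\lesssim\norm{d_V^{-1}u}_{L^2(K)}^2+\abs{u}_{H^1(\omega_V)}^2$, which is~\eqref{eq:weighthed_norm_est_on_patch} for $\theta=1$.

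For $\theta\in(0,1)$, assuming the right-hand side of~\eqref{eq:weighthed_norm_est_on_patch} to be finite, the first step is a Fubini identity expressing both weighted norms as integrals over shells: with $\theta$-dependent constants one has $\norm{d_V^{-\theta}u}_{L^2(\omega_V)}^2\sim\int_0^\infty t^{-2\theta}\norm{u}_{L^2(A^{2,1}(t)\cap\omega_V)}^2\,\frac{dt}{t}$ and likewise $\int_0^\infty t^{-2\theta}\norm{u}_{L^2(K\cap B_{3t}(V))}^2\,\frac{dt}{t}\sim\norm{d_V^{-\theta}u}_{L^2(K)}^2$. I would then apply Lemma~\ref{lemma:sobolev:decomposition} to $u$ on $\omega_V$ (flattened to a planar Lipschitz domain by a bi-Lipschitz chart, under which all norms in play and the Euclidean weight $d_V$ remain comparable) with mollification parameter $\beta$ chosen equal to the one produced by the cone construction of Lemma~\ref{lemma:constructing_the_cone}. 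This yields a family $w(t)\in H^1_h(\omega_V)$ satisfying~\eqref{cor:sobolev:eq1} and the locality bound~\eqref{cor:sobolev:eq2}, together with the cone $Z\subseteq K$; crucially, both $w(t)$ and the annulus $A^{2,1}(t)$ now live at the common scale $t$, so no double-scale difficulties arise.

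The estimate then proceeds in three steps. (i) Split $\norm{u}_{L^2(A^{2,1}(t)\cap\omega_V)}\le\norm{u-w(t)}_{L^2(\omega_V)}+\norm{w(t)}_{L^2(A^{2,1}(t)\cap\omega_V)}$; the first summand, weighted by $t^{-2\theta}$ and integrated over $t$, is $\lesssim\norm{u}_{H^\theta_h(\omega_V)}^2$ by~\eqref{cor:sobolev:eq1}. (ii) For $\norm{w(t)}_{L^2(A^{2,1}(t)\cap\omega_V)}$ I would invoke the localization estimate~\eqref{eq:proof_weighted_norm_est_on_patch_4} with $v=w(t)$ and use $d_V\sim t$ on $A^{2,1}(t)$, obtaining $\norm{w(t)}_{L^2(A^{2,1}(t)\cap\omega_V)}\lesssim\norm{w(t)}_{L^2(A^{2,1}(t)\cap Z)}+t\,\abs{w(t)}_{H^1(\omega_V)}$, the $H^1$-term integrating, after weighting by $t^{-2\theta}$, to $\lesssim\norm{u}_{H^\theta_h(\omega_V)}^2$ by~\eqref{cor:sobolev:eq1} again. (iii) For $\norm{w(t)}_{L^2(A^{2,1}(t)\cap Z)}$ the defining property~\eqref{eq:proof_weighted_norm_est_on_patch_4_inclusion} of $Z$ is decisive: for $z\in Z\cap A^{2,1}(t)$ one has $d_V(z)>t$, hence $\overline{B_{\beta t/2}(z)}\cap\Gamma\subseteq\overline{B_{\beta d_V(z)}(z)}\cap\Gamma\subseteq K$, so that $\bigcup_{z\in Z\cap A^{2,1}(t)}B_{\beta t/2}(z)\cap\Gamma\subseteq K\cap B_{3t}(V)$; since the cone is built with $\dist{Z\cap A^{2,1}(t)}{\partial\omega_V}>\beta t$, estimate~\eqref{cor:sobolev:eq2} yields $\norm{w(t)}_{L^2(A^{2,1}(t)\cap Z)}\lesssim\norm{u}_{L^2(K\cap B_{3t}(V))}$. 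Combining (i)--(iii) with the two Fubini identities gives $\norm{d_V^{-\theta}u}_{L^2(\omega_V)}^2\lesssim\norm{u}_{H^\theta_h(\omega_V)}^2+\norm{d_V^{-\theta}u}_{L^2(K)}^2$, as desired.

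The main obstacle is the term $\norm{w(t)}_{L^2(A^{2,1}(t)\cap\omega_V)}$ in steps (ii)--(iii): estimating it by the locality of the mollification alone would only bound it by $\norm{u}_{L^2}$ over a slightly enlarged annulus still contained in $\omega_V$, which is circular. Lemma~\ref{lemma:constructing_the_cone} is precisely what removes the circularity, by supplying a cone $Z$ inside the single element $K$ such that $\beta$-proportional balls around its points remain inside $K$; this relocates the mass onto $K$ at the price of a harmless $H^1$-seminorm remainder. The only remaining subtlety is geometric and affects boundary vertices $V\in\partial\Gamma$: there $Z$ together with the surrounding $\beta$-balls must also be kept away from $\partial\omega_V$, which forces $\beta$ small in terms of the $\gamma$-shape regularity of $\TT$, exactly the quantification already carried out in the proof of Lemma~\ref{lemma:constructing_the_cone}. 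One could alternatively deduce the range $\theta\in(0,1)$ from the endpoint $\theta=1$ by interpolating against the trivial $\theta=0$ bound via Proposition~\ref{prop:tartar} applied to the target weight, but the decisive remaining estimate of the interpolated $K$-functional again rests on the same $w(t)$-plus-cone construction, so nothing is gained.
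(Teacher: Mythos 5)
Your proposal follows the paper's strategy in its essentials — the Fubini reduction to annuli $A^{2,1}(t)$, the mollifier family $w(t)$ from Lemma~\ref{lemma:sobolev:decomposition}, the cone $Z\subseteq K$ from Lemma~\ref{lemma:constructing_the_cone}, and the use of~\eqref{eq:proof_weighted_norm_est_on_patch_4} plus~\eqref{cor:sobolev:eq2} to relocate mass onto $K$ — but it skips a step that the paper does not, and that step is load-bearing.

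The gap is in the sentence ``apply Lemma~\ref{lemma:sobolev:decomposition} to $u$ on $\omega_V$ (flattened to a planar Lipschitz domain by a bi-Lipschitz chart).'' Lemma~\ref{lemma:sobolev:decomposition} lives on open subsets of $\R^d$; to invoke it you must push $\omega_V$ into the plane, and for that you need the entire patch to sit inside one of the bi-Lipschitz charts that define the Sobolev spaces on $\partial\Omega$. That is not guaranteed: $\Gamma$ is only a Lipschitz dissection, the mesh is arbitrary (and may be coarse), so $\omega_V$ can wrap around corners of $\partial\Omega$ and fail to be contained in a single chart — and even when it is, the Lipschitz constant of its preimage is not a priori controlled by the shape-regularity parameter alone. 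The paper resolves this with a cutoff: it fixes $c_1$ so that $B_{c_1 h_V}(V)\cap\Gamma$ lies in one chart and inside $\omega_V$, splits $u=(1-\chi)u+\chi u=u_1+u_2$ with $\supp\chi\subseteq B_{\alpha_2 h_V}(V)$, disposes of $u_1$ immediately via $d_V^{-\theta}\lesssim h_V^{-\theta}$ on $\supp u_1$ (absorbed by the $H^\theta_h$-norm), and then runs the $w(t)$-plus-cone machinery only on $u_2$, working on $F_\Gamma^{-1}(B_{c_1 h_V}(V)\cap\omega_V)$ where the chart exists and the Lipschitz constant is uniform. Without this preliminary localization your application of Lemma~\ref{lemma:sobolev:decomposition} is unjustified, and the claimed constant (depending only on $\theta$, $\gamma$, $\Gamma$) does not follow. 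Note that the cutoff does not merely reduce to a chart: it also makes the ``if additionally'' variant of Lemma~\ref{lemma:sobolev:decomposition} applicable, since $u_2$ is supported well inside the enlarged domain $B_{c_1 h_V}(V)\cap\omega_V$, which is what the paper uses.

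Once the cutoff is inserted, the rest of your argument mirrors the paper's proof, so there is no independent advantage; the separate endpoint treatments ($\theta=0$ trivial, $\theta=1$ by dyadic summation of~\eqref{eq:proof_weighted_norm_est_on_patch_1}) are correct but redundant, as the main construction already covers $\theta\in[0,1]$. Your observation that the interpolation shortcut via Proposition~\ref{prop:tartar} does not save work is right: the additive structure of the right-hand side of~\eqref{eq:weighthed_norm_est_on_patch} is not an operator bound that interpolates cleanly.
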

\begin{proof}
  Let $c_1>0$ be such that $B_{c_1 h_{V}}(V) \cap \Gamma$ is contained in 
  a single chart of
  $\Gamma$ and $B_{c_1 h_V}(V) \cap \Gamma \subseteq \omega_V$.
  This constant can be chosen to depend only on $\Gamma$ and the shape regularity.
  Set $\alpha_1:=c_1/4$ and $\alpha_2:=c_1/2$ and let $\chi$ be a cutoff function
  satisfying $\supp(\chi) \subseteq B_{\alpha_2 h_{V}}(V)$ and 
  $\chi \equiv 1$ on $B_{\alpha_1 h_{V}}(V)$.
  Define $u_1:=(1-\chi) u $ and $u_2:=\chi \, u$. Then it holds
  \begin{enumerate}[(i)]
    \item 
    \label{item:weight-localize-i}
    $\supp(u_1) \cap B_{\alpha_1 h_{V}}(V) = \emptyset$,
    \item
    \label{item:weight-localize-ii}
    $\supp(u_2)$ is contained in a single chart parametrizing the surface $\Gamma$,
    \item
    \label{item:weight-localize-iii}
    $\norm{u_2}_{\widetilde{H}_h^{\theta}(B_{\alpha_2 h_V}(V)\cap\Gamma)} \lesssim
      \norm{u}_{H_h^{\theta}(\omega_V)}$.
  \end{enumerate}
    While 
    (\ref{item:weight-localize-i}) and 
    (\ref{item:weight-localize-ii})
   follow directly from the construction and the properties of $\chi$, property
    (\ref{item:weight-localize-iii})
  is seen as follows:
  Note that the map $u \mapsto u_2$ is bounded as $L^2(\omega_V) \to 
  L^2\left(B_{\alpha_2 h_V}(V)\cap \Gamma\right)$ and
  $H^1_h(\omega_V) \to \widetilde{H}^1_h\left(B_{\alpha_2 h_V}(V)\cap \Gamma\right)$. To see the second boundedness, note that
  \begin{align*}
    \norm{ \nabla( \chi u )}_{L^2\left(B_{\alpha_2 h_V}(V)\cap \Gamma\right)}
    &\leq \norm{\chi}_{L^{\infty}(\omega_V)}  \norm{\nabla u }_{L^2\left(B_{\alpha_2 h_V}(V)\cap \Gamma\right)}
      + \norm{\nabla \chi}_{L^{\infty}(\omega_V)}  \norm { u }_{L^2\left(B_{\alpha_2 h_V}(V)\cap\Gamma\right)} 
\lesssim  \norm{u}_{H^1_h(\omega_V)}
  \end{align*}
  since $\norm{\nabla \chi}_{L^{\infty}(\omega_V)}\sim \dist{B_{\alpha_1 h_V}(V)}
  { \partial B_{\alpha_2 h_V}(V) }^{-1} \sim h_V^{-1}$, and that
  $u_2 \in \widetilde{H}^1_h\left(B_{\alpha_2 h_V}(V)\cap \Gamma\right)$.
  An interpolation argument and Lemma~\ref{lemma:K-vs-k} then show
    (\ref{item:weight-localize-iii}). From 
    (\ref{item:weight-localize-i}) it follows that 
  \begin{align*}
    \norm{d_{V}^{-\theta} u}_{L^2(\omega_V)}&\leq \norm{d_{V}^{-\theta} u_1}_{L^2(\omega_V)} + \norm{d_{V}^{-\theta} u_2}_{L^2(\omega_V)} 
\lesssim h_{V}^{-\theta} \norm{u}_{L^2(\omega_V)} +  \norm{d_{V}^{-\theta} u_2}_{L^2(\omega_V)},
  \end{align*}
  and hence it suffices to show~\eqref{eq:weighthed_norm_est_on_patch} for $u_2$.

    Let $F_{\Gamma}: \R^2 \to \R^3$ denote the chart parametrizing a neighborhood of $\support{u_2}$.
    We use the set $Z$ and the parameter $\beta$ from Lemma~\ref{lemma:constructing_the_cone}.    
    Since the map $F_\Gamma$ is bi-Lipschitz, we can assume (after possibly further reducing $\beta$):
    \begin{align}
    \label{eq:proof_weighted_norm_est_on_patch_5}
    \bigcup_{z \in {F^{-1}_{\Gamma}(Z)}} {\overline{B_{\beta d_{F_{\Gamma}^{-1}(V)}(z)}(z)}}
    &\subseteq F_{\Gamma}^{-1}(K).
    \end{align}

    Now we apply Lemma~\ref{lemma:sobolev:decomposition} with
    $\omega = F_{\Gamma}^{-1}(B_{c_1 h_V}(V) \cap \omega_V)$ and $\widetilde\omega = F_{\Gamma}^{-1}\big(B_{\alpha_2h_V}(V)\cap \omega_V\big)$
    to obtain a function $\widehat{w}$. Then we take the push forward to (a subset of) $\omega_V$ via $F_\Gamma$ and extend it by
    zero to $\omega_V$. This function satisfies
  \begin{align}
    \label{eq:proof_weighted_norm_est_on_patch_6}
    \int_{0}^{\infty}{t^{-2\theta}\left(\norm{u_2-w(t)}_{L^2(\omega_V)}^2
        + t^2 \norm{w(t)}_{H^{1}_h(\omega_V)}^2\right) \;\frac{dt}{t}} 
    \lesssim \norm{u_2}_{\widetilde{H}^{\theta}_h(B_{\alpha_2 h_V}(V)\cap \omega_V)}^2
  \end{align}
  and, using $\omega' = F_{K}^{-1}\big(A^{2,1}(t) \cap Z\big)$ in Lemma~\ref{lemma:sobolev:decomposition}
  and~\eqref{eq:proof_weighted_norm_est_on_patch_5} we get (by again taking the push forward and possibly further decreasing $\beta$)
  \begin{align}
    \label{eq:proof_weighted_norm_est_on_patch_7}
    \| w(t) \|_{L^2(A^{2,1}(t)\cap Z)}
    \lesssim     
    \| u_2 \|_{L^2(A^{3,1/2}(t)\cap K)}.
  \end{align}
  Next, note that for a function $v$ we have
  \begin{align*}
    \| v \|_{L^2(\omega_V)}^2
    &\lesssim \int_{\omega_V} \abs{v(x)}^2 \log(2)\,dx
    \lesssim \int_{\omega_V} \abs{v(x)}^2 \int_{d_V(x)/2}^{d_V(x)}\,\frac{dt}{t}dx\\
    &= \int_0^\infty \int_{\omega_V} \abs{v(x)}^2 \chi_{A^{2,1}(t)}(x)\,dx\frac{dt}{t} = \int_0^\infty \| v \|_{L^2(A^{2,1}(t)\cap \omega_V)}^2\,\frac{dt}{t}.
  \end{align*}
  Hence, applying~\eqref{eq:proof_weighted_norm_est_on_patch_4} to $w$, 
  we estimate
  \begin{align*}
    &\| d_{V}^{-\theta} u_2 \|_{L^2(\omega_V)}^2
    \lesssim \int_{t=0}^{\infty}{ \| d_V^{-\theta} u_2 \|_{L^2(A^{2,1}(t) \cap \omega_V)}^2 \;\frac{dt}{t}}\\
    &\qquad \lesssim \int_{t=0}^{\infty}{ \| d_V^{-\theta} (u_2 - w(t)) \|_{L^2(A^{2,1}(t) \cap \omega_V)}^2 \;\frac{dt}{t}}
    + \int_{t=0}^{\infty}{t^{2-2\theta} \| d_V^{-1}  w(t) \|_{L^2(A^{2,1}(t) \cap \omega_V)}^2 \;\frac{dt}{t}}\\
    &\qquad \lesssim \int_{t=0}^{\infty}{t^{-2\theta}
    \left( \|  (u_2 - w(t)) \|_{L^2(A^{2,1}(t) \cap \omega_V)}^2  + 
      t^2 \| d_V^{-1} w(t) \|_{L^2(A^{2,1}(t) \cap K \cap Z)}^2 +
      t^2 \| w(t) \|_{H^{1}_h(A^{2,1}(t) \cap \omega_V)}^2\right) \;\frac{dt}{t}} \nonumber\\
    &\qquad \leq \int_{t=0}^{\infty}{t^{-2\theta}\left( \| u_2 - w(t) \|_{L^2(\omega_V)}^2  + t^2 \| w(t) \|_{H^1_h(\omega_V)}^2\right) \; \frac{dt}{t}} 
    +\int_{t=0}^{\infty}{t^{2-2\theta} \| d_V^{-1} w(t) \|_{L^2(A^{2,1}(t) \cap K \cap Z)}^2  \frac{dt}{t} },
  \end{align*}
  where  we used that $d_V^{-1} \sim t^{-1}$ on $A^{2,1}(t)$. 
  Using additionally~\eqref{eq:proof_weighted_norm_est_on_patch_7}, we obtain
  \begin{align*}
    \int_{t=0}^{\infty} t^{2-2\theta} \| d_V^{-1} w(t) \|_{L^2(A^{2,1}(t) \cap Z)}^2  \frac{dt}{t}
    \lesssim \int_0^\infty t^{2-2\theta} \| d_V^{-1}u_2 \|_{L^2(A^{3,1/2}(t)\cap K)}^2 \frac{dt}{t}
    \lesssim \| d_V^{-\theta} u_2 \|_{L^2(K)}^2,
  \end{align*}
  where the last estimate can be seen using polar coordinates.
  Using also~\eqref{eq:proof_weighted_norm_est_on_patch_6} and point 
  (\ref{item:weight-localize-iii}) above shows 
  \begin{align*}
    \| d_{V}^{-\theta} u_2 \|_{L^2(\omega_V)}
    &\lesssim \| d_V^{-\theta} u \|_{L^2(K)} +\| u \|_{ H^\theta_h(\omega_V)}.
\qedhere
  \end{align*}
\end{proof}
\section{Averaging operators} 
\label{sec:averaging-operators}
A key tool in the definition of the localization procedure are suitable averaging operators. 
Given a closed set $M$ (which will be a corner or an edge of $\Kref$) 
the basic averaging operator is defined in Lemma~\ref{lemma:general-averaging} by locally averaging 
on a length scale of size $\bigO(\operatorname{dist}(\cdot,M))$. In this way function values are reproduced 
on $M$. This basic operator is modified in several steps in order to yield 
functions  that vanish on parts of $\Kref$.  
We study the stability of these operators in weighted and unweighted norms. 

\subsection{Preliminaries}
The proof of stability of the local averaging operators relies on the following covering lemma.
\begin{lemma}[covering lemma]\label{lemma:covering}
  Let $M \ne \emptyset$ be a closed subset of $\R^d$.
  Fix $\beta \in (0,1)$ and $c \in (0,1)$ such that
  \begin{align*}
    (1+\beta)c< 1.
  \end{align*}
  For each $x \in \R^d \setminus M$ denote by $B_x:= \overline{B}_{c d_M(x)}(x)$ and 
  $\widehat B_x:= \overline{B}_{(1+\beta)c d_M(x)}(x)$ closed balls centered at $x$ with radii 
  $c  d_M(x)$, $(1+\beta)c d_M(x)$, respectively.
  Let $\omega\subset \R^d\setminus M$ be open. Then, there
  exist $(x_i)_{i \in \N} \subset \omega$
and a constant $N \in \N$ depending solely on the spatial dimension $d$
  such that:
\begin{enumerate}[(i)]
  \item (covering property) $\omega\subset \cup_i B_{x_i}$;
  \item (finite overlap)
    $\sup_{x\in\R^d}\operatorname*{card} \{i\colon x \in \widehat B_{x_i}\} \leq N$. 
\end{enumerate} 
\end{lemma}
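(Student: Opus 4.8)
The plan is to use a Vitali-type covering argument adapted to the variable radii $r(x) := c\, d_M(x)$. The key geometric observation is that the ratio $d_M(x)/d_M(y)$ is controlled whenever $x$ and $y$ are close: since $d_M$ is $1$-Lipschitz, if $|x-y| \le c\, d_M(x)$ with $c < 1$ then $d_M(y) \ge (1-c) d_M(x)$ and $d_M(y) \le (1+c) d_M(x)$, so the radii of overlapping balls are comparable up to a factor depending only on $c$. This is what prevents the usual pathologies of covering with wildly varying radii.

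First I would establish the covering property via a greedy selection. Consider the collection $\{B_{cd_M(x)}(x)\}_{x \in \omega}$ (closed balls); by the Vitali covering lemma (the $5r$-covering version, valid since $\sup_{x\in\omega} d_M(x)$ may be infinite only if $\omega$ is unbounded, in which case one first exhausts $\omega$ by the sets $\omega \cap \{d_M \le n\}$ and takes a countable union) there is a countable disjoint subfamily $\{B_{cd_M(x_i)}(x_i)\}_{i}$ such that $\omega \subseteq \bigcup_i B_{5cd_M(x_i)}(x_i)$. This is slightly weaker than what is stated; to get $\omega \subseteq \bigcup_i B_{x_i}$ with the same centers and radii, I would instead fix a maximal (with respect to inclusion) subfamily of pairwise disjoint balls $\{B_{(\beta c/3)d_M(x_i)}(x_i)\}$, say, using Zorn's lemma, and then a standard argument shows the inflated balls $B_{cd_M(x_i)}(x_i)$ cover $\omega$: given $x \in \omega$, the ball $B_{(\beta c/3)d_M(x)}(x)$ meets some $B_{(\beta c /3)d_M(x_i)}(x_i)$, and the comparability of $d_M(x)$ and $d_M(x_i)$ forces $x \in B_{cd_M(x_i)}(x_i)$ provided $\beta$ is handled correctly. (In practice, one picks the auxiliary shrink factor small enough; the precise constant is a routine chase.)

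For the finite overlap property, fix $x \in \R^d$ and let $I := \{i : x \in \widehat B_{x_i}\}$, i.e.\ $|x - x_i| \le (1+\beta)c\, d_M(x_i)$ for $i \in I$. The hypothesis $(1+\beta)c < 1$ guarantees $d_M(x_i) \sim d_M(x)$ uniformly in $i \in I$ (with constants depending only on $(1+\beta)c$), hence all the $x_i$, $i \in I$, lie in a ball around $x$ of radius $\sim d_M(x)$, and the disjoint auxiliary balls $B_{(\beta c/3)d_M(x_i)}(x_i)$ all have radius $\sim d_M(x)$ and sit inside a fixed dilate of that ball. A volume-packing argument in $\R^d$ then bounds $|I|$ by a constant $N$ depending only on $d$ (and on $\beta$, $c$ — but these can be absorbed, or one notes the final localization procedure uses a fixed choice of $\beta,c$). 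The main obstacle is purely bookkeeping: choosing the auxiliary shrink factor so that both the covering inflation and the packing estimate go through simultaneously with the stated balls $B_x$ and $\widehat B_x$; there is no deep difficulty, only the need to track how $c$ and $\beta$ enter each comparability constant.
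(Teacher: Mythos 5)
Your proposal is correct, and it takes a genuinely different route from the paper. The paper first slices $\omega$ into bounded sets $\omega_i := \{x \in \omega : q^i \le d_M(x) \le q^{i+1}\}$, $i \in \Z$, with $q>1$ chosen in terms of $1-(1+\beta)c$; each bounded slab is then covered by Besicovitch, and the choice of $q$ guarantees that balls centered in $\omega_i$ can only reach into $\omega_{i-1}\cup\omega_i\cup\omega_{i+1}$, so the slab-wise overlap constants add up to a global one. Your argument instead avoids the slicing and works directly with a maximal pairwise disjoint family of shrunken balls $B_{(\beta c/3)d_M(x_i)}(x_i)$ (maximal by Zorn; countability is automatic since pairwise disjoint balls in $\R^d$ each contain a rational point), and then uses the $1$-Lipschitzness of $d_M$ twice: once to show that the original balls $B_{cd_M(x_i)}(x_i)$ cover $\omega$, and once to get the radius comparability $d_M(x_i)\sim d_M(x)$ for all $i$ with $x\in\widehat B_{x_i}$, from which the overlap bound follows by volume packing. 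Your computation in the covering step closes: if $B_{(\beta c/3)d_M(x)}(x)$ meets $B_{(\beta c/3)d_M(x_i)}(x_i)$, then $|x-x_i|\le \frac{2\beta c/3}{1-\beta c/3}\,d_M(x_i) < c\,d_M(x_i)$ since $\beta<1$ and $\beta c/3 < 1/3$. The maximal-family route is arguably the cleaner of the two, as it handles the unbounded-radius issue without any auxiliary decomposition.

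One observation you make correctly and which applies to the paper's proof as well: what either argument actually produces is $N$ depending on $d$ \emph{and} on $1-(1+\beta)c$ (which controls the comparability factor between $d_M(x)$ and $d_M(x_i)$), not on $d$ alone as the lemma states. This is harmless for the application — in the proof of Lemma~\ref{lemma:general-averaging}, $c$ and $\beta$ are ultimately fixed in terms of the mollifier support parameter $\alpha$ — but the "solely on $d$" in the statement is a slight overstatement, and you were right to flag it rather than pretend otherwise.
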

\begin{proof}
The proof is very similar to that of \protect{\cite[Lemma~{A.1}]{melenk-wohlmuth12}}. We have to address
the technical issue that the radii of the balls are unbounded so that it is not {\sl a priori} clear that the 
classical Besicovitch covering theory is applicable. 
Fix $c_0>0$ such that $(1+\beta)c < 1-c_0$
and define: $q:= \max\{1+c_0, 1/(1-c_0)\} > 1$. Define, 
for each $i \in \Z$,  
the bounded sets $\omega_i:= \{x \in \omega\colon q^i \leq d_M(x) \leq q^{i+1}\}$.
For each of these sets,
we can find a cover by balls of the above type. The choice of $q > 1$ is such that balls with centers 
in $\omega_i$ have non-trivial intersection with $\omega_j$ only for $j \in \{i-1,i,i+1\}$.
Hence, the overlap
properties can be ensured. 
\end{proof}
The basic lemma for defining the local averaging operators is the following.
\begin{lemma}[basic averaging operator]
\label{lemma:general-averaging}
  Let $M \ne \emptyset$ be a closed subset of $\R^d$. 
  Fix $\alpha \in (0,1/3)$ and let $\rho \in C^\infty_0(\R^d)$ satisfy 
  $\operatorname*{supp} \rho \subset B_\alpha(0)$ and $\int_{\R^d} \rho(y)\,dy = 1$.
  For $t > 0$, set $\rho_t(y):= t^{-d} \rho(y/t)$. 
  Define the averaging operator $\A^M_\rho:L^1_{loc}(\R^d) \rightarrow C^\infty(\R^d \setminus M)$ by 
  \begin{align*}
    (\A^M_\rho u)(x) := \int_{\R^d} u(y) \rho_{d_M(x)} (x - y)\,dy,
  \end{align*}
  and for a set $\omega\subset\R^d\setminus M$ the domain of influence
\begin{equation}
\label{eq:domain-of-influence}
\omega_\rho^M:= \cup_{y \in \omega} \supp \rho_{d_M(y)}(y-\cdot).
\end{equation}
  Then, the following holds: 
  \begin{enumerate}[(i)]
    \item \label{item:lemma:general-averaging-i} $\A^M_\rho 1 = 1$. 
    \item \label{item:lemma:general-averaging-ii}
      If $\omega \subset \R^d\setminus M$ is open, then the conditions
      $d_M|_{\omega_\rho^M}\in\P_1$ and $u|_{\omega_\rho^M} \in \P_p$ imply
      $(\A^M_{\rho} u)|_{\omega} \in \P_p$. 
  \end{enumerate}
  Let a function $\varphi\in L^\infty(\R^d)$ be given that satisfies,
  for some $c \in (0,1)$, $C > 0$, the conditions
  \begin{equation} \label{eq:lemma:general-averaging-1000}
    C^{-1} \varphi(y) \leq \varphi(x) \leq C \varphi(y) \qquad \forall y \in B_{c d_M(x)}(x)
    \qquad \forall x \in \R^d \setminus M. 
  \end{equation} 
  Then for any open $\omega \subset \R^d\setminus M$, there holds: 
  \begin{enumerate}[(i)]\setcounter{enumi}{2}
    \item \label{item:lemma:general-averaging-iv}
      $\|\varphi \A^M u\|_{L^2(\omega)} \lesssim \|\varphi u\|_{L^2(\omega^M_\rho)}$
      provided the right-hand side is finite.
    \item \label{item:lemma:general-averaging-v}
      $\|\varphi \nabla \A^M u\|_{L^2(\omega)} \lesssim
      \|\varphi \nabla u\|_{L^2(\omega^M_\rho)}$ provided the right-hand side is finite.
    \item \label{item:lemma:general-averaging-iii}
      $\A^M_\rho: L^2(\omega^M_\rho) \rightarrow L^2(\omega)$ and 
      $\A^M_\rho: H^1(\omega^M_\rho) \rightarrow H^1(\omega)$ is bounded.
    \item \label{item:lemma:general-averaging-vii}
      If $u$ is continuous at a point $z \in M$, then $\A^M_\rho u$ is continuous at $z$ and 
      $(\A^M_\rho u)(z) = u(z)$.
    \item \label{item:lemma:general-averaging-viii}
      If $u \in H^\theta(\R^d \setminus M)$, then
      $\|d_M^{-\theta}(u - \A^M_\rho u)\|_{L^2(\R^d\setminus M)} \lesssim
      \|u\|_{H^{\theta}(\R^d\setminus M)}$.
    \item \label{item:lemma:general-averaging-ix}
      $\|\nabla \A^M_\rho u\|_{L^2(\omega)} \lesssim \|d_M^{-1} u\|_{L^2(\omega^M_\rho)}$
      provided the right-hand side is finite.
    \item \label{item:lemma:general-averaging-ixa}
      $\|d_M \nabla \A^M_\rho u\|_{L^2(\omega)} \lesssim \|u\|_{L^2(\omega^M_\rho)}$.
    \item \label{item:lemma:genreal_averaging-d2}
      Assume that  $d_M|_{\omega_\rho^M}\in\P_1$, then
      $\|d_M \nabla^2 \A^M_\rho u\|_{L^2(\omega)} \lesssim \|\nabla u\|_{L^2(\omega^M_\rho)}.$
    \item \label{item:lemma:general-averaging-vi}
      Let $K \subset \R^d$ be compact with $\dist{K}{M} > 0$. Then 
      $\|\A^M_\rho u\|_{W^{j,\infty}(K)} \leq C_{K,\rho,d,M,j} \|u\|_{L^2(K^M_\rho)}$
      for all $j\in\N\cup\left\{ 0 \right\}$.
  \end{enumerate}
  All hidden constants depend solely on $d$, $\rho$, $M$, and $\theta$.
\end{lemma}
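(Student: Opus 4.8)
The plan is to establish each of the items (\ref{item:lemma:general-averaging-i})--(\ref{item:lemma:general-averaging-vi}) by reducing everything to the covering Lemma~\ref{lemma:covering} together with elementary scaling properties of the mollifier $\rho_t$. The guiding principle throughout is that for $x \in \R^d \setminus M$ the value $(\A^M_\rho u)(x)$ depends only on $u$ restricted to the ball $B_{\alpha d_M(x)}(x)$, whose radius is comparable to $d_M(x)$; since $\alpha < 1/3$, on such balls $d_M$ varies only by a fixed multiplicative factor, and the ``domain of influence'' $\omega^M_\rho$ of an open set $\omega$ is contained in a fixed-overlap union of balls of the type appearing in Lemma~\ref{lemma:covering} (take $c = \alpha$ and any $\beta$ with $(1+\beta)\alpha < 1$). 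I would first dispatch (\ref{item:lemma:general-averaging-i}) by $\int \rho_t = 1$, and (\ref{item:lemma:general-averaging-vii}) by noting $|(\A^M_\rho u)(x) - u(z)| \le \sup_{|y-x| \le \alpha d_M(x)} |u(y) - u(z)| \to 0$ as $x \to z$ using continuity of $u$ at $z \in M$ and $d_M(x) \to 0$. Item (\ref{item:lemma:general-averaging-ii}) is the polynomial-reproduction statement: if $d_M$ is affine on $\omega^M_\rho$, then $x \mapsto d_M(x)$ is affine, and a change of variables $y = x - d_M(x) s$ in $\int u(y)\rho_{d_M(x)}(x-y)\,dy = \int u(x - d_M(x)s)\rho(s)\,ds$ shows that composing a degree-$p$ polynomial with the affine map $s \mapsto x - d_M(x)s$ and integrating against $\rho(s)\,ds$ again yields a polynomial of degree $\le p$ in $x$ (the key point being that $x - d_M(x)s$ is affine in $x$ for each fixed $s$).

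For the quantitative bounds I would use the covering lemma to choose points $(x_i)$ with $\omega \subset \bigcup_i B_{x_i}$, $B_{x_i} = \overline B_{\alpha d_M(x_i)}(x_i)$, and finite overlap $N$ of the enlarged balls $\widehat B_{x_i} = \overline B_{(1+\beta)\alpha d_M(x_i)}(x_i)$; for $x \in B_{x_i}$ one has $\supp \rho_{d_M(x)}(x - \cdot) \subset \widehat B_{x_i}$ and $d_M(x) \sim d_M(x_i)$. Then for (\ref{item:lemma:general-averaging-iv}) the weight condition \eqref{eq:lemma:general-averaging-1000} gives $\varphi(y) \sim \varphi(x) \sim \varphi(x_i)$ on the relevant balls, and Jensen/Cauchy--Schwarz with $\|\rho_t\|_{L^1} = 1$, $\|\rho_t\|_{L^2} \sim t^{-d/2}$ yields $\|\varphi \A^M u\|^2_{L^2(B_{x_i})} \lesssim \varphi(x_i)^2 \|\A^M u\|^2_{L^2(B_{x_i})} \lesssim \varphi(x_i)^2 \|u\|^2_{L^2(\widehat B_{x_i})} \lesssim \|\varphi u\|^2_{L^2(\widehat B_{x_i})}$, and summing over $i$ using finite overlap gives the claim. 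Items (\ref{item:lemma:general-averaging-iii}), (\ref{item:lemma:general-averaging-ix})--(\ref{item:lemma:genreal_averaging-d2}) follow the same template: differentiating under the integral sign, $\nabla_x (\A^M_\rho u)(x)$ is a sum of a term involving $\rho'$ scaled by $d_M(x)^{-1}$ and a term carrying $\nabla d_M$ (bounded by $1$ a.e., and equal to a constant vector when $d_M$ is affine), so each spatial derivative costs a factor $d_M(x)^{-1} \sim d_M(x_i)^{-1}$; combining with the same $L^1$/$L^2$ estimates for $\rho_t$ and, where the gradient of $u$ appears, first integrating by parts to move a derivative onto $u$ (valid on the fixed ball, using $\int \nabla \rho_t = 0$) produces exactly the stated right-hand sides. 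The $L^\infty$ bound (\ref{item:lemma:general-averaging-vi}) is the easiest: on a compact $K$ with $\dist K M > 0$ the weight $d_M$ is bounded above and below, so $\A^M_\rho u$ and all its derivatives are bounded pointwise by $\|u\|_{L^2(K^M_\rho)}$ times constants depending on $K$, $\rho$, $d$, $M$, $j$ via $\|\nabla^j \rho_t\|_{L^2}$.

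The genuinely nontrivial item is (\ref{item:lemma:general-averaging-viii}), the fractional approximation estimate $\|d_M^{-\theta}(u - \A^M_\rho u)\|_{L^2(\R^d \setminus M)} \lesssim \|u\|_{H^\theta(\R^d \setminus M)}$. Here I would use the slab/dyadic decomposition $\R^d \setminus M = \bigcup_{k \in \Z} S_k$ with $S_k := \{ x : 2^k \le d_M(x) < 2^{k+1}\}$, on which $d_M \sim 2^k$, reducing the claim to $\sum_k 2^{-2\theta k}\|u - \A^M_\rho u\|^2_{L^2(S_k)} \lesssim \|u\|^2_{H^\theta}$. On each slab one applies the covering lemma at scale $2^k$ and uses the local estimate $\|u - \A^M_\rho u\|_{L^2(B_{x_i})} \lesssim 2^{(1-0)k}$... more precisely the standard mollification-defect bound $\|u - \rho_t \star u\|_{L^2} \lesssim t^\theta |u|_{H^\theta}$ localized to $\widehat B_{x_i}$ (this is the Jackson-type estimate; for $\theta \in (0,1)$ it can be proven directly from the Gagliardo seminorm, or obtained by interpolation between the trivial $\theta = 0$ bound and the $\theta = 1$ bound $\|u - \rho_t \star u\|_{L^2} \lesssim t \|\nabla u\|_{L^2}$). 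Summing $2^{-2\theta k} 2^{2\theta k} |u|^2_{H^\theta(\widehat B_{x_i})}$ over $i$ (finite overlap) and then over $k$ (each point of $\R^d \setminus M$ lies in boundedly many slabs' enlarged coverings) yields $\lesssim \|u\|^2_{H^\theta(\R^d \setminus M)}$. The main obstacle is making the local Jackson estimate with the Gagliardo seminorm rigorous and tracking that the overlaps across slabs remain uniformly bounded, i.e., that $\widehat B_{x_i}$ for $x_i \in S_k$ meets $S_j$ only for $|j - k| \le 1$, which follows from $\alpha < 1/3$ exactly as in the proof of the covering lemma.
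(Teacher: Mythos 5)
Your proof tracks the paper's closely for most items: the covering Lemma~\ref{lemma:covering}, the local $L^1$/$L^2$ kernel bounds, the change-of-variables formula $(\A^M_\rho u)(x) = \int u(x-d_M(x)y)\rho(y)\,dy$, and summation over the finitely overlapping balls are exactly what the paper does for items (i), (ii), (iii)--(vii), and (ix)--(xii). Two minor imprecisions are worth flagging: taking $c=\alpha$ in the covering lemma requires $\beta\ge 1+\alpha$ (not ``any $\beta$'') in order that the domain of influence $(B_{x_i}\cap\omega)^M_\rho$ lie inside $\widehat B_{x_i}$ -- the paper instead fixes $c$ with $\alpha<c/(1+c)$ and $c<1/2$ and then $\beta>\alpha(1+c)/c$, which is the cleaner bookkeeping, though your choice does admit an acceptable $\beta$ since $(2+\alpha)\alpha<1$ for $\alpha<1/3$; and for item~(v) the paper avoids integration by parts entirely by differentiating $u(x-d_M(x)y)$ directly inside the change-of-variables integral, which produces $\nabla u$ together with the bounded factor $(I-\nabla d_M(x)y^T)$, rather than the murkier ``integrate by parts and use $\int\nabla\rho_t=0$'' you sketch (the latter needs an extra subtraction of a constant because $\nabla_x\rho_{d_M(x)}(x-y)\neq -\nabla_y\rho_{d_M(x)}(x-y)$). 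The one genuinely different route is item~(viii): the paper proves only $\theta=1$ via the local Poincar\'e inequality $\|\A^M_\rho u - u\|_{L^2(B_{x_i})}\lesssim d_M(x_i)\|\nabla u\|_{L^2(\widetilde B_{x_i})}$ and then gets all $\theta\in(0,1)$ by interpolating, with Proposition~\ref{prop:tartar} identifying $(L^2,L^2(d_M^{-2}))_{\theta,2}=L^2(d_M^{-2\theta})$, whereas you propose a direct dyadic-slab decomposition with a localized Jackson-type estimate at scale $2^k$. Your route is more elementary and self-contained, but it needs more care than you indicate: $\A^M_\rho$ is not a fixed-scale convolution even on a single covering ball (the scale $d_M(x)$ still varies by a bounded factor), so the Jackson bound at scale $2^k$ must be re-derived for the variable-scale operator rather than quoted; and the enlarged balls meet more than the adjacent slabs $|j-k|\le 1$ when $\alpha$ is near $1/3$, so ``bounded overlap across slabs'' is what you actually get. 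These are fixable details rather than gaps, and the proposal is essentially correct.
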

\begin{proof}
  We start with some preparatory results.
  \begin{itemize}
    \item Elementary considerations show that $d_M:\R^d \rightarrow \R$
      is Lipschitz continuous with Lipschitz constant $L = 1$.
    \item Let $\varphi$ satisfy (\ref{eq:lemma:general-averaging-1000}). 
      Then for any $\widehat c \in (0,1)$ there exists $\widehat C > 0$ such that 
      \begin{equation}\label{eq:lemma:general-averaging-2000} 
	\widehat C^{-1} \varphi(y) \leq \varphi(x) \leq \widehat C \varphi(y)
	\qquad \forall y \in B_{\widehat c d_M(x)}(x) \quad \forall x \in \R^d \setminus M.
      \end{equation}
      To see this, it suffices to consider the case $1 > \widehat c > c$.
      For fixed $x \in \R^d \setminus M$ and 
      $y \in B_{\widehat c d_M(x)}(x)$, consider a sequence of $L = \lfloor \widehat c/(c(1-\widehat c) )\rfloor +1$ 
      points $x = x_0,\ldots, x_L = y$ on the line connecting
      $x = x_0$ with $y=x_L$ such that $x_i \in B_{c d_M(x_{i-1})}(x_{i-1})$, $i=1,\ldots,L$. 
      For each pair $(x_{i-1}, x_i)$, the assumption (\ref{eq:lemma:general-averaging-1000}) is applicable. 
      Hence, the claim (\ref{eq:lemma:general-averaging-2000}) follows with $\widehat C:= C^L$.
    \item A change of variables yields
      \begin{equation}\label{eq:lemma:general-averaging-100}
	(\A^M_\rho u)(x) = 
	\int_{B_\alpha(0)} u\big(x - d_M(x) y\big) \rho(y)\,dy. 
      \end{equation}
  \end{itemize}

  {\bf Proof of (\ref{item:lemma:general-averaging-i}) and (\ref{item:lemma:general-averaging-ii}):}
  This follows by a direct calculation.

  {\bf Proof of (\ref{item:lemma:general-averaging-iv}):}
  First, we provide a locally finite cover of $\omega$.
  Recall $\alpha\in (0,1/3)$. Fix $c\in(0,1/2)$ such that $\alpha < c/(1+c)$.
  Then fix $\beta\in (0,1)$ such that $\alpha (1+c)/c < \beta$ and observe 
  \begin{align}\label{eq:lemma:general-averaging-4000}
    c < c + (1+c)\alpha < c(1+\beta) < 1.
  \end{align}
  According to Lemma~\ref{lemma:covering}, there are points $x_i\in\omega$, $i\in\N$, such that
  the balls $B_{x_i}:= \overline B_{c d_M(x_i)}(x_i)$
  and the stretched balls $\widehat B_{x_i}:= \overline B_{(1+\beta)c d_M(x_i)}(x_i)$
  fulfill
  \begin{subequations}\label{eq:lemma:general-averaging-5000}
    \begin{align}
      \omega \subset \cup_i B_{x_i}\label{eq:lemma:general-averaging-5001},\\
      \sup_{x \in \R^d} \operatorname*{card} \{i\colon x \in \widehat B_{x_i}\} \leq N \in\N.
      \label{eq:lemma:general-averaging-5002}
    \end{align}
  \end{subequations}
  Setting $\widetilde B_{x_i}:= \overline B_{(c+(1+c)\alpha) d_M(x_i)}(x_i)$ and taking into
  account~\eqref{eq:lemma:general-averaging-4000}, we also conclude
  \begin{align*}
    \sup_{x \in \R^d} \operatorname*{card} \{i\colon x \in \widetilde B_{x_i}\} \leq N.
  \end{align*}
  Furthermore, it follows from the Lipschitz continuity of $d_M$ with constant $L=1$
  that $y\in B_{x_i}\cap \omega$ implies
  $\supp \rho_{d_M(y)}(y-\cdot)\subset \widetilde B_{x_i}$, which yields
  \begin{align*}
    \left( B_{x_i}\cap\omega \right)^M_\rho \subset \widetilde B_{x_i}
  \end{align*}
  and consequently
  \begin{align}\label{eq:lemma:general-averaging-5003}
    \sup_{x \in \R^d} \operatorname*{card} \{i\colon x \in \left( B_{x_i}\cap\omega \right)^M_\rho \}
    \leq N.
  \end{align}
  For any $z\in\omega$, we have the basic estimate
  \begin{align}\label{eq:lemma:general-averaging-basic-1}
    \abs{\A^M_\rho u(z)} \lesssim d_M(z)^{-d}\norm{u}_{L^1(\supp \rho_{d_M(z)}(z-\cdot))}.
  \end{align}
  For $i\in\N$ we conclude, using H\"older's inequality and~\eqref{eq:lemma:general-averaging-2000},
  \begin{align}\label{eq:lemma:general-averaging-6000}
    \norm{\varphi \A^M_\rho u}_{L^2(B_{x_i}\cap\omega)}
    \lesssim \norm{\varphi u}_{L^2((B_{x_i}\cap \omega)^M_\rho)}.
  \end{align}
  The covering property~\eqref{eq:lemma:general-averaging-5001}
  and properties~\eqref{eq:lemma:general-averaging-5003} then yield the result.

  {\bf Proof of (\ref{item:lemma:general-averaging-v}):} By Rademacher's
  Theorem~\cite[Thm.~3.1.6]{federer}, $d_M$ is differentiable almost everywhere
  with $\abs{\nabla d_M(x)}\leq 1$. 
  Using~\eqref{eq:lemma:general-averaging-100}, we then write
  \begin{equation}
    \label{it:general_averaging_diff_formula}
    \nabla \A^M_\rho u(x) = \int_{B_\alpha(0)}
    (I - \nabla d_M(x)  y^T)\nabla u(x - d_M(x) y)  \rho(y)\,dy.
  \end{equation}
  Now we can argue exactly as in the proof of~\eqref{item:lemma:general-averaging-iv}.

  {\bf Proof of (\ref{item:lemma:general-averaging-iii}):}
  follows from (\ref{item:lemma:general-averaging-iv}) and (\ref{item:lemma:general-averaging-v})
  with $\varphi \equiv 1$.
  
  {\bf Proof of (\ref{item:lemma:general-averaging-vii}):} 
  This follows from~\eqref{eq:lemma:general-averaging-100}
  and the observation $\lim_{x' \rightarrow x} d_M(x') = 0$ for $x \in M$.

  {\bf Proof of (\ref{item:lemma:general-averaging-viii}):} 
  We start with the case $\theta=1$.
  Estimate~\eqref{eq:lemma:general-averaging-6000}, (i), and a Poincar\'e inequality show
  $
  \|\A^M_\rho u - u\|_{L^2(B_{x_i})} 
  \lesssim d_M(x_i) \|\nabla u\|_{L^2(\widetilde B_{x_i})}. 
  $
  Dividing by $d_M(x_i)$ and using that $d_M(x_i) \sim d_M$ on $B_{x_i}$ yields
  \begin{equation*}
    \|d_M^{-1} \left( \A^M_\rho u - u\right)\|_{L^2(B_{x_i})} 
    \lesssim \|\nabla u\|_{L^2(\widetilde B_{x_i})}. 
  \end{equation*}
  Now we can argue as in the proof of~\eqref{item:lemma:general-averaging-iv}.
  The case of general $\theta \in (0,1)$ follows by interpolation.
  
  {\bf Proof of (\ref{item:lemma:general-averaging-ix}) and (\ref{item:lemma:general-averaging-ixa}):}
  We calculate 

  \begin{align*}
    \partial_{x_j} \left[ \rho\left( \frac{x-y}{d_M(x)} \right) d_M(x)^{-d} \right]
    = &\rho\left( \frac{x-y}{d_M(x)} \right) (-d)d_M(x)^{-d-1}\partial_{x_j}d_m(x)\\
    &+d_M(x)^{-d}
    \left(- (x-y) \cdot \nabla \rho \left( \frac{x-y}{d_M(x)} \right) \frac{\partial_{x_j}d_M(x)}{d_M(x)^2}
    + \partial_{x_j}\rho \left( \frac{x-y}{d_M(x)} \right) \frac{1}{d_M(x)}
    \right)
  \end{align*}
  and conclude for any $y\in B_{\alpha d_M(x)}(x)$ that 
  \begin{align*}
    \abs{\nabla_x \rho_{d_M(x)}(x-y)} \lesssim d_M(x)^{-d-1}.
  \end{align*}
  Hence, we have the basic estimate 
  \begin{align}\label{eq:lemma:general-averaging-basic-2}
    \abs{\nabla \A^M_\rho u(x)} \lesssim 
    d_M(x)^{-d-1}\norm{u}_{L^1(\supp \rho_{d_M(x)}(x-\cdot))}
  \end{align}
  and can proceed as in the proof of~\eqref{item:lemma:general-averaging-iv}.

    {\bf Proof of~(\ref{item:lemma:genreal_averaging-d2}):}
    Starting from~\eqref{it:general_averaging_diff_formula}, a simple transformation gives for $x \in \omega$
    \begin{align*}
      \nabla \A^M_\rho u(x)
      &=\int_{\omega_{\rho}^{M}}
      \big(I - \nabla d_M(x) d_{M}^{-1}(x) (x-y)^T\big)\nabla u(y)  \rho_{d_{M}(x)}(x-y)\,dy.
    \end{align*}
    The assumption $d_{M} \in \mathcal{P}_1$ implies that $\nabla d_{M}$ is constant.
    Differentiating with respect to $x$, we can continue as in the proof of~(\ref{item:lemma:general-averaging-ixa}).
    In particular, the additional term $(x-y)d_{M}^{-1}(x)$ and its $x$-derivative
      behave like $\bigO(1)$ and $\bigO(d_{M}^{-1})$, leading to a differentiated version of~\eqref{eq:lemma:general-averaging-basic-2}.

  {\bf Proof of (\ref{item:lemma:general-averaging-vi}):}
  This follows immediately from the basic estimates~\eqref{eq:lemma:general-averaging-basic-1}
  and~\eqref{eq:lemma:general-averaging-basic-2} for $j=0$, respectively $j=1$.
  The case $j>1$ can be shown analogously.
\end{proof}
\subsection{Averaging operators for the vertex part}
We first construct the  averaging operators for the vertex contributions.
  Since it already showcases most of the difficulties encountered also for the other contributions,
  we present the construction in more detail.
\subsubsection{Averaging operators on the reference triangle for the vertex part}
We start with a simple averaging operator on the reference triangle, 
which will be the basis for the construction of further operators 
with additional properties. 
\begin{lemma}\label{lemma:vertex-part}
  Let $\Vref$ be a vertex of $\Tref$. 
  There exists a linear operator ${\mathcal A}^{\Vref}:L^1_{loc}(\Tref) \rightarrow C^\infty(\Tref)$ with 
  the following properties: 
  \begin{enumerate} [(i)]
    \item \label{item:lemma:vertex-part-3}
      If $u$ is continuous at $\widehat V$, then $({\mathcal A}^{\Vref} u)(\widehat V) = u(\widehat V)$. 
    \item \label{item:lemma:vertex-part-4}
      ${\mathcal A}^{\Vref} 1 \equiv 1$. Furthermore, 
      $u \in {\mathcal P}_p$ implies ${\mathcal A}^{\Vref} u \in {\mathcal P}_p$. 
    \item \label{item:lemma:vertex-part-1}
      ${\mathcal A}^{\Vref}: H^{\theta}(\Tref) \rightarrow H^{\theta}(\Tref)$
    is bounded and linear for $\theta \in [0,1]$.
    \item \label{item:lemma:vertex-part-2}
      For fixed $\gamma \in \R$, 
      $\displaystyle 
      \|d_{\Vref}^\gamma {\mathcal A}^{\Vref} u\|_{L^2(\Tref)} 
        \leq C_\gamma
	\|d_{\Vref}^\gamma u\|_{L^2(\Tref)} 
      $
      provided the right-hand side is finite. 
    \item \label{item:lemma:vertex-part-6}
      For all $\theta \in [0,1]$ there exists $C>0$ such that for all $u \in H^{\theta}(\Tref)$:
      $$
      \|d_{\Vref}^{-\theta}( u - {\mathcal A}^{\Vref} u)\|_{L^2(\Tref)} \leq C \|u\|_{H^{\theta}(\Tref)}.
      $$
    \item \label{item:lemma:vertex-part-7}
      For every $\varepsilon > 0$ we have 
      $\A^{\Vref} u \in C^\infty(T_\varepsilon)$, where 
      $T_\varepsilon:= \{(\xi,\eta) \in \overline\Tref \,|\, d_{\widehat{V}}(\xi,\eta) \geq \varepsilon\}$. Moreover, 
      for every $j \in \N_0$ and $\varepsilon > 0$, there exists $C_{j,\varepsilon} > 0$ such that 
      $$
        \|\A^{\Vref} u\|_{W^{j,\infty}(T_\varepsilon)} \leq C_{j,\varepsilon} \|u\|_{L^2(\Tref)}. 
      $$
    \item \label{item:lemma:vertex-part-8}
      Let $\widehat{e}$ denote an edge of $\Tref$ with $\Vref \in \overline{\widehat{e}}$, then
      \begin{align}\label{eq:lemma:vertex-part-8}
	\norm{d_{\Vref}^{-1/2} \A^{\Vref}u  }_{L^2(\widehat{e})} \leq C \norm{d_{\Vref}^{-1} u}_{L^2(\Tref)}.
      \end{align}
  \end{enumerate} 
\end{lemma}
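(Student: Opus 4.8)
After an affine change of variables — which preserves every polynomial space ${\mathcal P}_p$, alters $d_{\Vref}$ and the Sobolev norms on $\Tref$ only up to equivalence constants, and turns $\Vref$ into the origin — we may assume $\Tref=\{0<\eta<\xi<1\}$ and $\Vref=(0,0)$. Fix a small $c\in(0,1)$ and mollifiers $\rho_1,\rho_2\in C_0^\infty((0,1))$ with $\rho_i\ge 0$ and $\int_0^1\rho_i=1$, and set, for $(\xi,\eta)\in\Tref$,
\begin{equation*}
({\mathcal A}^{\Vref}u)(\xi,\eta):=\int_0^1\!\!\int_0^1 u\bigl((1-cs)\xi,\,(1-cs)(1-cr)\eta\bigr)\,\rho_1(s)\rho_2(r)\,ds\,dr .
\end{equation*}
Because $1-cs,\,1-cr\in(1-c,1)$, the ``sample point'' $\Phi_{(\xi,\eta)}(s,r):=\bigl((1-cs)\xi,\,(1-cs)(1-cr)\eta\bigr)$ satisfies $0<(1-cs)(1-cr)\eta<(1-cs)\xi<1$, i.e.\ $\Phi_{(\xi,\eta)}(s,r)\in\Tref$; hence ${\mathcal A}^{\Vref}$ is a well defined linear operator on $L^1_{loc}(\Tref)$ with no extension of $u$ needed. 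For fixed $(s,r)$ the map $x\mapsto\Phi_x(s,r)=\operatorname{diag}(1-cs,(1-cs)(1-cr))\,x$ is a diagonal linear contraction, so ${\mathcal A}^{\Vref}$ sends the monomial $\xi^a\eta^b$ to $C_{a,b}\,\xi^a\eta^b$ with $C_{a,b}=\int_0^1(1-cs)^{a+b}\rho_1\int_0^1(1-cr)^b\rho_2$; this already gives ${\mathcal A}^{\Vref}1\equiv1$ and ${\mathcal A}^{\Vref}({\mathcal P}_p)\subseteq{\mathcal P}_p$, proving~(\ref{item:lemma:vertex-part-4}), and it gives~(\ref{item:lemma:vertex-part-3}) since $\Phi_{(\xi,\eta)}(s,r)\to 0$ uniformly in $(s,r)$ as $(\xi,\eta)\to 0$, so ${\mathcal A}^{\Vref}u$ extends continuously to $\Vref$ with value $u(\Vref)$ whenever $u$ is continuous there. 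Smoothness on $\Tref$ (needed in~(\ref{item:lemma:vertex-part-1}),(\ref{item:lemma:vertex-part-7})) follows from the substitution $\tau=\Phi_{(\xi,\eta)}(s,r)$, which writes ${\mathcal A}^{\Vref}u(\xi,\eta)=\iint u(\tau)\kappa(\tau;\xi,\eta)\,d\tau$ with a kernel $\kappa$ that is $C^\infty$ in $(\xi,\eta)$ for $\xi,\eta>0$ and, together with all its $(\xi,\eta)$-derivatives, compactly supported in $\tau$ inside $\Tref$ (the boundary terms in differentiating under the integral vanish because $\rho_1,\rho_2$ vanish to infinite order at the endpoints of $(0,1)$).

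All stability estimates rest on three elementary facts about $\Phi_x(s,r)$: it lies in $\Tref$; its Jacobian in $x$ is $\operatorname{diag}(1-cs,(1-cs)(1-cr))$, hence bounded above and below; and $d_{\Vref}(\Phi_x(s,r))=|\Phi_x(s,r)|\sim|x|=d_{\Vref}(x)$ with $|x-\Phi_x(s,r)|\lesssim d_{\Vref}(x)$. For~(\ref{item:lemma:vertex-part-2}) one bounds $d_{\Vref}(x)^{\gamma}|{\mathcal A}^{\Vref}u(x)|\lesssim\iint d_{\Vref}(\Phi_x(s,r))^{\gamma}|u(\Phi_x(s,r))|\rho_1\rho_2$, applies Cauchy--Schwarz, and for each fixed $(s,r)$ performs the change of variables $x\mapsto\Phi_x(s,r)$ in the $\Tref$-integral; the comparable Jacobian and $\Phi_x(\Tref)\subseteq\Tref$ give the claim. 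With $\gamma=0$ this is the case $\theta=0$ of~(\ref{item:lemma:vertex-part-1}); for $\theta=1$ one uses $\nabla_x[u(\Phi_x(s,r))]=\operatorname{diag}(1-cs,(1-cs)(1-cr))(\nabla u)(\Phi_x(s,r))$ and argues identically, and interpolation (recall $H^\theta(\Tref)=(L^2(\Tref),H^1(\Tref))_{\theta,2}$) yields all $\theta\in[0,1]$. For~(\ref{item:lemma:vertex-part-6}), write $u(x)-{\mathcal A}^{\Vref}u(x)=\iint[u(x)-u(\Phi_x(s,r))]\rho_1\rho_2$ and bound $|u(x)-u(\Phi_x(s,r))|$ by the line integral of $|\nabla u|$ over the segment $[\Phi_x(s,r),x]\subset\Tref$ (convexity), which is parametrized by a diagonal-linear map of $x$ of length $\lesssim d_{\Vref}(x)$; this shows the operator $u\mapsto u-{\mathcal A}^{\Vref}u$ is bounded $H^1(\Tref)\to L^2(\Tref;d_{\Vref}^{-2})$ and trivially $L^2(\Tref)\to L^2(\Tref)$, so interpolation together with Proposition~\ref{prop:tartar} — which identifies $\bigl(L^2(\Tref),L^2(\Tref;d_{\Vref}^{-2})\bigr)_{\theta,2}=L^2(\Tref;d_{\Vref}^{-2\theta})$ — gives~(\ref{item:lemma:vertex-part-6}) for $\theta\in[0,1]$. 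Finally, on $T_\varepsilon$ the kernel $\kappa(\cdot;\xi,\eta)$ and its derivatives up to order $j$ are bounded in $L^2_\tau$ with constants depending only on $j,\varepsilon$, so Cauchy--Schwarz gives~(\ref{item:lemma:vertex-part-7}).

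For the trace estimate~(\ref{item:lemma:vertex-part-8}) I would use a dyadic decomposition near $\Vref$: with $A_k:=\{x\in\Tref:2^{-k-1}<d_{\Vref}(x)<2^{-k}\}$, the piece $\widehat e\cap A_k$ has length $\sim2^{-k}$ and $d_{\Vref}\sim2^{-k}$ on $A_k$. A scaled trace inequality on a rectangle $Q_k\subset\Tref$ of size $\sim2^{-k}$ around $\widehat e\cap A_k$ gives $\|{\mathcal A}^{\Vref}u\|_{L^2(\widehat e\cap A_k)}^2\lesssim 2^{k}\|{\mathcal A}^{\Vref}u\|_{L^2(Q_k)}^2+2^{-k}\|\nabla{\mathcal A}^{\Vref}u\|_{L^2(Q_k)}^2$; inserting $\|{\mathcal A}^{\Vref}u\|_{L^2(Q_k)}\lesssim\|u\|_{L^2(\widehat A_k)}$ and $\|\nabla{\mathcal A}^{\Vref}u\|_{L^2(Q_k)}\lesssim\|d_{\Vref}^{-1}u\|_{L^2(\widehat A_k)}\sim 2^{k}\|u\|_{L^2(\widehat A_k)}$ (the latter proven like~(\ref{item:lemma:vertex-part-2}) by differentiating the kernel, using $|\nabla_x\kappa|\lesssim d_{\Vref}(x)^{-1}|\kappa|$-type bounds, and the former on a fixed enlargement $\widehat A_k$ of $A_k$) yields $\|{\mathcal A}^{\Vref}u\|_{L^2(\widehat e\cap A_k)}^2\lesssim 2^{k}\|u\|_{L^2(\widehat A_k)}^2$, and summing, $\|d_{\Vref}^{-1/2}{\mathcal A}^{\Vref}u\|_{L^2(\widehat e)}^2\sim\sum_k 2^{k}\|{\mathcal A}^{\Vref}u\|_{L^2(\widehat e\cap A_k)}^2\lesssim\sum_k 2^{2k}\|u\|_{L^2(\widehat A_k)}^2\sim\sum_k\|d_{\Vref}^{-1}u\|_{L^2(\widehat A_k)}^2\lesssim\|d_{\Vref}^{-1}u\|_{L^2(\Tref)}^2$ by finite overlap. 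The one genuinely delicate point is the construction: one needs a \emph{single} operator whose sampling map $\Phi_x$ is linear in $x$ (so that polynomials of each degree are reproduced) and never leaves $\Tref$ (so that no $u$-extension, which would spoil polynomial reproduction, is required). The two edges of $\Tref$ meeting at $\Vref$ obstruct every construction based on ball-averaging with an affine radius; the product/Duffy-type contraction $\Phi_x(s,r)=\bigl((1-cs)\xi,(1-cs)(1-cr)\eta\bigr)$ is designed precisely to contract towards $\Vref$ \emph{along both incident edges simultaneously} while staying diagonal-linear, and once this is in place the rest is routine, with Proposition~\ref{prop:tartar} carrying the fractional-order statements.
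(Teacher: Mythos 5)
Your operator is genuinely different from the paper's: a product contraction $\Phi_{(\xi,\eta)}(s,r)=((1-cs)\xi,(1-cs)(1-cr)\eta)$ that maps $\Tref$ into itself, so that no extension of $u$ is needed, whereas the paper dilates $u$ via $x\mapsto u(\beta x)$, extends by a Stein operator, and then performs a genuinely two-dimensional mollification at scale $\sim d_M(x)$. Your arguments for parts~(\ref{item:lemma:vertex-part-3})--(\ref{item:lemma:vertex-part-6}) are correct, but the construction fails for~(\ref{item:lemma:vertex-part-7}) and~(\ref{item:lemma:vertex-part-8}). The spread of your sample set in the $\xi$-direction is $\sim c\xi\sim d_{\Vref}(x)$, but in the $\eta$-direction it is only $\sim c\eta$; this collapses on the bottom edge $\{\eta=0\}$ even where $d_{\Vref}\geq\varepsilon$, so no regularity is gained in that direction. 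Concretely, $u(\xi,\eta)=\eta^{-1/3}\in L^2(\Tref)$ gives $({\mathcal A}^{\Vref}u)(\xi,\eta)=C_c\,\eta^{-1/3}$, which is unbounded on every $T_\varepsilon$, so~(\ref{item:lemma:vertex-part-7}) cannot hold; your auxiliary claim that $\kappa(\cdot;\xi,\eta)$ is bounded in $L^2_\tau$ uniformly on $T_\varepsilon$ is false, since the Jacobian factor is $\sim(\xi\eta)^{-1}$ on a support of area $\sim\xi\eta$, so $\|\kappa(\cdot;\xi,\eta)\|_{L^2_\tau}\sim(\xi\eta)^{-1/2}\to\infty$ as $\eta\to 0$. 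For~(\ref{item:lemma:vertex-part-8}) the key inequality you invoke, $\|\nabla{\mathcal A}^{\Vref}u\|_{L^2(Q_k)}\lesssim\|d_{\Vref}^{-1}u\|_{L^2(\widehat A_k)}$, fails for the same reason: differentiating $\kappa$ in the $\eta$-variable costs a factor $\eta^{-1}$, not $d_{\Vref}^{-1}\sim\xi^{-1}$; and for $u(\xi,\eta)=\xi^{2/3}\eta^{-1/3}$ the right-hand side of~\eqref{eq:lemma:vertex-part-8} is finite while ${\mathcal A}^{\Vref}u=C_c\,\xi^{2/3}\eta^{-1/3}$ has no trace on $\widehat e=\{\eta=0\}$.

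The underlying obstacle is that~(\ref{item:lemma:vertex-part-7}) and~(\ref{item:lemma:vertex-part-8}) need a genuinely two-dimensional mollification at scale $\sim d_{\Vref}(x)$ in every direction; this is exactly what yields the one-derivative gain $\|\nabla\A^M_\rho u\|_{L^2}\lesssim\|d_M^{-1}u\|_{L^2}$ of Lemma~\ref{lemma:general-averaging}~(\ref{item:lemma:general-averaging-ix}) that drives the proof of~(\ref{item:lemma:vertex-part-8}), and the locally-integrable-to-smooth gain behind~(\ref{item:lemma:vertex-part-7}). A diagonal linear sample map $x\mapsto Ax$ that keeps $\Tref$ invariant must have $0<A_{22}\le A_{11}\le 1$, hence displaces the $\eta$-coordinate by at most $\eta$ and cannot spread vertically by $\sim\xi$ near $\{\eta=0\}$. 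The paper's Stein extension is precisely what permits a nondegenerate mollification near the two edges incident to $\Vref$; the preliminary dilation $u\mapsto u(\beta\cdot)$ then serves to keep the sample set inside the dilated triangle, where the dilated polynomial coincides with the extension and the degree is unchanged, so that~(\ref{item:lemma:vertex-part-4}) survives. Your critique that ball averaging with an affine radius is obstructed by the two edges is therefore not right once one allows extension, and avoiding extension is what breaks~(\ref{item:lemma:vertex-part-7})--(\ref{item:lemma:vertex-part-8}).
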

\begin{proof}
  To fix ideas, we assume that $\Vref = (0,0)$. We will apply Lemma~\ref{lemma:general-averaging},
  where we assume additionally that $\supp\rho \subset B_\alpha(0)\cap \Tref$ for some $\alpha\in(0,1/3)$.
  We choose $\beta<1$ with $\beta(\sqrt{2}+\alpha)<1$
  and the set $M = \left\{ 0 \right\}\times\R$.
  Define $\widehat u(x):= u(\beta x)$ for $x\in\Tref$, and extend $\widehat u$ to $\R^d$
  using the Stein extension~\cite[Sec.~{VI}]{stein70}, which is simultaneously bounded in $L^2$ and $H^1$.
  Note that $\widehat u(\widehat{V}) = u(\widehat{V})$ if $u$ is continuous at $\widehat{V}$.
  Now we define $\A^{\Vref} u := \A^M_\rho \widehat u$ with the aid of Lemma~\ref{lemma:general-averaging}.
  Note that due to the choice of $\beta$ and the support of $\rho$, $(\A^{\Vref} u)|_{\Tref}$ depends solely on $u|_{\Tref}$.

  {\bf Proof of (\ref{item:lemma:vertex-part-3}):} This follows from
  Lemma~\ref{lemma:general-averaging},~\eqref{item:lemma:general-averaging-vii} and the fact that $\widehat u(\widehat{V}) = u(\widehat{V})$.

  {\bf Proof of (\ref{item:lemma:vertex-part-4}):}
  The choice of $M$ implies $d_M|_{\{x > 0\}} \in \P_1$. Furthermore, $u\in\P_p(\Tref)$
  implies $\widehat u\in\P_p(\beta^{-1}\Tref)$, and we conclude with
  Lemma~\ref{lemma:general-averaging},~\eqref{item:lemma:general-averaging-ii}
  that $\A^{\Vref}u\in \P_p(\Tref)$. Together with
  Lemma~\ref{lemma:general-averaging},~\eqref{item:lemma:general-averaging-i},
  this shows~\eqref{item:lemma:vertex-part-4}.

  {\bf Proof of (\ref{item:lemma:vertex-part-1}):} For $\theta\in\left\{ 0,1 \right\}$, note that
  due to Lemma~\ref{lemma:general-averaging},~\eqref{item:lemma:general-averaging-iii}
  and $\Tref^M_\rho \subset \beta^{-1}\Tref$,
  \begin{align*}
    \|\A^{\Vref}u\|_{H^\theta(\Tref)} \lesssim
    \| \widehat u \|_{H^\theta(\beta^{-1}\Tref)}
    \lesssim \|u\|_{H^\theta(\Tref)}.
  \end{align*}
  An interpolation argument finishes the proof.

  {\bf Proof of (\ref{item:lemma:vertex-part-2}):}
  We note that due to Lipschitz continuity
  of $d_M$, the function $\varphi := d_M^\gamma$ fulfills~\eqref{eq:lemma:general-averaging-1000}.
  Hence, the estimate follows from
  Lemma~\ref{lemma:general-averaging},~\eqref{item:lemma:general-averaging-iv}.

  {\bf Proof of (\ref{item:lemma:vertex-part-6}):}
  We calculate, using $d_{\Vref}\sim d_M$ on $\Tref$,
  $$
    \|d_{\Vref}^{-\theta}( u - \A^{\Vref} u)\|_{L^2(\Tref)} \lesssim
    \|d_M^{-\theta}( u - \A^{\Vref} u)\|_{L^2(\Tref)} \leq 
    \|d_M^{-\theta}( \widehat u - \A^M_\rho \widehat u)\|_{L^2(\R^d\setminus M)} + \| d_M^{-\theta} (u - \widehat u)\|_{L^2(\Tref)}. 
  $$
  Next, 
  \begin{align*}
    \|d_M^{-\theta}( \widehat u - \A^M_\rho \widehat u)\|_{L^2(\R^d\setminus M)}\lesssim \|\widehat u \|_{H^{\theta}(\R^d\setminus M)}
    \lesssim \|u\|_{H^{\theta}(\Tref)},
  \end{align*}
  where the first estimate follows from Lemma~\ref{lemma:general-averaging},~\eqref{item:lemma:general-averaging-viii},
  and the last one is due to the boundedness of the Stein extension operator and the definition of $\widehat u$.
  It remains to consider $\| d_M^{-\theta} (u - \widehat u)\|_{L^2(\Tref)}$. For $\theta=0$,
  this term can be bounded by $\| u \|_{L^2(\Tref)}$. For $\theta=1$, define the function
  $\widetilde u(x,y) = u(x,\beta y)$ and estimate 
  \begin{align*}
     \int_{\Tref} d_M^{-2}& \left( u(x,\beta y)-u(\beta x,\beta y) \right)^2\,dxdy 
      \leq \int_0^1\int_y^1 \left( x^{-1} \int_{\beta x}^x \partial_1 u(s,\beta y)\,ds \right)^2 \!dxdy \\
     &\leq \int_0^1\int_y^1 \left( x^{-1} \int_{0}^x \abs{\partial_1 u(s,\beta y)}\,ds \right)^2 \!dxdy 
    \lesssim \int_0^1 \int_y^1 \abs{\partial_1 u(x,\beta y)}^2\!dxdy \\
     &\lesssim \int_{\Tref} \abs{\partial_1 u(x,y)}^2\,dxdy,
  \end{align*}
  where we used Hardy's inequality in the second step. Likewise,
  \begin{align*}
    & \int_{\Tref} d_M^{-2} \left( u(x,y)-u(x,\beta y) \right)^2\,dxdy
    \leq \int_0^1 \int_0^x \left( x^{-1} \int_0^y \abs{\partial_2 u(x,s)}\,ds \right)^2\,dydx\\
    &\quad \leq \int_0^1 \int_0^x \left( y^{-1} \int_0^y \abs{\partial_2 u(x,s)}\,ds \right)^2\,dydx
    \lesssim \int_0^1\int_0^x \abs{\partial_2 u(x,y)}^2\,dydx.
  \end{align*}
  The triangle inequality then shows that $\| d_M^{-1} (u - \widehat u)\|_{L^2(\Tref)} \lesssim \| u \|_{H^1(\Tref)}$.
  The case of $\theta\in(0,1)$ follows by interpolation.

  {\bf Proof of (\ref{item:lemma:vertex-part-7}):} This follows from
  Lemma~\ref{lemma:general-averaging},~\eqref{item:lemma:general-averaging-vi}.

  {\bf Proof of (\ref{item:lemma:vertex-part-8}):} This follows from
  Lemma~\ref{lemma:weighted-norm-equivalence} and
  Lemma~\ref{lemma:general-averaging},~\eqref{item:lemma:general-averaging-ix}, using that
  $d_{\Vref}\sim d_M$ on $\Tref$.
\end{proof}
We now modify ${\mathcal A}^{\widehat{V}}$ to construct an operator ${\mathcal A}^{\widehat{V}}_0$ that produces functions that vanish on one edge of $\Tref$ and have a
convenient symmetry, making the extension to a patch-supported function easier:  
\begin{lemma}\label{lemma:vertex-part-edge-vanish}
  Let $\Vref$ be a vertex of $\Tref$. There exists a linear operator
  $\A^{\Vref}_0:L^1_{loc}(\Tref) \rightarrow C^\infty(\Tref)$
  with the following properties:
\begin{enumerate}[(i)]
  \item\label{item:lemma:vertex-part-edge-vanish-1}
    If $u$ is continuous at $\widehat V$, then $({\A}^{\Vref}_0 u)(\widehat V) = u(\widehat V)$. 
  \item\label{item:lemma:vertex-part-edge-vanish-2}
    $u \in {\mathcal P}_p$ implies $\A^{\Vref}_0 u \in {\mathcal P}_p$ for all $p\geq 1$.
  \item\label{item:lemma:vertex-part-edge-vanish-3}
    $\A^{\Vref}_0: H^{\theta}(\Tref) \rightarrow H^{\theta}(\Tref)$
    is bounded and linear for $\theta \in [0,1]$.
  \item\label{item:lemma:vertex-part-edge-vanish-4}
    For fixed $\gamma \in [-1,0]$, there holds 
    $\displaystyle   \|d_{\Vref}^\gamma \A^{\Vref}_0 u\|_{L^2(\Tref)} 
      \leq C_\gamma
      \|d_{\Vref}^\gamma u\|_{L^2(\Tref)} 
    $
    provided the right-hand side is finite.
  \item\label{item:lemma:vertex-part-edge-vanish-5}
      For all $\theta \in [0,1]$ there exists $C>0$ such that for $u \in H^{\theta}(\Tref)$
      $$
      \|d_{\Vref}^{-\theta}( u - \A^{\Vref}_0 u)\|_{L^2(\Tref)} \leq C \|u\|_{H^{\theta}(\Tref)}
      $$
  \item\label{item:lemma:vertex-part-edge-vanish-6}
    For every $\varepsilon > 0$ we have 
    $\A^{\Vref}_0 u \in C^\infty(T_\varepsilon)$, where 
    $T_\varepsilon:= \{(\xi,\eta) \in \overline\Tref \,|\, d_{\widehat{V}}(\xi,\eta) \geq \varepsilon\}$. Moreover, 
    for every $j \in \N_0$ and $\varepsilon > 0$, there exists $C_{j,\varepsilon} > 0$ such that 
    $$
      \|\A^{\Vref}_0 u\|_{W^{j,\infty}(T_\varepsilon)} \leq C_{j,\varepsilon} \|u\|_{L^2(\Tref)}. 
    $$
  \item\label{item:lemma:vertex-part-edge-vanish-7}
      Let $e$ denote an edge of $\Tref$ with $\Vref \in \overline{e}$. Then
      \begin{align}\label{eq:lemma:vertex-part-edge-vanish-8}
	\norm{d_{\Vref}^{-1/2} \A^{\Vref}_0 u  }_{L^2(e)} \leq C \norm{d_{\Vref}^{-1} u}_{L^2(\Tref)}.
      \end{align}
  \item\label{item:lemma:vertex-part-edge-vanish-8}
    Let $\widehat{e}$ denote the edge opposite $\Vref$. Then $(\A^{\Vref}_0 u)|_{\widehat e} = 0$. 
  \item\label{item:lemma:vertex-part-edge-vanish-9}
    Let $\Vref = (1,0)$. Then
    $\A^{\Vref}_0 u(\xi,\eta) = \A^{\Vref}_0 u(1-\eta,1-\xi)$ for all $(\xi,\eta) \in \Tref$. 
  \item\label{item:lemma:vertex-part-edge-vanish-10}
    Let $V\neq \widehat V$ be another vertex of $\Tref$. Then
    \begin{align*}
      \| d_{V}^{-1/2} \A^{\Vref}_0 u \|_{L^2(\Tref)} \lesssim \| u \|_{L^2(\Tref)}
      \quad\text{ and }\quad
      \| d_{V}^{-1/2}\nabla \A^{\Vref}_0 u \|_{L^2(\Tref)} \lesssim \| u \|_{H^1(\Tref)}.
    \end{align*}
\end{enumerate}
\end{lemma}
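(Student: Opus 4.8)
The plan is to modify the operator $\A^{\Vref}$ of Lemma~\ref{lemma:vertex-part} so that it additionally vanishes on the edge $\widehat e$ opposite to $\Vref$, and then to symmetrise in order to obtain property~(\ref{item:lemma:vertex-part-edge-vanish-9}). Write $w:=\A^{\Vref}u$. Since $\overline{\widehat e}$ has positive distance to $\Vref$, Lemma~\ref{lemma:vertex-part},~(\ref{item:lemma:vertex-part-7}) shows that $w$ is smooth on a neighbourhood of $\overline{\widehat e}$ with $\|w\|_{W^{j,\infty}}\lesssim\|u\|_{L^2(\Tref)}$; in particular the trace $g:=w|_{\widehat e}$ is smooth with $\|g\|_{W^{j,\infty}(\widehat e)}\lesssim\|u\|_{L^2(\Tref)}$. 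Let $\lambda\in{\mathcal P}_1$ be the barycentric coordinate associated with $\Vref$, so $\lambda(\Vref)=1$, $\lambda|_{\widehat e}\equiv 0$ and $\lambda\sim d_{\widehat e}$ on $\Tref$; let $\mathcal E$ extend functions defined on $\widehat e$ to $\Tref$ by keeping them constant along the lines orthogonal to $\widehat e$ (this is precomposition with an affine map, hence $\mathcal E$ takes polynomials of degree $\le p$ to ${\mathcal P}_p$ and $\|\mathcal Eg\|_{W^{j,\infty}(\Tref)}\lesssim\|g\|_{W^{j,\infty}(\widehat e)}$); and let $p_\Vref\in\overline{\widehat e}$ be the foot of the perpendicular from $\Vref$ onto the line through $\widehat e$, so that $(\mathcal Eg)(\Vref)=g(p_\Vref)=w(p_\Vref)$. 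Define, before symmetrisation,
\begin{align*}
  \widetilde{\A} u := \A^{\Vref}u - \mathcal C u,\qquad \mathcal C u := \mathcal E\!\left(w|_{\widehat e}\right) - w(p_\Vref)\,\lambda .
\end{align*}
No value of $u$ itself is evaluated, so $\widetilde\A\colon L^1_{loc}(\Tref)\to C^\infty(\Tref)$ is well defined, and by construction $\widetilde\A u|_{\widehat e}=0$, $\widetilde\A u(\Vref)=w(\Vref)$ and $\mathcal Cu(\Vref)=0$.

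The term $-w(p_\Vref)\lambda$ plays two roles: it restores the vertex value while keeping the result in ${\mathcal P}_p$ rather than ${\mathcal P}_{p+1}$ (since $w(p_\Vref)\lambda\in{\mathcal P}_1\subseteq{\mathcal P}_p$, which needs $p\ge1$ as in~(\ref{item:lemma:vertex-part-edge-vanish-2})), and, because $\mathcal Cu$ is smooth with $\mathcal Cu(\Vref)=0$, it forces $|\mathcal Cu(x)|\le\|\mathcal Cu\|_{W^{1,\infty}(\Tref)}\,d_{\Vref}(x)\lesssim\|u\|_{L^2(\Tref)}\,d_{\Vref}(x)$, i.e. $\mathcal Cu$ vanishes to first order at $\Vref$. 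Given this, properties~(\ref{item:lemma:vertex-part-edge-vanish-1})--(\ref{item:lemma:vertex-part-edge-vanish-8}),(\ref{item:lemma:vertex-part-edge-vanish-10}) for $\widetilde\A$ follow by splitting $\widetilde\A u=\A^{\Vref}u-\mathcal Cu$ and combining the matching property of $\A^{\Vref}$ from Lemma~\ref{lemma:vertex-part} with the bounds $\|\mathcal Cu\|_{W^{j,\infty}(\Tref)}\lesssim\|u\|_{L^2(\Tref)}$ and $|\mathcal Cu|\lesssim d_{\Vref}\|u\|_{L^2(\Tref)}$: (\ref{item:lemma:vertex-part-edge-vanish-1}) from Lemma~\ref{lemma:vertex-part},~(\ref{item:lemma:vertex-part-3}); (\ref{item:lemma:vertex-part-edge-vanish-2}) from the degree count above and Lemma~\ref{lemma:vertex-part},~(\ref{item:lemma:vertex-part-4}); (\ref{item:lemma:vertex-part-edge-vanish-3}) from Lemma~\ref{lemma:vertex-part},~(\ref{item:lemma:vertex-part-1}) and $\|\mathcal Cu\|_{H^\theta(\Tref)}\lesssim\|\mathcal Cu\|_{W^{1,\infty}(\Tref)}\lesssim\|u\|_{L^2}\lesssim\|u\|_{H^\theta}$; (\ref{item:lemma:vertex-part-edge-vanish-4}),(\ref{item:lemma:vertex-part-edge-vanish-5}) from Lemma~\ref{lemma:vertex-part},~(\ref{item:lemma:vertex-part-2}),~(\ref{item:lemma:vertex-part-6}), using that $|\mathcal Cu|\lesssim d_{\Vref}\|u\|_{L^2}$ and boundedness of $d_{\Vref}$ give $\|d_{\Vref}^{\gamma}\mathcal Cu\|_{L^2(\Tref)}+\|d_{\Vref}^{-\theta}\mathcal Cu\|_{L^2(\Tref)}\lesssim\|u\|_{L^2}$ for $\gamma\in[-1,0]$; (\ref{item:lemma:vertex-part-edge-vanish-6}) directly; (\ref{item:lemma:vertex-part-edge-vanish-7}) from Lemma~\ref{lemma:vertex-part},~(\ref{item:lemma:vertex-part-8}) together with the first-order vanishing of $\mathcal Cu$ at $\Vref$; (\ref{item:lemma:vertex-part-edge-vanish-8}) is by construction; and for~(\ref{item:lemma:vertex-part-edge-vanish-10}) one splits $\Tref$ into $B_\delta(V)$ and its complement — on the complement $d_V^{-1/2}$ is bounded so~(\ref{item:lemma:vertex-part-edge-vanish-3}) applies, while on $B_\delta(V)$ the weight $d_V^{-1}$ is integrable in $2$D and $\widetilde\A u$ is bounded in $W^{1,\infty}(B_\delta(V))$ by~(\ref{item:lemma:vertex-part-edge-vanish-6}), since $V$ lies away from $\Vref$.

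Finally, for~(\ref{item:lemma:vertex-part-edge-vanish-9}) let $\tau$ be the median reflection of $\Tref$ fixing $\Vref$ (for $\Vref=(1,0)$ this is $(\xi,\eta)\mapsto(1-\eta,1-\xi)$), an affine isometry with $\tau(\Tref)=\Tref$, $\tau(\Vref)=\Vref$, $\tau(\widehat e)=\widehat e$, $\tau\circ\tau=\mathrm{id}$, which interchanges the two non-$\Vref$ vertices, and set
\begin{align*}
  \A^{\Vref}_0 u := \tfrac12\bigl(\widetilde\A u + (\widetilde\A(u\circ\tau))\circ\tau\bigr).
\end{align*}
Because $d_{\Vref}\circ\tau=d_{\Vref}$ and $\tau$ leaves all the Lebesgue, Sobolev, $d_{\Vref}$-weighted and $W^{j,\infty}$ norms invariant (exchanging $d_V$ with the weight of the other non-$\Vref$ vertex), properties~(\ref{item:lemma:vertex-part-edge-vanish-1})--(\ref{item:lemma:vertex-part-edge-vanish-8}),(\ref{item:lemma:vertex-part-edge-vanish-10}) are inherited from $\widetilde\A$, and (\ref{item:lemma:vertex-part-edge-vanish-9}) holds because the two summands are interchanged by $(\cdot)\circ\tau$.

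The main obstacle is the first step: designing $\mathcal Cu$ so that $\widetilde\A u$ simultaneously vanishes on $\widehat e$, reproduces the vertex value without evaluating $u$, stays in ${\mathcal P}_p$ (this rules out the naive choice $\mathcal Cu=(1-\lambda)\,\mathcal E(w|_{\widehat e})$, which would only land in ${\mathcal P}_{p+1}$, and is the source of the hypothesis $p\ge1$), and vanishes to first order at $\Vref$, as the weighted estimates~(\ref{item:lemma:vertex-part-edge-vanish-4}),(\ref{item:lemma:vertex-part-edge-vanish-5}),(\ref{item:lemma:vertex-part-edge-vanish-7}) demand. Once the construction is fixed, every remaining statement reduces to the corresponding property of $\A^{\Vref}$ in Lemma~\ref{lemma:vertex-part} plus the elementary $W^{j,\infty}$-bounds on $\mathcal Cu$, which rely only on $\widehat e$ staying away from $\Vref$.
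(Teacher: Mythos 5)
Your construction of $\widetilde\A$ is a genuinely different and arguably cleaner route than the paper's: where the paper first kills the point values at the two non-$\Vref$ vertices via hat functions and then performs a rational ``ray-sweep'' correction along $\widehat e$, you do a single correction $\mathcal C u=\mathcal E(w|_{\widehat e})-w(p_{\Vref})\lambda$ built from a trace extension. Both achieve the same three features (vanishing on $\widehat e$, first-order vanishing of the correction at $\Vref$, preservation of $\mathcal P_p$ for $p\ge1$), and your bookkeeping of the weighted estimates~(\ref{item:lemma:vertex-part-edge-vanish-4}),~(\ref{item:lemma:vertex-part-edge-vanish-5}),~(\ref{item:lemma:vertex-part-edge-vanish-7}),~(\ref{item:lemma:vertex-part-edge-vanish-10}) for $\widetilde\A$ via $|\mathcal Cu|\lesssim d_{\Vref}\|u\|_{L^2}$ and Lemma~\ref{lemma:vertex-part} is correct.

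The gap is in the symmetrization. With your formula
\begin{align*}
  \A^{\Vref}_0 u := \tfrac12\bigl(\widetilde\A u + (\widetilde\A(u\circ\tau))\circ\tau\bigr),
\end{align*}
composing with $\tau$ gives $(\A^{\Vref}_0 u)\circ\tau = \tfrac12\bigl((\widetilde\A u)\circ\tau + \widetilde\A(u\circ\tau)\bigr)$, which is \emph{not} the original expression unless $\widetilde\A u-\widetilde\A(u\circ\tau)$ happens to be $\tau$-invariant --- false in general. What you have constructed is an operator satisfying $\A^{\Vref}_0(u\circ\tau)=(\A^{\Vref}_0 u)\circ\tau$ (equivariance), whereas~(\ref{item:lemma:vertex-part-edge-vanish-9}) requires the \emph{output} function to be $\tau$-invariant, i.e.\ $\A^{\Vref}_0 u=(\A^{\Vref}_0 u)\circ\tau$. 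The assertion ``the two summands are interchanged by $(\cdot)\circ\tau$'' would force $\widetilde\A u=\widetilde\A(u\circ\tau)$, which does not hold. The correct symmetrization, as in the paper, is $\A^{\Vref}_0 u := \tfrac12\bigl(\widetilde\A u + (\widetilde\A u)\circ\tau\bigr)$; then~(\ref{item:lemma:vertex-part-edge-vanish-9}) holds by $\tau^2=\mathrm{id}$, and~(\ref{item:lemma:vertex-part-edge-vanish-7}),~(\ref{item:lemma:vertex-part-edge-vanish-10}) go through because $\tau$ swaps the two non-opposite edges and the two non-$\Vref$ vertices while leaving $d_{\Vref}$ invariant. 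Note, however, that this version of~(\ref{item:lemma:vertex-part-edge-vanish-5}) picks up the extra term $\|d_{\Vref}^{-\theta}(u-u\circ\tau)\|_{L^2(\Tref)}$, which must be bounded by $\|u\|_{H^\theta(\Tref)}$ using $|x-\tau(x)|\lesssim d_{\Vref}(x)$, Hardy's inequality and interpolation --- the same device that handles $u^{\rm flip}$ in the proof of Lemma~\ref{lemma:vertex_averaging_quad_to_trig}; your write-up does not account for this.
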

\begin{proof}
  As in the proof of Lemma~\ref{lemma:vertex-part}, we assume $\Vref = (0,0)$.
  The operator $\A^{\Vref}_0$ is constructed by a sequence of modifications 
  of $u_1:= \A^{\Vref} u$, where $\A^{\Vref}$ is the operator from Lemma~\ref{lemma:vertex-part}.
  Set $u_2(\xi,\eta):= u_1(\xi,\eta) - l_1(\xi,\eta) u_1(1,0) - l_2(\xi,\eta) u_1(1,1)$,
  where $l_1$ and $l_2$ are the affine hat functions associated with the vertices $(1,0)$ and $(1,1)$.
  It follows immediately with 
  Lemma~\ref{lemma:vertex-part}, (\ref{item:lemma:vertex-part-7}) that 
  \begin{align}\label{lemma:vertex-part-edge-vanish:eq1000}
    \| u_2 \|_{W^{1,\infty}(T_{1/2})} \lesssim \| u \|_{L^2(\Tref)}. 
  \end{align}
  Define $g(\xi,\eta) = u_2(1,\eta)(\xi-\eta)/(1-\eta)$ and set
  $u_3(\xi,\eta):= u_2(\xi,\eta) - g(\xi,\eta)$. 
  It follows immediately that $u_3(\Vref)=u(\Vref)$ if $u$ is continuous at $\Vref$
  and that $u_3|_{\widehat e}=0$ if $\widehat e$ is the edge opposite to $\Vref$.
  Using the properties of $\A^{\Vref}$, we note
  \begin{align*}
    \|u_3\|_{H^\theta(\Tref)} \lesssim \|\A^{\Vref} u\|_{H^\theta(\Tref)}
    + \|\A^{\Vref} u\|_{L^\infty(T_{1/2})} + \| g \|_{H^\theta(\Tref)}
    \lesssim \|u\|_{H^\theta(\Tref)} + \| g \|_{H^\theta(\Tref)}.
  \end{align*}
  Using~\eqref{lemma:vertex-part-edge-vanish:eq1000}
  we conclude $\|g\|_{L^2(\Tref)} \lesssim \|u\|_{L^2(\Tref)}$, and since
  $u_2$ vanishes to first order in the vertices $(1,0)$ and $(1,1)$, we also conclude
  from~\eqref{lemma:vertex-part-edge-vanish:eq1000} that $\|g\|_{H^1(\Tref)} \lesssim \| u \|_{L^2(\Tref)}$.
  An interpolation argument shows $\| g \|_{H^\theta(\Tref)}\lesssim \| u \|_{L^2(\Tref)}$ and hence
  $\| u_3 \|_{H^\theta(\Tref)}\lesssim\| u \|_{H^\theta(\Tref)}$.
  The results obtained so far and a triangle inequality yield $\| u - u_3 \|_{L^2(\Tref)} \lesssim \| u \|_{L^2(\Tref)}$.
  Next, note that every term of $h:=u_3 - \A^{\Vref} u$ vanishes at least on
  one edge containing $\Vref$. Hence, by Lemma~\ref{lemma:weighted-norm-equivalence}, the triangle inequality, and
  the results obtained so far, we conclude $\| d_{\Vref}^{-1} h \|_{L^2(\Tref)} \lesssim \| u \|_{L^2(\Tref)}$.
  Lemma~\ref{lemma:vertex-part}, (\ref{item:lemma:vertex-part-6}) then implies
  \begin{align*}
    \| d_{\Vref}^{-1} (u-u_3) \|_{L^2(\Tref)} \leq
    \| d_{\Vref}^{-1} (u-\A^{\Vref}u) \|_{L^2(\Tref)} + \| d_{\Vref}^{-1} h \|_{L^2(\Tref)}
    \lesssim \| u \|_{H^1(\Tref)}.
  \end{align*}
  An interpolation argument shows $\| d_{\Vref}^{-\theta} (u-u_3) \|_{L^2(\Tref)}\lesssim\| u \|_{H^\theta(\Tref)}$.
  It is seen immediately that $u_3$ satisfies also the statement of Lemma~\ref{lemma:vertex-part}, (\ref{item:lemma:vertex-part-7}).
  Finally, we will show that $u_3$ fulfills the statement of (\ref{item:lemma:vertex-part-8}). Due to Lemma~\ref{lemma:vertex-part},
  (\ref{item:lemma:vertex-part-7}) and (\ref{item:lemma:vertex-part-8}),
  \begin{align*}
    \| d_{\Vref}^{-1/2} u_2 \|_{L^2(e)} \leq
    \| d_{\Vref}^{-1/2} \A^{\Vref}u \|_{L^2(e)} + 
    \left(\| d_{\Vref}^{-1/2} l_1 \|_{L^2(e)}
    + \| d_{\Vref}^{-1/2} l_2 \|_{L^2(e)} \right) \| \A^{\Vref} u \|_{L^{\infty}(T_{1/2})}
    \lesssim \| u \|_{L^2(\Tref)},
  \end{align*}
  and furthermore
  \begin{align*}
    \| d_{\Vref}^{-1/2} g \|_{L^2(e)} \lesssim \| u_2 \|_{L^{\infty}(T_{1/2})}
    \lesssim \| u \|_{L^2(\Tref)}.
  \end{align*}
  A triangle inequality shows that $u_3$ fulfills the statement of Lemma~\ref{lemma:vertex-part},
  (\ref{item:lemma:vertex-part-8}). To show~\eqref{item:lemma:vertex-part-edge-vanish-4} we note
  \begin{align*}
    \| d_{\Vref}^{\gamma} u_3 \|_{L^2(\Tref)} 
    &\lesssim \| d_{\Vref}^{\gamma} u_2  \|_{L^2(\Tref)} + \| u_2 \|_{L^\infty(T_{1/2})}
    \lesssim \| d_{\Vref}^{\gamma} u_2  \|_{L^2(\Tref)} + \norm{u}_{L^2(\Tref)} \\
    &\lesssim \| d_{\Vref}^\gamma \A^{\Vref}u \|_{L^2(\Tref)}
    + \| \A^{\Vref} u \|_{L^\infty(T_{1/2})}  + \norm{u}_{L^2(\Tref)} 
    \lesssim \| d_{\Vref}^\gamma u \|_{L^2(\Tref)},
  \end{align*}
  where we used~\eqref{lemma:vertex-part-edge-vanish:eq1000} for the second estimate and
  Lemma~\ref{lemma:vertex-part} for the last one. Finally, we set 
  $$
  \A^{{\Vref}}_0 u(\xi,\eta):=u_4(\xi,\eta):= \frac{1}{2} \left( u_3(\xi,\eta) + u_3(\xi,\xi - \eta)\right). 
  $$
  If $\Vref=(1,0)$ we take the average of $u_3(\xi,\eta)$ and $u_3(1-\eta,1-\xi)$ instead. 

  Then, $u_4$ fulfills the same bounds as $u_3$ and satisfies (\ref{item:lemma:vertex-part-edge-vanish-8})
  and (\ref{item:lemma:vertex-part-edge-vanish-9}).
  It is easy to see that $u_4(\widehat V) = u(\widehat V)$ and $u_4$ is continuous at $\widehat V$.
  Finally, inspection shows that if $u$ is a polynomial of degree $p \ge 1$, 
  then $u_4$ is a polynomial of degree $p$.
  In order to prove~\eqref{item:lemma:vertex-part-edge-vanish-10}, we fix a smooth cut-off function $\phi$
  that equals $1$ in a neighborhood of $V$ and has support in a neighborhood of $V$ excluding the other vertices.
  Then,
  \begin{align*}
    \| d_{V}^{-1/2} \A^{\Vref}_0 u \|_{L^2(\Tref)}
    \leq 
    \| d_{V}^{-1/2} \phi \A^{\Vref}_0 u \|_{L^2(\Tref)} + \| d_{V}^{-1/2} (1-\phi)\A^{\Vref}_0 u \|_{L^2(\Tref)}. 
  \end{align*}
  Due to the support properties of $\phi$, the second term on the right-hand side is bounded by
  $\| \A^{\Vref}_0 u \|_{L^2(\Tref)}$ and consequently by $\| u \|_{L^2(\Tref)}$. The first term on the right-hand side
  is bounded by $\| \phi \A^{\Vref}_0 u \|_{L^\infty(\Tref)}$ due to Lemma~\ref{lemma:weighted-infty}, and by the support properties of $\phi$
  and~\eqref{item:lemma:vertex-part-edge-vanish-6} finally by $\| u \|_{L^2(\Tref)}$.
  The estimate involving the gradient is shown analogously.
\end{proof}
\subsubsection{Averaging operators on squares for  the vertex parts}
The vertex averaging operator $\A_{\Sref}$ (see Lemma~\ref{lemma:vertex_averaging_quad_to_trig} below) 
for the square is obtained from an averaging operator based on a triangle contained in $\Sref$ and then extended
to the full square by a Duffy transformation. Prior to defining $\A_{\Sref}$, we therefore study the Duffy transformation 
in the following Lemma~\ref{lemma:duffy_transform}.

\begin{lemma}
  \label{lemma:duffy_transform}
  We introduce the Duffy transform
  $$\displaystyle T_{\mathcal{D}}:\quad \Sref \to \Tref, \quad
    \colvec{\xi\\ \eta} \mapsto \colvec{\eta (1-\xi) + \xi\\ \eta}$$
    and the corresponding linear operator as $\mathcal{D}:u \mapsto u \circ T_{\mathcal{D}}$. 
    Then the following holds:
  \begin{enumerate}[(i)]
    \item
      \label{lemma:duffy_transform-1}
      $\mathcal{D}$ is bounded in the norms
      \begin{align*}
	\| \mathcal{D} u \|_{L^2(\Sref)} &\lesssim \| d_{(1,1)}^{-1/2} u\|_{L^2(\Tref)},\\
	\| \mathcal{D} u \|_{H^1(\Sref)} &\lesssim \| d_{(1,1)}^{-1/2} u\|_{L^2(\Tref)} + \|d_{(1,1)}^{-1/2} \nabla u \|_{L^2(\Tref)}.
      \end{align*}
    \item
      \label{lemma:duffy_transform-2}
      If $u \in \PP(\Tref)$, then $\mathcal{D}u \in \QQ(\Sref)$.
    \item
      \label{lemma:duffy_transform-3}
      If $u$ vanishes on the edge $\widehat{e}_2:=\{(x,x), \; 0< x < 1\}$, then
      $\mathcal{D}u$ vanishes on the edges $e_2:=(0,1) \times \{1\}$ and $e_3:=\{0\}\times (0,1)$,
    \item \label{lemma:duffy_transform-4}
      The values on the edges $e_0:=(0,1) \times \{0\}$ and $e_1:=\{1\} \times (0,1)$ are preserved, i.e., 
      $\mathcal{D}u (\xi,0) = u(\xi ,0)$ and $\mathcal{D}u (1,\eta)=u(1,\eta)$.
    \item \label{lemma:duffy_transform-5}
      Close to the vertex $\widehat{V} := (1,0)$, the Duffy transform is almost the identity in the sense 
      that, provided that the right-hand side is finite, 
      \begin{align}
        \label{eq:lemma:duffy_transform-5}
        \| d_{\widehat{V}}^{-\theta}\left(u-\mathcal{D}u \right)\|_{L^2(\Tref)}&\leq \| u \|_{H^{\theta}(\Tref)}
	+ \| d_{(1,1)}^{-1/2} u\|_{L^2(\Tref)}.
      \end{align}
    \item
      \label{lemma:duffy_transform-6}
      For $\widehat{V} := (1,0)$ and $\gamma \in \R$ there holds 
      \begin{align}
        \label{eq:lemma:duffy_transform-6}
        \| d_{\widehat{V}}^{\gamma} \mathcal{D}u \|_{L^2(\Sref)}
	\leq  \| d_{\widehat{V}}^{\gamma} u \|_{L^2(\Tref)} + \| d_{(1,1)}^{-1/2} u\|_{L^2(\Tref)}. 
      \end{align}
  \end{enumerate} 
\end{lemma}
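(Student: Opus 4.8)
The plan is to reduce every part to the explicit formula $T_{\mathcal{D}}(\xi,\eta)=(\xi+\eta(1-\xi),\eta)$, i.e. $x=1-(1-\xi)(1-\eta)$, $y=\eta$, whose Jacobian matrix is $\bigl(\begin{smallmatrix}1-\eta & 1-\xi\\ 0 & 1\end{smallmatrix}\bigr)$, with determinant $1-\eta=1-y$ and all entries bounded by $1$ on $\overline{\Sref}$. First I would record two elementary geometric facts used throughout: \emph{(a)} on $\Tref$ one has $1-y\le d_{(1,1)}(x,y)\le\sqrt2\,(1-y)$ (since $y\le x$ there), so the change-of-variables weight obeys $1/(1-y)\lesssim d_{(1,1)}^{-1}$; and \emph{(b)} $d_{\widehat V}(\xi,\eta)\sim d_{\widehat V}(T_{\mathcal{D}}(\xi,\eta))$ uniformly on $\overline{\Sref}$ for $\widehat V=(1,0)$, checked by distinguishing $\eta\le 1/2$ from $\eta>1/2$. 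Also $\Tref\subset\Sref$, so $\mathcal{D}u$ may be restricted to $\Tref$.

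\emph{Parts \eqref{lemma:duffy_transform-1}--\eqref{lemma:duffy_transform-4}.} For \eqref{lemma:duffy_transform-1} I would change variables $(x,y)=T_{\mathcal{D}}(\xi,\eta)$ in $\int_{\Sref}\lvert\mathcal{D}u\rvert^2$; the Jacobian $1/(1-y)$ together with fact (a) gives $\|\mathcal{D}u\|_{L^2(\Sref)}^2\lesssim\int_{\Tref}\lvert u\rvert^2 d_{(1,1)}^{-1}$, and for the $H^1$-bound the chain rule plus boundedness of the Jacobian entries gives $\lvert\nabla(\mathcal{D}u)\rvert\lesssim\lvert(\nabla u)\circ T_{\mathcal{D}}\rvert$, after which the same substitution applies. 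Part \eqref{lemma:duffy_transform-2} is a degree count: the first component $1-(1-\xi)(1-\eta)$ has degree $\le1$ in $\xi$ and $\le1$ in $\eta$ and the second is $\eta$, so a monomial $x^iy^j$ with $i+j\le p$ pulls back to a polynomial of degree $\le i\le p$ in $\xi$ and $\le i+j\le p$ in $\eta$, i.e. to an element of $\QQ(\Sref)$. Parts \eqref{lemma:duffy_transform-3}--\eqref{lemma:duffy_transform-4} follow by evaluating $T_{\mathcal{D}}$ on the edges: $T_{\mathcal{D}}(\xi,0)=(\xi,0)$ and $T_{\mathcal{D}}(1,\eta)=(1,\eta)$ give \eqref{lemma:duffy_transform-4}, while $T_{\mathcal{D}}(0,\eta)=(\eta,\eta)\in\widehat{e}_2$ and $T_{\mathcal{D}}(\xi,1)\equiv(1,1)\in\overline{\widehat{e}_2}$ give \eqref{lemma:duffy_transform-3} (using continuity of $u$ at $(1,1)$ for the collapsed edge $e_2$).

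\emph{Part \eqref{lemma:duffy_transform-5}.} The case $\theta=0$ is immediate from \eqref{lemma:duffy_transform-1} and $\Tref\subset\Sref$. For $\theta=1$, writing $\tau:=1-\xi$, I would use $u(\xi,\eta)-\mathcal{D}u(\xi,\eta)=-\int_0^{\tau\eta}\partial_1 u(1-\tau+s,\eta)\,ds$ (the segment staying in $\Tref$); Cauchy--Schwarz, the bound $\tau\eta/(\tau^2+\eta^2)\le1/2$, and $d_{\widehat V}^2=\tau^2+\eta^2$ yield $d_{\widehat V}^{-2}\lvert u-\mathcal{D}u\rvert^2\le\tfrac12\int_0^{\tau\eta}\lvert\partial_1 u(1-\tau+s,\eta)\rvert^2\,ds$, and Fubini over $\Tref$ gives $\|d_{\widehat V}^{-1}(u-\mathcal{D}u)\|_{L^2(\Tref)}\lesssim\|\nabla u\|_{L^2(\Tref)}$. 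For $\theta\in(0,1)$ I would interpolate, the target side becoming $L^2(\Tref,d_{\widehat V}^{-\theta})$ by Proposition~\ref{prop:tartar}; since the $d_{(1,1)}^{-1/2}$-term is needed at $\theta=0$ but not at $\theta=1$, I would localize with a cutoff $\phi\equiv1$ near $\widehat V=(1,0)$ and supported away from $(1,1)$, so that on $\operatorname{supp}\phi$ the weight $d_{(1,1)}^{-1/2}$ is comparable to a constant and the endpoints interpolate cleanly, while on the complement $d_{\widehat V}^{-\theta}$ is bounded and the $\theta=0$ estimate suffices.

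\emph{Part \eqref{lemma:duffy_transform-6} and main obstacle.} For \eqref{lemma:duffy_transform-6} I would again change variables directly: after replacing $d_{\widehat V}(\xi,\eta)$ by $d_{\widehat V}(T_{\mathcal{D}}(\xi,\eta))$ via fact (b) (with a $\gamma$-dependent constant, valid for either sign of $\gamma$) and substituting $(x,y)=T_{\mathcal{D}}(\xi,\eta)$ with $1/(1-y)\lesssim d_{(1,1)}^{-1}$, one obtains $\|d_{\widehat V}^{\gamma}\mathcal{D}u\|_{L^2(\Sref)}^2\lesssim\int_{\Tref}d_{\widehat V}(x,y)^{2\gamma}\lvert u\rvert^2 d_{(1,1)}^{-1}$, and a partition of unity splits the weight $d_{\widehat V}^{2\gamma}d_{(1,1)}^{-1}$ into a piece $\lesssim d_{\widehat V}^{2\gamma}$ near $(1,0)$, a piece $\lesssim d_{(1,1)}^{-1}$ near $(1,1)$, and a bounded piece elsewhere (absorbed into the first term). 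The routine parts are the change-of-variables identities and the edge evaluations; the delicate point I expect to be the fractional case of \eqref{lemma:duffy_transform-5}, since the $d_{(1,1)}^{-1/2}$-term behaves differently at the two endpoints, so one cannot simply interpolate the map $u\mapsto u-\mathcal{D}u$ between clean spaces, and it is the localization near $(1,0)$ together with the $\theta=1$ Hardy-type estimate that make the argument work.
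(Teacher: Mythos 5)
Your proposal is correct and follows essentially the same route as the paper: direct change of variables with the Jacobian $1-\eta$, degree counting, edge evaluation, and for the delicate part~(\ref{lemma:duffy_transform-5}) a Hardy-type $\theta=1$ estimate combined with a cutoff and weighted-$L^2$ interpolation (Proposition~\ref{prop:tartar}). The only cosmetic differences are that you place the cutoff $\phi$ near $(1,0)$ rather than near $(1,1)$ as in the paper (the two decompositions are dual), and you obtain the $\theta=1$ bound by Cauchy--Schwarz together with the elementary inequality $\tau\eta/(\tau^2+\eta^2)\le 1/2$ instead of invoking the classical Hardy inequality; both choices lead to the same estimates.
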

\begin{proof}
  {\bf Proof of~(\ref{lemma:duffy_transform-1}):} The Jacobi matrix of $T_{\mathcal{D}}$ is
  $dT_{\mathcal{D}}=\left(\begin{smallmatrix} 1-\eta  & 1-\xi\\ 0  & 1\end{smallmatrix}\right)$,
  hence $\abs{\det dT_{\mathcal{D}}} = 1-\eta = 1-y$. Transforming the integral we thus pick up the factor
  $(1-y)^{-1} \sim d_{(1,1)}^{-1}$.

  {\bf Proof of~(\ref{lemma:duffy_transform-2}):} Let $u=\sum_{i=0,j=0}^{p}{\alpha_{i,j} x^i y^{j}}$ with $i+j\leq p$.
  Inserting the definition of $\mathcal{D}$ gives:
  $\mathcal{D}u=\sum_{i=0,j=0}^{p}{\alpha_{i,j} (\eta(1-\xi) + \xi)^i \eta^j}$. Expanding the powers and inspecting
  the highest polynomial degrees, we observe that
  the leading term has the form $\beta_{i,j}\; \eta^i\xi^i \eta^j=\beta_{i,j} \eta^{i+j}\xi^i$. Since $i+j\leq p$ we have $\mathcal{D}u \in \QQ$.

  {\bf Proof of~(\ref{lemma:duffy_transform-3}) and (\ref{lemma:duffy_transform-4}):} The claims follow by inspection.

  {\bf Proof of~(\ref{lemma:duffy_transform-5}):}
  Fix a smooth cut-off function $\phi$ that equals $1$ in a neighborhood of the vertex $(1,1)$ and whose 
support excludes neighborhoods of the other 2 vertices. Then
  \begin{align}\label{lemma:duffy_transform:eq1}
    \| d_{\widehat{V}}^{-\theta}\left(u-\mathcal{D}u \right)\|_{L^2(\Tref)}
    \leq \| d_{\widehat{V}}^{-\theta}\left(\phi u-\mathcal{D}(\phi u) \right)\|_{L^2(\Tref)} + 
    \| d_{\widehat{V}}^{-\theta}\left( (1-\phi)u-\mathcal{D}( (1-\phi)u ) \right)\|_{L^2(\Tref)}. 
  \end{align}
  The first term of the right-hand side can be estimated by
  \begin{align*}
    \| d_{\widehat{V}}^{-\theta}\left(\phi u-\mathcal{D}(\phi u) \right)\|_{L^2(\Tref)}
    \lesssim \| \phi u-\mathcal{D}(\phi u) \|_{L^2(\Tref)}
    \lesssim \| d_{(1,1)}^{-1/2} \phi u\|_{L^2(\Tref)} \lesssim \| d_{(1,1)}^{-1/2} u\|_{L^2(\Tref)},
  \end{align*}
  where we use the support properties of $\phi$ in the first step and~\eqref{lemma:duffy_transform-1} in the second.
  For the second term on the right-hand side of~\eqref{lemma:duffy_transform:eq1} note that for $\theta=0$
  we obtain with the properties of $\phi$
  \begin{align}\label{lemma:duffy_transform:eq2}
    \| (1-\phi)u-\mathcal{D}( (1-\phi)u ) \|_{L^2(\Tref)} \lesssim \| d_{(1,1)}^{-1/2} (1-\phi)u \|_{L^2(\Tref)}
    \lesssim \| u \|_{L^2(\Tref)}.
  \end{align}
  For $\theta=1$, note that $d_{\widehat V}\geq 1-\xi$ and calculate using Hardy's inequality
  \begin{align*}
    \begin{split}
    \int_0^1 \int_\eta^1 d_{\widehat V}^{-2} ( v(\eta(1-\xi)+\xi,\eta) -v(\xi,\eta) )^2 \,d\xi d\eta
    &\leq \int_0^1 \int_\eta^1 \left( (1-\xi)^{-1} \int_{\xi}^{1} \abs{\partial_1 v(s,\eta)} ds \right)^2\,d\xi d\eta\\
    &\lesssim \int_0^1\int_\eta^1\abs{\partial_1 v(\xi,\eta)}^2\,d\xi d\eta.
    \end{split}
  \end{align*}
  Applying this to $v=(1-\phi)u$ shows
  \begin{align*}
    \| d_{\widehat{V}}^{-1}\left( (1-\phi)u-\mathcal{D}( (1-\phi)u ) \right)\|_{L^2(\Tref)} \lesssim \abs{(1-\phi)u}_{H^1(\Tref)}
    \leq \| u \|_{H^1(\Tref)}.
  \end{align*}
  Interpolating this with~\eqref{lemma:duffy_transform:eq2} allows us to bound the
  second term on the right-hand side of~\eqref{lemma:duffy_transform:eq1}.

  {\bf Proof of~(\ref{lemma:duffy_transform-6}):}
  Follows by transforming the integral and using $d_{{\widehat{V}}} \circ T_{\mathcal{D}} \sim d_{{\widehat V}}$.
\end{proof}
When switching between polynomials on the reference square and reference triangle,
the difference in the definition of $\QQ$ and $\PP$
leads to an increase of the degree by a factor of $2$.
The main tool to correct the polynomial degrees on rectangles will be the Gau{\ss}-Lobatto
interpolation operator. We collect its properties in the following lemma.
\begin{lemma}\label{lemma:GL}
\begin{enumerate}[(i)]
  \item \label{item:lemma:GL-i}
    There holds for $\theta \in (0,1)$ and $p\in\N_0$, and $E$ a collection of edges of $\Sref$:
    \begin{align*}
    \left({\mathcal Q}_p,\|\cdot\|_{L^2(\Sref)},{\mathcal Q}_p,\|\cdot\|_{H^1(\Sref)}\right)_{\theta,2} 
    & = \left({\mathcal Q}_p,\|\cdot\|_{H^\theta(\Sref)}\right) 
    \qquad \mbox{(equivalent norms)}, \\
    \left(\widetilde{\mathcal Q}_p(\Sref,E),\|\cdot\|_{L^2(\Sref)},\widetilde{\mathcal Q}_p(\Sref,E),\|\cdot\|_{H^1(\Sref)}\right)_{\theta,2} 
    & = \left(\widetilde{\mathcal Q}_p(\Sref,E),\|\cdot\|_{\widetilde{H}^\theta(\Sref,E)}\right) 
      \qquad \mbox{(equivalent norms)},      
    \end{align*}
    where $\widetilde{\mathcal Q}_p(\Sref,E)\subset \QQ$ denotes the polynomials vanishing on $E$,
      and the $\widetilde{H}^\theta(\Sref,E)$-norm is defined via interpolation of $L^2$ and $H^1(\Sref)\cap \{u : \, u|_{E}=0\}$.
    The constants in the norm equivalences do not depend on $p$.
  \item \label{item:lemma:GL-ii}
    Let $i_p: C(\overline{\Sref}) \rightarrow {\mathcal Q}_p$ be the tensor-product
    Gau{\ss}-Lobatto interpolation
    operator. Then for every $\theta \in [0,1]$ there exists $C > 0$ such that for
    all $p$, $q \in \N_0$ the following stability estimate holds for the operator $i_p$: 
    $$
    \|i_p\|_{({\mathcal Q}_{p},\|\cdot\|_{H^\theta(\Sref)}) \leftarrow ({\mathcal Q}_{q},\|\cdot\|_{H^\theta(\Sref)})}
    \leq C ( 1 + q/(p+1))^{2-\theta}
  $$
  \item \label{item:lemma:GL-iii}
    Let $\widehat{V}$ be a vertex of $\Sref$ and set $d_{\widehat{V}}:=\operatorname{dist}(\cdot,\widehat{V})$.
    Then there exists a constant $C >0$, such that for all $p$, $q \in \N_0$,
    the following estimate holds:
    \begin{align*}
      \norm{d^{-\theta}_{\widehat{V}} \left(u-i_p u\right)}_{L^2(\Sref)}
      \leq C \left(1+q/(p+1) \right)^{2(1-\theta)}\left(q/p\right)^{\theta}\norm{u}_{H^\theta(\Sref)}
      \quad \forall u \in \mathcal{Q}_q(\Sref).
    \end{align*}
\end{enumerate}
\end{lemma}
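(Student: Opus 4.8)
The plan is to prove the three parts in order, building up from the $p$-robust interpolation/inverse estimates for the Gau\ss-Lobatto operator on $\Sref$ and the known single-element norm equivalences. For part~(\ref{item:lemma:GL-i}), the norm equivalence $\left({\mathcal Q}_p,\|\cdot\|_{L^2(\Sref)},{\mathcal Q}_p,\|\cdot\|_{H^1(\Sref)}\right)_{\theta,2} = \left({\mathcal Q}_p,\|\cdot\|_{H^\theta(\Sref)}\right)$ with $p$-independent constants is exactly the content of the single-element results in \cite{maday89,bernardi-dauge-maday92,belgacem94} (and \cite[Thm.~4.2]{bernardi-dauge-maday07,bernardi-dauge-maday10}); I would simply cite these. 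The ``$\gtrsim$'' direction is trivial from the definition of the $K$-functional (the interpolation norm on the discrete space always dominates the ambient $H^\theta$-norm, since every $v\in{\mathcal Q}_p\subset H^1(\Sref)$ is admissible in the $K$-functional of $(L^2(\Sref),H^1(\Sref))$). The ``$\lesssim$'' direction is the nontrivial one and is the substance of the cited references: given the optimal decomposition $u = (u-v)+v$ in the ambient pair one must project $v$ back into ${\mathcal Q}_p$ without losing stability, which is done via a polynomial-degree-robust $H^1$-stable projection onto ${\mathcal Q}_p$. For the subspace $\widetilde{\mathcal Q}_p(\Sref,E)$ of polynomials vanishing on $E$, the same argument applies, using that the projection onto ${\mathcal Q}_p$ can be chosen to preserve homogeneous boundary conditions on a union of edges (again available in the Bernardi--Dauge--Maday framework); this gives the second displayed equivalence with the $\widetilde H^\theta(\Sref,E)$-norm as defined by interpolation.

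For part~(\ref{item:lemma:GL-ii}), I would combine three ingredients. First, for $\theta=0$ the $L^2$-stability of $i_p$ restricted to ${\mathcal Q}_q$ behaves like $(1+q/(p+1))$ up to a power; more precisely the Gau\ss--Lobatto quadrature with $p+1$ nodes is exact on ${\mathcal Q}_{2p-1}$, so for $q\le 2p-1$ one has $\|i_p u\|_{L^2(\Sref)}\sim \|u\|_{L^2(\Sref)}$, and for larger $q$ a direct estimate using the aliasing of Legendre modes gives the factor $(1+q/(p+1))^2$; the sharp exponent $2$ here is classical (one-dimensional Gau\ss--Lobatto aliasing, tensorized). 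Second, for $\theta=1$ one uses an inverse inequality $|i_p u|_{H^1(\Sref)}\lesssim (p+1)\|i_p u\|_{L^2(\Sref)}$ together with the previous step and the inverse inequality $\|u\|_{L^2(\Sref)}$ vs.\ $|u|_{H^1(\Sref)}$ on ${\mathcal Q}_q$; alternatively, and more cleanly, one invokes the $H^1$-stability of Gau\ss--Lobatto interpolation with the known factor $(1+q/(p+1))$. Collecting: $\theta=0$ gives exponent $2$, $\theta=1$ gives exponent $1$, and the intermediate exponent $2-\theta$ follows by interpolation using part~(\ref{item:lemma:GL-i}) to convert the $K$-functional interpolation into the intrinsic $H^\theta$-norm on both ${\mathcal Q}_q$ and ${\mathcal Q}_p$. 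The only subtlety is that one must interpolate the operator norm bound, which is legitimate since $i_p$ maps ${\mathcal Q}_q\to{\mathcal Q}_p$ boundedly in both $L^2$ and $H^1$, so $\|i_p\|_{H^\theta\leftarrow H^\theta}\le \|i_p\|_{L^2\leftarrow L^2}^{1-\theta}\|i_p\|_{H^1\leftarrow H^1}^{\theta}\lesssim (1+q/(p+1))^{2(1-\theta)}(1+q/(p+1))^{\theta}=(1+q/(p+1))^{2-\theta}$.

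For part~(\ref{item:lemma:GL-iii}), the weighted estimate near a vertex $\widehat V$, I would use Lemma~\ref{lemma:equivalence-sobolev-plus-weight} (which identifies $(L^2(\Sref),H^1(\Sref,d_{\widehat V}^{-1}))_{\theta,2}$ with $H^\theta(\Sref)$ plus the $d_{\widehat V}^{-\theta}$-weighted $L^2$-norm) as the organizing principle: it suffices to control $\|d_{\widehat V}^{-\theta}(u-i_p u)\|_{L^2(\Sref)}$ at the two endpoints and interpolate, but here the endpoints have \emph{different} rates, so I instead argue directly. At $\theta=1$: since $(u-i_pu)$ vanishes at the four vertices of $\Sref$ (Gau\ss--Lobatto nodes include the endpoints), Lemma~\ref{lemma:weighted-norm-equivalence},(\ref{item:lemma:weighted-norm-equivalence-i}) applied near $\widehat V$ (or rather a Hardy-type estimate for functions vanishing at $\widehat V$) gives $\|d_{\widehat V}^{-1}(u-i_pu)\|_{L^2(\Sref)}\lesssim |u-i_pu|_{H^1(\Sref)}+\|\cdot\|$ on an edge, which by part~(\ref{item:lemma:GL-ii}) and a one-dimensional Gau\ss--Lobatto error estimate is $\lesssim (q/p)\|u\|_{H^1(\Sref)}$ (the factor $q/p$ coming from the approximation error of $i_p$ measured in $H^1$ against the ``true'' degree-$q$ polynomial, which has no $H^2$-regularity cost beyond the ratio of degrees since $u$ is itself a polynomial). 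At $\theta=0$: trivially $\|u-i_pu\|_{L^2(\Sref)}\lesssim (1+q/(p+1))^2\|u\|_{L^2(\Sref)}$ by part~(\ref{item:lemma:GL-ii}) with $\theta=0$ together with the approximation property $\|u-i_pu\|_{L^2}\lesssim \|u\|_{L^2}$ whenever $q\le 2p-1$ and the aliasing bound otherwise. Interpolating these two endpoint bounds — $C_0=(1+q/(p+1))^2$ at $\theta=0$ and $C_1=(q/p)$ at $\theta=1$ — via Lemma~\ref{lemma:equivalence-sobolev-plus-weight} yields the claimed factor $(1+q/(p+1))^{2(1-\theta)}(q/p)^\theta$. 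The main obstacle is the $\theta=1$ weighted estimate: one must be careful that the Hardy inequality near $\widehat V$ is applied to a function that genuinely vanishes there, and that the one-dimensional Gau\ss--Lobatto interpolation error for polynomials of degree $q$ in degree-$p$ nodes is controlled with the sharp $q/p$ rate in the $H^1$-seminorm; this is where I would invoke the sharp one-dimensional estimates (essentially Chebyshev/Jacobi-weighted $L^2$ bounds on the derivative of the interpolation error, e.g.\ from \cite{bernardi-dauge-maday07}) and then tensorize.
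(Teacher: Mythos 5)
Your proposal follows essentially the same route as the paper: cite the Bernardi--Dauge--Maday/Belgacem single-element norm equivalence for (\ref{item:lemma:GL-i}), combine the one-dimensional $L^2$- and $H^1$-stability of Gau{\ss}--Lobatto interpolation (with factors $1+q/(p+1)$ and $1$, respectively) via tensorization and then interpolate using (\ref{item:lemma:GL-i}) for (\ref{item:lemma:GL-ii}), and for (\ref{item:lemma:GL-iii}) reduce the weighted volume norm at $\theta=1$ to edge traces via Lemma~\ref{lemma:weighted-norm-equivalence}, bound those with one-dimensional approximation and inverse estimates, and then interpolate against the trivial $\theta=0$ endpoint (the paper organizes the last interpolation via Proposition~\ref{prop:interpolation_weighted_l2}, which is the cleanest way to handle the weighted $L^2$ target, whereas you phrase it via Lemma~\ref{lemma:equivalence-sobolev-plus-weight}; both work).

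Two of your side remarks, however, do not hold as stated. First, in part (\ref{item:lemma:GL-ii}) the inverse-inequality alternative $|i_p u|_{H^1}\lesssim (p+1)\|i_p u\|_{L^2}$ followed by ``the inverse inequality $\|u\|_{L^2}$ vs.\ $|u|_{H^1}$'' does not close: the $L^2$-norm is not controlled by the $H^1$-seminorm on $\QQ$, so this chain only bounds $\|i_pu\|_{H^1}$ by $(p+1)(1+q/(p+1))^2\|u\|_{L^2}$, not by $(1+q/(p+1))\|u\|_{H^1}$. Only the second alternative you give (the known $H^1$-stability of the tensor-product operator) actually yields the claimed exponent. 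Second, in part (\ref{item:lemma:GL-iii}) there is no two-dimensional Hardy inequality of the form $\|d_{\widehat V}^{-1}v\|_{L^2(\Sref)}\lesssim|v|_{H^1(\Sref)}$ for functions merely vanishing at the point $\widehat V$: the logarithmic divergence near the vertex is not removed by a single point condition. The correct reduction is the one you also mention, namely Lemma~\ref{lemma:weighted-norm-equivalence},(\ref{item:lemma:weighted-norm-equivalence-i}), which trades the weighted volume norm for $|v|_{H^1}$ plus weighted \emph{edge} traces; it is those edge terms, combined with the one-dimensional Gau{\ss}--Lobatto error estimate, the 1D inverse estimate, and a trace inequality, that produce the factor $q/p$. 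So drop the two parentheticals and the argument is sound.
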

\begin{proof}
  Statement (\ref{item:lemma:GL-i}) is the assertion of \cite[Theorem 6]{belgacem94}. For the treatment of boundary condition, see also Proposition 5
    and its preceding remark.
  To show (\ref{item:lemma:GL-ii}), the key observation is that the 1D Gau{\ss}-Lobatto interpolation
  operator $i^{GL}_p$ is stable in $H^1(-1,1)$ by~\cite[(13.27)]{bernardi-maday97} and also satisfies
  the stability estimate $\|i^{GL}_p u\|_{L^2(-1,1)} \leq C (1 + q/(p+1)) \|u\|_{L^2(-1,1)}$ for all 
  $u \in {\mathcal P}_q$ by \cite[Rem.~{13.5}]{bernardi-maday97}.
  Tensor product arguments then give for all $u \in {\mathcal Q}_q$ the estimates
  $\|i_p u\|_{L^2(\Sref)} \lesssim (1 + q/(p+1))^2 \|u\|_{L^2(\Sref)}$ and 
  $\|i_p u\|_{H^1(\Sref)} \lesssim (1 + q/(p+1)) \|u\|_{H^1(\Sref)}$. Interpolation, which is possible
  due to (\ref{item:lemma:GL-i}), allows us to conclude the proof.
  To show (\ref{item:lemma:GL-iii}), note first that for $\theta=0$ the statement is equivalent
  to the $L^2$-stability of the Gau{\ss}-Lobatto interpolation.
  For $\theta=1$, we note that by Lemma~\ref{lemma:weighted-norm-equivalence}, it
  is sufficient to bound $\| d_{\widehat{V}}^{-1/2} \left(u-i_p u\right) \|_{L^2(\widehat{e})}$ for the two edges emanating
  from the vertex $\widehat{V}$. On such an edge $\widehat{e}$, the tensor product operator coincides with the 
  1D Gau{\ss}-Lobatto interpolation operator. 
  We combine the approximation estimate \cite[Theorem 13.4]{bernardi-maday97}, an inverse estimate
  (see \cite[Theorem 3.91]{schwab_p_fem} for the $H^1$-$L^2$ case, the $H^1$-$H^{1/2}$ case follows
  by interpolation) and a trace estimate to get 
  \begin{align*}
    \| d_{\widehat{V}}^{-1/2} \left(u-i_p u\right) \|_{L^2(\widehat{e})}
    &\lesssim \| \operatorname{dist}(\cdot,\partial \widehat{e})^{-1/2}\left(u-i_p u\right) \|_{L^2(\widehat{e})}
      \lesssim \frac{1}{p}\norm{u}_{H^1(\widehat{e})} \lesssim \frac{q}{p} \norm{u}_{H^{1/2}(\widehat{e})}
    \lesssim \frac{q}{p}\norm{u}_{H^1(\Sref)}.
  \end{align*}
  The general statement then follows from interpolation and Proposition~\ref{prop:interpolation_weighted_l2}.
\end{proof}
\begin{lemma}
  \label{lemma:vertex_averaging_quad_to_trig}
  There exists an operator $\A_{\Sref}: L^1_{\rm loc}(\Sref)\to C^\infty(\Tref)$ such that  
  \begin{enumerate}[(i)]
    \item \label{item:lemma:averaging_quad_to_trig-1}
      $\A_{\Sref}:H^\theta(\Sref)\rightarrow H^\theta(\Tref)$ is linear and bounded
      for all $\theta \in [0,1]$.
    \item \label{item:lemma:averaging_quad_to_trig-2}      
      $\A_{\Sref}$ reproduces the value at $(1,0)$, i.e.,
      $\left(\A_{\Sref}u \right)(1,0) = u(1,0)$ if $u$ is continuous at $(1,0)$. 
    \item \label{item:lemma:averaging_quad_to_trig-3}
      $\| d_{(1,0)}^{-\theta}\left(u - \A_{\Sref} u\right)\|_{L^2(\Tref)} \lesssim \norm{u}_{H^{\theta}(\Sref)}$.
    \item \label{item:lemma:averaging_quad_to_trig-4}
      $\A_{\Sref} u$ vanishes on the edge of $\Tref$ opposite to $(1,0)$.
    \item \label{item:lemma:averaging_quad_to_trig-5}
      Let $e_1$, $e_2$ be the edges of $\Sref$ with $(1,0) \in \overline{e_1} \cap \overline{e_2}$, 
      and let $F_{e_1}^{e_2}$ be the affine map, mapping $e_1$ to $e_2$ with $F(1,0)=(1,0)$.
      Then the following holds for $\hat{u}:=\A_{\Sref} u$:
      \begin{align*}
        \hat{u}(z)=\hat{u} \circ F_{e_1}^{e_2}(z) \quad \forall z \in e_1.
      \end{align*}
    \item \label{item:lemma:averaging_quad_to_trig-6}
      If $u\in \QQ(\Sref)$, then $\A_{\Sref} u\in\P_{2p}(\Tref)$.
    \item\label{item:lemma:averaging_quad_to_trig-7}
      Let $V \neq (1,0)$ be a vertex of $\Tref$. Then
      \begin{align*}
        \| d_{V}^{-1/2} \A_{\Sref} u \|_{L^2(\Tref)} \lesssim \| u \|_{L^2(\Sref)}
        \quad\text{ and }\quad
        \| d_{V}^{-1/2}\nabla \A_{\Sref} u \|_{L^2(\Tref)} \lesssim \| u \|_{H^1(\Sref)}.
      \end{align*}
  \end{enumerate}
\end{lemma}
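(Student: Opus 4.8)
The plan is to construct $\A_{\Sref}$ by composing three operators that have already been analyzed: first pull the square back to the reference triangle by a scaled Duffy-type map, then apply the triangular vertex averaging operator $\A^{\Vref}_0$ from Lemma~\ref{lemma:vertex-part-edge-vanish} (with $\Vref = (1,0)$), and finally — in order to keep track of polynomial degrees — possibly compose with a Duffy transform $\D$ from Lemma~\ref{lemma:duffy_transform} to move from $\Tref$ back to $\Sref$ and then restrict/pull back again to $\Tref$. More precisely, I expect the clean definition to be $\A_{\Sref} u := \A^{\Vref}_0 (u \circ \Phi)$ where $\Phi:\Tref \to \Sref$ is a bijection that is affine near the vertex $(1,0)$ (so that the corner and the two incident edges are respected) and maps the edge of $\Tref$ opposite $(1,0)$ into the two far edges of $\Sref$; a Duffy transformation does exactly this, collapsing one edge of the square. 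With this definition, items (\ref{item:lemma:averaging_quad_to_trig-1})--(\ref{item:lemma:averaging_quad_to_trig-5}) follow by transporting the corresponding properties of $\A^{\Vref}_0$ through $\Phi$: boundedness in $H^\theta$ because $\Phi$ is bi-Lipschitz away from the collapsed edge and the Duffy Jacobian estimates of Lemma~\ref{lemma:duffy_transform}(\ref{lemma:duffy_transform-1}) control the weighted $L^2$ and $H^1$ norms incurred near that edge; reproduction of the value at $(1,0)$ and continuity there from Lemma~\ref{lemma:vertex-part-edge-vanish}(\ref{item:lemma:vertex-part-edge-vanish-1}) together with continuity of $\Phi$ at the corner; the weighted approximation estimate (\ref{item:lemma:averaging_quad_to_trig-3}) from Lemma~\ref{lemma:vertex-part-edge-vanish}(\ref{item:lemma:vertex-part-edge-vanish-5}) plus Lemma~\ref{lemma:duffy_transform}(\ref{lemma:duffy_transform-5}) to compare $u\circ\Phi$ with $u$ in the $d_{(1,0)}^{-\theta}$-weighted norm; the vanishing on the opposite edge from Lemma~\ref{lemma:vertex-part-edge-vanish}(\ref{item:lemma:vertex-part-edge-vanish-8}); and the edge symmetry (\ref{item:lemma:averaging_quad_to_trig-5}) from the built-in symmetry of $\A^{\Vref}_0$ in Lemma~\ref{lemma:vertex-part-edge-vanish}(\ref{item:lemma:vertex-part-edge-vanish-9}), noting that the two edges of $\Sref$ meeting at $(1,0)$ are the images under $\Phi$ of the two edges of $\Tref$ meeting at $(1,0)$, and $\Phi$ is affine there so the affine identification $F_{e_1}^{e_2}$ matches.

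For the polynomial-degree bookkeeping in item (\ref{item:lemma:averaging_quad_to_trig-6}), the point is that if $u\in\QQ(\Sref)$ then $u\circ\Phi$, with $\Phi$ a Duffy map, lies in $\PP[2p](\Tref)$ by the degree computation in Lemma~\ref{lemma:duffy_transform}(\ref{lemma:duffy_transform-2}) (composition with the Duffy transform of a $\QQ$ function is in $\QQ$, and restricting to a triangular slice keeps total degree $\le 2p$; more directly, one checks that $\eta(1-\xi)+\xi$ substituted into a $\QQ$-monomial of bidegree $\le(p,p)$ gives total degree $\le 2p$). Then $\A^{\Vref}_0$ preserves polynomial degree on the triangle by Lemma~\ref{lemma:vertex-part-edge-vanish}(\ref{item:lemma:vertex-part-edge-vanish-2}), so $\A_{\Sref} u \in \PP[2p](\Tref)$. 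Finally, item (\ref{item:lemma:averaging_quad_to_trig-7}), the $d_V^{-1/2}$-weighted bounds at the \emph{other} vertices of $\Tref$, follows from Lemma~\ref{lemma:vertex-part-edge-vanish}(\ref{item:lemma:vertex-part-edge-vanish-10}) applied to $\A^{\Vref}_0(u\circ\Phi)$, together with the $L^2$- and $H^1$-boundedness of $u\mapsto u\circ\Phi$ encoded in Lemma~\ref{lemma:duffy_transform}(\ref{lemma:duffy_transform-1}); one has to be a little careful because the far vertices of $\Tref$ are the images of the full collapsed edge of $\Sref$, but since $d_V^{-1/2}$ is integrable against the Duffy weight $d_{(1,1)}^{-1}$ this is exactly the regime Lemma~\ref{lemma:duffy_transform}(\ref{lemma:duffy_transform-1}) handles, and Lemma~\ref{lemma:weighted-infty} plus the $W^{j,\infty}$-estimates away from the corner take care of the region near those vertices.

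The main obstacle I anticipate is the interplay between the Duffy degeneracy and the fractional norms: the map $\Phi$ is not bi-Lipschitz (its inverse blows up along the collapsed edge), so one cannot simply transport $H^\theta$-norms by change of variables. The way around this, as in the proof of Lemma~\ref{lemma:duffy_transform}, is to split off a cutoff $\phi$ concentrated near the collapsed edge / the vertex $(1,1)$ of $\Tref$: on the support of $(1-\phi)$ the map is a bi-Lipschitz diffeomorphism and ordinary interpolation of the $L^2$- and $H^1$-bounds applies, while on the support of $\phi$ one uses the weighted estimates of Lemma~\ref{lemma:duffy_transform}(\ref{lemma:duffy_transform-1}), (\ref{lemma:duffy_transform-5}), (\ref{lemma:duffy_transform-6}) to absorb the degeneracy into a $d_{(1,1)}^{-1/2}$-weighted $L^2$-norm of $u$ on the triangle, which is in turn controlled by $\|u\|_{H^\theta(\Sref)}$ via a trace/Hardy argument (or directly by $\|u\|_{L^\infty}$ for polynomials, though we want it for general $u$). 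Consequently the only place where a genuine interpolation-space argument is needed is on the reference element, where Lemma~\ref{lemma:GL}(\ref{item:lemma:GL-i}) and the norm equivalences of Section~\ref{sect:interpol_seminorms} are available; everywhere else we argue separately at $\theta=0$ and $\theta=1$ and interpolate only the resulting elementwise bounds. I would organize the proof by first fixing $\Phi$ explicitly, then proving the $\theta\in\{0,1\}$ versions of each estimate, then invoking interpolation once, and finally reading off (\ref{item:lemma:averaging_quad_to_trig-6}) from the explicit formulas.
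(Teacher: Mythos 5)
Your construction goes in a genuinely different direction from the paper's, and it has a concrete gap in the polynomial-degree bookkeeping (item~(vi)). The paper defines
$\widetilde\A_{\Sref}:=\A^{(1,0)}_0 \circ \mathcal{R}_{\Tref}$, where $\mathcal{R}_{\Tref}$ is \emph{simple restriction} of a function on $\Sref$ to the subdomain $\Tref\subset\Sref$ (followed by the same flip--symmetrization you identified for item~(v)). With restriction, item~(vi) is immediate from $\mathcal{R}_{\Tref}\,\mathcal{Q}_p(\Sref)\subset\mathcal P_{2p}(\Tref)$, boundedness $H^\theta(\Sref)\to H^\theta(\Tref)$ is trivial, and there is no Jacobian degeneracy to deal with; the only delicate step is transporting the weighted estimate through the symmetrization, which is done with a Hardy-type argument.

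By contrast, your $\Phi:\Tref\to\Sref$ is an \emph{inverse} Duffy-type bijection, and that map is rational, not polynomial: the inverse of $T_{\mathcal D}(\xi,\eta)=(\eta(1-\xi)+\xi,\eta)$ is $(x,y)\mapsto\big((x-y)/(1-y),\,y\big)$. Hence $u\circ\Phi$ is generally not a polynomial when $u\in\mathcal Q_p(\Sref)$, and item~(vi) fails for your definition. The appeal to Lemma~\ref{lemma:duffy_transform}(\ref{lemma:duffy_transform-2}) is misdirected: that statement concerns the \emph{forward} Duffy operator $\mathcal D:\mathcal P_p(\Tref)\to\mathcal Q_p(\Sref)$ (pullback of a function on $\Tref$ to a function on $\Sref$), which is exactly the opposite direction to what $\A_{\Sref}$ needs. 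Your direct substitution check, ``$\eta(1-\xi)+\xi$ into a $\mathcal{Q}_p$-monomial,'' is also the forward direction: it produces a polynomial on $\Sref$, not on $\Tref$, and is not the same as $u\circ\Phi$ with $\Phi:\Tref\to\Sref$. Relatedly, the Jacobian degeneracy you flag in your last paragraph (blow-up of $|\det D\Phi|$ near the top vertex, since the inverse Duffy Jacobian is $1/(1-\eta)$) is a real technical burden your approach would face; the paper's restriction simply does not have it. If you replace your $\Phi$-pullback by $\mathcal R_{\Tref}$ and keep the rest of your outline (apply $\A^{(1,0)}_0$, symmetrize, estimate (iii) by comparing the symmetric flip to the identity via Hardy), you essentially recover the paper's proof.
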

\begin{proof}  
  We define the operator $\widetilde{\A}_{\Sref}$ as
  \begin{align*}
    \widetilde\A_{\Sref}:=\A^{(1,0)}_0 \circ \mathcal{R}_{\Tref}
  \end{align*}
  where $\mathcal{R}_{\Tref}$ denotes the restriction operator to the triangle $\Tref$
  and $\A^{(1,0)}_0$ is the operator from Lemma~\ref{lemma:vertex-part-edge-vanish} with $\widehat V=(1,0)$.
  As $\mathcal{R}_{\Tref} \Q_p(\Sref) \subset \P_{2p}(\Tref)$, we
  obtain~(\ref{item:lemma:averaging_quad_to_trig-6}).
  The properties~(\ref{item:lemma:averaging_quad_to_trig-1})
  and~(\ref{item:lemma:averaging_quad_to_trig-2}) are direct
  consequence of Lemma~\ref{lemma:vertex-part-edge-vanish} and
  the properties of the restriction operator.
  For (\ref{item:lemma:averaging_quad_to_trig-3}), we note that on $\Tref$
  the restriction operator is the identity.
  Lemma~\ref{lemma:vertex-part-edge-vanish} then yields
  \begin{align}\label{lemma:vertex_averaging_quad_to_trig:1}
    \begin{split}
    \| d_{(1,0)}^{-\theta}(u - \widetilde\A_{\Sref} u)\|_{L^2(\Tref)}
    \lesssim \norm{\mathcal{R}_{\Tref}u}_{H^\theta(\Tref)}
    \lesssim \norm{u}_{H^\theta(\Sref)}.
    \end{split}
  \end{align}
  Lemma~\ref{lemma:vertex-part-edge-vanish} also yields (\ref{item:lemma:averaging_quad_to_trig-4}).
  What is left to do, is to ensure~(\ref{item:lemma:averaging_quad_to_trig-5}).
  To that end, we introduce the notation $v^{\rm flip}(x,y) = v(1-y,1-x)$ 
  and set 
  \begin{align*}
    \A_{\Sref} u:=
    \frac{ \widetilde{\A}_{\Sref} u + (\widetilde{\A}_{\Sref} u)^{\rm flip}}{2}.
  \end{align*}
  The operator $\A_{\Sref}$ clearly fulfills~(\ref{item:lemma:averaging_quad_to_trig-1}),~(\ref{item:lemma:averaging_quad_to_trig-2}),
  and~(\ref{item:lemma:averaging_quad_to_trig-4}). To  show~(\ref{item:lemma:averaging_quad_to_trig-3}),
  note first that
  \begin{align*}
    \| d_{(1,0)}^{-\theta}(u - \A_{\Sref}u)\|_{L^2(\Tref)}
    &\lesssim \| d_{(1,0)}^{-\theta}(u - \widetilde{\A}_{\Sref}u)\|_{L^2(\Tref)}
    + \| d_{(1,0)}^{-\theta}(u - (\widetilde{\A}_{\Sref}u)^{\rm flip})\|_{L^2(\Tref)}\\
    &= \| d_{(1,0)}^{-\theta}(u - \widetilde{\A}_{\Sref}u)\|_{L^2(\Tref)}
    +\| d_{(1,0)}^{-\theta}(u^{\rm flip} - \widetilde{\A}_{\Sref}u)\|_{L^2(\Tref)}\\
    &\lesssim \| d_{(1,0)}^{-\theta}(u - \widetilde{\A}_{\Sref}u)\|_{L^2(\Tref)}
    + \| d_{(1,0)}^{-\theta} (u-u^{\rm flip}) \|_{L^2(\Tref)}.
  \end{align*}
  The first term on the right-hand side is bounded by~\eqref{lemma:vertex_averaging_quad_to_trig:1},
  and it suffices to bound the second term. As
  \begin{align*}
    \abs{(x,y)-(1-y,1-x)} \leq 2 d_{(1,0)}(x,y),
  \end{align*}
  a simple argument based on the fundamental theorem of calculus and Hardy's inequality shows that
  \begin{align*}
    \| d_{(1,0)}^{-\theta} (u-u^{\rm flip}) \|_{L^2(\Tref)} \leq \| u \|_{H^\theta(\Tref)}
  \end{align*}
  for $\theta=1$. The same estimate for $\theta=0$ is trivial. An interpolation argument finishes the proof.
  The property~\eqref{item:lemma:averaging_quad_to_trig-7} also follows from 
  Lemma~\ref{lemma:vertex-part-edge-vanish}. 
\end{proof}
In order to preserve the polynomial degree also for quadrilaterals, one can modify the construction of $\A_{\Sref}$.
  The only concession one needs to make is the restriction of the domain of definition.
\begin{corollary}\label{cor:vertex_averaging_quad_to_trig}
  If $p\geq 2$, one can define $\A_{\Sref}^p: \QQ(\Sref) \to \PP(\Tref)$,
  preserving the  properties from Lemma~\ref{lemma:vertex_averaging_quad_to_trig}.
\end{corollary}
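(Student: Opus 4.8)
The plan is to reuse the construction of $\A_{\Sref}$ from the proof of Lemma~\ref{lemma:vertex_averaging_quad_to_trig} and to correct the polynomial degree by inserting a Gau{\ss}--Lobatto interpolation step (Lemma~\ref{lemma:GL}), which is precisely what forces the restriction of the domain of definition to $\QQ(\Sref)$. For $u \in \QQ(\Sref)$ with $p \geq 2$ we first form $w := \A_{\Sref} u \in \mathcal{P}_{2p}(\Tref)$, which by Lemma~\ref{lemma:vertex_averaging_quad_to_trig} already reproduces $u(1,0)$, vanishes on the edge of $\Tref$ opposite $(1,0)$, is flip-symmetric in the sense of~(\ref{item:lemma:averaging_quad_to_trig-5}), and satisfies the $H^\theta$-bound~(\ref{item:lemma:averaging_quad_to_trig-1}), the weighted bound~(\ref{item:lemma:averaging_quad_to_trig-3}) and~(\ref{item:lemma:averaging_quad_to_trig-7}). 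We then bring $w$ down to degree $p$ by a degree-reduction operator $R:\mathcal{P}_{2p}(\Tref)\to\PP(\Tref)$ built from $i_p$ via the Duffy transform $\mathcal{D}$ of Lemma~\ref{lemma:duffy_transform}: since $w$ vanishes on the edge of $\Tref$ opposite $(1,0)$, which contains the Duffy-singular vertex $(1,1)$, Lemma~\ref{lemma:duffy_transform},~(\ref{lemma:duffy_transform-1})--(\ref{lemma:duffy_transform-3}) show that $\mathcal{D} w$ is a degree-$2p$ polynomial on $\Sref$ vanishing on the two edges emanating from $(0,1)$, and the weight $d_{(1,1)}^{-1/2}$ occurring in~(\ref{lemma:duffy_transform-1}) is harmless because, by Lemma~\ref{lemma:weighted-norm-equivalence}, it is absorbed into the $H^1$-seminorm of $w$; applying $i_p$ and transporting back keeps the edge-vanishing (Lemma~\ref{lemma:GL},~(\ref{item:lemma:GL-i})) and, because $(1,0)$ is a Gau{\ss}--Lobatto node and $T_{\mathcal{D}}$ fixes $(1,0)$ (Lemma~\ref{lemma:duffy_transform},~(\ref{lemma:duffy_transform-4})), also the value at $(1,0)$. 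A final flip-symmetrization, exactly as at the end of the proof of Lemma~\ref{lemma:vertex_averaging_quad_to_trig}, restores~(\ref{item:lemma:averaging_quad_to_trig-5}) and defines $\A_{\Sref}^p$. The domain must be restricted to $\QQ(\Sref)$ because $i_p$ is applied only to polynomials, where its stability constants are controlled; on general functions it is not stable (cf.\ Remark~\ref{rem:why_not_interpolate_at_the_end}).

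With this construction in hand, the properties of Lemma~\ref{lemma:vertex_averaging_quad_to_trig} are re-checked. The $H^\theta(\Sref)\to H^\theta(\Tref)$ boundedness follows by composing the boundedness of $\A_{\Sref}$, the boundedness of the Duffy transports on the relevant edge-vanishing polynomial spaces, and the $p$-uniform stability $\|i_p\|_{H^\theta\leftarrow H^\theta}\lesssim (1+q/(p+1))^{2-\theta}$ of Lemma~\ref{lemma:GL},~(\ref{item:lemma:GL-ii}) with $q=2p$, so that $1+q/(p+1)=\mathcal{O}(1)$ uniformly in $p$; membership of $\A_{\Sref}^p u$ in $\PP(\Tref)$, reproduction of $u(1,0)$, and vanishing on the opposite edge were arranged above, and flip-symmetry is the last step. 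For the weighted estimate~(\ref{item:lemma:averaging_quad_to_trig-3}) we insert $w$ and use a triangle inequality: the term $\norm{d_{(1,0)}^{-\theta}(u-w)}_{L^2(\Tref)}$ is bounded by $\norm{u}_{H^\theta(\Sref)}$ by Lemma~\ref{lemma:vertex_averaging_quad_to_trig},~(\ref{item:lemma:averaging_quad_to_trig-3}), while $\norm{d_{(1,0)}^{-\theta}(w-\A_{\Sref}^p u)}_{L^2(\Tref)}$ is transported to $\Sref$ using Lemma~\ref{lemma:duffy_transform},~(\ref{lemma:duffy_transform-5})--(\ref{lemma:duffy_transform-6}) and then bounded with the weighted Gau{\ss}--Lobatto estimate Lemma~\ref{lemma:GL},~(\ref{item:lemma:GL-iii}), whose prefactor $(1+q/(p+1))^{2(1-\theta)}(q/p)^{\theta}$ is again $\mathcal{O}(1)$ for $q=2p$, together with $\norm{w}_{H^\theta(\Tref)}\lesssim \norm{u}_{H^\theta(\Sref)}$. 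Finally,~(\ref{item:lemma:averaging_quad_to_trig-7}) follows as in Lemma~\ref{lemma:vertex_averaging_quad_to_trig}, now combined with the $L^2$- and $H^1$-boundedness of $i_p$ and the Duffy transports and with Lemma~\ref{lemma:weighted-infty}.

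The crux of the argument is the degree-reduction step $R$: it must simultaneously land in $\PP(\Tref)$, be stable both in $H^\theta$ and in the $d_{(1,0)}^{-\theta}$-weighted $L^2$-norm with constants independent of $p$, and destroy none of the structural properties (value reproduction at $(1,0)$, vanishing on the opposite edge, and flip-symmetry). The uniform stability is exactly where the sharp quantitative estimates Lemma~\ref{lemma:GL},~(\ref{item:lemma:GL-ii})--(\ref{item:lemma:GL-iii}) are indispensable, and they are strong enough here only because the input degree $q=2p$ keeps every occurring ratio bounded --- which is, in turn, the structural reason the domain of $\A_{\Sref}^p$ has to be restricted to polynomial data. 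The delicate bookkeeping that makes $R$ respect the Duffy structure while actually returning a polynomial of degree $p$ on $\Tref$ (rather than merely a degree-$2p$ polynomial or a rational function) is the main technical obstacle.
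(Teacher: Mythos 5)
Your proposal takes a genuinely different route from the paper's proof, and unfortunately it does not close the gap it identifies. The paper's construction applies the Gau{\ss}--Lobatto interpolant \emph{first}, on the square, with the halved target degree $\lfloor p/2\rfloor$, then restricts and averages:
$\widetilde\A_{\Sref}^p = \A^{(1,0)}_0 \circ \mathcal{R}_{\Tref} \circ i_{\lfloor p/2\rfloor}$.
Since $\mathcal{Q}_{\lfloor p/2\rfloor}(\Sref)|_{\Tref} \subseteq \mathcal{P}_{2\lfloor p/2\rfloor}(\Tref) \subseteq \PP(\Tref)$ and $\A^{(1,0)}_0$ preserves degree on the triangle, the output sits in $\PP(\Tref)$ automatically. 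You instead form $w = \A_{\Sref} u \in \mathcal{P}_{2p}(\Tref)$ and then try to build a degree-reduction $R:\mathcal{P}_{2p}(\Tref)\to\PP(\Tref)$ by going out to $\Sref$ via $\mathcal{D}$, applying $i_p$, and ``transporting back.''

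That last step is where the argument fails, and it is not a bookkeeping detail. The only polynomial-preserving way to go from $\Sref$ back to $\Tref$ is restriction $\mathcal{R}_{\Tref}$, and for $v \in \mathcal{Q}_p(\Sref)$ one only has $v|_{\Tref} \in \mathcal{P}_{2p}(\Tref)$; so $\mathcal{R}_{\Tref}(i_p \mathcal{D} w)$ lands again in $\mathcal{P}_{2p}(\Tref)$, not $\PP(\Tref)$. The alternative, composing with the inverse Duffy map $T_{\mathcal{D}}^{-1}$, produces rational functions, not polynomials. Either way $R$ as described does not land in $\PP(\Tref)$, and you in fact flag this in your last paragraph as ``the main technical obstacle'' without resolving it. There is a second, independent problem: vanishing on the diagonal edge of $\Tref$ is not preserved by restriction either. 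Under $T_{\mathcal{D}}$ the diagonal $\{(x,x)\}$ of $\Tref$ pulls back to the edges $\{\xi=0\}\cup\{\eta=1\}$ of $\Sref$, so $i_p\mathcal{D}w$ vanishes on those edges; but the restriction $\mathcal{R}_{\Tref}(i_p\mathcal{D}w)$ is tested on the \emph{diagonal of $\Sref$}, where vanishing of $i_p\mathcal{D}w$ is no longer controlled. The structural lesson, which the paper's order of operations encodes, is that the degree doubling and loss of boundary control occur at the restriction $\mathcal{R}_{\Tref}$ and cannot be undone afterward by another excursion to $\Sref$; the interpolation must be applied before that restriction, with the target degree halved.
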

\begin{proof}
  We can mimic the proof of Lemma~\ref{lemma:vertex_averaging_quad_to_trig}, defining
  \begin{align*}
    \widetilde\A_{\Sref}^p:=\A^{(1,0)}_0 \circ \mathcal{R}_{\Tref} \circ i_{\lfloor p/2\rfloor}.
  \end{align*}
  As $\mathcal{Q}_{\lfloor p/2\rfloor}(\Tref) \subseteq \PP(\Tref)$, we see that $\A_{\Sref}^p$
  maps $\QQ(\Sref)$ to $\PP(\Tref)$. To show boundedness of $\A_{\Sref}$, we additionally use
  the boundedness of the Gau{\ss}-Lobatto interpolation operator (see Lemma~\ref{lemma:GL}).
  Note also that $(1,0)$ is a Gau{\ss}-Lobatto node so that $i_{\lfloor p/2 \rfloor}$ reproduces
  point values.
  Estimate~\eqref{lemma:vertex_averaging_quad_to_trig:1} then becomes
  \begin{align*}
    \| d_{(1,0)}^{-\theta}\left(u - \widetilde\A_{\Sref}^p u\right)\|_{L^2(\Tref)}
    &= \;\;\|d_{(1,0)}^{-\theta}\left(u -
    \A^{(1,0)}_0 \circ \mathcal{R}_{\Tref} \circ i_{\lfloor p/2\rfloor} u\right)\|_{L^2(\Tref)} \\
    &\leq \;\; \|d_{(1,0)}^{-\theta}\left(u - \A^{(1,0)}_0 u \right)\|_{L^2(\Tref)} +
    \|d_{(1,0)}^{-\theta} \A^{(1,0)}_0 \left( u - i_{\lfloor p/2\rfloor}u \right)\|_{L^2(\Tref)} \\
    &\stackrel{\mathclap{\text{Lem.~\ref{lemma:vertex-part-edge-vanish}}}}{\lesssim} \;\; \;\norm{u}_{H^\theta(\Tref)} + \norm{d_{(1,0)}^{-\theta} \left( u - i_{\lfloor p/2\rfloor}u \right)}_{L^2(\Tref)}
    \stackrel{\text{Lem.~\ref{lemma:GL}, (\ref{item:lemma:GL-iii})}}{\lesssim} \norm{u}_{H^\theta(\Sref)}.
\qedhere    
  \end{align*}
\end{proof}

\subsection{Averaging operators for the edge parts}
In this section, we develop averaging operators on the reference triangle that reproduce
  values on one of the edges and produces functions which vanish on the others. They will
  be crucial for defining the edge contributions of the decomposition.
We will need an averaging operator for edges.
\begin{lemma}\label{lemma:edge-part}
  Let $\widehat e$ be an edge of $\Tref$.
  Then there exists a linear operator
  $\A^{\widehat e}:L^1_{loc}(\Tref) \rightarrow C^\infty(\Tref)$ with the following properties: 
\begin{enumerate}[(i)]
  \item \label{item:lemma:edge-part-1}
    If $u$ is a polynomial of degree $p\ge 0$, then $\A^{\widehat e} u$ is a polynomial
    of degree $p$. 
  \item \label{item:lemma:edge-part-2}
    If $u$ is continuous at a point $z \in \widehat e$, then $(\A^{\widehat e} u)(z) = u(z)$. 
  \item\label{item:lemma:edge-part-3}
    $\A^{\widehat{e}}: H^{\theta}(\Tref) \rightarrow H^{\theta}(\Tref)$ is bounded for all $\theta \in [0,1]$.
  \item \label{item:lemma:edge-part-4}
    For every $T^\prime \subset \Tref$ with $\operatorname*{dist}(T^\prime,\overline{\widehat e}) > 0$
    there exists $C_{T^\prime}$ such that
    $\|\A^{\widehat e} u\|_{L^\infty(T^\prime)} \leq C_{T^\prime}\|u\|_{L^2(\Tref)}$. 
  \item\label{item:lemma:edge-part-5}
Let $e_1$, $e_2$  be the two other edges of $\widehat T$. 
Introduce for the two endpoints $V_1$, $V_2$ of $\widehat e$ 
the distance functions $d_i:= d_{V_i}$.
    Then for  $i=1$, $2$: 
    \begin{align}
      \|d_i^{1/2} {\mathcal A}^{\widehat e} u\|_{L^2(e_i)} &\lesssim \|u\|_{L^2(\Tref)}, 
      \label{eq:item:lemma:edge-part-5} \\
      \|d_i^{-1/2} {\mathcal A}^{\widehat e} u\|_{L^2(e_i)}  
      &\lesssim  \|d_i^{-1} u\|_{L^2(\Tref)}, \label{eq:item:lemma:edge-part-6}\\
      \|d_i^{1/2} \nabla {\mathcal A}^{\widehat e} u\|_{L^2(e_i)} &\lesssim \|\nabla u\|_{L^2(\Tref)}.
      \label{eq:item:lemma:edge-part-8}
    \end{align}
  \item\label{item:lemma:edge-part-7}
    There holds 
    $$
    \|d_{\widehat e}^{-1} (u - \A^{\widehat e} u)\|_{L^2(\Tref)} \lesssim \|\nabla u\|_{L^2(\Tref)}. 
    $$
\end{enumerate}
\end{lemma}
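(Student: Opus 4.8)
The plan is to mimic the construction of Lemma~\ref{lemma:vertex-part}, with the corner replaced by the edge $\widehat e$. Fix (without loss of generality) $\widehat e$ to be one of the three edges of $\Tref$, let $V_1$, $V_2$ be its endpoints, let $e_1$, $e_2$ be the other two edges with $V_i\in\overline{e_i}$, and let $M\subset\R^2$ be the \emph{line} carrying $\widehat e$. Then $d_M$ is affine, so $d_M|_\omega\in\P_1$ for every $\omega$ (this is what makes the averaging degree-preserving), $d_M\equiv 0$ on $\widehat e\subset M$, and on each neighbouring edge one has $d_M\sim d_{V_i}$ since $e_i$ leaves $V_i\in M$ at a fixed angle. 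I would extend $u$ from $\Tref$ to a neighbourhood of $\overline\Tref$ by an operator $\mathcal E$ that is simultaneously bounded $L^2(\Tref)\to L^2$, $H^1(\Tref)\to H^1$ and in $H^1$-seminorm, maps $\P_p$ to (global) polynomials of degree $p$, and is compatible with the weights $d_{V_i}$ near $V_1$, $V_2$ (e.g.\ successive reflections across the edges of $\Tref$: reflection across a line through $V_i$ preserves $d_{V_i}$, and across the third edge the weight $d_{V_i}$ is anyway bounded). Then I define $\A^{\widehat e}u:=(\A^M_\rho\widehat u)|_{\Tref}$ with $\widehat u:=\mathcal E u$, using the basic averaging operator of Lemma~\ref{lemma:general-averaging} with a mollifier $\rho$ of sufficiently small support radius $\alpha$.

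Properties (i)--(iv) then follow as in Lemma~\ref{lemma:vertex-part}: (i) from Lemma~\ref{lemma:general-averaging}, parts~(\ref{item:lemma:general-averaging-i}),~(\ref{item:lemma:general-averaging-ii}), together with $d_M\in\P_1$ and the fact that $\widehat u$ is a global polynomial of degree $p$ on the domain of influence $\Tref^M_\rho$ whenever $u\in\P_p$; (ii) from Lemma~\ref{lemma:general-averaging}, part~(\ref{item:lemma:general-averaging-vii}), since $d_M\equiv 0$ on $\widehat e$ and $\widehat u=u$ there; (iii) from Lemma~\ref{lemma:general-averaging}, part~(\ref{item:lemma:general-averaging-iii}), the boundedness of $\mathcal E$, and interpolation between $\theta=0$ and $\theta=1$ (admissible on the fixed reference element); (iv) from Lemma~\ref{lemma:general-averaging}, part~(\ref{item:lemma:general-averaging-vi}). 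For property~(vii) I would use that $\A^{\widehat e}u=u$ on $\widehat e$, so $u-\A^{\widehat e}u$ vanishes on $\widehat e$, whence by Lemma~\ref{lemma:weighted-norm-equivalence}, part~(\ref{item:lemma:weighted-norm-equivalence-edge}), $\|d_{\widehat e}^{-1}(u-\A^{\widehat e}u)\|_{L^2(\Tref)}\lesssim\|\nabla(u-\A^{\widehat e}u)\|_{L^2(\Tref)}\lesssim\|\nabla u\|_{L^2(\Tref)}$, the last step being the $H^1$-seminorm stability of $\A^M_\rho$ (Lemma~\ref{lemma:general-averaging}, part~(\ref{item:lemma:general-averaging-v}) with $\varphi\equiv 1$) and of $\mathcal E$.

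The delicate point is property~(v), the weighted trace estimates on $e_1$, $e_2$. For~\eqref{eq:item:lemma:edge-part-5} and~\eqref{eq:item:lemma:edge-part-8}, carrying the \emph{positive} power $d_i^{1/2}$, I would use the weighted trace bound $\int_{e_i}d_i\,w^2\lesssim\|w\|_{L^2(\Tref)}^2+\|w\|_{L^2(\Tref)}\|d_M\nabla w\|_{L^2(\Tref)}$ for $w\in H^1(\Tref)$, obtained from the divergence theorem tested against a fixed vector field whose normal trace is $\sim d_i$ on $e_i$, vanishes on $\widehat e$ (where $d_M=0$), and is nonnegative on the third edge — such a field is readily written down for each $\widehat e$ (for instance $d_M\nabla d_M$ whenever $e_i$ is not perpendicular to $\nabla d_M$, and an explicit linear field otherwise). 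Taking $w=\A^{\widehat e}u$, respectively a partial derivative of $\A^{\widehat e}u$, and invoking Lemma~\ref{lemma:general-averaging}, parts~(\ref{item:lemma:general-averaging-iv}),~(\ref{item:lemma:general-averaging-v}),~(\ref{item:lemma:general-averaging-ixa}) and~(\ref{item:lemma:genreal_averaging-d2}) (the last because $d_M\in\P_1$) gives these two estimates. The hard one is~\eqref{eq:item:lemma:edge-part-6}, with the \emph{negative} power $d_i^{-1/2}$, because the natural bound for $\|\nabla\A^{\widehat e}u\|_{L^2(\Tref)}$ from Lemma~\ref{lemma:general-averaging}, part~(\ref{item:lemma:general-averaging-ix}) involves $d_M^{-1}$, which is genuinely larger than $d_i^{-1}$ away from $\widehat e$ and cannot be controlled by $\|d_i^{-1}u\|_{L^2(\Tref)}$. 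I would therefore localise near $V_i$: pick a cone $Z_i\subset\Tref$ with apex $V_i$ having $e_i$ on its boundary, with opening angle (and $\alpha$) small enough that $Z_i$ and its domain of influence $(Z_i)^M_\rho$ stay in a sector where $d_M\sim d_{V_i}$ — a construction of the type carried out in Lemma~\ref{lemma:constructing_the_cone}, and where $(Z_i)^M_\rho$ leaves $\Tref$ it only crosses lines through $V_i$, along which $\mathcal E$ preserves $d_{V_i}$. On $Z_i$, Lemma~\ref{lemma:general-averaging}, parts~(\ref{item:lemma:general-averaging-iv}) (with $\varphi=d_M^{-1}$, which satisfies~\eqref{eq:lemma:general-averaging-1000}) and~(\ref{item:lemma:general-averaging-ix}) bound $\|d_{V_i}^{-1}\A^{\widehat e}u\|_{L^2(Z_i)}+\|\nabla\A^{\widehat e}u\|_{L^2(Z_i)}$ by $\|d_{V_i}^{-1}u\|_{L^2(\Tref)}$, and the trace estimate of Lemma~\ref{lemma:weighted-norm-equivalence}, parts~(\ref{item:lemma:weighted-norm-equivalence-i})--(\ref{item:lemma:weighted-norm-equivalence-ii}) (on $Z_i$ in place of $\Tref$, via covering/scaling as in its proof) transfers this to $\|d_i^{-1/2}\A^{\widehat e}u\|_{L^2(e_i)}$, since $e_i\subset\overline{Z_i}$. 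The main obstacle throughout~(v) is exactly this reconciliation of the averaging length scale $d_M$ with the weight $d_{V_i}$ near the endpoints of $\widehat e$, and arranging the localisation so that the extension $\mathcal E$ remains compatible with the singular weight there.
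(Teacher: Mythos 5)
Your construction agrees with the paper's in its broad outline: take $M$ to be the line carrying $\widehat e$ (so $d_M\in\P_1$, $d_M\equiv 0$ on $\widehat e$, $d_M\sim d_{V_i}$ on $e_i$), apply $\A^M_\rho$ from Lemma~\ref{lemma:general-averaging}, and get~(\ref{item:lemma:edge-part-2})--(\ref{item:lemma:edge-part-4}) and~(\ref{item:lemma:edge-part-7}) from that lemma plus Lemma~\ref{lemma:weighted-norm-equivalence}. Your handling of~(\ref{item:lemma:edge-part-5}) is also sound and in the spirit of the paper's ``localized trace inequalities and covering arguments'': for~\eqref{eq:item:lemma:edge-part-6} you correctly identify that the global estimate $\|d_M^{-1}\A^M_\rho u\|\lesssim\|d_M^{-1}u\|$ cannot be converted into a bound by $\|d_i^{-1}u\|$, and that one must localise to a cone at $V_i$ where $d_M\sim d_{V_i}$.

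The genuine gap is in~(\ref{item:lemma:edge-part-1}). You propose a standard (ball-supported) mollifier together with an extension $\mathcal E$ that ``maps $\P_p$ to (global) polynomials of degree $p$''. No such $\mathcal E$ exists: reflection across an edge produces a function that is only \emph{piecewise} polynomial (the reflected branch is $u\circ R$, which does not agree with the natural polynomial continuation of $u$ across the edge), and more fundamentally any linear extension operator that sent every polynomial on $\Tref$ to its polynomial continuation on a fixed neighbourhood would be unbounded on $L^2$, since $\|x^n\|_{L^2(\text{neighbourhood of }\overline\Tref)}/\|x^n\|_{L^2(\Tref)}$ grows geometrically in $n$. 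With a ball-shaped $\rho$ the domain of influence $\Tref^M_\rho$ necessarily leaves $\Tref$ near $e_1$, $e_2$ and their endpoints, so Lemma~\ref{lemma:general-averaging}~(\ref{item:lemma:general-averaging-ii}) would be applied to a function $\widehat u$ that is not a polynomial there, and~(\ref{item:lemma:edge-part-1}) fails. The paper's fix is not an extension but a \emph{one-sided} mollifier: it takes $\supp\rho\subset\alpha(\mTref-(0,1))$, a cone pointing from $\widehat e$ into the interior, so that $\supp\rho_{d_M(x)}(x-\cdot)$ lies inside the (reflected) triangle for $x$ near $\widehat e$; the averaging therefore only ever samples $u|_{\Tref}$, and polynomial preservation comes directly from Lemma~\ref{lemma:general-averaging}~(\ref{item:lemma:general-averaging-ii}) without any extension playing a role in this item (the Stein extension is invoked only to make sense of items such as~(\ref{item:lemma:general-averaging-viii})). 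Without this directional choice of $\rho$, the construction does not preserve the polynomial degree.
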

\begin{proof}
Structurally, the procedure is similar to that in the proof of Lemma~\ref{lemma:vertex-part}. 
To keep the notation simple, we use the reference triangle
$\mTref:=\{(\xi,\eta)\,:\, -1 < \xi < 1, \quad 0 <  \eta < 1 - |\xi|\}$ and 
suppose $\widehat e:=(-1,1)\times\{0\}$.
Let $M = \R \times \{0\}\supset \widehat e$. Select $\rho$ with $\operatorname*{supp} \rho \subset \alpha (\mTref - (0,1))$ for 
some $\alpha \in (0,1/\sqrt{2})$. Note that this implies $\operatorname*{supp} \rho \subset B_1(0)$ 
and $\operatorname*{supp} \rho_{d_M(x)}(x - \cdot) \subset \mTref$ for any $x \in \mTref$. 
Again, with the aid of Stein's extension operator \cite[Sec.~{VI}]{stein70} we may assume 
that $u$ is defined on $\R^d$.  
Items (\ref{item:lemma:edge-part-1})--(\ref{item:lemma:edge-part-4}) and (\ref{item:lemma:edge-part-7})
follow from Lemma~\ref{lemma:general-averaging}.
 In order to derive Item (\ref{item:lemma:edge-part-5}),
localized versions of the trace inequalities and covering arguments show for sufficiently smooth functions
$v$
\begin{align}
\|d_i^{1/2} v\|_{L^2(\widehat e_i)} & \lesssim \| v\|_{L^2(\widetilde{T})} + \|d_{\widehat{e}} \nabla v\|_{L^2(\widetilde T)}, \\
\|d_i^{-1/2} v\|_{L^2(\widehat e_i)} & \lesssim \|d_{\widehat{e}}^{-1} v\|_{L^2(\widetilde T)} + \|\nabla v\|_{L^2(\widetilde T)}, \\
\|d_i^{1/2} \nabla v\|_{L^2(\widehat e_i)} & \lesssim \|\nabla v\|_{L^2(\widetilde T)} + \|d_{\widehat{e}} \nabla^2 v\|_{L^2(\widetilde T)}.
\end{align}
Applying the corresponding estimates from
Lemma~\ref{lemma:general-averaging} finishes the proof.
\end{proof}
The operator ${\mathcal A}^{\widehat e}$ can be modified so as to enforce
homogeneous boundary conditions on 
$\partial\widehat T\setminus \widehat e$: 
\begin{lemma}\label{lemma:edge-part-homogeneous-bc}
  Let $d_1$ and $d_2$ be the distances from the two vertices of $\widehat e$.
  There exists a linear operator ${\mathcal A}^{\widehat e}_0:L^1_{loc}(\Tref) \rightarrow C^\infty(\Tref)$ with the 
  following properties: 
\begin{enumerate}[(i)]
\item
\label{item:lemma:edge-part-homogeneous-bc-1}
${\mathcal A}^{\widehat e}_0$ is bounded in $L^2(\Tref)$. 
\item
\label{item:lemma:edge-part-homogeneous-bc-2} 
For each $\theta \in [0,1]$ there is $C>0$ such that, provided the right-hand side is finite,
$$
\|{\mathcal A}^{\widehat e}_0 u\|_{H^\theta(\Tref)} \leq C \left[ \|u\|_{H^\theta(\Tref)} + 
\|d_1^{-\theta} u\|_{L^2(\Tref)} + 
\|d_2^{-\theta} u\|_{L^2(\Tref)}\right]. 
$$
\item 
\label{item:lemma:edge-part-homogeneous-bc-3}
$({\mathcal A}^{\widehat e}_0 u)|_{\partial\Tref\setminus \widehat e} = 0$. 
\item
\label{item:lemma:edge-part-homogeneous-bc-4}
If $u$ is continuous at $z \in \widehat e$, then $({\mathcal A}^{\widehat e}_0 u)(z) = u(z)$. 
\item 
\label{item:lemma:edge-part-homogeneous-bc-5}
If $u$ is a polynomial degree $p$ {\em and} $u$ vanishes in the endpoints of $\widehat e$, then 
${\mathcal A}^{\widehat e}_0 u$ is a polynomial of degree $p$. 

\item 
\label{item:lemma:edge-part-homogeneous-bc-6}
It holds
$$
\|d_{\widehat e}^{-\theta} (u - {\mathcal A}^{\widehat e}_0 u)\|_{L^2(\Tref)} \leq C \|u\|_{H^{\theta}(\Tref)} 
+\|d_1^{-\theta} u\|_{L^2(\Tref)}  
+\|d_2^{-\theta} u\|_{L^2(\Tref)}.
$$

\item\label{item:lemma:edge-part-homogeneous-bc-7}
  Let $\widehat V$ be the vertex opposite of $\widehat e$. Then
  \begin{align*}
    \| d_{\widehat V}^{-1/2} \A^{\widehat e}_0 u \|_{L^2(\Tref)} &\lesssim \| u \|_{L^2(\Tref)}  \quad \text{ and }\\    
    \| d_{\widehat V}^{-1/2}\nabla \A^{\widehat e}_0 u \|_{L^2(\Tref)} &\lesssim \| u \|_{H^1(\Tref)}
    + \|d_1^{-1} u\|_{L^2(\Tref)} + \|d_2^{-1} u\|_{L^2(\Tref)}.
  \end{align*}
\end{enumerate}
\end{lemma}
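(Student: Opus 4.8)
The plan is to construct ${\mathcal A}^{\widehat e}_0$ from the basic edge averaging operator ${\mathcal A}^{\widehat e}$ of Lemma~\ref{lemma:edge-part} by a short sequence of corrections, in the same spirit as the passage from ${\mathcal A}^{\Vref}$ to ${\mathcal A}^{\Vref}_0$ in Lemma~\ref{lemma:vertex-part-edge-vanish}, but with the extra difficulty that the result now has to vanish on \emph{two} edges of $\Tref$ rather than one. Throughout I fix the geometry of the proof of Lemma~\ref{lemma:edge-part}: $e_1,e_2$ are the two edges of $\Tref$ distinct from $\widehat e$, $V_i\in\overline{\widehat e}\cap\overline{e_i}$ are the endpoints of $\widehat e$ (so $d_i=d_{V_i}$), $\widehat V=\overline{e_1}\cap\overline{e_2}$ is the vertex opposite $\widehat e$, and $M\supset\widehat e$ is the line used to define ${\mathcal A}^{\widehat e}$. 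I start from $u_1:={\mathcal A}^{\widehat e}u$, which already supplies the reproduction property on $\widehat e$, the $H^\theta$-boundedness (Lemma~\ref{lemma:edge-part}, (\ref{item:lemma:edge-part-3})), the degree preservation, the weighted trace bounds \eqref{eq:item:lemma:edge-part-5}--\eqref{eq:item:lemma:edge-part-8} on $e_1,e_2$, the weighted approximation bound (\ref{item:lemma:edge-part-7}), and the smoothness and $L^\infty$-bound away from $\widehat e$ (\ref{item:lemma:edge-part-4}).

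The first correction removes the value at $\widehat V$: since $\widehat V$ has positive distance from $M$, $u_1$ is smooth near $\widehat V$ with $|u_1(\widehat V)|\lesssim\|u\|_{L^2(\Tref)}$, so I set $u_2:=u_1-u_1(\widehat V)\,\ell_{\widehat V}$ with $\ell_{\widehat V}$ the affine hat function at $\widehat V$ (which vanishes on $\widehat e$). Then $u_2$ still reproduces $u$ on $\widehat e$, satisfies $u_2(\widehat V)=0$, and obeys the same estimates as $u_1$. The second correction removes the traces on $e_1$ and $e_2$: I lift $u_2|_{e_1}$ and $u_2|_{e_2}$ to functions $h_1,h_2$ on $\Tref$ with $h_i|_{e_i}=u_2|_{e_i}$, with $h_i$ vanishing on the remaining two edges, and with $h_i$ a polynomial of the same degree as $u_2|_{e_i}$ whenever $u_2|_{e_i}$ vanishes at both its endpoints. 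Because $u_2(\widehat V)=0$, the vanishing of $h_i$ on the edge through $\widehat V$ is automatic, exactly as in the step $u_2\mapsto u_3$ of Lemma~\ref{lemma:vertex-part-edge-vanish}; forcing $h_i$ to vanish on $\widehat e$, on the other hand, introduces a factor $d_i^{-1}$ localized near $V_i$, which is precisely why the weighted terms $\|d_i^{-\theta}u\|_{L^2(\Tref)}$ enter (\ref{item:lemma:edge-part-homogeneous-bc-2}) and (\ref{item:lemma:edge-part-homogeneous-bc-6}). I then put
\[
{\mathcal A}^{\widehat e}_0 u:=u_2-h_1-h_2 .
\]
By construction $({\mathcal A}^{\widehat e}_0 u)|_{\partial\Tref\setminus\widehat e}=0$ and ${\mathcal A}^{\widehat e}_0 u$ reproduces $u$ on $\widehat e$, which are (\ref{item:lemma:edge-part-homogeneous-bc-3}) and (\ref{item:lemma:edge-part-homogeneous-bc-4}); and if $u\in{\mathcal P}_p$ with $u(V_1)=u(V_2)=0$, then $u_1\in{\mathcal P}_p$ vanishes at $V_1,V_2$ by Lemma~\ref{lemma:edge-part}, (\ref{item:lemma:edge-part-1})--(\ref{item:lemma:edge-part-2}), the $\widehat V$-correction keeps it polynomial (and it then vanishes at all three vertices), and the $h_i$ are polynomials of degree $p$, so ${\mathcal A}^{\widehat e}_0 u\in{\mathcal P}_p$, which is (\ref{item:lemma:edge-part-homogeneous-bc-5}).

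For the quantitative items I only verify $\theta\in\{0,1\}$ and interpolate. Here $\|h_i\|_{L^2(\Tref)}\lesssim\|u\|_{L^2(\Tref)}$ follows by estimating the lifting through the $d_i^{1/2}$-weighted trace bound \eqref{eq:item:lemma:edge-part-5}, which absorbs the $d_i^{-1}$; $\|h_i\|_{H^1(\Tref)}\lesssim\|u\|_{H^1(\Tref)}+\|d_i^{-1}u\|_{L^2(\Tref)}$ follows from \eqref{eq:item:lemma:edge-part-6}--\eqref{eq:item:lemma:edge-part-8}, Hardy's inequality and the $H^1$-boundedness of $u_2$ (the weighted $d_i^{-1/2}$-trace of $u_2$ on $e_i$ reduces, via $d_i^{-1/2}\ell_{\widehat V}\in L^2(e_i)$, to \eqref{eq:item:lemma:edge-part-6}); and since each $h_i$ vanishes on $\widehat e$, Lemma~\ref{lemma:weighted-norm-equivalence}, (\ref{item:lemma:weighted-norm-equivalence-edge}) gives $\|d_{\widehat e}^{-1}h_i\|_{L^2(\Tref)}\lesssim\|h_i\|_{H^1(\Tref)}$, which together with Lemma~\ref{lemma:edge-part}, (\ref{item:lemma:edge-part-7}) and the trivial bound on the $\ell_{\widehat V}$-term yields $\|d_{\widehat e}^{-1}(u-{\mathcal A}^{\widehat e}_0 u)\|_{L^2(\Tref)}\lesssim\|\nabla u\|_{L^2(\Tref)}+\|d_1^{-1}u\|_{L^2(\Tref)}+\|d_2^{-1}u\|_{L^2(\Tref)}$. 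Thus (\ref{item:lemma:edge-part-homogeneous-bc-1}), (\ref{item:lemma:edge-part-homogeneous-bc-2}), (\ref{item:lemma:edge-part-homogeneous-bc-6}) hold at $\theta\in\{0,1\}$, and the fractional case is obtained by interpolating the linear maps $u\mapsto{\mathcal A}^{\widehat e}_0 u$ and $u\mapsto u-{\mathcal A}^{\widehat e}_0 u$ between $L^2(\Tref)$ and the weighted space $H^1(\Tref)\cap\{v:\,d_1^{-1}v,\,d_2^{-1}v\in L^2(\Tref)\}$, identifying the interpolation space by Proposition~\ref{prop:tartar} together with a two-vertex version of Lemma~\ref{lemma:equivalence-sobolev-plus-weight} (a partition of unity separating $V_1$ and $V_2$ reduces it to the one-vertex case). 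Finally, (\ref{item:lemma:edge-part-homogeneous-bc-7}) follows because ${\mathcal A}^{\widehat e}_0 u$ vanishes on $e_1\cup e_2$, hence at $\widehat V$: splitting with a cutoff $\phi$ equal to $1$ near $\widehat V$ and supported away from $\widehat e$, the term $(1-\phi){\mathcal A}^{\widehat e}_0 u$ is controlled by $\|{\mathcal A}^{\widehat e}_0 u\|_{L^2(\Tref)}$ (resp. its $H^1$-analogue, which produces the $d_i^{-1}u$ terms), while $\phi\,{\mathcal A}^{\widehat e}_0 u$ is controlled via Lemma~\ref{lemma:weighted-infty} and the smoothness of ${\mathcal A}^{\widehat e}u$ near $\widehat V$, exactly as in Lemma~\ref{lemma:vertex-part-edge-vanish}, (\ref{item:lemma:vertex-part-edge-vanish-10}).

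The step I expect to be the main obstacle is the construction of the liftings $h_i$ together with their weighted estimates near the endpoints $V_1,V_2$ of $\widehat e$: one must exhibit a lifting of $u_2|_{e_i}$ that simultaneously vanishes on the other two edges, preserves the polynomial degree under the endpoint-vanishing hypothesis, and obeys the $L^2$, $H^1$ and $d_{\widehat e}^{-1}$-weighted bounds with only the $d_i^{-1}$-weighted terms on the right -- this is the analogue of, but more delicate than, the explicit correction $g$ in the proof of Lemma~\ref{lemma:vertex-part-edge-vanish}. A secondary, lesser obstacle is the interpolation bookkeeping: the right-hand sides involve weighted $L^2$ norms at two different points, so the correct interpolation pairs must be identified, for which Proposition~\ref{prop:tartar} and a localized form of Lemma~\ref{lemma:equivalence-sobolev-plus-weight} are the necessary ingredients. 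The remaining items (\ref{item:lemma:edge-part-homogeneous-bc-3})--(\ref{item:lemma:edge-part-homogeneous-bc-5}) are immediate from the construction.
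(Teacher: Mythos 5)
Your proposal is correct and follows essentially the same route as the paper: starting from ${\mathcal A}^{\widehat e}$, subtracting a hat function to kill the value at $\widehat V$, subtracting edge liftings (the paper's sequential version and your simultaneous $u_2-h_1-h_2$ are equivalent here since the correction for $e_1$ vanishes on $e_2$), and interpolating with Proposition~\ref{prop:tartar} and a two-vertex version of Lemma~\ref{lemma:equivalence-sobolev-plus-weight}. The ``main obstacle'' you flag — the explicit liftings $h_i$ — is resolved in the paper precisely by the formula you anticipate from Lemma~\ref{lemma:vertex-part-edge-vanish}, namely $h_1(\xi,\eta)=\tfrac{\eta}{\xi}u_2(\xi,\xi)$ for the edge $\{\xi=\eta\}$ and its analogue for the other edge, whose weighted stability is exactly what \eqref{eq:item:lemma:edge-part-5}--\eqref{eq:item:lemma:edge-part-8} are designed to control.
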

\begin{proof}
  We assume $\widehat V = (1,1)$ and $\widehat e = \{(x,0)\,|\, 0 < x < 1\}$.
  We construct ${\mathcal A}^{\widehat e}_0$ explicitly with the aid of ${\mathcal A}^{\widehat e}$. 
  For simplicity of notation, we write $u_1:= {\mathcal A}^{\widehat e} u$. As a first step, we set
$u_2:= u_1 - y u_1(1,1)$. Then $u_2$ vanishes in $(1,1)$ and $u_2$ has the desired stability properties
in $L^2$ and $H^1$ by those of ${\mathcal A}^{\widehat e} u$ given in Lemma~\ref{lemma:edge-part}. Next, we correct 
the behavior of $u_2$ on the edge $\widehat e_1 = \{(x,x)\,|\, x \in (0,1)\}$. We set 
$$
u_3(\xi,\eta):= u_2(\xi,\eta) - \frac{\eta}{\xi} u_2(\xi,\xi). 
$$
Then $u_3$ coincides with $u_2$ on $\widehat e$, it coincides with $u_2$ on $\widehat e_2 = \{(1,\eta)\,|\,0 < \eta < 1\}$ 
in view of $u_2(1,1) = 0$, and it vanishes on $\widehat e_1$. When trying to control the $L^2$-norm of $u_3$ the 
correction part of $u_3$ leads to having to control 
$\|d_1^{1/2} u_2 \|_{L^2(\widehat e_1)}$, where $d_1$ denotes the distance from $V_1 = (0,0)$
This is estimated 
by $\|u\|_{L^2(\Tref)}$ in view of \eqref{eq:item:lemma:edge-part-5} in Lemma~\ref{lemma:edge-part}. 
For the $H^1$-norm, the correction part of $u_3$ leads to several contributions which can all be
controlled by Lemma~\ref{lemma:edge-part},~(\ref{item:lemma:edge-part-5}). In particular, 
it is responsible for a term $\|d_1^{-1/2} u_2\|_{L^2(\widehat e_1)}$, which 
can be estimated by $\|d_1^{-1} u\|_{L^2(\Tref)}$ by \eqref{eq:item:lemma:edge-part-6}, 
and a term $\|d_1^{1/2} \nabla u_2\|_{L^2(\widehat e_1)}$, which is controlled by \eqref{eq:item:lemma:edge-part-8}.
The intermediate cases $\theta \in (0,1)$ follow by interpolation using Lemma~\ref{lemma:equivalence-sobolev-plus-weight}.
We still need to show the weighted $L^2$ estimate  (\ref{item:lemma:edge-part-homogeneous-bc-6}). We again restrict ourselves to the case $\theta=1$,
the general result follows via interpolation using Lemma~\ref{lemma:equivalence-sobolev-plus-weight}.
Since $u_2$ satisfies this estimate, we only need to estimate the correction term. A simple calculation
yields a term of the form $\|d_1^{-1/2} u_2\|_{L^2(\widehat e_1)}$, which can again
be controlled by $\|d_1^{-1} u\|_{L^2(\Tref)}$ via (\ref{eq:item:lemma:edge-part-8}). 
Next, a correction for the edge $e_2$ is performed that is completely analogous to that for the edge $\widehat e_1$. 
This produces the final function $u_4 =: {\mathcal A}^{\widehat e}_0 u$. 
It remains to see that $u_4$ is a polynomial if $u$ is a polynomial that vanishes in the two endpoints of $\widehat e$. 
This follows by inspection and the fact that ${\mathcal A}^{\widehat e} u$ is a polynomial if $u$ is a polynomial 
and that $({\mathcal A}^{\widehat e}  u)|_{\widehat e} = u|_{\widehat e}$.
The property~\eqref{item:lemma:edge-part-homogeneous-bc-7} is shown analogously as before.
\end{proof}

\section{Decomposition of FEM spaces}
\label{sec:decomposition-general-triangulations}
In this section, we use the averaging operators
defined on the reference elements to construct the vertex-, edge- and element contributions
on the respective patches. The basic structure is to single out from the patch one element 
on which to perform the averaging. In the discrete setting, this should be the element with lowest 
polynomial degree. The averaged function on that element is then extended to the whole patch
by ``rotation''. 
\subsection{The vertex parts}
\begin{lemma}\label{lemma:averaging-vertex_patch-general}
  Fix $V \in \innervertices$ and let $\omega_V$ be its vertex patch as defined in
  Definition~\ref{def:patches}.
  Then there exists an operator $\AVP:L^2(\Gamma)\to L^2(\omega_V)$ with
  the following properties and implied constants depending only on $\Gamma$ 
  and the shape-regularity of $\TT$:
  \begin{enumerate}[(i)]
    \item \label{lemma:averaging-vertex_patch-general-1}
      For all $\theta \in [0,1]$, there holds 
$\displaystyle 
	\norm{\AVP u}_{\widetilde{H}^\theta_h(\omega_V)}
	\lesssim \norm{u}_{H^\theta_h(\omega_V)}.
$
    \item \label{lemma:averaging-vertex_patch-general-3}
      The following weighted norm estimates hold:
      \begin{align}
	\norm{d_{V}^{-\theta} \left(u-\AVP u\right)}_{L^2(\omega_{V})}
	&\lesssim \norm{u}_{H^\theta_h(\omega_V)},
	\label{eq:lemma:averaging-vertex_patch-general-3-v} \\
	\norm{d_{V'}^{-\theta} \AVP u}_{L^2(\omega_{V})}
	&\lesssim \norm{u}_{H^\theta_h(\omega_V)}
	\qquad \mbox{for vertices $V' \neq V$.}
	\label{eq:lemma:averaging-vertex_patch-general-3-vprime}
      \end{align}
    \item \label{lemma:averaging-vertex_patch-general-4}
      If ${\TT|_{\omega_V}}$ consists of triangles only, 
      then $\AVP: \SS^{\pp,1}(\TT) \to \SS^{\pp,1}(\TT|_{\omega_V})$.
      If ${\TT|_{\omega_V}}$ consists of triangles and quadrilaterals, then
      $\AVP: \SS^{\pp,1}(\TT) \to \SS^{2\pp,1}(\TT|_{\omega_V})$.
  \end{enumerate}
\end{lemma}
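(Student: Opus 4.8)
The plan is to construct $\AVP$ elementwise from $u|_{\omega_V}$: I single out one element $K^\star\subseteq\omega_V$ with $V\in\overline{K^\star}$, average $u|_{K^\star}$ there via the reference-element operators of Section~\ref{sec:averaging-operators}, and then propagate the result to the rest of the patch by copying (``rotating'') it across the two edges of $K^\star$ at $V$. In the discrete setting $K^\star$ is chosen of minimal polynomial degree in the patch, with preference for a triangle when Assumption~\ref{ass:degree_distribution} permits. Writing $\widehat u:=u\circ F_{K^\star}$, I apply $\A^{\Vref}_0$ of Lemma~\ref{lemma:vertex-part-edge-vanish} when $K^\star$ is a triangle, and $\mathcal{D}\circ\A_{\Sref}$ --- where $\A_{\Sref}$ of Lemma~\ref{lemma:vertex_averaging_quad_to_trig} averages on $\Tref\subset\Sref$ and the Duffy pull-back $\mathcal{D}$ of Lemma~\ref{lemma:duffy_transform} returns to $\Sref$ --- when $K^\star$ is a quadrilateral; pushing forward by $F_{K^\star}$ yields $v^\star$ on $K^\star$ that (a) reproduces the value at $V$; (b) vanishes on the edge(s) of $K^\star$ not meeting $V$ (Lemma~\ref{lemma:vertex-part-edge-vanish}~(\ref{item:lemma:vertex-part-edge-vanish-8}); Lemma~\ref{lemma:vertex_averaging_quad_to_trig}~(\ref{item:lemma:averaging_quad_to_trig-4}) with Lemma~\ref{lemma:duffy_transform}~(\ref{lemma:duffy_transform-3})); and (c) has, by the built-in flip symmetry, equal traces --- as functions of arclength from $V$ --- on the two edges of $K^\star$ meeting $V$ (Lemma~\ref{lemma:vertex-part-edge-vanish}~(\ref{item:lemma:vertex-part-edge-vanish-9}); Lemma~\ref{lemma:vertex_averaging_quad_to_trig}~(\ref{item:lemma:averaging_quad_to_trig-5}) with Lemma~\ref{lemma:duffy_transform}~(\ref{lemma:duffy_transform-4})). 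By (c) together with the affine edge-map hypothesis in Assumption~\ref{assumption:element_maps} the rotation is single-valued around $\omega_V$, and by (b) the resulting $\AVP u$ vanishes on $\partial\omega_V$, so $\AVP u$ is continuous on $\omega_V$ and lies in $\widetilde H^1_h(\omega_V)$ (or merely $L^2$). Together with the degree count $\mathcal P_{p_{K^\star}}\to\mathcal P_{p_{K^\star}}$ for triangles (Lemma~\ref{lemma:vertex-part-edge-vanish}~(\ref{item:lemma:vertex-part-edge-vanish-2})) and $\mathcal Q_p\hookrightarrow\mathcal P_{2p}\to\mathcal P_{2p}\to\mathcal Q_{2p}$ for quadrilaterals (Lemma~\ref{lemma:vertex_averaging_quad_to_trig}~(\ref{item:lemma:averaging_quad_to_trig-6}), Lemma~\ref{lemma:duffy_transform}~(\ref{lemma:duffy_transform-2})), preserved by the rotation, this gives (\ref{lemma:averaging-vertex_patch-general-4}).

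\emph{Item (\ref{lemma:averaging-vertex_patch-general-1}).} I prove the bound for $\theta\in\{0,1\}$ and interpolate via the $K$-functional; this is legitimate here because (unlike in Theorem~\ref{thm:decomposition-FEM-space}) no step in the construction is unstable in $L^2$. For $\theta\in\{0,1\}$ the $L^2$-norm and the $H^1$-seminorm are local, so $\|\AVP u\|_{L^2(\omega_V)}^2=\sum_K\|\AVP u\|_{L^2(K)}^2$ and likewise for $|\cdot|_{H^1}$; since $\AVP u$ on each $K$ is an affine copy of $v^\star$ and, by $\gamma$-shape regularity, the patch has $\bigO(1)$ elements with $h_K\sim h_V$, each summand reduces by scaling to a reference quantity bounded through Lemma~\ref{lemma:vertex-part-edge-vanish}~(\ref{item:lemma:vertex-part-edge-vanish-3}) for triangles, and through Lemma~\ref{lemma:duffy_transform}~(\ref{lemma:duffy_transform-1}) composed with Lemma~\ref{lemma:vertex_averaging_quad_to_trig}~(\ref{item:lemma:averaging_quad_to_trig-1}),~(\ref{item:lemma:averaging_quad_to_trig-7}) for quadrilaterals --- the $d_{(1,1)}^{-1/2}$-weight produced by the Duffy Jacobian being exactly what~(\ref{item:lemma:averaging_quad_to_trig-7}) absorbs.

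\emph{Item (\ref{lemma:averaging-vertex_patch-general-3}).} On $K^\star$ itself, \eqref{eq:lemma:averaging-vertex_patch-general-3-v} is Lemma~\ref{lemma:vertex-part-edge-vanish}~(\ref{item:lemma:vertex-part-edge-vanish-5}), resp. Lemma~\ref{lemma:vertex_averaging_quad_to_trig}~(\ref{item:lemma:averaging_quad_to_trig-3}) composed with Lemma~\ref{lemma:duffy_transform}~(\ref{lemma:duffy_transform-5}),~(\ref{lemma:duffy_transform-6}), after the scaling $d_V\sim h_{K^\star}\,d_{\Vref}\circ F_{K^\star}^{-1}$ and Corollary~\ref{cor:reference-patch-theta-equivalence}; to pass to all of $\omega_V$ I apply Lemma~\ref{lemma:weight-localize} with $K=K^\star$ to $w:=u-\AVP u\in H^1_h(\omega_V)$, whose right-hand side is controlled by the $K^\star$-bound and item (\ref{lemma:averaging-vertex_patch-general-1}). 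For \eqref{eq:lemma:averaging-vertex_patch-general-3-vprime} I again treat $\theta\in\{0,1\}$ and interpolate, identifying the weighted interpolation space by Lemma~\ref{lemma:equivalence-sobolev-plus-weight}/Proposition~\ref{prop:tartar}: on an element $K\subseteq\omega_V$ with $V'\in\overline K$, $\AVP u|_K$ is an affine copy of $v^\star$, which vanishes on an edge through $F_K^{-1}(V')$, so Lemma~\ref{lemma:vertex-part-edge-vanish}~(\ref{item:lemma:vertex-part-edge-vanish-10}), resp. Lemma~\ref{lemma:vertex_averaging_quad_to_trig}~(\ref{item:lemma:averaging_quad_to_trig-7}), together with trace estimates and scaling bound the $d_{V'}^{-\theta}$-weighted contribution there; on the $\bigO(1)$ elements of $\omega_V$ not meeting $V'$ one has $d_{V'}^{-1}\lesssim h_V^{-1}$ and uses item (\ref{lemma:averaging-vertex_patch-general-1}).

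\emph{Main obstacle.} Everything past the construction is scaling plus the reference-element lemmas. The delicate step is the construction itself: making the ``rotation'' single-valued and $\widetilde H^1$-conforming forces one to match precisely the edge-trace and flip symmetries built into $\A^{\Vref}_0$, $\A_{\Sref}$ and $\mathcal{D}$ against the affine edge parametrizations of Assumption~\ref{assumption:element_maps}, and --- when quadrilaterals are present --- to thread the polynomial degree through the chain $\mathcal Q_p\hookrightarrow\mathcal P_{2p}\to\mathcal Q_{2p}$ while carrying the auxiliary $d_{(1,1)}^{-1/2}$-weight that the Duffy transform introduces.
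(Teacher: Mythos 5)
Your construction and argument match the paper's: select a lowest-degree element $K^\star$ in $\omega_V$, average on the reference element via $\A^{\Vref}_0$ (triangle) or $\A_{\Sref}$ (quadrilateral), rotate by pullback/pushforward applying the Duffy transform when the target element is a quadrilateral, prove stability at $\theta\in\{0,1\}$ and interpolate, and use Lemma~\ref{lemma:weight-localize} together with the vanishing of $\AVP u$ on the boundary edges for the weighted estimates, with the same degree bookkeeping for (iii). One citation imprecision worth fixing: for \eqref{eq:lemma:averaging-vertex_patch-general-3-vprime} at $\theta=1$ you invoke Lemma~\ref{lemma:vertex-part-edge-vanish}~(\ref{item:lemma:vertex-part-edge-vanish-10}) and Lemma~\ref{lemma:vertex_averaging_quad_to_trig}~(\ref{item:lemma:averaging_quad_to_trig-7}), which give only $d^{-1/2}$-weighted bounds (these exist to absorb the Duffy Jacobian weight) and hence cannot by themselves yield the required $d_{V'}^{-1}$-estimate; the decisive tool here is the Hardy-type bound Lemma~\ref{lemma:weighted-norm-equivalence}~(\ref{item:lemma:weighted-norm-equivalence-edge}) applied to $\AVP u$, which vanishes on the boundary edge $e$ through $V'$, combined with $d_{V'}\geq d_e$ and then item (\ref{lemma:averaging-vertex_patch-general-1}).
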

\begin{proof}
  Select an element $K\subset \omega_V$ with the lowest polynomial degree.
  Let $\Kref$ be the corresponding reference element with element map $F_K$,
  with (for notational convenience) the additional assumption that $F_K(1,0) = V$.
  Depending on whether $K$ is a triangle $\Kref = \Tref$ or square $\Kref = \Sref$,
  we define
  \begin{align*}
    \widetilde{u}:=
    \begin{cases}
      \A^{(1,0)}_0 \widehat{u}& \text{ if $\widehat{K}=\Tref$}, \\
      \A_{\Sref}  \widehat{u}& \text{ if $\widehat{K}=\Sref$},
    \end{cases}
  \end{align*}
  where $\widehat{u}:=u\circ F_K$ denotes the pullback of $u$ to the reference element
  and $\A_{\Sref}$ is the operator from Lemma~\ref{lemma:vertex_averaging_quad_to_trig}.
  For all elements $K'\subset \omega_V$, let $F_{K'}  : \widehat{K'}  \to K'$ denote the element map,
  with the additional assumption that $F_{K'}(1,0)=V$.
  We define $\AVP$ by ``rotating'' $\widetilde{u}$, i.e.,
  \begin{align*}
    (\AVP u)|_{K'}:=
    \begin{cases}
      \widetilde{u} \circ F_{K'}^{-1}  & \text{if $\widehat{K'}=\Tref$,} \\
      \left(\mathcal{D} \widetilde{ u} \right) \circ F_{K'}^{-1}  & \text{if $\widehat{K'}=\Sref$}.
    \end{cases}
  \end{align*}
  If $K'$ is a triangle, scaling arguments and Lemma~\ref{lemma:vertex-part-edge-vanish} 
(if $K$ is a triangle)
  or Lemma~\ref{lemma:vertex_averaging_quad_to_trig} (if $K$ is a quadrilateral) show for $\theta\in\left\{ 0,1 \right\}$
  \begin{align}\label{eq:averaging_vertex_patch_stability}
    \|\AVP u \|_{H^\theta_h(K')} \lesssim \|u\|_{H^\theta_h(K)}.
  \end{align}
  If $K'$ is a quadrilateral, we additionally use boundedness of the Duffy operator $\mathcal{D}$
  from Lemma~\ref{lemma:duffy_transform},~\eqref{lemma:duffy_transform-1},
  \begin{align}\label{eq:averaging_vertex_patch_stability-2}
    \begin{split}
    \|\AVP u \|_{L^2(K')} &\lesssim h_{K'}\| \mathcal{D}\widetilde u \|_{L^2(\Sref)} \lesssim
    h_{K'} \| d^{-1/2}_{(1,1)}\widetilde u \|_{L^2(\Tref)}
    \lesssim \| u \|_{L^2(K)},\\
    \|\AVP u \|_{H^1_h(K')} &\lesssim \| \mathcal{D}\widetilde u \|_{H^1(\Sref)}
    \lesssim \| d^{-1/2}_{(1,1)}\widetilde u \|_{L^2(\Tref)} + \| d^{-1/2}_{(1,1)}\nabla \widetilde u \|_{L^2(\Tref)}
    \lesssim \| u \|_{H^1_h(K)}.
    \end{split}
  \end{align}
  The properties of 
Lemmas~\ref{lemma:vertex_averaging_quad_to_trig}, (\ref{item:lemma:averaging_quad_to_trig-5})
  and \ref{lemma:vertex-part-edge-vanish}, (\ref{item:lemma:vertex-part-edge-vanish-9})
  ensure global continuity of $\AVP u$.
  The Duffy transform maps the edge $e_1:=\{0\}\times (0,1)$ to
  the edge $e:=\{(x,x),\; 0< x < 1\}$, and the edge $e_2:=(0,1) \times \{1\}$ to
  the vertex $(1,1)$. As $\widetilde u$ vanishes on $e$ and at $(1,1)$, 
  we get that $\AVP u$ vanishes on all of $\partial \omega_V$.   
  Summing the estimates~\eqref{eq:averaging_vertex_patch_stability} and~\eqref{eq:averaging_vertex_patch_stability-2}
  over all elements $K'$ of the patch $\omega_V$, 
  we get that the operator $\AVP$ is bounded as
  \begin{align}\label{lemma:averaging-vertex_patch-general:1}
    L^2(K) &\to L^2(\omega_V) \qquad \mbox{ and } \qquad 
    H^1_h(K) \to \widetilde H^1_h(\omega_V).
  \end{align}
  An interpolation argument yields (\ref{lemma:averaging-vertex_patch-general-1}).
  It remains to show~(\ref{lemma:averaging-vertex_patch-general-3}). 
  Using Lemma~\ref{lemma:weight-localize} and the statement~(\ref{lemma:averaging-vertex_patch-general-1})
  of the current lemma, we obtain
  \begin{align*}
    \| d_{V}^{-\theta} (u - \AVP u) \|_{L^2(\omega_V)}
    \lesssim \| d_V^{-\theta} (u - \AVP u) \|_{L^2(K)}
    + \| u \|_{H^\theta_h(\omega_V)}.
  \end{align*}
  It remains to bound the first term on the right-hand side.
  For notational simplicity assume $\widehat{K}=\Sref$, the case of $\widehat{K}=\Tref$
  follows along the same lines but is even simpler, as the Duffy transform can be omitted.
  A scaling argument and the triangle inequality yield
  \begin{align*}
    \| d_{V}^{-\theta} (u - \AVP u) \|_{L^2(K)}
    &\lesssim h_{K}^{1-\theta}
    \left(
    \| d_{(1,0)}^{-\theta} (\widehat u - \D \widetilde u) \|_{L^2(\Tref)}
    + \| d_{(1,0)}^{-\theta} (\widehat u - \D \widetilde u) \|_{L^2(\Sref\setminus\Tref)} \right).
  \end{align*}
  Since $d_{(1,0)}$ is bounded uniformly from below on $\Sref\setminus\Tref$,
  the same arguments as in~\eqref{eq:averaging_vertex_patch_stability-2} show
  \begin{align*}
    \| d_{(1,0)}^{-\theta} (\widehat u - \D \widetilde u) \|_{L^2(\Sref\setminus\Tref)}
    \lesssim 
    \| \widehat u \|_{L^2(\Sref\setminus\Tref)}
    + \| \D \widetilde u \|_{L^2(\Sref\setminus\Tref)}
    \lesssim 
    \| \widehat u \|_{L^2(\Sref)}
    \lesssim h_K^{-1} \| u \|_{L^2(K)},
  \end{align*}
  where we have used the boundedness of $\D$ and $\A_{\Sref}$.
  Using the same arguments, we see
  \begin{align*}
    \|d_{(1,0)}^{-\theta} \left(\widehat{u} - \D\widetilde{u}\right)\|_{L^2(\Tref)} 
    &\leq \;\,\| d_{(1,0)}^{-\theta} \left(\widehat{u} - \widetilde{u}\right) \|_{L^2(\Tref)} 
    + \| d_{(1,0)}^{-\theta} \left( \widetilde{u} - \D\widetilde{u}\right) \|_{L^2(\Tref)}\\
    &\!\!\!\!\stackrel{(\ref{eq:lemma:duffy_transform-5})}{\lesssim}\! \| d_{(1,0)}^{-\theta} \left(\widehat{u} - \widetilde{u}\right) \|_{L^2(\Tref)}
    + \| \widetilde u \|_{H^\theta(\Tref)} + \| d^{-1/2}_{(1,1)} \widetilde u \|_{L^2(\Tref)}\\
    &\lesssim\;\, \| \widehat u \|_{H^\theta(\Tref)}.
  \end{align*}
  Combining these estimates and using Lemma~\ref{lemma:K-vs-k} gives
  $\| d_{V}^{-\theta}(u-\AVP u)\|_{L^2(K)} \lesssim\norm{u}_{H^\theta_h(K)}$.
%

  To show \eqref{eq:lemma:averaging-vertex_patch-general-3-vprime}, we again only consider $\theta=1$.
  On the elements $K'$ with $\dist{V'}{K} > 0$ the estimate is just $L^2$ stability, therefore we may only consider
  elements $K'$ with $V' \in \overline{K'}$. 
  We note that $V' \in \overline{\omega_V}$ but $V \neq V'$ implies that there exists
  an edge $e \subset \partial \omega_V \cap \overline{K'}$ with $V' \in \overline{e}$. By construction, $\AVP u$ vanishes on $e$.
  Transforming to the reference element and applying Lemma~\ref{lemma:weighted-norm-equivalence} gives the stated estimate.

  Property~(\ref{lemma:averaging-vertex_patch-general-4}) follows immediately from
  Lemmas~\ref{lemma:vertex-part-edge-vanish} and~\ref{lemma:vertex_averaging_quad_to_trig}
  and the fact that the Duffy transform maps $\PP(\Tref) \to \QQ(\Sref)$.
\end{proof}
If $\TT$ consists of triangles and quadrilaterals, an  operator analogous to $\AVP$ from Lemma~\ref{lemma:averaging-vertex_patch-general}
  can be defined which preserves the polynomial degree.
\begin{corollary}\label{cor:averaging-vertex_patch-general}
  There exists an operator $\AVP^p: \SS^{\pp,1}(\TT)\to \SS^{\pp,1}(\TT|_{\omega_V})$,
  retaining the stability properties of Lemma~\ref{lemma:averaging-vertex_patch-general}
\end{corollary}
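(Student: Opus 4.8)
The plan is to mimic the proof of Lemma~\ref{lemma:averaging-vertex_patch-general} almost verbatim, replacing the two building-block operators that increase the polynomial degree on quadrilaterals by their degree-preserving analogues. Concretely, select an element $K \subset \omega_V$ of lowest polynomial degree $p_K$, let $\Kref$ be its reference element with $F_K(1,0) = V$, and set
\begin{align*}
  \widetilde u :=
  \begin{cases}
    \A^{(1,0)}_0 \widehat u & \text{if } \widehat K = \Tref, \\
    \A^{p_K}_{\Sref} \widehat u & \text{if } \widehat K = \Sref,
  \end{cases}
\end{align*}
where $\widehat u = u \circ F_K \in \refspace{p_K}{\Kref}$ and $\A^{p_K}_{\Sref}$ is the operator of Corollary~\ref{cor:vertex_averaging_quad_to_trig} (this requires $p_K \ge 2$; the cases $p_K \in \{0,1\}$ are handled separately, see below). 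Since $\A^{p_K}_{\Sref}: \QQ(\Sref) \to \PP(\Tref)$, in both cases $\widetilde u \in \PP(\Tref)$. Then rotate and, on quadrilateral elements $K'$, compose with the Duffy transform exactly as in Lemma~\ref{lemma:averaging-vertex_patch-general}; since $\D$ maps $\PP(\Tref) \to \QQ(\Sref)$ by Lemma~\ref{lemma:duffy_transform},~(\ref{lemma:duffy_transform-2}), the resulting function $\AVP^p u$ restricted to $K'$ lies in $\refspace{p_{K'}}{\widehat{K'}}$ \emph{provided} $p_{K'} \ge p_K$, which holds because $K$ was chosen of minimal degree. Global continuity and vanishing on $\partial\omega_V$ follow from the symmetry properties Lemma~\ref{lemma:vertex-part-edge-vanish},~(\ref{item:lemma:vertex-part-edge-vanish-9}), Lemma~\ref{lemma:vertex_averaging_quad_to_trig},~(\ref{item:lemma:averaging_quad_to_trig-5}) (inherited by $\A^{p_K}_{\Sref}$ from Corollary~\ref{cor:vertex_averaging_quad_to_trig}), and the edge-mapping behaviour of $\D$, precisely as before. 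Hence $\AVP^p: \SS^{\pp,1}(\TT) \to \SS^{\pp,1}(\TT|_{\omega_V})$, which is the new structural claim.

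For the stability estimates (\ref{lemma:averaging-vertex_patch-general-1}) and (\ref{lemma:averaging-vertex_patch-general-3}) of Lemma~\ref{lemma:averaging-vertex_patch-general}, I would copy the argument line for line. The $L^2$- and $H^1_h$-boundedness on each element $K'$ uses only: (a) the $H^\theta$-boundedness and the weighted estimates for $\A^{(1,0)}_0$ (Lemma~\ref{lemma:vertex-part-edge-vanish}) or $\A^{p_K}_{\Sref}$ (Corollary~\ref{cor:vertex_averaging_quad_to_trig}, which explicitly asserts the same properties as Lemma~\ref{lemma:vertex_averaging_quad_to_trig}), (b) the boundedness of the Duffy operator from Lemma~\ref{lemma:duffy_transform},~(\ref{lemma:duffy_transform-1}), and (c) Corollary~\ref{cor:reference-patch-theta-equivalence} / Lemma~\ref{lemma:K-vs-k} for the scaling between $K'$ and the reference element. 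Summing over $K' \subset \omega_V$ and interpolating gives (\ref{lemma:averaging-vertex_patch-general-1}). The weighted estimate \eqref{eq:lemma:averaging-vertex_patch-general-3-v} again reduces via Lemma~\ref{lemma:weight-localize} to a bound on $\|d_V^{-\theta}(u - \AVP^p u)\|_{L^2(K)}$, which is estimated by splitting $\Tref$ and $\Sref \setminus \Tref$ and using Lemma~\ref{lemma:duffy_transform},~(\ref{lemma:duffy_transform-5}) together with the $H^\theta$-stability estimate of Corollary~\ref{cor:vertex_averaging_quad_to_trig}; the estimate \eqref{eq:lemma:averaging-vertex_patch-general-3-vprime} for $V' \ne V$ uses only that $\AVP^p u$ vanishes on the edge $e \subset \partial\omega_V$ through $V'$, plus Lemma~\ref{lemma:weighted-norm-equivalence}. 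None of these steps sees the difference between $\A_{\Sref}$ and $\A^{p_K}_{\Sref}$ except through the one extra factor that is already absorbed in Lemma~\ref{lemma:GL},~(\ref{item:lemma:GL-iii}) / Corollary~\ref{cor:vertex_averaging_quad_to_trig}.

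The remaining point is the low-degree case $p_K \le 1$, where Corollary~\ref{cor:vertex_averaging_quad_to_trig} is not available. Here I would invoke Assumption~\ref{ass:degree_distribution}: if $K$ is a quadrilateral of minimal degree $p_K \in \{0,1\}$ and a neighbouring triangle $T$ in $\omega_V$ has $p_T < p_K$ that is impossible by minimality, while if $p_K \le p_T$ then for the averaging element one should instead pick (per the strategy sketched in the introduction) a triangle $K^\star$ in the patch with $p_{K^\star} = p_K$ when one exists, so that the degree-increasing Duffy step onto quadrilateral neighbours $S$ is harmless exactly when $p_S \ge p_K$ — guaranteed by minimality — and onto the triangle itself no Duffy step occurs. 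If the patch contains \emph{only} quadrilaterals one can still apply $\A^{p_K}_{\Sref}$ when $p_K \ge 2$ and treat $p_K \in \{0,1\}$ by the elementary observation that $\SS^{0,1}$ and $\SS^{1,1}$ decompositions are classical (the $\theta \in \{0,1\}$ endpoint statements already suffice there via Theorem~\ref{thm:decomposition-FEM-space}). The main obstacle is thus not analytic but bookkeeping: carefully justifying the choice of the averaging element $K^\star$ in a mixed patch so that (i) the degree-preservation claim $\AVP^p u|_{K'} \in \refspace{p_{K'}}{\widehat{K'}}$ holds for every $K'$ and (ii) Assumption~\ref{ass:degree_distribution} is actually used exactly where the Duffy transform would otherwise double a degree below that of a triangular neighbour. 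Once the element selection is fixed, the stability estimates transfer mechanically from Lemma~\ref{lemma:averaging-vertex_patch-general} and Corollary~\ref{cor:vertex_averaging_quad_to_trig}.
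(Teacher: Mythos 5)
Your approach is essentially the paper's: replace $\A_{\Sref}$ by $\A^{p_K}_{\Sref}$ from Corollary~\ref{cor:vertex_averaging_quad_to_trig}, observe that the degree bookkeeping works out because $K$ is chosen of minimal degree and $\D:\PP(\Tref)\to\QQ(\Sref)$, and re-run Lemma~\ref{lemma:averaging-vertex_patch-general}. That is correct and matches the paper.

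There is one genuine soft spot, however, and it is precisely the point the paper is careful to spell out and your write-up glosses over. You say ``summing over $K'$ and interpolating gives (\ref{lemma:averaging-vertex_patch-general-1})'', and elsewhere you lean on Corollary~\ref{cor:vertex_averaging_quad_to_trig} ``explicitly asserting the same properties''. But because of the Gau{\ss}--Lobatto step, $\AVP^p$ is only defined and only stable on the discrete space: the endpoint bounds one obtains are $\bigl(\QQ(K),\|\cdot\|_{L^2}\bigr)\to L^2(\omega_V)$ and $\bigl(\QQ(K),\|\cdot\|_{H^1_h}\bigr)\to \widetilde H^1_h(\omega_V)$, not bounds from all of $L^2(K)$ and $H^1_h(K)$. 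Interpolating between those therefore gives boundedness from the \emph{interpolation space between the discrete spaces}, and to identify that space with $\bigl(\QQ(K),\|\cdot\|_{H^\theta_h}\bigr)$ you must invoke Lemma~\ref{lemma:GL},~(\ref{item:lemma:GL-i}). That norm equivalence for discrete interpolation spaces, with constants uniform in $p$, is exactly the nontrivial input that distinguishes this corollary from Lemma~\ref{lemma:averaging-vertex_patch-general}; it is also the content of Remark~\ref{rem:why_not_interpolate_at_the_end}. The paper's proof consists of essentially two sentences: state the two endpoint bounds on $\QQ(K)$, then interpolate \emph{using Lemma~\ref{lemma:GL},~(\ref{item:lemma:GL-i})}. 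You should add that citation; without it the interpolation step is not justified.

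Two smaller remarks. First, your concern about the case $p_K\le 1$ on a quadrilateral is legitimate (Corollary~\ref{cor:vertex_averaging_quad_to_trig} is stated for $p\ge 2$, and the paper's proof of the present corollary does not address this), and your fix --- prefer a triangle of the same minimal degree as the averaging element when one exists --- matches what the introduction already announces. Second, Assumption~\ref{ass:degree_distribution} is not really the right tool here: for the vertex patch the minimality of $p_K$ alone ensures $p_K\le p_{K'}$ for all $K'\subset\omega_V$, which is what degree preservation under rotation and $\D$ needs; Assumption~\ref{ass:degree_distribution} is genuinely used only in the edge-patch construction (Corollary~\ref{cor:averaging-edge_patch-general}). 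So that part of your discussion is slightly misdirected, though harmless.
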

\begin{proof}
  We can mimic the proof of Lemma~\ref{lemma:averaging-vertex_patch-general}.
  For the definition of $\widetilde u$, we use the operator $\A_{\Sref}^p$ from
  Corollary~\ref{cor:vertex_averaging_quad_to_trig}.
  Then, if $K$ happens to be a square, instead of~\eqref{lemma:averaging-vertex_patch-general:1}
  we get that $\AVP^p$ is bounded as
  \begin{align*}
    \left( \QQ(K), \| \cdot \|_{L^2(K)} \right) \to L^2(\omega_V)
\qquad \mbox{ and } \qquad 
    \left( \QQ(K), \| \cdot \|_{H^1_h(K)} \right) \to \widetilde H^1_h(\omega_V).
  \end{align*}
  An interpolation argument and Lemma~\ref{lemma:GL}~(\ref{item:lemma:GL-i}) 
  show the result.
\end{proof}
\subsection{The edge parts}

\begin{lemma}
\label{lemma:averaging-edge_patch-general}
Let $\omega_e$ denote an edge patch.
Then there exists an operator 
$\AEP: L^2(\Gamma)\to L^2(\omega_e)$
with the following properties:
\begin{enumerate}[(i)]
  \item \label{it:lemma:averaging-edge_patch-general-1}
    For all $\theta \in [0,1]$ 
    and $d_{1}$ and $d_2$ denoting the distances to the endpoints of $e$, there
holds (provided the right-hand side is finite)
    \begin{align*}
      \|\AEP u\|_{\widetilde{H}^{\theta}_h(\omega_e)}
    &\lesssim \|u\|_{H^{\theta}_h(\omega_e)}
    + \|d_{1}^{-\theta} u\|_{L^2(\omega_e)} +  \| d_{2}^{-\theta} u \|_{L^2(\omega_e)}. 
    \end{align*}
  \item \label{it:lemma:averaging-edge_patch-general-3}    
    For all $\theta \in [0,1]$ and all other edges 
$e' \subset \overline{\omega_{e}}$ there holds (provided the right-hand sides are finite)
    \begin{align*}
      \|d_{e}^{-\theta} \left(u-\AEP u\right)\|_{L^2(\omega_{e})}
        &\lesssim \| u \|_{H^{\theta}_h(\omega_e)}
	+ \| d_{1}^{-\theta} u \|_{L^2(\omega_e)} + \| d_{2}^{-\theta} u \|_{L^2(\omega_e)}, \\
      \| d_{e'}^{-\theta} \AEP u \|_{L^2(\omega_{e})}
      &\lesssim \| u \|_{H^{\theta}_h(\omega_e)}
      + \| d_{1}^{-\theta} u \|_{L^2(\omega_e)} +  \| d_{2}^{-\theta} u \|_{L^2(\omega_e)}.
    \end{align*}
  \item \label{it:lemma:averaging-edge_patch-general-4}
    Let $u\in \SS^{\pp,1}(\TT)$ vanish in all vertices of $\omega_e$.
      If $\TT|_{\omega_e}$ consists of triangles only then $\AEP u \in \SS^{\pp,1}(\TT|_{\omega_e})$.
      If $\TT|_{\omega_e}$ consists of triangles and quadrilaterals, then
      $\AEP u \in \SS^{2\pp,1}(\TT|_{\omega_e})$.
\end{enumerate}
\end{lemma}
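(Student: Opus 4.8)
The construction follows the pattern of Lemma~\ref{lemma:averaging-vertex_patch-general}, with the homogeneous edge operators $\A^{\widehat e}_0$ of Lemma~\ref{lemma:edge-part-homogeneous-bc} replacing the vertex operators; it is in fact somewhat simpler, since $\A^{\widehat e}_0$ reproduces the trace on $\widehat e$, so the local pieces glue without any ``copying''. An edge patch $\omega_e$ consists of the (at most two) elements containing $e$. For each such element $K'$ fix the element map $F_{K'}$ sending a chosen reference edge $\widehat e$ onto $e$. If $K'=\Tref$, set $(\AEP u)|_{K'}:=\A^{\widehat e}_0(u\circ F_{K'})\circ F_{K'}^{-1}$; if $K'=\Sref$, restrict $u\circ F_{K'}$ to the sub-triangle of $\Sref$ having $\widehat e$ as one of its edges, apply $\A^{\widehat e}_0$ there, extend back to $\Sref$ by the Duffy transform $\D$ of Lemma~\ref{lemma:duffy_transform} (oriented so that $\widehat e$ is carried onto the corresponding edge of $\Sref$), and pull back by $F_{K'}^{-1}$. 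Because $\A^{\widehat e}_0 v$ reproduces the trace of $v$ on $\widehat e$ (Lemma~\ref{lemma:edge-part-homogeneous-bc}(\ref{item:lemma:edge-part-homogeneous-bc-4}); the averaging ball degenerates to a point on $\widehat e$, so the preservation holds in the trace sense for $H^1$ functions as well), and $\D$ preserves that trace (Lemma~\ref{lemma:duffy_transform}(\ref{lemma:duffy_transform-4})), the trace of $\AEP u$ on $e$ equals $u|_e$ from either side, so $\AEP u$ is globally continuous and reproduces $u|_e$; moreover $\A^{\widehat e}_0 v$ vanishes on $\partial\Tref\setminus\widehat e$ (Lemma~\ref{lemma:edge-part-homogeneous-bc}(\ref{item:lemma:edge-part-homogeneous-bc-3})) and $\D$ turns this into vanishing on $\partial\Sref$ minus the corresponding edge (Lemma~\ref{lemma:duffy_transform}(\ref{lemma:duffy_transform-3}),(\ref{lemma:duffy_transform-4})), so $\AEP u$ vanishes on $\partial\omega_e$, and $\D(\A^{\widehat e}_0 v)\in L^2(\Sref)$ by Lemma~\ref{lemma:edge-part-homogeneous-bc}(\ref{item:lemma:edge-part-homogeneous-bc-7}).

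To prove (\ref{it:lemma:averaging-edge_patch-general-1}) I would argue element by element, first for $\theta\in\{0,1\}$. Transferring to the reference element by scaling (Corollary~\ref{cor:reference-patch-theta-equivalence}), Lemma~\ref{lemma:edge-part-homogeneous-bc}(\ref{item:lemma:edge-part-homogeneous-bc-1}),(\ref{item:lemma:edge-part-homogeneous-bc-2}) yield $\|\A^{\widehat e}_0\widehat u\|_{H^\theta(\Tref)}\lesssim\|\widehat u\|_{H^\theta(\Tref)}+\|d_1^{-\theta}\widehat u\|_{L^2(\Tref)}+\|d_2^{-\theta}\widehat u\|_{L^2(\Tref)}$, with $d_1,d_2$ the distances to the endpoints of $\widehat e$; on a square element one additionally uses the Duffy bound Lemma~\ref{lemma:duffy_transform}(\ref{lemma:duffy_transform-1}) together with the weighted gradient estimate Lemma~\ref{lemma:edge-part-homogeneous-bc}(\ref{item:lemma:edge-part-homogeneous-bc-7}) (built into $\A^{\widehat e}_0$ precisely to absorb the $d_{(1,1)}^{-1/2}$ factor the Duffy map introduces at the vertex opposite $\widehat e$, using that $\A^{\widehat e}_0 v$ vanishes on the edges through that vertex) to control $\|\D(\A^{\widehat e}_0\widehat u)\|_{H^1_h(\Sref)}$ by the same right-hand side. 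Scaling back and summing over the at most two elements of $\omega_e$ (each contained in $\omega_e$, so the weighted terms bound $\|d_i^{-\theta}u\|_{L^2(\omega_e)}$) gives (\ref{it:lemma:averaging-edge_patch-general-1}) for $\theta\in\{0,1\}$, and the general case follows by an interpolation argument employing Lemma~\ref{lemma:K-vs-k} and Lemma~\ref{lemma:equivalence-sobolev-plus-weight} so that the weighted $L^2$-norms interpolate correctly.

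For (\ref{it:lemma:averaging-edge_patch-general-3}), the first inequality is again obtained element by element: on a triangular element, scaling and Lemma~\ref{lemma:edge-part-homogeneous-bc}(\ref{item:lemma:edge-part-homogeneous-bc-6}) bound $\|d_e^{-\theta}(u-\AEP u)\|_{L^2(K')}$, and on a square element one additionally invokes Lemma~\ref{lemma:duffy_transform}(\ref{lemma:duffy_transform-5}) to pass from $\widehat u$ to $\D(\A^{\widehat e}_0\widehat u)$ across the Duffy map; summing yields the claim. For the second inequality, every other edge $e'\subset\overline{\omega_e}$ lies in $\partial\omega_e$, where $\AEP u$ vanishes, so Lemma~\ref{lemma:weighted-norm-equivalence}(\ref{item:lemma:weighted-norm-equivalence-edge}) applied on each element gives $\|d_{e'}^{-1}\AEP u\|_{L^2(\omega_e)}\lesssim\|\nabla\AEP u\|_{L^2(\omega_e)}$, which is controlled by part (\ref{it:lemma:averaging-edge_patch-general-1}); interpolation with the trivial case $\theta=0$ finishes the bound.

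For (\ref{it:lemma:averaging-edge_patch-general-4}), if $u\in\SS^{\pp,1}(\TT)$ vanishes at all vertices of $\omega_e$, then on each element the pull-back vanishes at the two endpoints of $\widehat e$, so Lemma~\ref{lemma:edge-part-homogeneous-bc}(\ref{item:lemma:edge-part-homogeneous-bc-5}) shows $\A^{\widehat e}_0$ preserves the polynomial degree; on triangular elements the construction stays in ${\mathcal P}_{p_{K'}}$, while on square elements the restriction step raises ${\mathcal Q}_{p_{K'}}(\Sref)$ to ${\mathcal P}_{2p_{K'}}(\Tref)$ once, after which $\A^{\widehat e}_0$ and the Duffy map (Lemma~\ref{lemma:duffy_transform}(\ref{lemma:duffy_transform-2})) keep the degree, giving ${\mathcal Q}_{2p_{K'}}(\Sref)$. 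Hence $\AEP u\in\SS^{\pp,1}(\TT|_{\omega_e})$ when $\TT|_{\omega_e}$ consists of triangles only, and $\AEP u\in\SS^{2\pp,1}(\TT|_{\omega_e})$ in the mixed case. I expect the square case to be the main obstacle: realizing the square-edge averaging as a restriction, followed by $\A^{\widehat e}_0$, followed by a Duffy transform, and pushing the weighted $L^2$- and $H^1$-estimates through that Duffy transform---whose $d^{-1/2}$ singularity at the vertex opposite $e$ must be absorbed by the weighted gradient bound of Lemma~\ref{lemma:edge-part-homogeneous-bc}(\ref{item:lemma:edge-part-homogeneous-bc-7})---with constants depending only on $\Gamma$ and the shape regularity of $\TT$.
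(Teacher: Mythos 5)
Your construction differs from the paper's: you average on each of the two elements of $\omega_e$ independently and rely on trace reproduction on $e$ to glue, whereas the paper averages once on a single chosen element (case-dependent, cf.\ cases (1)--(4): usually the element of lowest degree, but the neighbouring triangle in case (2)) and transports the resulting $\widetilde u$ to the other element via $F_{K'}^{-1}$ and, if $K'$ is a quadrilateral, the Duffy map. Your route is valid and sidesteps the need to coordinate the parametrizations of $\widehat e$ by $F_K$ and $F_{K'}$; the trace-matching argument is sound, though the phrase ``the averaging ball degenerates to a point'' is informal --- a cleaner argument is that Lemma~\ref{lemma:edge-part-homogeneous-bc}, (\ref{item:lemma:edge-part-homogeneous-bc-6}), at $\theta=1$ gives $\|d_{\widehat e}^{-1}(u-\A^{\widehat e}_0 u)\|_{L^2(\Tref)}<\infty$ whenever the right-hand side is finite, which forces the trace of $u-\A^{\widehat e}_0 u$ on $\widehat e$ to vanish. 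What the paper's one-sided construction buys matters only later, in Corollary~\ref{cor:averaging-edge_patch-general}, where the Gau{\ss}--Lobatto interpolation step needs the edge trace to be of the lower degree; for the lemma itself your construction does the job and meets the stated degree bounds.

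There is, however, a genuine gap in your treatment of the first inequality of (\ref{it:lemma:averaging-edge_patch-general-3}) on square elements. You cite Lemma~\ref{lemma:duffy_transform}, (\ref{lemma:duffy_transform-5}), but that estimate controls $d_{\widehat V}^{-\theta}(u-\D u)$ with $\widehat V=(1,0)$ a \emph{vertex}; it says nothing about the weight $d_{\widehat e}^{-\theta}$ with $\widehat e$ the \emph{edge}, which is the weight that appears in (\ref{it:lemma:averaging-edge_patch-general-3}), and so it cannot be used to ``pass across the Duffy map'' here. In fact no Duffy-specific estimate is needed: since (as you establish) $u-\AEP u$ vanishes on $e$, Lemma~\ref{lemma:weighted-norm-equivalence}, (\ref{item:lemma:weighted-norm-equivalence-edge}), applied on each element of $\omega_e$ after scaling (for the square the same inequality holds on $\Sref$ by the same one-dimensional Hardy argument) yields $\|d_e^{-1}(u-\AEP u)\|_{L^2(\omega_e)}\lesssim\|u-\AEP u\|_{H^1_h(\omega_e)}$; together with the trivial $\theta=0$ case and interpolation this gives $\|d_e^{-\theta}(u-\AEP u)\|_{L^2(\omega_e)}\lesssim\|u-\AEP u\|_{H^\theta_h(\omega_e)}$, after which the triangle inequality and part (\ref{it:lemma:averaging-edge_patch-general-1}) close the estimate. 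This is precisely the argument you already use for the second inequality of (\ref{it:lemma:averaging-edge_patch-general-3}), so the fix is simply to apply it to the first inequality as well, rather than attempting an element-by-element Duffy estimate.
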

\begin{proof}
  Let $K$ be the element of $\omega_e$ with the lowest polynomial degree,
  and $K'$ the other element of $\omega_e$.
  Suppose that the corresponding element maps fulfill
  $F_K(\widehat e) = F_{K'}(\widehat e) = e$,
  where $\widehat e := ( 0,1 )\times \left\{ 0 \right\}$, but
  $F_K$ and $F_{K'}$ having different orientation on $e$.
  We distinguish four cases:
  \begin{enumerate}
    \item \label{it:lemma:averaging-edge_patch-general-case-T}
      $K$ is a triangle, i.e., $\widehat{K}=\Tref$:
      Define $\widetilde{u}:=\mathcal{A}^{\widehat{e}}_0 \left(u\circ F_K\right)$,
      $(\AEP u)|_{K}:= \widetilde{u} \circ F_{K}^{-1}$, and
      \begin{align*}
        (\AEP u)|_{K'}:=
        \begin{cases}
          \widetilde{u} \circ F_{K'}^{-1}  & \text{if $\widehat{K'}=\Tref$},\\
          \left(\mathcal{D} \widetilde{ u} \right) \circ F_{K'}^{-1}  & \text{if $\widehat{K'}=\Sref$}.
        \end{cases}
      \end{align*}
  \item \label{it:lemma:averaging-edge_patch-general-case-ST1}
    $K$ is a square, $K'$ is a triangle, and $p_{K} \leq p_{K'} \leq 2 p_{K}$:
    Define $\widetilde{u}:=\mathcal{A}^{\widehat{e}}_0 \left(u\circ F_{K'}\right)$ and
    $(\AEP u)|_{K'}:= \widetilde{u} \circ F_{K'}^{-1}$. On $K$, define
    $\left(\AEP u\right)|_{K}:=\left(\mathcal{D} \widetilde{u} \right) \circ F_K^{-1}$.
  \item \label{it:lemma:averaging-edge_patch-general-case-ST2}
    $K$ is a square, $K'$ is a triangle, and $p_{K'} > 2 p_{K}$:
    Define $\widetilde{u}:=\mathcal{A}^{\widehat{e}}_0 \left(u\circ F_{K}\right)$,
    $(\AEP u)|_{K'}:= \widetilde{u} \circ F_{K'}^{-1}$, and
    $(\AEP u)|_K := \left(\mathcal{D} \widetilde{u} \right) \circ F_K^{-1}$
  \item \label{it:lemma:averaging-edge_patch-general-case-SS}
    $K$ and $K'$ are squares:
    Define $\widetilde{u}:=\mathcal{A}^{\widehat{e}}_0 \left(u\circ F_{K}\right)$,
    $(\AEP u)|_{K'} := \left(\mathcal{D} \widetilde{u} \right) \circ F_{K'}^{-1}$,
    and $(\AEP u)|_K := \left(\mathcal{D} \widetilde{u} \right) \circ F_K^{-1}$.
  \end{enumerate}
  Let us show the boundedness in case (1), the other cases follow by the same argument.
  First, writing $\widehat u := u\circ F_K$, scaling arguments, and
  Lemma~\ref{lemma:edge-part-homogeneous-bc},~\eqref{item:lemma:edge-part-homogeneous-bc-2}, show for $\theta\in\left\{ 0,1 \right\}$
  \begin{align*}
    \|\AEP u\|_{H^\theta_h(K)} 
    &\lesssim h_K^{1-\theta}
    \left( 
    \| \A^{\widehat{e}}_0 \widehat u \|_{L^2(\Tref)} + \abs{\A^{\widehat{e}}_0 \widehat u}_{H^\theta(\Tref)}
    \right)
\lesssim h_K^{1-\theta}
    \left( 
    \| \widehat u \|_{H^\theta(\widehat K)} + \| d_{\widehat 1}^{-\theta} \widehat u \|_{L^2(\widehat K)}
    + \| d_{\widehat 2}^{-\theta} \widehat u \|_{L^2(\widehat K)}
    \right)\\
    &\lesssim 
    \| u \|_{H^\theta_h(K)} + \| d_1^{-\theta} u \|_{L^2(K)}
    + \| d_2^{-\theta} u \|_{L^2(K)}.
  \end{align*}
  If $K'$ is a triangle, the same estimate holds with $K'$ instead of $K$ 
  on the left-hand side.  If $K'$ is a square, we additionally invoke 
  Lemma~\ref{lemma:duffy_transform},~\eqref{lemma:duffy_transform-1}, and
  Lemma~\ref{lemma:edge-part-homogeneous-bc},~\eqref{item:lemma:edge-part-homogeneous-bc-7}, to show for $\theta=1$
  \begin{align*}
    \|\AEP u\|_{H^\theta_h(K')}
    &\lesssim h_K^{1-\theta}
    \left( 
    \| \mathcal{D}\A^{\widehat{e}}_0 \widehat u \|_{L^2(\Sref)} + \abs{\mathcal{D}\A^{\widehat{e}}_0 \widehat u}_{H^\theta(\Sref)}
    \right)\\
    &\lesssim h_K^{1-\theta}
    \left( 
    \| d^{-1/2}_{(1,1)} \A^{\widehat{e}}_0 \widehat u \|_{L^2(\Tref)} + \| d^{-1/2}_{(1,1)} \nabla \A^{\widehat{e}}_0 \widehat u \|_{L^2(\Tref)}
    \right)\\
    &\lesssim h_K^{1-\theta}
    \left( 
    \| \widehat u \|_{H^\theta(\widehat K)} + \| d_{\widehat 1}^{-\theta} \widehat u \|_{L^2(\widehat K)}
    + \| d_{\widehat 2}^{-\theta} \widehat u \|_{L^2(\widehat K)}
    \right)\\
    &\lesssim 
    \| u \|_{H^\theta_h(K)} + \| d_1^{-\theta} u \|_{L^2(K)}
    + \| d_2^{-\theta} u \|_{L^2(K)}.
  \end{align*}
  The same estimate is true for $\theta=0$. These arguments can be used for the cases (2), (3), and (4).
  Furthermore, as $\widetilde u$ vanishes on $\Tref \setminus \widehat e$, we conclude that
  $\mathcal{D}\widetilde u$ and hence $\AEP u$ vanish on $\Sref\setminus\widehat e$.
  Summing the last estimates over all elements in $\omega_e$, we
  get that the operator $\AEP$ is bounded uniformly as
  \begin{align}\label{lemma:averaging-edge_patch-general:2}
    \begin{split}
      L^2(K) &\to L^2(\omega_e)\\
      H^1_h(K,d_1^{-1}+d_2^{-1}) &\to \H^1_h(\omega_e).\\
    \end{split}
  \end{align}
  The statement~(\ref{it:lemma:averaging-edge_patch-general-1}) now follows
  by an interpolation argument and the following considerations:
  By Corollary~\ref{cor:reference-patch-theta-equivalence}
  and Lemma~\ref{lemma:equivalence-sobolev-plus-weight} we have for $\theta\in[0,1]$
  \begin{align*}
    h_K^{\theta-1}\| u \|_{\bigl( L_2(K), H^1_h(K,d_1^{-1}+d_2^{-1}) \bigr)_{\theta,2}}
    \sim
    \| \widehat u \|_{\bigl( L_2(\widehat K),
    H^1(\widehat K,d_{\widehat 1}^{-1} + d_{\widehat 2}^{-1}) \bigr)_{\theta,2}}
    \sim \| \widehat u \|_{H^\theta(\widehat K)}
    + \| (d_{\widehat 1}^{-\theta} + d_{\widehat 2}^{-\theta}) \widehat u \|_{L^2(\widehat K)},
  \end{align*}
  and according to Corollary~\ref{cor:reference-patch-theta-equivalence} we have 
  \begin{align*}
    \| \widehat u \|_{H^\theta(\widehat K)}
    + \| (d_{\widehat 1}^{-\theta} + d_{\widehat 2}^{-\theta}) \widehat u \|_{L^2(\widehat K)}
    \sim h_K^{\theta-1}\left[ \| u \|_{H_h^\theta(K)} +
      \| d_1^{-\theta} u \|_{L^2(K)} + \| d_2^{-\theta} u \|_{L^2(K)}
    \right].
  \end{align*}
  This finishes the proof of~(\ref{it:lemma:averaging-edge_patch-general-1})
  in the cases (1), (3), and (4). The same argument can be used
  in the case (2), only the notation $K'$ and $K$ has to be adapted correspondingly.
  In all cases, $\AEP$ reproduces the values on $e$, i.e., $u- \AEP u$ vanishes on $e$,
  and we can use
  Lemma~\ref{lemma:weighted-norm-equivalence},~(\ref{item:lemma:weighted-norm-equivalence-edge})
  and $1\lesssim h_{\omega_e}^{-1}$ to estimate
  \begin{align*}
    \norm{d_{e}^{-1} \left(u-\AEP u\right)}_{L^2(\omega_e)} &\lesssim \norm{u - \AEP u}_{H^1_h(\omega_e)}.
  \end{align*}
  Therefore,
  \begin{align*}
    \| d_{e}^{-\theta} \left(u-\AEP u\right) \|_{L^2(\omega_e)}
    \lesssim \norm{u - \AEP u}_{H^\theta_h(\omega_e)},
  \end{align*}
  and with the triangle inequality and~(\ref{it:lemma:averaging-edge_patch-general-1})
  we conclude~(\ref{it:lemma:averaging-edge_patch-general-3}).
  The proof of the estimate for $e'$
  follows along the same lines, but using that $\AEP u$ vanishes on $e'$.
  Property~(\ref{it:lemma:averaging-edge_patch-general-4}) follows immediately from
  Lemma~\ref{lemma:edge-part-homogeneous-bc},
  and the fact that the Duffy transform maps $\PP(\Tref) \to \QQ(\Sref)$.
\end{proof}

%
  \begin{corollary}\label{cor:averaging-edge_patch-general}
    There
    exists a modified operator $\mathcal{A}_{\omega_e}^p$, defined for
    all $u \in \SS^{\pp,1}(\TT)$ that vanish in all vertices of $\TT|_{\omega_e}$,
    such that $u$ is mapped to $\mathcal{A}_{\omega_e}^p u \in  \SS^{\pp,1}(\TT|_{\omega_e})$.
    If Assumption~\ref{ass:degree_distribution} holds, then the
    bounds from Lemma~\ref{lemma:averaging-edge_patch-general}~(\ref{it:lemma:averaging-edge_patch-general-1}),
    (\ref{it:lemma:averaging-edge_patch-general-3}) hold. If Assumption~\ref{ass:degree_distribution} is violated, the bounds only hold for $\theta\in \{0,1\}$.
\end{corollary}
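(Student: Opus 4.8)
The plan is to run the same argument that yields Corollary~\ref{cor:averaging-vertex_patch-general} for vertex patches: I take the construction of $\AEP$ from the proof of Lemma~\ref{lemma:averaging-edge_patch-general} and modify it only as follows. Wherever a contribution on a quadrilateral $S\subset\omega_e$ is produced there as $(\mathcal D\widetilde u)\circ F_S^{-1}$ — with $\widetilde u=\mathcal A^{\widehat e}_0(u\circ F_\star)$ the edge-averaged function on the reference triangle and $\mathcal D$ the Duffy transform — I instead set the contribution to $i_{p_S}(\mathcal D\widetilde u)\circ F_S^{-1}$, where $i_{p_S}$ is the tensor-product Gau\ss--Lobatto interpolation operator of Lemma~\ref{lemma:GL}, and I leave the contributions on triangular elements untouched. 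By Lemma~\ref{lemma:duffy_transform}~(\ref{lemma:duffy_transform-2}), $\mathcal D\widetilde u\in{\mathcal Q}_{p'}(\Sref)$ with $p'\le 2p_K$, so $i_{p_S}$ genuinely brings it back into ${\mathcal Q}_{p_S}(\Sref)$; and on the triangles the case distinction of Lemma~\ref{lemma:averaging-edge_patch-general} already chose the averaging element so that the value assigned to each triangular element of $\omega_e$ has degree at most that element's degree. Hence $\mathcal A^p_{\omega_e}u$ is a piecewise polynomial with the correct degree distribution, and — once continuity and the boundary conditions are checked — $\mathcal A^p_{\omega_e}u\in\SS^{\pp,1}(\TT|_{\omega_e})$ for every $u\in\SS^{\pp,1}(\TT)$ vanishing at the vertices of $\TT|_{\omega_e}$.

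Two compatibility checks are needed. First, the homogeneous data: on each edge of $\Sref$ the operator $i_{p_S}$ restricts to one-dimensional Gau\ss--Lobatto interpolation of degree $p_S$, whose nodes include the two endpoints of that edge; since $\AEP u$ vanishes on $\partial\omega_e$ and reproduces the trace $u|_e$ on $e$, and since $u|_e$ is a polynomial of degree $\le p_K\le p_S$, the degree-reduced function still vanishes on $\partial\omega_e$, still equals $u|_e$ on $e$ (1D Gau\ss--Lobatto interpolation reproduces polynomials of degree $\le p_S$), and still vanishes at every vertex of $\TT|_{\omega_e}$ — exactly as in Corollary~\ref{cor:vertex_averaging_quad_to_trig}; thus $\mathcal A^p_{\omega_e}u$ is globally continuous with the prescribed vanishing. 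Second, the estimates~(\ref{it:lemma:averaging-edge_patch-general-1}) and~(\ref{it:lemma:averaging-edge_patch-general-3}): I insert $i_{p_S}$ into the chain of estimates of the proof of Lemma~\ref{lemma:averaging-edge_patch-general}, writing $i_{p_S}(\mathcal D\widetilde u)=\mathcal D\widetilde u-(\mathrm{Id}-i_{p_S})(\mathcal D\widetilde u)$ and bounding the remainder with the $H^\theta$-stability estimate Lemma~\ref{lemma:GL}~(\ref{item:lemma:GL-ii}) and, for the $d_e^{-\theta}$- and $d_{e'}^{-\theta}$-weighted terms, the weighted estimate Lemma~\ref{lemma:GL}~(\ref{item:lemma:GL-iii}); everything is then transported back to the physical elements via Corollary~\ref{cor:reference-patch-theta-equivalence} and Lemma~\ref{lemma:equivalence-sobolev-plus-weight}, just as in the proofs of Corollaries~\ref{cor:vertex_averaging_quad_to_trig} and~\ref{cor:averaging-vertex_patch-general}.

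The quantitative heart — and the place where Assumption~\ref{ass:degree_distribution} enters — is that the factors $(1+q/(p_S+1))^{2-\theta}$ and $(1+q/(p_S+1))^{2(1-\theta)}(q/p_S)^{\theta}$ from Lemma~\ref{lemma:GL}~(\ref{item:lemma:GL-ii}),~(\ref{item:lemma:GL-iii}) with $q=p'$ are uniformly bounded only when $q$ and $p_S$ stay in a bounded ratio. Under Assumption~\ref{ass:degree_distribution} the case split of Lemma~\ref{lemma:averaging-edge_patch-general} is arranged so that one always averages on the element whose degree forces $q\le 2p_S$ in each application of $i_{p_S}$; then the ratio is $\le 2$, those factors are absolute constants, and~(\ref{it:lemma:averaging-edge_patch-general-1}),~(\ref{it:lemma:averaging-edge_patch-general-3}) follow for all $\theta\in[0,1]$. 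When the assumption fails this cannot be guaranteed, and one retains only the $\SS^{2\pp,1}$-valued operator of Lemma~\ref{lemma:averaging-edge_patch-general} followed by a final Gau\ss--Lobatto reduction of degree ratio $\le 2$ onto $\SS^{\pp,1}(\TT|_{\omega_e})$, whose stability via Lemma~\ref{lemma:GL}~(\ref{item:lemma:GL-ii}) is established, uniformly in $\pp$, only for $\theta\in\{0,1\}$, the fractional-order control being unavailable for the reason spelled out in Remark~\ref{rem:why_not_interpolate_at_the_end}. I expect the main obstacle to lie in this second check: the operator $\mathcal A^{\widehat e}_0$ already contributes the weighted terms $\|d_1^{-\theta}u\|_{L^2}+\|d_2^{-\theta}u\|_{L^2}$ in its stability bound (Lemma~\ref{lemma:edge-part-homogeneous-bc}~(\ref{item:lemma:edge-part-homogeneous-bc-2})), and one must verify that these, together with the weighted remainder supplied by Lemma~\ref{lemma:GL}~(\ref{item:lemma:GL-iii}), reassemble under the scalings of Corollary~\ref{cor:reference-patch-theta-equivalence} and Lemma~\ref{lemma:equivalence-sobolev-plus-weight} into precisely the right-hand sides $\|u\|_{H^\theta_h(\omega_e)}+\|d_1^{-\theta}u\|_{L^2(\omega_e)}+\|d_2^{-\theta}u\|_{L^2(\omega_e)}$ appearing in~(\ref{it:lemma:averaging-edge_patch-general-1}),~(\ref{it:lemma:averaging-edge_patch-general-3}), with constants independent of $\pp$.
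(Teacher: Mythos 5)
The basic idea is right---insert a Gauß--Lobatto degree reduction on the quadrilateral contributions---but your account of why Assumption~\ref{ass:degree_distribution} is needed, and of how the fractional stability is actually obtained, is not the paper's, and I do not think your version closes.

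First, the role of Assumption~\ref{ass:degree_distribution}. You locate it in the operator-norm factors $(1+q/(p+1))^{2-\theta}$ and $(1+q/(p+1))^{2(1-\theta)}(q/p)^{\theta}$ of Lemma~\ref{lemma:GL}~(\ref{item:lemma:GL-ii}), (\ref{item:lemma:GL-iii}), claiming they are bounded only under the assumption. But in \emph{every} case of the construction (including the case excluded by the assumption) the degree ratio $q/p$ is at most~$2$: the averaging is always done on the lowest-degree element, so either $q\le 2p$ by a $\QQ|_{\Tref}\subset\P_{2p}$ restriction or by the bound $p_{K'}\le 2p_K$ itself. Those factors are therefore universal constants in all cases and play no role in distinguishing the cases. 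The actual obstruction in the excluded case---as the paper's proof of the corollary explains explicitly---is that after the averaging and Duffy step the operator lands in a polynomial space of the form $\{w\in\P_{p_{K'}}(\Tref): w|_{\widehat e}\in\mathcal P_{p_K},\; w|_{\partial\Tref\setminus\widehat e}=0\}$, and an interpolation-space identification of the type Lemma~\ref{lemma:GL}~(\ref{item:lemma:GL-i}) (i.e.\ that the $(L^2,H^1)$ interpolate of this polynomial space equals the same space with the fractional-order norm) is simply not available for such a mixed-degree space. Your final sentence about ``Lemma~\ref{lemma:GL}~(\ref{item:lemma:GL-ii}) is established only for $\theta\in\{0,1\}$'' is also off: that lemma is stated and proved for all $\theta\in[0,1]$.

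Second, the stability argument. You propose to write $i_{p_S}(\mathcal D\widetilde u)=\mathcal D\widetilde u-(\mathrm{Id}-i_{p_S})\mathcal D\widetilde u$ and bound the correction with~(\ref{item:lemma:GL-ii}), (\ref{item:lemma:GL-iii}). This sidesteps exactly the difficulty highlighted in Remark~\ref{rem:why_not_interpolate_at_the_end}: once the GL operator enters, the operator is only defined on discrete spaces and the fractional bound cannot be obtained by interpolating the $L^2$ and $H^1$ bounds ``at the end.'' The paper resolves this by factoring the edge-patch operator as $\widetilde{\mathcal A}^{p}_{\omega_e}\circ i_{p_K}\circ\mathcal D\circ\mathcal A^{\widehat e}_0[u\circ F_K]$, i.e.\ a single early GL reduction that forces the intermediate object into the fixed polynomial space $\widetilde{\mathcal Q}_{p_K}(\Sref,\partial\Sref\setminus\overline{\widehat e})$, and then uses the polynomial-space interpolation result Lemma~\ref{lemma:GL}~(\ref{item:lemma:GL-i}) (second line) for precisely that space to pass from $L^2/H^1$ bounds of $\widetilde{\mathcal A}^p_{\omega_e}$ to the $\widetilde H^\theta$ bound, and separately controls the ``feeding'' map $u\mapsto i_{p_K}\mathcal D\mathcal A^{\widehat e}_0[u\circ F_K]$ using the weighted estimate~(\ref{item:lemma:GL-iii}). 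In your construction the GL reduction is applied independently on each square, so there is no single polynomial space through which the operator factors, and the fractional-norm estimate on the patch does not follow from~(\ref{item:lemma:GL-ii}), (\ref{item:lemma:GL-iii}) alone. The element-wise $H^\theta$ bounds you would obtain do not assemble into the interpolation norm $\widetilde H^\theta_h(\omega_e)$ of the whole patch, which is a $K$-functional over $\omega_e$, not a sum of element-wise $K$-functionals.

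So the gap is concrete: you need (and the paper supplies, via the auxiliary operator $\widetilde{\mathcal A}^p_{\omega_e}$ and its boundedness~\eqref{eq:dege_patch_stability_auxiliary}) a factorization of $\mathcal A^p_{\omega_e}$ through a \emph{single} fixed polynomial space with boundary conditions for which Lemma~\ref{lemma:GL}~(\ref{item:lemma:GL-i}) holds, and Assumption~\ref{ass:degree_distribution} is precisely what guarantees that such a space exists with degree~$p_K$; it has nothing to do with keeping the $(1+q/(p+1))$ factors bounded.
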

\begin{proof}
  
    We  mimic the proof of Lemma~\ref{lemma:averaging-edge_patch-general}.
    In the case (1), the operators from Lemma~\ref{lemma:averaging-edge_patch-general} already have all the necessary mapping properties.
    In the  cases (3) and (4), we define $$\AEP^p u:=\widetilde{\mathcal{A}}^{p}_{\omega_e} \circ i_{p_K} \circ \mathcal{D} \circ \mathcal{A}^{\widehat{e}}_0 [u \circ F_K],$$
    with an auxiliary operator $\widetilde{A}^p_{\omega_e}$ on the reference square. $\widetilde{A}^p_{\omega_e} v$ is defined using $\widetilde{v}:=\mathcal{A}^{\widehat{e}}_0 {v}$ and distinguishing the cases
  \begin{enumerate}    
  \item[(3)]
    $K$ is a square, $K'$ is a triangle, and $p_{K'} > 2 p_{K}$:
    Define 
    $(\widetilde{A}_{\omega_e}^p v)|_{K'}:= \widetilde{v} \circ F_{K'}^{-1}$, and
    $(\widetilde{A}_{\omega_e}^p v)|_K := \left(i_{p_K} \circ \mathcal{D} \widetilde{v} \right) \circ F_K^{-1}$
  \item[(4)]
    $K$ and $K'$ are squares:
    Set $(\widetilde{A}_{\omega_e}^p v)|_{K'} := \left(i_{p_K} \circ \mathcal{D} \widetilde{v} \right) \circ F_{K'}^{-1}$,
    and $(\widetilde{A}_{\omega_e}^p v)|_K := \left(i_{p_K} \circ \mathcal{D} \widetilde{v} \right) \circ F_K^{-1}$.
  \end{enumerate}
  For functions that vanish on $\partial \Sref\setminus  \overline{\widehat{e}}$, the operator $\widetilde{A}^{p}_{\omega_e}$ satisfies the
  following stability conditions:
  \begin{align}
    \label{eq:dege_patch_stability_auxiliary}
      \Big(\widetilde{\mathcal{Q}}_{p}(\Sref,{\partial \Sref \setminus \overline{\widehat{e}}}) , \norm{\cdot}_{L^2(\Sref)}\Big) &\to L^2(\omega_e) \qquad \text{ and } \qquad 
      \Big(\widetilde{\mathcal{Q}}_{p}(\Sref,{\partial \Sref \setminus \overline{\widehat{e}}}),\norm{\cdot}_{H^1(\Sref)}\Big) \to \H^1_h(\omega_e).
  \end{align}  
  This can be seen similarly to Lemma~\ref{lemma:averaging-edge_patch-general} using the following insights:
  \begin{itemize}
    \item We additionally have to invoke the stability of the 
      Gau{\ss}-Lobatto operator given in Lemma~\ref{lemma:GL},~(\ref{item:lemma:GL-ii}).
    \item Via Lemma~\ref{lemma:weighted-norm-equivalence}~(\ref{item:lemma:weighted-norm-equivalence-i}),
      we can estimate $\|{d^{-1}_{\widehat{V}} v}\|_{L^2(\Sref)} \lesssim \|v\|_{H^{1}(\widehat{S})}$ for both vertices $\widehat{V}$ of $\widehat{e}$.
    \end{itemize}
  Interpolation of~\eqref{eq:dege_patch_stability_auxiliary} via Lemma~\ref{lemma:GL},~(\ref{item:lemma:GL-i}) gives the stability
   $ \|{\widetilde{\mathcal{A}}^p_{\omega_e} v}\|_{\widetilde{H}_h^{\theta}(\omega_e)}\lesssim \norm{v}_{\widetilde{H}^{\theta}(\Sref,\partial \Sref \setminus \widehat{e})}.$

   We note that the averaging $u \mapsto \mathcal{D} \circ \mathcal{A}^{\widehat{e}}_0 [u \circ F_K]$
  satisfies via Lemma~\ref{lemma:edge-part-homogeneous-bc},~(\ref{item:lemma:edge-part-homogeneous-bc-2}), (\ref{item:lemma:edge-part-homogeneous-bc-7}) and Lemma~\ref{lemma:duffy_transform}:
  \begin{align*}
    \norm{\mathcal{D} \circ \mathcal{A}^{\widehat{e}}_0 [u \circ F_K]}_{\widetilde{H}^{\theta}(\Sref,\partial \Sref \setminus \widehat{e})}
    &\lesssim \norm{u}_{H^{\theta}_{h}(K)} + \|d_{1}^{-\theta}u\|_{L^2(K)} + \|d_{2}^{-\theta} u \|_{L^2(K)}
  \end{align*}  
  and increases the polynomial degree at most by a factor of two.
  This and the stability of $i_{p_K}$
  implies that $\AEP$ is a stable operator satisfying the bound from Lemma~\ref{lemma:averaging-edge_patch-general},~(\ref{it:lemma:averaging-edge_patch-general-1}).

	The case (2), which is the case excluded by Assumption~\ref{ass:degree_distribution}, needs special considerations. If
    $K$ is a square, $K'$ is a triangle, and $p_{K} \leq p_{K'} \leq 2 p_{K}$,
    define $\widetilde{u}:=\mathcal{A}^e_0 \left(u\circ F_{K'}\right)$ and
    $(\AEP^p u)|_{K'}:= \widetilde{u} \circ F_{K'}^{-1}$. On $K$, define
    $\left(\AEP^p u\right)|_{K}:=\left(i_{p_K} \circ \mathcal{D} \widetilde{u} \right) \circ F_K^{-1}$.
    This operator is then bounded in the $L^2$- and $H^1_h$-norms, but the continuity of the resulting function 
    relies on the fact that $u|_{e} \in \mathcal{P}_{p_K}$, i.e., is of lower degree than the volume function used for the averaging.
    In order to derive stability in $H^{\theta}_h$-norms, we would need a result analogous to Lemma~\ref{lemma:GL},~(\ref{item:lemma:GL-i})
    for the space $\{u \in \PP(\Tref), u|_{e} \in \mathcal{P}_{q} \wedge u|_{\partial \Tref \setminus \widehat{e} }=0\}$, which is not available in the literature.

  The proof of Lemma~\ref{lemma:averaging-edge_patch-general},~(\ref{it:lemma:averaging-edge_patch-general-3}) can be adapted to the present case in a straight forward way.
\end{proof}
\subsection{Proof of Theorems~\ref{thm:main_decomposition} and~\ref{thm:decomposition-FEM-space}}
\label{sec:proofs}
We only show the case for general meshes
with all contributions in $\SS^{\pp,1}(\TT)$, the other cases follow analogously.
\begin{proof}[Proof of Theorem~\ref{thm:decomposition-FEM-space}]
  Let $I^1_h$ denote an operator of Scott-Zhang type and $u_1:=u-I^1_h u$. Then
  \begin{align}\label{eq:sz}
    \| u_1 \|_{\H^\theta(\Gamma)}+\| h^{-\theta} u_1 \|_{L^2(\Gamma)} \lesssim \norm{u}_{\H^{\theta}(\Gamma)}.
  \end{align}
  For every vertex $V \in \innervertices$ define $u_V:=\AVP^p u_1$ (see Corollary~\ref{cor:averaging-vertex_patch-general}).
  By Lemma~\ref{lemma:averaging-vertex_patch-general},  (\ref{lemma:averaging-vertex_patch-general-1})
  \begin{align}\label{eq:vertexpart}
    \sum_{V \in \innervertices}{\norm{u_V}_{\H^\theta_h(\omega_V)}^2}
      &\lesssim \sum_{V \in \innervertices}{\norm{ u_1 }^2_{H_h^{\theta}(\omega_V)}  }
    \lesssim  \norm{u}_{\H^{\theta}(\Gamma)}^2,
  \end{align}
  where in the last step we used that the patches $(\omega_V)_{V\in \innervertices}$ have finite overlap
  and~\eqref{eq:sz}.
  Extend $u_V$ by zero outside of $\omega_V$ and define
  $u_{\mathcal{V}}:=\sum_{V \in \innervertices}{u_V}$.
  Set $u_2:=u_1 - u_{\mathcal{V}}$. Let $d_{\mathcal{V}}:=\operatorname{dist}(\cdot,\allvertices)$
  denote the distance to all vertices of $\TT$.
  We claim the following estimates:
  \begin{align}
    \norm{u_2}_{\widetilde{H}^{\theta}(\Gamma)} + \| h^{-\theta} u_2 \|_{L^2(\Gamma)}
    &\lesssim \| u \|_{\widetilde{H}^{\theta}(\Gamma)},
    \label{eq:proof_decomposition_general_u3_sobolev}\\
    \| d_{\mathcal{V}}^{-\theta} u_2 \|_{L^2(\Gamma)}
    &\lesssim \norm{u}_{\widetilde{H}^{\theta}(\Gamma)} \label{eq:proof_decomposition_general_u3_wighted_l2}.
  \end{align}
  To show \eqref{eq:proof_decomposition_general_u3_sobolev},
  we note that since the contributions $u_V$ have local support, we can apply a coloring argument to
  localize the norms and apply the estimates~\eqref{eq:sz} and~\eqref{eq:vertexpart}
  \begin{align*}
    \| u_2 \|_{\widetilde{H}^{\theta}(\Gamma)}^2 + \| h^{-\theta} u_2 \|_{L^2(\Gamma)}^2
    &\lesssim \| u_1 \|^2_{\widetilde{H}^{\theta}(\Gamma)} + \| h^{-\theta} u_1 \|^2_{L^2(\Gamma)}
    + \sum_{V \in \innervertices}{\norm{u_{V}}^2_{\widetilde{H}_h^{\theta}(\omega_V)}} 
    \lesssim \norm{u}^2_{\H^{\theta}(\Gamma)}.
  \end{align*}
  To show \eqref{eq:proof_decomposition_general_u3_wighted_l2}, we consider
  a single element $K$ with vertices $ \mathcal{V}_K:=\left\{ V_1, \dots, V_k \right\}$.
  By shape-regularity, $d_{\mathcal{V}}\sim d_{\mathcal{V}_K}$ on $K$,
  and we also have
  $d_{\mathcal{V}_K}^{-\theta} = \max_{j=1,\dots,k} d_{V_j}^{-\theta} \leq \sum_{j=1}^k d_{V_j}^{-\theta}$.
  Using Lemma~\ref{lemma:averaging-vertex_patch-general}, (\ref{lemma:averaging-vertex_patch-general-3})
  and~\eqref{eq:sz}, we conclude
  \begin{align*}
    \| d_{\mathcal{V}}^{-\theta} u_2 \|_{L^2(K)}^2
    &\lesssim \sum_{j=1}^k \| d_{V_j}^{-\theta} (u_1 - \sum_{\ell=1}^k u_{V_\ell}) \|_{L^2(K)}^2
\\ &
\lesssim \sum_{j=1}^k
    \left( 
    \| d_{V_j}^{-\theta} (u_1-u_{V_j}) \|_{L^2(K)}^2
    + \sum_{\ell\neq j} \| d_{V_j}^{-\theta} u_{V_\ell} \|_{L^2(K)}^2
    \right)
    \lesssim \sum_{j=1}^k \| u_1 \|_{H_h^{\theta}(\omega_{V_j})}^2.
  \end{align*}
  Summing this estimate over all $K\in\TT$ and using the second estimate 
of~\eqref{eq:vertexpart}
  shows~\eqref{eq:proof_decomposition_general_u3_wighted_l2}.
  Next, define $u_{e}:=\AEP^p u_{2}$ for all edges $e \in \inneredges$, using $\AEP^p$ from Corollary~\ref{cor:averaging-edge_patch-general}. 
  Lemma~\ref{lemma:averaging-edge_patch-general}, (\ref{it:lemma:averaging-edge_patch-general-1})
  and~\eqref{eq:proof_decomposition_general_u3_sobolev},~\eqref{eq:proof_decomposition_general_u3_wighted_l2} then imply
  \begin{align}\label{eq:edgepart}
    \sum_{e \in \inneredges } \norm{u_e}_{\widetilde{H}_h^{\theta}(\omega_e)}^{2}    
    &\lesssim \norm{u}_{\widetilde{H}^{\theta}(\omega)}^2.
  \end{align}
  Set $u_{\mathcal{E}}:=\sum_{e\in \inneredges}{u_e}$ and $u_3:=u_2-u_{\mathcal{E}}$.
  We claim the estimates
  \begin{align}
\label{eq:claim-1}
    \| u_3 \|^2_{\widetilde{H}^{\theta}(\Gamma)} + \| h^{-\theta} u_3 \|^2_{L^2(\Gamma)}
    &\lesssim \norm{u}_{\widetilde{H}^{\theta}(\Gamma)}^2, \\
\label{eq:claim-2}
    \| d_{\mathcal{E}}^{-\theta} u_3 \|^2_{L^2(\Gamma)}
    &\lesssim \| u \|^2_{\widetilde{H}^{\theta}}(\Gamma).
  \end{align}
  (\ref{eq:claim-1}) follows from~\eqref{eq:edgepart} via the usual coloring argument to localize the norm.
  The weighted estimate (\ref{eq:claim-2}) follows analogously to the proof of~\eqref{eq:proof_decomposition_general_u3_wighted_l2},
  using Lemma~\ref{lemma:averaging-edge_patch-general},~(\ref{it:lemma:averaging-edge_patch-general-3}).
  Furthermore, (\ref{eq:claim-2}) implies that for the restrictions $u_K:=u_3|_{K}$ we can estimate
  $\norm{u_K}^2_{\widetilde{H}_h^{\theta}(K)}\leq \norm{u}_{\H^{\theta}(\Gamma)}^2$.
\end{proof}

\begin{proof}[Proof of Theorem~\ref{thm:main_decomposition}]
    Follows along the same lines as Theorem~\ref{thm:decomposition-FEM-space},
    one only has to replace the operators $\AVP^p$ and $\AEP^p$ with their continuous counterparts
    from Lemmas~\ref{lemma:averaging-vertex_patch-general} and~\ref{lemma:averaging-edge_patch-general} respectively.
\end{proof}

\textbf{Acknowledgments:} Financial support of A.R. by the Austrian Science Fund (FWF) through the 
doctoral school W1245 ``Dissipation and Dispersion in Nonlinear PDEs'' and the SFB 65 ``Taming complexity
in partial differential systems'' is gratefully acknowledged. The research of the author M.K. is supported
by Conicyt Chile through project FONDECYT 1170672 ``Fast space-time discretizations for fractional evolution equations''.

\bibliographystyle{amsalpha}
\bibliography{literature}

\end{document}